\pgfplotsset{compat=1.16}
\newtheorem{lemma}{Lemma}[section]
\newtheorem{theorem}[lemma]{Theorem}
\newtheorem{corollary}[lemma]{Corollary}
\newtheorem{proposition}[lemma]{Proposition}
\theoremstyle{definition}
\newtheorem{definition}[lemma]{Definition}
\theoremstyle{remark}
\newtheorem{remark}[lemma]{Remark}
\newtheorem{example}[lemma]{Example}
\theoremstyle{theorem}
\newtheorem{thmx}{Theorem}
\newcommand{\C}{\mathbb{C}}
\newcommand{\D}{\mathbb{D}}
\newcommand{\N}{\mathbb{N}}
\newcommand{\R}{\mathbb{R}}
\newcommand{\Z}{\mathbb{Z}}
\DeclareMathOperator{\re}{Re}
\DeclareMathOperator{\im}{Im}
\DeclareMathOperator{\Int}{int}
\renewcommand{\epsilon}{\varepsilon}
\renewcommand{\phi}{\varphi}
\DeclareFontFamily{U}{tipa}{}
\DeclareFontShape{U}{tipa}{m}{n}{<->tipa10}{}
\newcommand{\arc@char}{{\usefont{U}{tipa}{m}{n}\symbol{62}}}
\newcommand{\arc}[1]{\mathpalette\arc@arc{#1}}
\newcommand{\arc@arc}[2]{
  \sbox0{$\m@th#1#2$}
  \vbox{
    \hbox{\resizebox{\wd0}{\height}{\arc@char}}
    \nointerlineskip
    \box0
  }
}
\date{\today}
\begin{document}

\title[Antiholomorphic Correspondences and Mating I]{Antiholomorphic Correspondences and Mating I: Realization Theorems}

\author[M. Lyubich]{Mikhail Lyubich}
\address{Institute for Mathematical Sciences, Stony Brook University, NY, 11794, USA}
\email{mlyubich@math.stonybrook.edu}
\thanks{M.L. was partially supported by NSF grants DMS-1901357 and 2247613, a fellowship from the Hagler Institute for Advanced Study, and the Clay fellowship.}

\author[J. Mazor]{Jacob Mazor}
\address{Institute for Mathematical Sciences, Stony Brook University, NY, 11794, USA}
\email{jacob.mazor@stonybrook.edu,n.jacobmazor@gmail.com}
\thanks{J.M. was partially supported by NSF grant DMS-1547145}

\author[S. Mukherjee]{Sabyasachi Mukherjee}
\address{School of Mathematics, Tata Institute of Fundamental Research, 1 Homi Bhabha Road, Mumbai 400005, India}
\email{sabya@math.tifr.res.in}
\thanks{S.M. was supported by the Department of Atomic Energy, Government of India, under project no.12-R\&D-TFR-5.01-0500, an endowment of the Infosys Foundation, and SERB research project grants SRG/2020/000018 and MTR/2022/000248.}

\begin{abstract}
In this paper, we bring together four different branches of antiholomorphic dynamics: of global anti-rational maps, reflection groups, Schwarz reflections in quadrature domains, and antiholomorphic correspondences.
We establish the first general realization theorems for bi-degree $d$:$d$ correspondences on the Riemann sphere (for $d\geq 2$) as matings of maps and groups. To achieve this, we introduce and study the dynamics of a general class of antiholomorphic correspondences; i.e., multi-valued maps with antiholomorphic local branches.  
Such correspondences are closely related to a class of single-valued antiholomorphic maps in one complex variable; namely, Schwarz reflection maps of simply connected quadrature domains. Using this connection, we prove that matings of all parabolic antiholomorphic rational maps with connected Julia sets (of arbitrary degree) and antiholomorphic analogues of Hecke groups can be realized as such correspondences. We also draw the same conclusion when parabolic maps are replaced with critically non-recurrent antiholomorphic polynomials with connected Julia sets. 
\end{abstract}

\maketitle

\setcounter{tocdepth}{1}
\tableofcontents

\section{Introduction}\label{intro}

Combination theorems have a long and rich history in Kleinian groups, geometric group theory, and holomorphic dynamics. The aim of a combination procedure is to start with two objects in some category, and combine (or `mate') them to produce a more general object in the same category that retains some of the essential features of the initial objects. Salient examples of such constructions for Kleinian/hyperbolic groups include the \emph{Klein combination theorem} for two Kleinian groups \cite{Klein}, the \emph{Bers simultaneous uniformization theorem} that combines two surfaces (or equivalently, two Fuchsian groups) \cite{Bers60}, the \emph{Thurston double limit theorem} that combines two projective measured laminations (or equivalently, two groups on the boundary of the corresponding Teichm{\"u}ller space) \cite{Thu86,Otal98}, the \emph{Bestvina-Feighn combination theorem} for Gromov-hyperbolic groups \cite{BF92}, etc. Douady and Hubbard designed the theory of polynomial mating to extend the notion of a combination theorem from the world of groups to that of holomorphic dynamics \cite{Dou83}.

Motivated by remarkable analogies between the dynamics of Kleinian groups and rational maps, Fatou conjectured in \cite{Fatou29} that these two classes of conformal dynamical systems may fit into a general theory of algebraic correspondences. It is thus natural to seek combinations or matings of Kleinian groups and rational maps in the category of correspondences.

\subsection*{Correspondences as matings, and the Bullett-Freiberger conjecture}
A holomorphic (respectively, antiholomorphic) correspondence on the Riemann sphere $\widehat{\C}$ is a multi-valued map $z\mapsto w$ defined by a relation of the form $P(z,w)=0$ (respectively, $P(\overline{z},w)=0$), where $P$ is a polynomial in two variables with coefficients in $\C$. Roughly speaking, a correspondence on $\widehat{\C}$ is called a mating of a rational (respectively, an anti-rational) map and a group if there exists a dynamically invariant partition $\widehat{\C}= X\sqcup Y$ such that the dynamics of the correspondence on $X$ is equivalent to a group action (i.e., the grand orbits of the correspondence on $X$ are equal to the orbits of a group acting on $X$), and suitable forward/backward branches of the correspondence preserve subsets of $Y$ where the dynamics of such branches are conjugate to a rational (respectively, anti-rational) map. We often refer to the dynamics of the correspondence on the set $X$ as the \emph{external dynamics}.

In \cite{BP}, Bullett and Penrose discovered a family of $2$:$2$ holomorphic correspondences on $\widehat{\C}$ containing matings of holomorphic quadratic polynomials and the modular group $\mathrm{PSL}(2, \Z)\cong \Z/2\Z\ast\Z/3\Z$. This family was studied extensively by Bullett and his collaborators, and finally it was shown by Bullett and Lomonaco that all correspondences in the `connectedness locus' of this family are matings of appropriate quadratic rational maps with the modular group \cite{BuLo1}. In \cite{BH00, BH07}, this mating framework was extended to Kleinian groups abstractly isomorphic to the modular group. Some explicit examples of $d$:$d$ correspondences that are matings of degree $d$ polynomials and Hecke groups (for $d>2$) were given in \cite{BuFr1}.

On the antiholomorphic side, univalent restrictions of cubic Chebyshev polynomials were used recently in \cite{LLMM3} to produce a family of $2$:$2$ antiholomorphic correspondences that give rise to matings of quadratic anti-rational maps and $\Z/2\Z\ast\Z/3\Z$. In the same paper, examples of correspondences that can be interpreted as matings of $\overline{z}^d$ (for any $d\geq 2$) with groups isomorphic to $\Z/2\Z\ast\Z/(d+1)\Z$ (denoted from here on as $\Gamma_d$) were also furnished.
To the best of our knowledge, the list of correspondences on $\widehat{\C}$ that are known to be matings of (anti-)rational maps and $\Gamma_d$ stops here. 

In \cite[\S 3, p. 3926]{BuFr}, Bullett and Freiberger conjectured that\\
\emph{..examples of matings} [with the Hecke groups] \emph{exist for all polynomials which have connected Julia sets}.
\smallskip

\noindent In this article, we confirm an antiholomorphic version of this conjecture. 
To accomplish this task, we develop a general straightening/mating surgery. We also study set-theoretic and topological properties of the associated straightening map between multi-dimensional parameter spaces. As a consequence, we conclude that there is a natural bijection between a family of anti-rational maps and the connectedness locus of a family of antiholomorphic correspondences, and that this bijection is continuous at most parameters. We note that in the literature, such straightening machinery have largely been developed for one-parameter families, where specific properties of one complex dimension are heavily used.
 

\subsection*{Polynomials vs. parabolic rational maps}

We now discuss the necessity for replacing polynomials with a related class of rational maps in the above conjecture.
A conspicuous feature of all the examples of correspondences mentioned above is the lack of uniform expansion on the limit set, which is the boundary of the region of $\widehat{\C}$ where the dynamics resembles that of a group action, even when the correspondence is a mating of a hyperbolic (anti-)polynomial with $\Gamma_d$.
The underlying reason for the lack of uniform expansion comes from the group side; namely, the modular group (and more generally, the Hecke groups) themselves have parabolic elements and hence the external dynamics of the correspondences exhibit parabolic behavior. This is a fundamental obstruction to the existence of quasiconformal conjugations between branches of the correspondences and (anti-)polynomials since the external dynamics of an (anti-)polynomial with connected Julia set is necessarily uniformly expanding (conformally equivalent to $z^d$ or $\overline{z}^d$).

In \cite{BuLo1}, Bullett and Lomonaco circumvented the above difficulty by showing that a suitable forward branch of each correspondence in the connectedness locus of the family introduced in \cite{BP} is in fact hybrid conjugate to a quadratic rational map with a \emph{parabolic} external class. They used the notion of \emph{parabolic-like maps} \cite{Lom15} to construct such hybrid conjugacies. In \cite{LLMM3}, a more direct quasiconformal surgery technique was devised to prove a
straightening theorem for pinched anti-quadratic-like maps, and it was used to show that the correspondences lying in the connectedness locus of the family arising from univalent restrictions of the cubic Chebychev polynomial are matings of quadratic \emph{parabolic} anti-rational maps and the group $\Z/2\Z\ast\Z/3\Z$.

Thus, in our desired general theory of correspondences arising as matings of maps and groups, we work with anti-rational maps with a parabolic external class. In order to allow for flexibility and generality, we introduce the following class of anti-rational maps. We call an anti-rational map $R$ of degree $d\geq 2$ a \emph{Bers-like anti-rational map} if it has a simply connected completely invariant Fatou component.\footnote{The nomenclature is motivated by the Kleinian group world where a finitely generated, non-elementary Kleinian group admits a simply connected invariant component in its domain of discontinuity if and only if it lies in the Bers slice closure of a Fuchsian lattice. This is a consequence of the Bers density theorem due to Brock-Canary-Minsky \cite{minsky-elc2} (in the classical literature, such groups were called $B$-groups, see \cite{bers-boundary,maskit}).}.  If $R$ is a Bers-like anti-rational map, then the completely invariant Fatou component must be mapped to itself with degree $d$ under $R$, and hence such a Fatou component must be an attracting/super-attracting or parabolic basin. We will tacitly assume that a Bers-like anti-rational map is equipped with a marked simply connected completely invariant Fatou component, which we denote by $\mathcal{B}(R)$. This marking will be important when $R$ has two simply connected completely invariant Fatou components.

We define the \emph{filled Julia set} $\mathcal{K}(R)$ of $R$ to be the complement of $\mathcal{B}(R)$ in the Riemann sphere. This set is connected and full.

Let us now assume that $R$ is a Bers-like anti-rational map with a parabolic external class; i.e., $R\vert_{\mathcal{B}(R)}$ is conformally conjugate to the action of a parabolic antiholomorphic Blaschke (\emph{anti-Blaschke} for short) product on $\D$. By a standard quasiconformal surgery procedure, we can assume without loss of generality that $R\vert_{\mathcal{B}(R)}$ is conformally conjugate to $B_d\vert_{\D}$, where 
$$
B_d(z) = \frac{(d+1)\overline z^d + (d-1)}{(d-1)\overline z^d + (d+1)}
$$ 
is a unicritical parabolic anti-Blaschke product (cf. \cite[Proposition~6.9]{McM88}). This leads to the space $\mathcal{F}_d$ of anti-rational maps with a marked parabolic fixed point at $\infty$ admitting a completely invariant simply connected basin where the dynamics is conformally equivalent to $B_d$ (see Subsection~\ref{para_anti_rat_subsec}). Morally, one can think of $\mathcal{F}_d$ as a space obtained from the connectedness locus of degree $d$ anti-polynomials by replacing the expanding external map $\overline{z}^d$ with the parabolic external map $B_d$. Using David surgery on degree $d$ anti-polynomials with connected Julia sets, one can show that the interior of the moduli space $\ \faktor{\mathcal{F}_d}{\mathrm{Aut}(\C)}$ has real dimension $2d-2$ (see Remark~\ref{fd_big_rem}). Alternatively, this can be seen by constructing geometrically finite maps in $\mathcal{F}_d$ using the Thurston-like theory of characterization of geometrically finite rational maps due to Cui and Tan Lei \cite{CT18}.

\subsection*{Anti-Hecke groups}

On the group side, a suitable generalization of the modular group $\mathrm{PSL}(2,\Z)$ in the antiholomorphic setting appears naturally in our general mating framework. We call this group the \emph{anti-Hecke group}, and denote it by $\pmb{\Gamma}_d$. As an abstract group, it is isomorphic to $\Gamma_d$. It is a discrete subgroup of $\mathrm{Aut}^\pm(\D)$, the group of all conformal and anti-conformal automorphisms of $\D$. The group $\pmb{\Gamma}_d$ is generated by the rigid rotation by angle $\frac{2\pi}{d+1}$ around the origin and the reflection in the hyperbolic geodesic of $\D$ connecting $1$ to $e^{\frac{2\pi i}{d+1}}$ (see Subsection~\ref{rd_subsec} for precise definition).

\subsection*{Main results}

Our main theorem, which settles the parabolic Bullett-Freiberger conjecture in the antiholomorphic setting, shows that all maps in $\mathcal{F}_d$ can be mated with the anti-Hecke group $\pmb{\Gamma}_d$ as correspondences (see Theorem~\ref{mating_existence_thm_precise} for a precise statement).

\begin{thmx}\label{mating_existence_thm_intro}
Let $R\in\mathcal{F}_d$. Then, there exists an antiholomorphic correspondence $\mathfrak{C}^*$ that is a quasiconformal mating of $\pmb{\Gamma}_d\cong\Z/2\Z\ast\Z/(d+1)\Z$ and $R$. The correspondence $\mathfrak{C}^*$ is defined by the polynomial relation
\begin{equation}
\frac{f(w)-f(1/\overline{z})}{w-1/\overline{z}}=0,
\label{corr_eqn_0}
\end{equation}
where $f$ is a degree $d+1$ polynomial that is univalent on $\overline{\D}$ and has a unique critical point on $\mathbb{S}^1$.
\end{thmx}

Differently put, the correspondence $\mathfrak{C}^*$ constructed in the previous theorem is generated by covering transformations of a degree $d+1$ polynomial as above and the reflection in the unit circle.
Theorem~\ref{mating_existence_thm_intro} gives rise to a map from the moduli space of parabolic anti-rational maps $\ \faktor{\mathcal{F}_d}{\mathrm{Aut}(\C)}$ to the connectedness locus of the family of antiholomorphic correspondences of the above type. It turns out that this map is a bijection (see Theorem~\ref{mating_existence_thm_precise} and Proposition~\ref{chi_cont_rigid_prop}).

\begin{thmx}\label{straightening_thm_intro}
The mating operation between parabolic anti-rational maps and the anti-Hecke group $\pmb{\Gamma}_d$ (described in Theorem~\ref{mating_existence_thm_intro}) yields a bijection between $\ \faktor{\mathcal{F}_d}{\mathrm{Aut}(\C)}$ and the connectedness locus of the space of antiholomorphic correspondences defined by Equation~\eqref{corr_eqn_0}. Moreover, this bijection is continuous at relatively hyperbolic and quasiconformally rigid parameters.
\end{thmx}

The family $\mathcal F_d$ contains no genuine hyperbolic maps, as every map in $\mathcal F_d$ has a (persistent) parabolic fixed point. We use the term \emph{relatively hyperbolic} for maps whose critical points, except the one contained in the marked parabolic basin, are contained in basins for attracting cycles. This term is borrowed from geometric group theory \cite{Gromov87,Farb98,Bow12}.

In fact, we also give a positive answer to the antiholomorphic version of the original Bullett-Freiberger conjecture for a large class of anti-polynomials (see Theorem~\ref{hyp_poly_mating_thm}).

\begin{thmx}\label{hyp_poly_mating_thm_intro}
Let $p$ be a degree $d$ semi-hyperbolic anti-polynomial (i.e., $p$ has no parabolic cycles and all Julia critical points of $p$ are non-recurrent) with a connected Julia set. Then, there exists an antiholomorphic correspondence that is a David mating of $\pmb{\Gamma}_d$ and $p$.
\end{thmx}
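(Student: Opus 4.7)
The plan is to reduce Theorem~\ref{hyp_poly_mating_thm_intro} to Theorem~\ref{mating_existence_thm_intro} by a parabolic David surgery that replaces the uniformly expanding external dynamics of $p$ with the parabolic dynamics of $B_d$, thereby producing a map in $\mathcal{F}_d$ to which Theorem~\ref{mating_existence_thm_intro} can be applied. Write $\mathcal{K}(p)$ for the filled Julia set of $p$ and $\mathcal{B}_\infty(p)$ for its basin of infinity; the B\"ottcher coordinate conjugates $p\vert_{\mathcal{B}_\infty(p)}$ to $\overline z^d$ on $\widehat\C\setminus\overline\D$. Following the Ha\"issinsky-type parabolic surgery, one glues the parabolic model $B_d$ into $p\vert_{\mathcal{K}(p)}$ by a topological antiholomorphic map coinciding with $B_d$ near the boundary and with the B\"ottcher model far from it, and pulls back the standard complex structure to obtain a Beltrami coefficient $\mu$ supported on $\mathcal{B}_\infty(p)$. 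Semi-hyperbolicity of $p$ ensures that $\mathcal{B}_\infty(p)$ is a John domain, which yields the area estimates on nested preimages of a bridge annulus needed to verify the David integrability bound $|\mu(z)|\leq 1-Ke^{-\alpha/\mathrm{dist}(z,\partial\mathcal{K}(p))}$. The David Integrability Theorem then produces a David homeomorphism $\Phi:\widehat\C\to\widehat\C$, conformal on $\Int\mathcal{K}(p)$, and the conjugate $R:=\Phi\circ p\circ\Phi^{-1}$ is an anti-rational map belonging to $\mathcal{F}_d$.

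Next, apply Theorem~\ref{mating_existence_thm_intro} to $R$ to obtain an antiholomorphic correspondence $\mathfrak{C}$ on $\widehat\C$ realizing a quasiconformal mating of $R$ with $\pmb\Gamma_d$, together with a dynamically invariant partition $\widehat\C=X\sqcup Y$, a quasiconformal semi-conjugacy $\Psi_R:\mathcal{K}(R)\to\overline Y$ intertwining $R\vert_{\mathcal{K}(R)}$ with a branch of $\mathfrak{C}$, and an equivariant map on $X$ realizing the $\pmb\Gamma_d$-action. The composition $\Psi_R\circ\Phi:\mathcal{K}(p)\to\overline Y$ is quasiconformal because $\Phi$ is conformal on $\Int\mathcal{K}(p)$, and it intertwines $p\vert_{\mathcal{K}(p)}$ with the appropriate branch of $\mathfrak{C}$. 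On the opposite side, the global $\Phi$, which is David on $\mathcal{B}_\infty(p)$, provides the David conjugacy between the expanding dynamics of $p$ and the parabolic external dynamics of $\mathfrak{C}$ on $X$. Since the composition of a David homeomorphism with a quasiconformal one is again David, these data exhibit $\mathfrak{C}$ as a David mating of $\pmb\Gamma_d$ and $p$.

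The principal obstacle is the verification of the David integrability estimate for $\mu$, which is where semi-hyperbolicity enters in an essential way: one needs subexponential control of the distortion $1/(1-|\mu(z)|)$ as $z$ approaches $\partial\mathcal{K}(p)$, and this requires fine area estimates on preimages of the bridge annulus under iterates of $p$. These follow from the John property of $\mathcal{B}_\infty(p)$, which in turn is a consequence of semi-hyperbolicity via Carleson--Jones--Yoccoz. A secondary technical point is that one must check that the composition of David and quasiconformal homeomorphisms in the final step remains in the David class in a way that preserves the structure of the correspondence; this is known but must be handled carefully since David maps do not in general compose.
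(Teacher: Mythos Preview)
Your approach is valid but takes a genuinely different route from the paper. The paper proves Theorem~\ref{hyp_poly_mating_thm_intro} by performing the David surgery \emph{directly with the anti-Farey map} $\mathcal{R}_d$ rather than with $B_d$: Proposition~\ref{mating_all_pcf_anti_poly} glues $\mathcal{R}_d$ outside $\mathcal{K}(p)$ (using the David extension of Lemma~\ref{david_ext_lem} and the John property of $\mathcal{B}_\infty(p)$) to produce a Schwarz reflection $(\Omega,\sigma)\in\mathcal{S}_{\mathcal{R}_d}$ together with a global David homeomorphism conjugating $p\vert_{\mathcal{K}(p)}$ to $\sigma\vert_{K(\sigma)}$; Proposition~\ref{srd_mating_check_prop} then gives the mating structure of the associated correspondence in one stroke. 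Your route instead glues $B_d$ to land in $\mathcal{F}_d$ (this is exactly Remark~\ref{fd_big_rem}), invokes Theorem~\ref{mating_existence_thm_intro}, and composes the resulting quasiconformal hybrid conjugacy with your David map $\Phi$. This works because quasiconformal post-composition preserves the David class, but it is a detour: the proof of Theorem~\ref{mating_existence_thm_intro} itself passes through the quasiconformal equivalence of $B_d$ and $\mathcal{R}_d$ (Lemma~\ref{qc_conj_rd_bd_lem}) to reach $\mathcal{S}_{\mathcal{R}_d}$, so you traverse $p\to\mathcal{F}_d\to\mathcal{S}_{\mathcal{R}_d}$ where the paper goes $p\to\mathcal{S}_{\mathcal{R}_d}$ in one step. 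The paper's route is shorter and makes clear that Theorem~\ref{hyp_poly_mating_thm_intro} does not actually depend on Theorem~\ref{mating_existence_thm_intro}; your route has the virtue of reusing Theorem~\ref{mating_existence_thm_intro} as a black box.

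A few points of imprecision are worth flagging. The David condition is the \emph{area} bound of Definition~\ref{david_def}, not the pointwise estimate $|\mu(z)|\leq 1-Ke^{-\alpha/\mathrm{dist}(z,\partial\mathcal{K}(p))}$ you wrote; the latter is not what the John property delivers, and the actual mechanism is that the David extension $H$ is transported through the B{\"o}ttcher map, with Johnness ensuring the David class survives (cf.\ \cite[Proposition~2.5]{LMMN}). Your sentence ``$\Psi_R\circ\Phi:\mathcal{K}(p)\to\overline Y$ is quasiconformal because $\Phi$ is conformal on $\Int\mathcal{K}(p)$'' misstates what is needed: the requirement (Definition~\ref{hybrid_def}) is that the \emph{global} composition be David with $\overline\partial(\Psi_R\circ\Phi)=0$ a.e.\ on $\mathcal{K}(p)$, not that a restriction be quasiconformal. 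Finally, Definition~\ref{mating_def} imposes no conjugacy condition on the group side, so your discussion of a David conjugacy on $X$ is unnecessary.
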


\subsection*{Schwarz reflection maps: from matings to correspondences}

The constructions of the correspondences appearing in both of our main theorems go via a certain class of dynamical systems in one complex variable, called Schwarz reflection maps in quadrature domains. A \emph{quadrature domain} is a domain $\Omega$ with piecewise real-analytic boundary such that the local Schwarz reflections at the non-singular points of $\partial\Omega$ (i.e., local anti-conformal involutions fixing $\partial\Omega$ pointwise) extend to a well-defined antiholomorphic map of $\Omega$. The associated antiholomorphic map is called the \emph{Schwarz reflection map} of $\Omega$. The fact that Schwarz reflections of quadrature domains behave like anti-conformal reflections near the boundary and like ramified antiholomorphic maps away from the boundary is responsible for the co-existence of reflection group and anti-rational map structure in their dynamical planes. Of particular importance are Schwarz reflections associated with simply connected quadrature domains. Such hybrid dynamical systems have been extensively studied in recent years \cite{lee-makarov,LLMM1,LLMM2,LLMM3,LMM2,LMMN}. A simply connected quadrature domain arises as the univalent image of the open unit disk in the complex plane under a global rational map. To prove Theorems~\ref{mating_existence_thm_intro} and~\ref{hyp_poly_mating_thm_intro}, we first invoke suitable integrability theorems in one complex variable to construct Schwarz reflections (of simply connected quadrature domains) with prescribed dynamical properties, and then lift them under the uniformizing rational maps of the quadrature domains to construct the desired antiholomorphic correspondences. The mating structure in the dynamical planes of the correspondences can be largely attributed to the corresponding mating structure in the dynamical planes of the underlying Schwarz reflections.

\subsection*{Proof ideas and organization of the paper}

Let us now detail the organization of the paper.
Section~\ref{gen_corr_const_sec} describes a general recipe for constructing antiholomorphic correspondences from  univalent restrictions of rational maps and investigates their basic dynamical properties. To a degree $d+1$ rational map $f$ of $\widehat{\C}$ that is univalent on $\D$, we associate a $d$:$d$ antiholomorphic correspondence on the Riemann sphere using Formula~\eqref{corr_eqn_2}. Roughly speaking, the local branches of such a correspondence are given by compositions of an order two anti-M{\"o}bius reflection with local deck transformations of $f$. It turns out that certain important branches of such correspondences are intimately related to the dynamics of Schwarz reflection maps. This connection is made precise in Subsections~\ref{schwarz_subsec},~\ref{schwarz_corr_relation_subsec}. In Subsections~\ref{partition_corr_subsec}, we introduce a natural dynamical partition for such correspondences. Subsection~\ref{mating_def_subsec} explicates what it means for a correspondence to be a mating of a group and an anti-rational map. To unify this discussion, we introduce the notion of a pinched (anti-)polynomial-like map, which generalizes the idea of polynomial-like mappings to degenerate cases (compare \cite{Lom15,LLMM3}). Finally in Subsection~\ref{group_structure_subsec}, we study the group structure of the action of the correspondence on one of its invariant subsets in terms of deck transformations of $f$. As a consequence, we are able to give some useful general criteria for such a correspondence to be a mating of a Bers-like anti-rational map and the abstract Hecke group $\Gamma_d$.

In Section~\ref{srd_sec}, we introduce a class $\mathcal{S}_{\mathcal{R}_d}$ of Schwarz reflection maps that lie at the heart of the construction of the correspondences mentioned in Theorems~\ref{mating_existence_thm_intro},~\ref{straightening_thm_intro}, and~\ref{hyp_poly_mating_thm_intro}. These Schwarz reflections are associated to Jordan quadrature domains with a unique cusp on the boundary (which is responsible for parabolic behavior of the Schwarz reflections and the associated correspondences), and have a carefully chosen parabolic external map $\mathcal{R}_d$, which we call the \emph{degree $d$ anti-Farey map} (see Subsections~\ref{rd_subsec} and~\ref{srd_subsec}). Alternatively, the space $\mathcal{S}_{\mathcal{R}_d}$ can be described as the connectedness locus of the space of Schwarz reflections arising from degree $d+1$ polynomials that are univalent on $\overline{\D}$ and have a unique critical point on $\mathbb{S}^1$ (see Proposition~\ref{mating_equiv_cond_prop}). 
In fact, in Proposition~\ref{mating_equiv_cond_prop} we characterize the  anti-Farey map $\mathcal{R}_d$ as the \emph{unique} external map of Schwarz reflections satisfying the mating criterion of Proposition~\ref{mating_axiom_prop_1}, which proves naturality of the space $\mathcal{S}_{\mathcal{R}_d}$ of Schwarz reflections.
We then employ David surgery techniques developed in \cite{LMMN} to prove that the space $\mathcal{S}_{\mathcal{R}_d}$ is large; in particular, the union of all hyperbolic components in the connectedness locus of degree $d$ anti-polynomials injects into $\mathcal{S}_{\mathcal{R}_d}$. In Subsection~\ref{srd_corr_rel_subsec}, we use characterizations of the map $\mathcal{R}_d$ given in Proposition~\ref{mating_equiv_cond_prop}
to deduce that each member of $\mathcal{S}_{\mathcal{R}_d}$ gives rise to an antiholomorphic correspondence whose dynamics on the lifted tiling set is equivalent to the action of $\pmb{\Gamma}_d$.  

To prove that all correspondences arising from $\mathcal{S}_{\mathcal{R}_d}$ are matings of Bers-like anti-rational maps and groups, it remains to show that the non-escaping set dynamics of the Schwarz reflection maps in $\mathcal{S}_{\mathcal{R}_d}$ are conjugate to filled Julia set dynamics of Bers-like anti-rational maps (see Proposition~\ref{srd_mating_check_prop}). As mentioned before, one cannot expect these Schwarz reflections to be quasiconformally conjugate to anti-polynomials due to mismatch of parabolic and expanding external classes. This leads us to the relevant family $\mathcal{F}_d$ of parabolic anti-rational maps, which are formally introduced in Subsection~\ref{para_anti_rat_subsec}. In Subsection~\ref{hybrid_conj_def_subsec}, we explicate the notion of \emph{hybrid conjugacy} between a Schwarz reflection map and a parabolic anti-rational map.
To demonstrate that the Schwarz reflections in $\mathcal{S}_{\mathcal{R}_d}$ are hybrid conjugate to anti-rational maps in $\mathcal{F}_d$, we prove two technical results in Subsection~\ref{pinched_anti_poly_straightening_subsec}. One of them is a quasiconformal compatibility result for the external maps $\mathcal{R}_d$ and $B_d$ (see Lemma~\ref{qc_conj_rd_bd_lem}). The other one is a straightening theorem for a class of maps called \emph{simple pinched anti-polynomial-like maps} (see Definition~\ref{pinched_anti_poly_like_map_def} and Theorem~\ref{straightening_thm})
 to the effect that every simple pinched anti-polynomial-like map of degree $d$ with a connected Julia set is hybrid conjugate to a unique member of $\mathcal{F}_d$ (up to affine conjugacy). The definition of simple pinched anti-polynomial-like maps is motivated by suitable restrictions of Schwarz reflection maps in $\mathcal{S}_{\mathcal{R}_d}$. Let us mention that Theorem~\ref{straightening_thm} is a generalization of a similar straightening theorem for pinched anti-quadratic-like maps proved in \cite{LLMM3}, and settles \cite[Conjecture~6.1, p. 1067]{BuFr1} in the antiholomorphic world (with additional technical hypotheses). The existence of the pinching point makes this straightening theorem subtler than the classical straightening theorem for polynomial-like maps. In particular, since the fundamental domain of a simple pinched anti-polynomial-like map is a pinched annulus, one needs to perform quasiconformal interpolation in an infinite strip. This necessitates one to control the asymptotics of conformal maps between infinite strips, which can be achieved by a classical result of Warschawski (cf. \cite{War42}).

In Subsection~\ref{strt_schwarz_ref_subsec}, we apply the results of Subsection~\ref{pinched_anti_poly_straightening_subsec} to maps in $\mathcal{S}_{\mathcal{R}_d}$. Theorem~\ref{srd_unif_strt_thm}, which is based on the existence of a quasiconformal conjugacy between the anti-Farey map $\mathcal{R}_d$ and the anti-Blaschke product $B_d$ (guaranteed by Lemma~\ref{qc_conj_rd_bd_lem}), provides a uniform way of straightening all maps in $\mathcal{S}_{\mathcal{R}_d}$. This method, however, is not suited for exploring parameter space consequences of the straightening. To circumvent this difficulty, we
describe in Theorem~\ref{srd_simp_strt_thm} a different (but ultimately equivalent) way of straightening of a Schwarz reflection in $\mathcal{S}_{\mathcal{R}_d}^{\mathrm{simp}}$ (maps in $\mathcal{S}_{\mathcal{R}_d}$ with a \emph{simple} cusp on the associated quadrature domain boundaries). This is done by showing that each map in $\mathcal{S}_{\mathcal{R}_d}^{\mathrm{simp}}$ admits a simple pinched anti-polynomial-like restriction, and hence by Theorem~\ref{straightening_thm}, it is hybrid conjugate to a unique map in $\mathcal{F}_d^{\mathrm{simp}}$ up to affine conjugacy (where $\mathcal{F}_d^{\mathrm{simp}}\subsetneq \mathcal{F}_d$ is given by the open condition that $\infty$ is a \emph{simple} parabolic fixed point of the anti-rational maps). In fact, the domains of these simple pinched anti-polynomial-like maps have controlled geometry, and these domains vary continuously with respect to parameters. It is this continuous dependence on parameters which lies at the heart of Theorem~\ref{straightening_thm_intro}.
We point out that some general results on asymptotics of Schwarz reflections near conformal cusps (worked out in Appendix~\ref{cusp_append}) play an important role in obtaining simple pinched anti-polynomial-like restrictions of maps in $\mathcal{S}_{\mathcal{R}_d}^{\mathrm{simp}}$. 
We conclude Section~\ref{straightening_sec} with the definition of the \emph{straightening map} 
$$
\chi: \faktor{\mathcal{S}_{\mathcal{R}_d}}{\mathrm{Aut}(\C)}\longrightarrow \faktor{\mathcal{F}_d}{\mathrm{Aut}(\C)},
$$
that sends each Schwarz reflection $\sigma$ to a parabolic anti-rational map $R_\sigma$ which is hybrid conjugate to $\sigma$. 

In Section~\ref{strt_prop_sec}, we study set-theoretic and topological properties of the straightening map $\chi$. It turns out that $\chi$ admits an inverse, and hence is a bijection. The existence of this inverse map is demonstrated by performing an inverse construction to the straightening (essentially by replacing the external map $B_d$ of an anti-rational map with the external map $\mathcal{R}_d$). It is worth remarking that our proof of bijectivity of the straightening map $\chi$ hinges on the fact that all the maps in $\mathcal{S}_{\mathcal{R}_d}$ (respectively, in $\mathcal{F}_d$) have the same external dynamics.  
We also show that the hybrid conjugacies between anti-rational maps in $\mathcal{F}_d^{\mathrm{simp}}$ and Schwarz reflections in $\mathcal{S}_{\mathcal{R}_d}^{\mathrm{simp}}$ have locally bounded dilatations, and (as in the previous paragraph) the domains of definition of these conjugacies depend continuously on the parameters. This parameter dependence allows us to analyze continuity properties of $\chi$ (see  Section~\ref{chi_cont_sec}), which play an important role in a follow-up paper \cite{LMM23} .
Our main Theorem~\ref{mating_existence_thm_intro}, as well as the first part of Theorem~\ref{straightening_thm_intro} now follow from bijectivity of $\chi$ combined with Proposition~\ref{srd_mating_check_prop} and the definition of $\chi$. We conclude Section~\ref{strt_prop_sec} with a proof of Theorem~\ref{hyp_poly_mating_thm_intro}, which follows from Proposition~\ref{srd_mating_check_prop} and the David surgery construction of Proposition~\ref{mating_all_pcf_anti_poly}. The second part of Theorem~\ref{straightening_thm_intro} follows from Proposition~\ref{chi_cont_rigid_prop}.

Let us now briefly mention the contents of the sequel \cite{LMM23} of the current paper. In that paper, we consider certain one-parameter slices in $\mathcal{F}_d$ defined by critical orbit relations and study their preimages under the straightening map $\chi$. It turns out that each such preimage slice in $\mathcal{S}_{\mathcal{R}_d}$ arises from univalent restrictions of a fixed Shabat polynomial (this generalizes the one-parameter family of Schwarz reflections studied in \cite{LLMM3} associated with the cubic Chebyshev polynomial). Using various local properties of Schwarz reflection maps near singular points, we give a complete description of the `univalence loci' of such Shabat polynomials. A complete understanding of these univalent loci allows us to study the aforementioned slices in $\mathcal{S}_{\mathcal{R}_d}$ from `outside' (i.e., from the escape loci). The main results of \cite{LMM23} include a partial description of the connectedness loci of such Schwarz reflections and the existence of homeomorphisms between combinatorial models of the Shabat slices of $\mathcal{S}_{\mathcal{R}_d}$ and $\mathcal{F}_d$.

Let us mention in conclusion that recently the parabolic version of the Bullett-Freiberger conjecture has been proved in the holomorphic framework as well (see \cite{BLLM}).

\medskip

\noindent\textbf{Acknowledgements.} Part of this work was done during the authors' visits to Institute for Theoretical Studies at ETH Z{\"u}rich, MSRI (Simons Laufer Mathematical Sciences Institute), and the Institute for Mathematical Sciences at Stony Brook. The authors thank these institutes for their hospitality and support. We would also like to thank Juan Rivera-Letelier for stimulating questions and useful comments.

\section{Univalent rational maps, correspondences, and a mating framework}\label{gen_corr_const_sec}

\noindent\textbf{Notation.} \begin{itemize}
\item $\eta(z) :=1/\overline{z}$.
\item $\D^*:=\widehat{\C}\setminus\overline{\D}$.
\end{itemize}

Let $f$ be a rational map of degree $(d+1)$ that is univalent on $\overline{\D}$. We define the $(d+1)$:$(d+1)$ antiholomorphic correspondence $\mathfrak{C}\subset\widehat{\C}\times\widehat{\C}$ as 
\begin{equation}
(z,w)\in\mathfrak{C} \iff f(w)-f(\eta(z))=0.
\label{corr_eqn_1}
\end{equation}
Note that for all $z\in\widehat{\C}$, we have $(z,\eta(z))\in\mathfrak{C}$. Removing all pairs $(z,\eta(z))$ from the correspondence $\mathfrak{C}$, we obtain the $d$:$d$ correspondence $\mathfrak{C}^*\subset\widehat{\C}\times\widehat{\C}$ defined as 
\begin{equation}
(z,w)\in\mathfrak{C}^*\iff \frac{f(w)-f(\eta(z))}{w-\eta(z)}=0.
\label{corr_eqn_2}
\end{equation}	

\noindent\textbf{A Special Example:} Let $f_1(z)=z^2+1/z^2$. Note that the maps $\pm z, \pm 1/z$ form the group $H\cong \Z/2\Z\times\Z/2\Z$ of deck transformations of $f_1$ on $\widehat{\C}$. Choose a round disk $U$ on which $f_1$ is univalent, and a M{\"o}bius map $M$ that carries to $U$ to $\D$. Then, the rational map $f:= f_1\circ M^{-1}$ is univalent on $\D$, and $M H M^{-1}$ is the group of global deck transformations of $f$. For this map $f$, the grand orbits of the antiholomorphic correspondence $\mathfrak{C}^*$ on $\widehat{\C}$ are given by $G$-orbits, where $G$ is the group generated by $M H M^{-1}\cong \Z/2\Z\times\Z/2\Z$ and $\eta$. Thus, the dynamics of the correspondence $\mathfrak{C}^*$ is equivalent to a group action.

However, in general a rational map of degree $d$ does not admit $d$ global deck transformations. Thus the dynamics of the correspondence $\mathfrak{C}^*$, except in exceptional cases, is not equivalent to a group action. We will now describe how univalence of $f\vert_{\D}$ can be exploited to study the dynamics of $\mathfrak{C}^*$ systematically.

\subsection{Schwarz reflections associated to univalent rational maps}\label{schwarz_subsec} 

By definition, a domain $\Omega\subsetneq\widehat{\C}$ satisfying $\infty \not\in \partial\Omega$ and $\Omega = \Int{\overline{\Omega}}$ is a {\it quadrature domain} if there exists a continuous function $\sigma : \overline{\Omega} \to \widehat{\C}\ $ such that $\sigma$ is anti-meromorphic in $\Omega$ and $\sigma(z) = z$ on the boundary $\partial\Omega$. Such a function $\sigma$ is unique (if it exists), and is called the {\it Schwarz reflection map} associated with $\Omega$. 
It is well known that except for a finite number of {\it singular} points (cusps and double points), the boundary of a quadrature domain consists of finitely many disjoint real-analytic curves \cite{sakai-acta}. Every non-singular boundary point has a neighborhood where the local reflection in $\partial\Omega$ is well-defined. The (global) Schwarz reflection $\sigma$ is an antiholomorphic continuation of all such local reflections.

Round disks on the Riemann sphere are the simplest examples of quadrature domains. Their Schwarz reflections are just the usual circle reflections. Further examples can be constructed using univalent polynomials or rational functions. In fact, simply connected quadrature domains admit a simple characterization.

\begin{proposition}\cite[Theorem~1]{AS}\cite[Proposition~2.3]{LMM1}\label{simp_conn_quad}
	A simply connected domain $\Omega\subsetneq\widehat{\C}$ with $\infty\notin\partial\Omega$ and $\Int{\overline{\Omega}}=\Omega$ is a quadrature domain if and only if the Riemann uniformization $f:\mathbb{D}\to\Omega$ extends to a rational map on $\widehat{\C}$. The Schwarz reflection map $\sigma$ of $\Omega$ is given by $f\circ\eta\circ(f\vert_{\mathbb{D}})^{-1}$.
	\[ \begin{tikzcd}
	\overline{\mathbb{D}} \arrow{r}{f} \arrow[swap]{d}{\eta} & \overline{\Omega} \arrow{d}{\sigma} \\
	\widehat{\C}\setminus\mathbb{D} \arrow[swap]{r}{f}& \widehat{\C}.
\end{tikzcd}
\]
	In this case, if the degree of the rational map $f$ is $d+1$, then $\sigma:\sigma^{-1}(\Omega)\to\Omega$ is a (branched) covering of degree $d$, and $\sigma:\sigma^{-1}(\Int{\Omega^c})\to\Int{\Omega}^c$ is a (branched) covering of degree $d+1$.
\end{proposition}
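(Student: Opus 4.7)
The plan is to establish both implications using the Schwarz reflection principle across $\partial\D$, taking advantage of the fact that $\eta(z) = 1/\overline{z}$ restricts to the identity on $\partial\D$. The formula $\sigma = R\circ\eta\circ (R|_{\D})^{-1}$ is then built into the construction in either direction, and the degree counts in the second assertion follow from the factorization of $\sigma$ through the bijection $\eta:\D\to\widehat{\C}\setminus\overline{\D}$.

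For the ``only if'' direction, I would first invoke Carath\'eodory's theorem to extend the Riemann map $R:\D\to\Omega$ continuously to $\overline{\D}\to\overline{\Omega}$; the needed local connectedness of $\partial\Omega$ is automatic, since it is a finite union of real-analytic arcs joined at finitely many cusps and double points. Next, I would define
$$
\tilde R(z) \;:=\; \sigma\bigl(R(\eta(z))\bigr), \qquad z\in\widehat{\C}\setminus\overline{\D},
$$
and check that $\tilde R$ is holomorphic (composition of two antiholomorphic maps with one holomorphic one), that it extends continuously up to $\partial\D$, and that $\tilde R|_{\partial\D} = R|_{\partial\D}$ because $\eta$ fixes $\partial\D$ pointwise and $\sigma$ fixes $\partial\Omega$ pointwise. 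An application of Morera's theorem (in local coordinates around $\infty$ as needed) then promotes $R\cup\tilde R$ to a meromorphic self-map of $\widehat{\C}$, i.e., a rational map; the identity $\sigma = R\circ\eta\circ (R|_{\D})^{-1}$ is immediate from the defining formula for $\tilde R$.

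For the ``if'' direction, I would simply take the formula $\sigma := R\circ\eta\circ (R|_{\D})^{-1}$ as the definition on $\Omega$, which is antimeromorphic there by construction. The only real verification is the boundary identity $\sigma|_{\partial\Omega} = \mathrm{id}$. For $w\in\partial\Omega$, taking $z\in\partial\D$ with $R(z)=w$ and using $\eta(z)=z$, the continuous extension of $R$ to $\overline{\D}$ yields $\sigma(w) = R(z) = w$; a brief local analysis at cusps and double points of $\partial\Omega$ handles the multi-valued nature of $R^{-1}$ there. The degree statements then follow from the factorization above: since $R|_{\D}$ is degree one onto $\Omega$ and $R$ has global degree $d+1$, each $w\in\Omega$ has exactly one preimage in $\D$ and $d$ preimages in $\widehat{\C}\setminus\overline{\D}$ (counted with multiplicity), whereas each $w\in\Int\Omega^c$ has all $d+1$ of its preimages in $\widehat{\C}\setminus\overline{\D}$ because $R(\overline{\D})=\overline{\Omega}$; composing with the bijection $\eta\circ (R|_\D)^{-1}:\Omega\to\widehat{\C}\setminus\overline{\D}$ preserves these counts.

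The main obstacle in either direction is the handling of the finitely many singular boundary points of $\Omega$, where $\partial\Omega$ fails to be smooth and where $R$ may identify several boundary points of $\D$. In practice this is absorbed by continuity of the Carath\'eodory extension together with removability of isolated singularities of bounded meromorphic functions, but it is the one place where Morera or the Schwarz reflection principle cannot be cited completely blindly.
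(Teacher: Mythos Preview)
The paper does not actually prove this proposition; it is stated with citations to \cite{AS} and \cite{LMM1} and no proof block is given. Your argument is the standard one and is essentially correct: reflect the Riemann map across $\partial\D$ via $\eta$ and $\sigma$, then glue using continuity on $\partial\D$ and removability; conversely, define $\sigma$ by the formula and check the boundary identity by a limiting argument. The degree counts follow exactly as you say from the factorization $\sigma = R\circ\bigl(\eta\circ(R|_\D)^{-1}\bigr)$ together with the bijectivity of $\eta\circ(R|_\D)^{-1}:\Omega\to\widehat{\C}\setminus\overline{\D}$.

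One remark worth making explicit: in the ``only if'' direction you invoke Carath\'eodory by appealing to the real-analytic structure of $\partial\Omega$, which is Sakai's regularity theorem \cite{sakai-acta} rather than something immediate from the definition of a quadrature domain. The paper does state this fact just before the proposition, so it is fair game, but you should be aware you are using it. In the ``if'' direction you do not need Carath\'eodory at all: since $R$ is rational it is already continuous on $\overline{\D}$, and the continuous extension of $\sigma$ to $\partial\Omega$ follows from the compactness argument you sketch (any subsequential limit of $(R|_\D)^{-1}(w_n)$ lies on $\partial\D$ and is mapped back to $w$ by $R\circ\eta$), so there is no circularity.
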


We refer the reader to \cite{AS}, \cite{lee-makarov}, \cite[\S 3]{LLMM1}, \cite[\S 2]{LMM1} for more background on quadrature domains and Schwarz reflection maps.

Let us now return to the degree $(d+1)$ rational map $f$ that is univalent on $\overline{\D}$. We set $\Omega:=f(\D)$ and denote the associated Schwarz reflection map by $\sigma$.

We define $T(\sigma):=\widehat{\mathbb{C}}\setminus \Omega$ and $S(\sigma)$ to be the singular set of $\partial T(\sigma)$. We further set $T^0(\sigma):=T(\sigma)\setminus S(\sigma)$, and $$T^\infty(\sigma):=\bigcup_{n\geq0} \sigma^{-n}(T^0(\sigma)).$$ We will call $T^\infty(\sigma)$ the \emph{tiling set} of $\sigma$. For any $n\geq0$, the connected components of $\sigma^{-n}(T^0(\sigma))$ are called \emph{tiles} of rank $n$. Two distinct tiles have disjoint interior. The \emph{non-escaping set} of $\sigma$ is defined as 
$$
K(\sigma):=\widehat{\C}\setminus T^\infty(\sigma)\subset \Omega\cup S(\sigma).
$$ 
The common boundary of the non-escaping set $K(\sigma)$ and the tiling set $T^\infty(\sigma)$ is called the \emph{limit set} of $\sigma$, denoted by $\Lambda(\sigma)$.
\smallskip

\begin{example}\label{example_1}
This example is taken from \cite[\S 3]{LLMM3}. The cubic Chebyshev polynomial $f_0(w)=w^3-3w$ is injective on the closed disk $\overline{B(3,2)}=\{w\in\C:\vert w-3\vert\leq 2\}$. Pre-composing $f_0$ with an affine map $A(w)=3-2w$ that carries $\overline{\D}$ onto $\overline{B(3,2)}$, we obtain a cubic polynomial $f(w)=f_0(3-2w)$ that is injective on $\overline{\D}$. Thus, $\Omega:=f(\D)$ is a Jordan quadrature domain with associated Schwarz reflection map $\sigma$. As $f$ has a critical point at $1$, the boundary $\partial\Omega$ has a conformal cusp at $f(1)=-2$ and is non-singular elsewhere. By definition, $S(\sigma)=\{-2\}$, and $T^0(\sigma)=\widehat{\C}\setminus\left(\Omega\cup\{-2\}\right)$. The fact that the only critical points of $f$ outside $\overline{\D}$ are at $2$ and $\infty$, combined with the commutative diagram of Proposition~\ref{simp_conn_quad}, implies that $\sigma$ has two critical points: a simple critical point at $f(\eta(2))=f(1/2)=2$ and a double critical point at $f(\eta(\infty))=f(0)$.
Moreover, $\sigma$ fixes the critical point at $2$; i.e., $\sigma$ has a superattracting fixed point at $2$. On the other hand, $\sigma$ sends the double critical point at $f(0)$ to $\infty\in T^0(\sigma)$. Thus, the only critical point of $\sigma$ in the tiling set is the double critical point $f(0)$ in the rank one tile. These observations and the mapping degrees of $\sigma$ can be used to conclude that both the non-escaping set and the tiling set are simply connected. Further, the map $\sigma\vert_{K(\sigma)}$ is conformally conjugate to $\overline{z}^2\vert_{\overline{\D}}$ (see Figure~\ref{chebyshev_center_fig} for the dynamical plane of $\sigma$ and Figure~\ref{schwarz_lift_fig} for the dynamical plane of the correspondence $\mathfrak{C}^*$ defined by the map $f$ via Formula~\eqref{corr_eqn_2}). We refer the reader to \cite[\S 4.3]{LM23} for a detailed account of this example, which may serve as a mental guide for many of the following results.
\end{example}
\begin{figure}[h!]
\captionsetup{width=0.96\linewidth}
\begin{tikzpicture}
\node[anchor=south west,inner sep=0] at (0,0) {\includegraphics[width=0.36\linewidth]{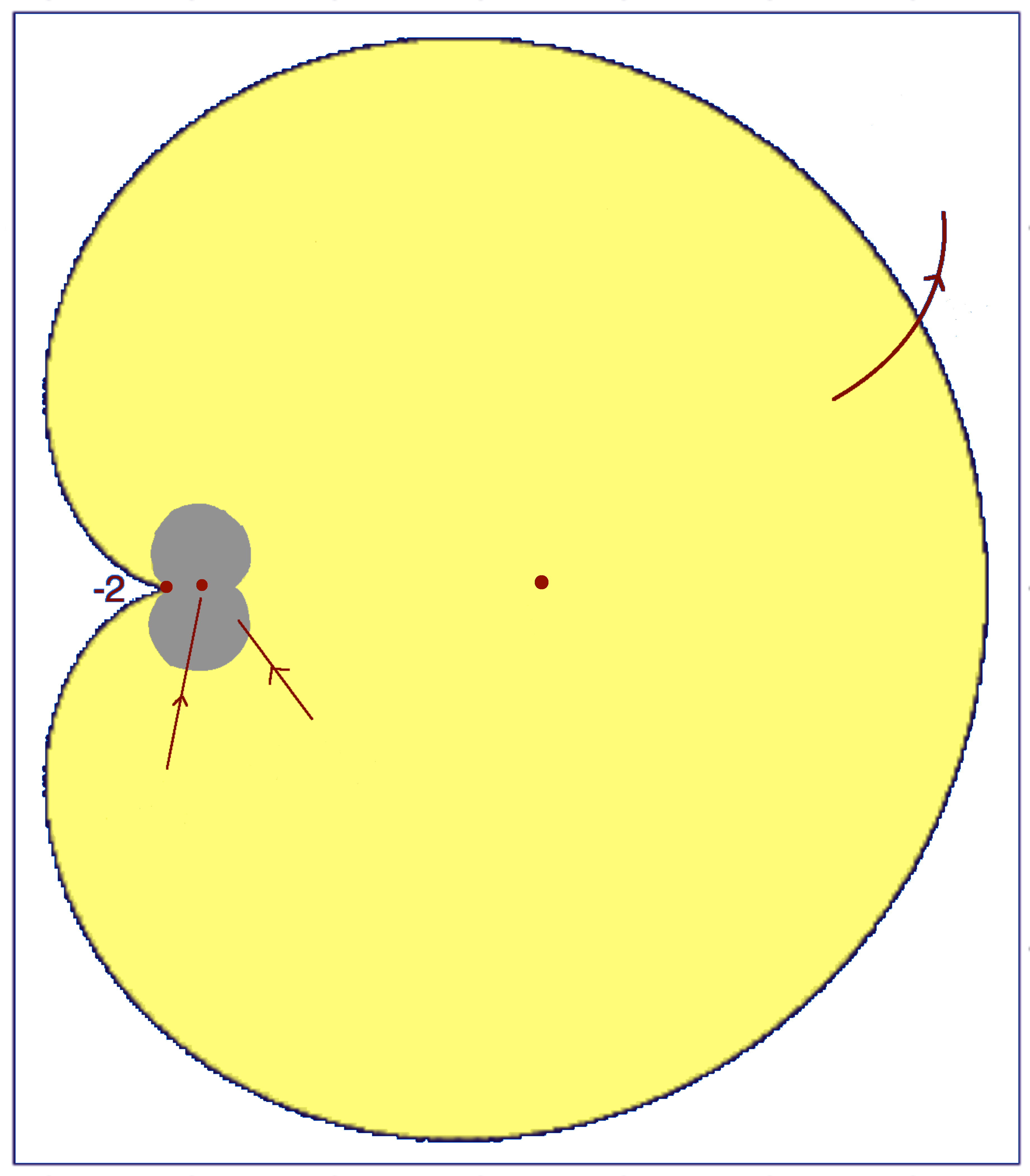}};
\node[anchor=south west,inner sep=0] at (5,0) {\includegraphics[width=0.44\linewidth]{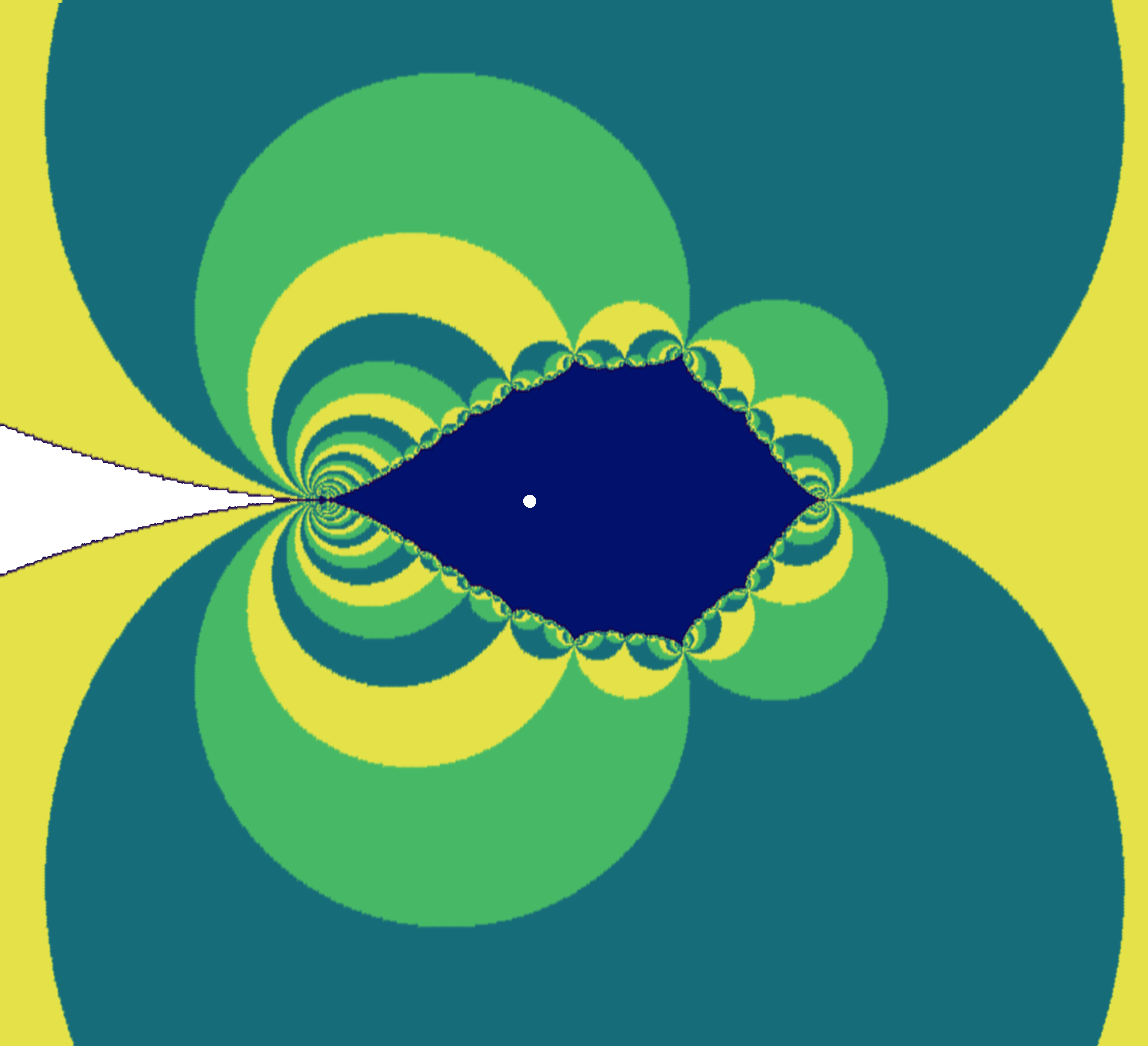}};
\node at (3.3,3.2) {\begin{scriptsize}\textcolor{RawSienna}{$\sigma^{-1}(T^0(\sigma))$}\end{scriptsize}};
\node at (1.66,1.88) {\begin{scriptsize}\textcolor{RawSienna}{$\sigma^{-1}(\Omega)$}\end{scriptsize}};
\node at (0.75,1.66) {\begin{scriptsize}\textcolor{RawSienna}{$2$}\end{scriptsize}};
\node at (2.4,2.4) {\begin{scriptsize}\textcolor{RawSienna}{$f(0)$}\end{scriptsize}};
\node at (4.1,4.5) {\begin{scriptsize}\textcolor{RawSienna}{$T^0(\sigma)$}\end{scriptsize}};
\node at (7.58,2.46) {\begin{scriptsize}\textcolor{White}{$2$}\end{scriptsize}};
\node at (8.2,2.9) {\begin{scriptsize}\textcolor{White}{$K(\sigma)$}\end{scriptsize}};
\end{tikzpicture}
\caption{Left: The large cardioid is the quadrature domain $\Omega$, the grey `double cardioid' region is $\sigma^{-1}(\Omega)$, and the yellow region is the rank one tile $\sigma^{-1}(T^0(\sigma))$ which contains the double critical point $f(0)$. Right: The non-escaping set of $\sigma$ is a closed topological disk (in dark blue) with a cusp on its~boundary.}
\label{chebyshev_center_fig}
\end{figure}

\begin{proposition}\label{tiling_connected}
The tiling set $T^\infty(\sigma)$ is open, and hence the non-escaping set $K(\sigma)$ is closed.
\end{proposition}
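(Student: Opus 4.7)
The plan is to prove directly that $T^\infty(\sigma)$ is open; closedness of $K(\sigma)$ is then immediate. Given $z\in T^\infty(\sigma)$, let $n\geq 0$ be minimal with $\sigma^n(z)\in T^0(\sigma)$. First I would observe that $\sigma^k(z)\in\Omega$ for every $0\leq k<n$: it cannot belong to $S(\sigma)$, since singular points are fixed by $\sigma$ but $\sigma^n(z)\in T^0(\sigma)=T(\sigma)\setminus S(\sigma)$; and it cannot belong to $\partial\Omega\setminus S(\sigma)\subset T^0(\sigma)$, by the minimality of $n$. So all iterates up to time $n-1$ lie strictly inside $\Omega$, where $\sigma$ is anti-meromorphic and in particular continuous.

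For the base case $n=0$, so $z\in T^0(\sigma)$, I would produce an open neighborhood of $z$ inside $T^0(\sigma)\cup\sigma^{-1}(T^0(\sigma))\subset T^\infty(\sigma)$. If $z\in\Int{T(\sigma)}$, the interior itself is such a neighborhood. Otherwise $z\in\partial\Omega\setminus S(\sigma)$ is a non-singular boundary point, and Schwarz reflection across $\partial\Omega$ at $z$ realizes $\sigma$ as an anti-conformal involution on a small neighborhood $U$ of $z$, swapping the two local sides of $\partial\Omega$ and fixing $U\cap\partial\Omega$ pointwise. Shrinking $U$ so that both $U$ and $\sigma(U\cap\overline\Omega)$ avoid the finite set $S(\sigma)$, I obtain $U\cap T(\sigma)\subset T^0(\sigma)$ and $\sigma(U\cap\Omega)\subset T(\sigma)\setminus S(\sigma)=T^0(\sigma)$, and therefore $U\subset T^0(\sigma)\cup\sigma^{-1}(T^0(\sigma))$.

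For $n>0$, the iterate $\sigma^n$ is well-defined and continuous on a small neighborhood of $z$ because $z,\sigma(z),\dots,\sigma^{n-1}(z)\in\Omega$ and $\sigma$ is continuous there. Applying the base case to $\sigma^n(z)\in T^0(\sigma)$ yields an open neighborhood $V\subset T^\infty(\sigma)$ of $\sigma^n(z)$. Its preimage under $\sigma^n$, intersected with a small enough neighborhood of $z$ on which $\sigma^n$ is defined, is an open neighborhood $U$ of $z$ with $\sigma^n(U)\subset V\subset T^\infty(\sigma)$; since $T^\infty(\sigma)$ is backward-invariant under $\sigma$ by its very definition, $U\subset T^\infty(\sigma)$, completing the argument.

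The only delicate step is the base-case analysis at a non-singular boundary point of $\Omega$, where one must use the local reflection structure provided by the Schwarz map to see that the $\Omega$-side of such a point consists immediately of rank-$1$ tiles; the passage to arbitrary $n$ is then a routine continuity reduction, and the hypothesis that $S(\sigma)$ is finite is used only to guarantee that small neighborhoods can be chosen to miss it.
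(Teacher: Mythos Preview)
Your proof is correct. The paper's argument is organized a bit differently: it sets $E^k$ to be the union of tiles of rank at most $k$ and observes that any $z\in T^\infty(\sigma)$ lies in $\Int E^k$ (if $z$ is interior to a rank-$k$ tile) or in $\Int E^{k+1}$ (if $z$ is on the boundary of a rank-$k$ tile), so that $T^\infty(\sigma)=\bigcup_k \Int E^k$ is open. Your pointwise reduction via continuity of $\sigma^n$ to the base case $n=0$ is a slightly different packaging of the same idea; the essential local step---that near a non-singular boundary point the Schwarz reflection swaps the two sides, so the adjacent rank-one tile fills in---is exactly what underlies the paper's unproved assertion that boundary points of rank-$k$ tiles lie in $\Int E^{k+1}$. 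In that sense your write-up is a bit more explicit about the one nontrivial point.
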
 
\begin{proof}
Let us denote the union of the tiles of rank $0$ through $k$ by $E^k$. 

If $z\in T^\infty(\sigma)$ belongs to the interior of a tile of rank $k$, then it clearly belongs to $\Int{E^k}$. On the other hand, if $z\in T^\infty(\sigma)$ belongs to the boundary of a tile of rank $k$, then $z$ lies in $\Int{E^{k+1}}$. Hence, $$T^\infty(\sigma)=\bigcup_{k\geq 0}\Int{E^k}.$$ So $T^\infty(\sigma)$ is a union of open sets. The result now follows.
\end{proof}

\subsection{Relation between Schwarz reflections and correspondences}\label{schwarz_corr_relation_subsec} 

Let us first consider $z\in\overline{\D}$. For such $z$, we have $\sigma(f(z))=f(\eta(z))$. Hence, for $z\in\overline{\D}$, $$(z,w)\in\mathfrak{C} \iff f(w)=f(\eta(z))=\sigma(f(z)).$$ Thus, the lifts of $\sigma$ under $f$ define the correspondence $\mathfrak{C}$ on $\overline{\D}\times\widehat{\C}$. 

We now turn our attention to $z\in\D^*$. For such $z$, we have that $\sigma(f(\eta(z)))=f(z)$. Choosing a suitable branch of $\sigma^{-1}$, we can rewrite the previous relation as $f(\eta(z))=\sigma^{-1}(f(z))$. Therefore, for $z\in\D^*$, 
$$
(z,w)\in\mathfrak{C} \iff f(w)=f(\eta(z))=\sigma^{-1}(f(z)).
$$ 
Thus, the lifts of (suitable inverse branches of) $\sigma^{-1}$ under $f$ define the correspondence $\mathfrak{C}$ on $\D^*\times\widehat{\C}$.

\begin{proposition}\label{inverse_lift_corr}
The correspondence $\mathfrak{C}$ defined by Equation~\eqref{corr_eqn_1} contains all possible lifts of $\sigma$ (respectively, suitable inverse branches of $\sigma^{-1}$) when $z\in\overline{\D}$ (respectively, when $z\in\D^*$) under $f$. More precisely,
\begin{itemize}
\item for $z\in\overline{\D}$, we have that $(z,w)\in\mathfrak{C} \iff f(w)=\sigma(f(z))$, and

\item for $z\in\D^*$, we have that $(z,w)\in\mathfrak{C}\ \implies\ \sigma(f(w))=f(z)$.
\end{itemize}
\end{proposition}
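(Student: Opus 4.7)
The plan is essentially to unpack the formula for $\sigma$ supplied by Proposition~\ref{simp_conn_quad} and substitute it into the defining equation \eqref{corr_eqn_1} of $\mathfrak{C}$. Since $f$ is univalent on $\overline{\D}$ and $\Omega=f(\D)$ is simply connected, Proposition~\ref{simp_conn_quad} identifies the uniformizing rational map with $f$ itself, so the Schwarz reflection of $\Omega$ is given by $\sigma = f\circ\eta\circ (f\vert_{\D})^{-1}$ on $\overline{\Omega}$. This single identity will do all the work; the two bullets just correspond to whether the point $z$ lies in the domain of the ``forward'' formula for $\sigma$ or in the domain where we need to use an inverse branch.

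First I would treat $z\in\overline{\D}$. Here $(f\vert_{\D})^{-1}(f(z)) = z$ by univalence on $\overline{\D}$, so the formula for $\sigma$ rewrites as $\sigma(f(z)) = f(\eta(z))$ for every $z\in\overline{\D}$. Plugging this into the defining relation $f(w) = f(\eta(z))$ yields the stated equivalence $(z,w)\in\mathfrak{C} \Longleftrightarrow f(w)=\sigma(f(z))$. This step is a genuine biconditional because we are only reading off a direct substitution.

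Next I would handle $z\in\D^*$. In this case $\eta(z)\in\D$, so the previous step applies with $\eta(z)$ in place of $z$, giving $\sigma(f(\eta(z))) = f(\eta(\eta(z))) = f(z)$, where the last equality uses that $\eta$ is an involution. Therefore, if $(z,w)\in\mathfrak{C}$, i.e.\ $f(w)=f(\eta(z))$, applying $\sigma$ to both sides yields $\sigma(f(w)) = \sigma(f(\eta(z))) = f(z)$, which is the second bullet.

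There is no real obstacle here; the only subtle point worth flagging in the write-up is the asymmetry between the two cases. In the first bullet the implication reverses because, given $z\in\overline{\D}$, the relation $f(w)=\sigma(f(z))$ determines $w$ through the $(d+1)$-valued preimage of $\sigma(f(z))$ under $f$, precisely matching the $(d+1)$ branches of $\mathfrak{C}$. In the second bullet only one direction is asserted because $\sigma$ is itself many-to-one, so $\sigma(f(w))=f(z)$ does not pin $f(w)$ to the single value $f(\eta(z))$; this is why the statement records only the implication and does not claim an equivalence for $z\in\D^*$.
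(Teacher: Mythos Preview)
Your proof is correct and follows essentially the same approach as the paper: the proposition is stated as a summary of the discussion in Subsection~\ref{schwarz_corr_relation_subsec}, which likewise uses the formula $\sigma=f\circ\eta\circ(f\vert_{\D})^{-1}$ to rewrite the defining equation of $\mathfrak{C}$ in each region. The only cosmetic difference is that for $z\in\D^*$ the paper phrases the identity as $f(\eta(z))=\sigma^{-1}(f(z))$ for a suitable inverse branch, whereas you apply $\sigma$ directly to both sides of $f(w)=f(\eta(z))$; your version is arguably cleaner and your closing remark on the asymmetry is a nice addition.
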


\subsection{Invariant partition of the dynamical plane of correspondences}\label{partition_corr_subsec}

Let us set 
$$
\widetilde{T^\infty(\sigma)}:=f^{-1}(T^\infty(\sigma)),\ \widetilde{T^0(\sigma)}:=f^{-1}(T^0(\sigma))\ \mathrm{and}\ \widetilde{K(\sigma)}:=f^{-1}(K(\sigma)).
$$
(See Figure~\ref{schwarz_lift_fig}.) We define tiles of rank $n$ in $\widetilde{T^\infty(\sigma)}$ as $f$-pre-images of tiles of rank $n$ in $T^\infty(\sigma)$. If $f$ has no critical value in a simply connected rank $n$ tile (of $T^\infty(\sigma)$), then it lifts to $(d+1)$ rank $n$ tiles in $\widetilde{T^\infty(\sigma)}$ (each of which is mapped univalently under $f$). 

The following proposition follows from the definitions.

\begin{proposition}\label{corr_partition_prop}
1) Each of the sets $\widetilde{T^\infty(\sigma)}$ and $\widetilde{K(\sigma)}$ is completely invariant under the correspondence $\mathfrak{C}^*$. More precisely, if $(z,w)\in\mathfrak{C}^*$, then 
$$
z\in\widetilde{T^\infty(\sigma)}\iff w\in\widetilde{T^\infty(\sigma)},\quad \textrm{and}\quad z\in\widetilde{K(\sigma)}\iff w\in\widetilde{K(\sigma)}.
$$

2) Furthermore, the lifted tiling set and lifted non-escaping set are invariant under the involution $\eta$ so that $\eta(\widetilde{T^\infty(\sigma)})=\widetilde{T^\infty(\sigma)}$, and $\eta(\widetilde{K(\sigma)})=\widetilde{K(\sigma)}$.
\end{proposition}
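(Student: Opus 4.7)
The plan is to reduce both parts of the statement to the invariance of the tiling set $T^\infty(\sigma)$ under $\sigma$, and then transfer this invariance to $\widetilde{T^\infty(\sigma)}$ and $\widetilde{K(\sigma)}$ via $f$ using Proposition~\ref{inverse_lift_corr}.

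First I would record the basic invariance of the tiling set. From $T^\infty(\sigma) = \bigcup_{n\geq 0} \sigma^{-n}(T^0(\sigma))$ one immediately sees $\sigma^{-1}(T^\infty(\sigma)) \subset T^\infty(\sigma)$; conversely, for $p \in \overline{\Omega} \cap T^\infty(\sigma)$ one has $\sigma(p) \in T^\infty(\sigma)$ (the only case requiring a separate check is $p \in T^0(\sigma) \cap \partial\Omega$, where $\sigma$ fixes $p$). Complementing inside the domain $\overline{\Omega}$ of $\sigma$ gives the analogous two-sided invariance of $K(\sigma)$.

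For part (1), take $(z,w) \in \mathfrak{C}^*$. By Proposition~\ref{inverse_lift_corr}, if $z \in \overline{\D}$ then $f(w) = \sigma(f(z))$ with $f(z) \in \overline{\Omega}$, while if $z \in \D^*$ then $\sigma(f(w)) = f(z)$ with $f(w) \in \overline{\Omega}$. In either case exactly one of $f(z), f(w)$ lies in the domain of $\sigma$ and is sent by $\sigma$ to the other, so the invariance of $T^\infty(\sigma)$ recorded above yields $f(z) \in T^\infty(\sigma) \iff f(w) \in T^\infty(\sigma)$, i.e., $z \in \widetilde{T^\infty(\sigma)} \iff w \in \widetilde{T^\infty(\sigma)}$. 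Combined with the decomposition $\widehat{\C} = T^\infty(\sigma) \sqcup K(\sigma)$, this gives the same equivalence for $\widetilde{K(\sigma)}$.

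For part (2), recall from Proposition~\ref{simp_conn_quad} that $\sigma \circ f = f \circ \eta$ on $\overline{\D}$. Since $f(z) \in \overline{\Omega}$ for $z \in \overline{\D}$, the invariance of $T^\infty(\sigma)$ yields $f(z) \in T^\infty(\sigma) \iff f(\eta(z)) \in T^\infty(\sigma)$ for every such $z$; for $z \in \D^*$ one applies the same identity to $\eta(z) \in \overline{\D}$ and uses $\eta \circ \eta = \mathrm{id}$. This establishes $\eta(\widetilde{T^\infty(\sigma)}) = \widetilde{T^\infty(\sigma)}$, and passing to complements inside $\widehat{\C}$ gives $\eta(\widetilde{K(\sigma)}) = \widetilde{K(\sigma)}$. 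The only non-routine aspect of the argument is ensuring that each invocation of $\sigma$ is made at a point of $\overline{\Omega}$, which is precisely what Proposition~\ref{inverse_lift_corr} guarantees in each of the two cases $z \in \overline{\D}$ and $z \in \D^*$.
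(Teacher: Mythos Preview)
Your proof is correct. The paper does not give a proof of this proposition, merely noting that it ``follows from the definitions''; you have correctly unpacked this by using Proposition~\ref{inverse_lift_corr} together with the two-sided $\sigma$-invariance of $T^\infty(\sigma)$ within $\overline{\Omega}$, which is exactly the content implicit in the definitions.
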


We will now see that the dynamics of suitable forward and backward branches of the correspondence $\mathfrak{C}^*$ are intimately related to the dynamics of the Schwarz reflection map $\sigma$.

\begin{proposition}[Branches of $\mathfrak{C}^*$ on $\widetilde{K(\sigma)}$]\label{basic_dynamics_corr_prop}
1) $\widetilde{K(\sigma)}\cap\overline{\D^*}$ is forward invariant (i.e., for $(z,w)\in \mathfrak{C}^*$, we have that $z\in\widetilde{K(\sigma)}\cap\overline{\D^*}\implies w\in\widetilde{K(\sigma)}\cap\overline{\D^*}$), and hence, $\widetilde{K(\sigma)}\cap\overline{\D}$ is backward invariant under $\mathfrak{C}^*$ (i.e., for $(z,w)\in \mathfrak{C}^*$, we have that $w\in\widetilde{K(\sigma)}\cap\overline{\D}\implies z\in\widetilde{K(\sigma)}\cap\overline{\D}$).

2) $\mathfrak{C}^*$ has a forward branch carrying $\widetilde{K(\sigma)}\cap\overline{\D}$ onto itself with degree $d$, and this branch is topologically conjugate to $\sigma:K(\sigma)\to K(\sigma)$ such that the conjugacy is conformal on $\widetilde{K(\sigma)}\cap\D$. 

The remaining forward branches of $\mathfrak{C}^*$ on $\widetilde{K(\sigma)}$ carry $\widetilde{K(\sigma)}\cap\overline{\D}$ onto $\widetilde{K(\sigma)}\cap\overline{\D^*}$.

3) $\mathfrak{C}^*$ has a backward branch carrying $\widetilde{K(\sigma)}\cap\overline{\D^*}$ onto itself with degree $d$, and this branch is topologically conjugate to $\sigma:K(\sigma)\to K(\sigma)$ such that the conjugacy is anti-conformal on $\widetilde{K(\sigma)}\cap \D^*$. 
\end{proposition}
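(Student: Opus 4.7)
The plan is to exploit the relation between $\mathfrak{C}^*$ and the Schwarz reflection $\sigma$ established in Proposition~\ref{inverse_lift_corr}, together with the univalence of $f$ on $\overline{\D}$, to exhibit the required branches in closed form as compositions of $(f|_{\overline{\D}})^{-1}$, $\sigma$, $f$, and $\eta$; in each part, after writing down such a candidate I will verify the dynamical incidence $(z,w)\in\mathfrak{C}^*$, locate the image in the correct partition element by injectivity of $f|_{\overline{\D}}$, and read off the conjugacy with $\sigma\vert_{K(\sigma)}$ from the identity $\sigma\circ f=f\circ\eta$ valid on $\overline{\D}$. For part~(1), taking $z\in\widetilde{K(\sigma)}\cap\overline{\D^*}$ and $(z,w)\in\mathfrak{C}^*$, the second bullet of Proposition~\ref{inverse_lift_corr} yields $\sigma(f(w))=f(z)\in K(\sigma)$; combined with backward invariance of $K(\sigma)$ under $\sigma$ (immediate from $T^\infty(\sigma)=\bigcup_{n\geq0}\sigma^{-n}(T^0(\sigma))$) this gives $f(w)\in K(\sigma)$, hence $w\in\widetilde{K(\sigma)}$. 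To place $w$ in $\overline{\D^*}$ I use that $\eta(z)\in\overline{\D}$ is the only preimage of $f(\eta(z))$ in $\overline{\D}$ and is excluded by the defining relation~\eqref{corr_eqn_2}, forcing $w\in\D^*$. Backward invariance of $\widetilde{K(\sigma)}\cap\overline{\D}$ then follows from the involution symmetry $(z,w)\mapsto(\eta(w),\eta(z))$ of $\mathfrak{C}^*$ together with Proposition~\ref{corr_partition_prop}(2).

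For part~(2), I will set $g:=(f|_{\overline{\D}})^{-1}\circ\sigma\circ f$ on $\widetilde{K(\sigma)}\cap\overline{\D}$, with forward invariance of $K(\sigma)$ under $\sigma$ ensuring that this lands in $\widetilde{K(\sigma)}\cap\overline{\D}$. Using $\sigma\circ f=f\circ\eta$ on $\overline{\D}$ one obtains $f(g(z))=f(\eta(z))$, so $(z,g(z))\in\mathfrak{C}$; and since $g(z)\in\overline{\D}$ while $\eta(z)\in\overline{\D^*}$, the two agree only on $\partial\D$, so $g$ is a single-valued branch of $\mathfrak{C}^*$ on $\D$ extending continuously to $\overline{\D}$. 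The relation $f\circ g=\sigma\circ f$ exhibits $f|_{\overline{\D}}$ as a topological conjugacy between $g$ and $\sigma\vert_{K(\sigma)}$, which is conformal on $\widetilde{K(\sigma)}\cap\D$ because $f|_{\D}$ is conformal, and of degree $d$ because $\sigma:\sigma^{-1}(\Omega)\to\Omega$ is a degree $d$ branched cover by Proposition~\ref{simp_conn_quad}. For the remaining branches, the full $f$-preimage $f^{-1}(\sigma(f(z)))$ has $d+1$ elements; exactly one ($g(z)$) lies in $\overline{\D}$ by injectivity of $f|_{\overline{\D}}$, so after excising $\eta(z)\in\overline{\D^*}$ the remaining $d-1$ preimages lie in $\D^*\subset\overline{\D^*}$, and surjectivity onto $\widetilde{K(\sigma)}\cap\overline{\D^*}$ follows by the same preimage count applied to $f(w)$ for $w\in\widetilde{K(\sigma)}\cap\D^*$.

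Part~(3) is symmetric: I will set $h:=\eta\circ(f|_{\overline{\D}})^{-1}\circ f$ on $\widetilde{K(\sigma)}\cap\overline{\D^*}$, verify $f(\eta(h(w)))=f(w)$ so that $(h(w),w)\in\mathfrak{C}$, and note that $\eta(h(w))\in\overline{\D}$ while $w\in\D^*$ forces $w\neq\eta(h(w))$, placing the pair in $\mathfrak{C}^*$. The map $f\circ\eta:\widetilde{K(\sigma)}\cap\overline{\D^*}\to K(\sigma)$ is a homeomorphism (well-defined by Proposition~\ref{corr_partition_prop}(2)), and the identity $(f\circ\eta)\circ h=\sigma\circ(f\circ\eta)$---once more derived from $\sigma\circ f=f\circ\eta$ on $\overline{\D}$---furnishes the required topological conjugacy of degree $d$ with $\sigma\vert_{K(\sigma)}$.

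The only delicate point I foresee concerns the branch $g$ in part~(2) at boundary points of $\partial\D\cap\widetilde{K(\sigma)}$, where $g(z)=z=\eta(z)$ so that $g$ touches the diagonal that was excised to form $\mathfrak{C}^*$. The resolution is to interpret $g$ as the continuous extension to $\overline{\D}$ of a single-valued branch of $\mathfrak{C}^*$ originally defined on $\D$; the extended map remains conjugate to $\sigma\vert_{K(\sigma)}$ via $f|_{\overline{\D}}$, and this boundary coincidence is in fact precisely the mechanism by which the correspondence will later encode the parabolic interaction between its rational-map and group components.
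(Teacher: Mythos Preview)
Your proof is correct and essentially the same as the paper's: your branch $g=(f|_{\overline{\D}})^{-1}\circ\sigma\circ f$ coincides with the paper's branch $(f|_{\overline{\D}})^{-1}\circ f\circ\eta$ via the identity $\sigma\circ f=f\circ\eta$ on $\overline{\D}$, and your backward branch $h=\eta\circ(f|_{\overline{\D}})^{-1}\circ f$ is exactly the paper's $\eta\circ g$. The only cosmetic differences are that the paper handles part~(1) directly via the preimage count (each point of $K(\sigma)$ has one $f$-preimage in $\overline{\D}$ and $d$ in $\overline{\D^*}$) rather than invoking Proposition~\ref{inverse_lift_corr}, and the paper does not pause over the boundary coincidence $g(z)=\eta(z)$ on $\partial\D$ that you flag in your final paragraph.
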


\begin{proof}
1) By Proposition~\ref{corr_partition_prop}, we have that $$\eta(\widetilde{K(\sigma)}\cap\overline{\D})=\widetilde{K(\sigma)}\cap\overline{\D^*}.$$ Moreover, the fact that the degree $(d+1)$ rational map $f$ sends $\overline{\D}$ homeomorphically onto $\overline{\Omega}$ implies that each $z\in K(\sigma)$ has exactly one pre-image in $\widetilde{K(\sigma)}\cap\overline{\D}$ and exactly $d$ pre-images (counted with multiplicity) in $\widetilde{K(\sigma)}\cap\overline{\D^*}$. The statement now follows from the above observations and the definition of $\mathfrak{C}^*$.

2) Let us set $V:=f^{-1}(\Omega)\cap\D^*$, and define $g:\overline{V}\to\overline{\D}$ as the composition of $f:\overline{V}\to\overline{\Omega}$ and $\left(f\vert_{\overline{\D}}\right)^{-1}:\overline{\Omega}\to\overline{\D}$. By definition, $g$ is a $d:1$ branched covering satisfying $f\circ g=f$ on $\overline{V}$. It follows that $$g\circ\eta:\widetilde{K(\sigma)}\cap\overline{\D}\to\widetilde{K(\sigma)}\cap\overline{\D}$$ is a $d:1$ forward branch of the correspondence. 

Clearly, the forward branch $(g\circ\eta)\vert_{\widetilde{K(\sigma)}\cap\overline{\D}}$ is topologically conjugate to $\sigma\vert_{K(\sigma)}$ via the univalent map $f:\widetilde{K(\sigma)}\cap\overline{\D}\to K(\sigma)$, which is conformal on $\D$.

The statement that the remaining forward branches of $\mathfrak{C}^*$ carry $\widetilde{K(\sigma)}\cap\overline{\D}$ onto $\widetilde{K(\sigma)}\cap\overline{\D^*}$ follows from the fact that each $z\in K(\sigma)$ has exactly one pre-image in $\widetilde{K(\sigma)}\cap\overline{\D}$ and exactly $d$ pre-images (counted with multiplicity) in $\widetilde{K(\sigma)}\cap\overline{\D^*}$.

3) Note that the map $$\eta\circ g=\eta\circ\left(f\vert_{\overline{\D}}\right)^{-1}\circ f: \widetilde{K(\sigma)}\cap\overline{\D^*}\to\widetilde{K(\sigma)}\cap\overline{\D^*}$$ is a backward branch of the correspondence $\mathfrak{C}^*$ carrying $\widetilde{K(\sigma)}\cap\overline{\D^*}$ onto itself with degree $d$. 

Finally, $\eta$ is a topological conjugacy between the backward branch $(\eta\circ g)\vert_{\widetilde{K(\sigma)}\cap\overline{\D^*}}$ and the forward branch $(g\circ\eta)\vert_{\widetilde{K(\sigma)}\cap\overline{\D}}$, and hence $f\vert_{\widetilde{K(\sigma)}\cap\overline{\D}}\circ\eta:\widetilde{K(\sigma)}\cap\overline{\D^*}\to K(\sigma)$ is a topological conjugacy between the backward branch $(\eta\circ g)\vert_{\widetilde{K(\sigma)}\cap\overline{\D^*}}$ and $\sigma\vert_{K(\sigma)}$, which furthermore is anti-conformal on $\widetilde{K(\sigma)}\cap \D^*$.
\end{proof}

\begin{remark}\label{branch_schwarz_rem}
Proposition~\ref{basic_dynamics_corr_prop} underscores the importance of studying the dynamics of the Schwarz reflection map $\sigma$. Indeed, the dynamics of the Schwarz reflection map $\sigma$ on the boundary of $K(\sigma)$ can often be modeled `from outside' by studying the dynamics of $\sigma$ on $T^\infty(\sigma)$. Moreover, Schwarz reflection maps are amenable to quasiconformal deformation/surgery techniques. These additional `global' features of Schwarz reflection maps facilitate the study of their dynamical properties, which can be profitably used to analyze the dynamics of branches of correspondences.
\end{remark}

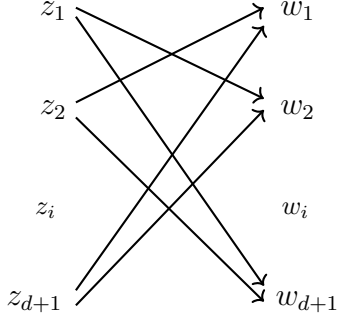
\begin{figure}
\captionsetup{width=0.96\linewidth}
\begin{tikzpicture}
\node at (0.5,4.32) {\begin{Large}$z_1$\end{Large}};
\node at (3.75,4.32) {\begin{Large}$w_1$\end{Large}};
\node at (0.5,3) {\begin{Large}$z_2$\end{Large}};
\node at (3.75,3) {\begin{Large}$w_2$\end{Large}};
\node at (0.25,0.45) {\begin{Large}$z_{d+1}$\end{Large}};
\node at (3.88,0.45) {\begin{Large}$w_{d+1}$\end{Large}};
\node at (0.4,1.66) {$z_{i}$};
\node at (3.72,1.66) {$w_{i}$};
\draw [->,line width=0.8pt] (0.8,4.36) to (3.3,3.15);
\draw [->,line width=0.8pt] (0.8,4.24) to (3.3,0.66);
\draw [->,line width=0.8pt] (0.8,3.1) to (3.3,4.36);
\draw [->,line width=0.8pt] (0.8,2.9) to (3.3,0.45);
\draw [->,line width=0.8pt] (0.8,0.6) to (3.3,4.12);
\draw [->,line width=0.8pt] (0.8,0.4) to (3.3,3);

\end{tikzpicture}
\caption{If $f^{-1}(w)=\{w_1,\cdots, w_{d+1}\}$ for some $w\in\widehat{\C}$ that is not a critical value of $f$, and $z_i=\eta(w_i)$ for $i=1,\cdots,d+1$, then the $d$ branches of the forward correspondence send the $(d+1)$ points $z_1,\cdots, z_{d+1}$ to the $(d+1)$ points $w_1,\cdots, w_{d+1}$ as shown in the figure.}
\label{diagram_fig}
\end{figure}

\begin{remark}\label{map_of_tuples_rem}
It is easy to see that the correspondence $\mathfrak{C}^*$ is \emph{reversible} in the sense of \cite{BP}. More precisely, we have that $(z,w)\in\mathfrak{C}^*$ if and only if $(\eta(w),\eta(z))\in\mathfrak{C}^*$. 

Moreover, $\mathfrak{C}^*$ is a \emph{map of $(d+1)$-tuples} in the following sense. If $f^{-1}(w)=\{w_1,\cdots, w_{d+1}\}$ for some $w\in\widehat{\C}$ that is not a critical value of $f$, and $z_i=\eta(w_i)$ for $i=1,\cdots,d+1$, then the $d$ branches of the forward correspondence send the $(d+1)$ points $z_1,\cdots, z_{d+1}$ to the $(d+1)$ points $w_1,\cdots, w_{d+1}$ as shown in Figure~\ref{diagram_fig}.
\end{remark}

\subsection{Correspondences as matings}\label{mating_def_subsec}
Before defining what it means for a correspondence to be realized as a mating of a map and a group, we first introduce the notion of a \textit{pinched (anti-)polynomial-like} map, generalizing the classical notion of polynomial-like mappings to degenerate cases (cf. \cite{Lom15,LLMM3}).

We say that a \textit{polygon} is a Jordan domain whose boundary consists of finitely many closed smooth arcs. The points of intersection of these arcs will be denoted as the \textit{corners} of the polygon. A \textit{pinched polygon} is a union of domains in $\widehat{\C}$ whose closure is homeomorphic to a closed disk quotiented by a finite geodesic lamination, and whose boundary is given by finitely many closed smooth arcs. The separating points of the boundary of a pinched polygon will be called its \textit{pinched points}, and the non-separating singular points will be called its \emph{corners}.

\begin{definition}\label{pinched_poly_def}
Let $V\subset \widehat{\C}$ be a polygon, and let $U\subset V$ be a pinched polygon where $\partial U\cap\partial V$ is the set of corners of $V$ and is contained in the set of corners of~$U$. 

Suppose that there is a (anti-)holomorphic map $g\colon U\to V$ such that
\begin{enumerate}
\item $g$ is a branched cover from each component of $U$ onto $V$, 
\item $g$ extends continuously to the boundary of $U$, 
\item $g$ is locally injective at the corners of $U$, and 
\item the corners and pinched points of $U$ are the preimages of the corners of $V$.
\end{enumerate}

We then call the triple $(g,\overline U, \overline V)$ a \emph{pinched (anti-)polynomial-like map}.
\end{definition}
(See Figure~\ref{pinched_poly_fig}.)

\begin{figure}
\captionsetup{width=0.96\linewidth}
\begin{tikzpicture}
\node[anchor=south west,inner sep=0] at (1,0) {\includegraphics[width=0.6\textwidth]{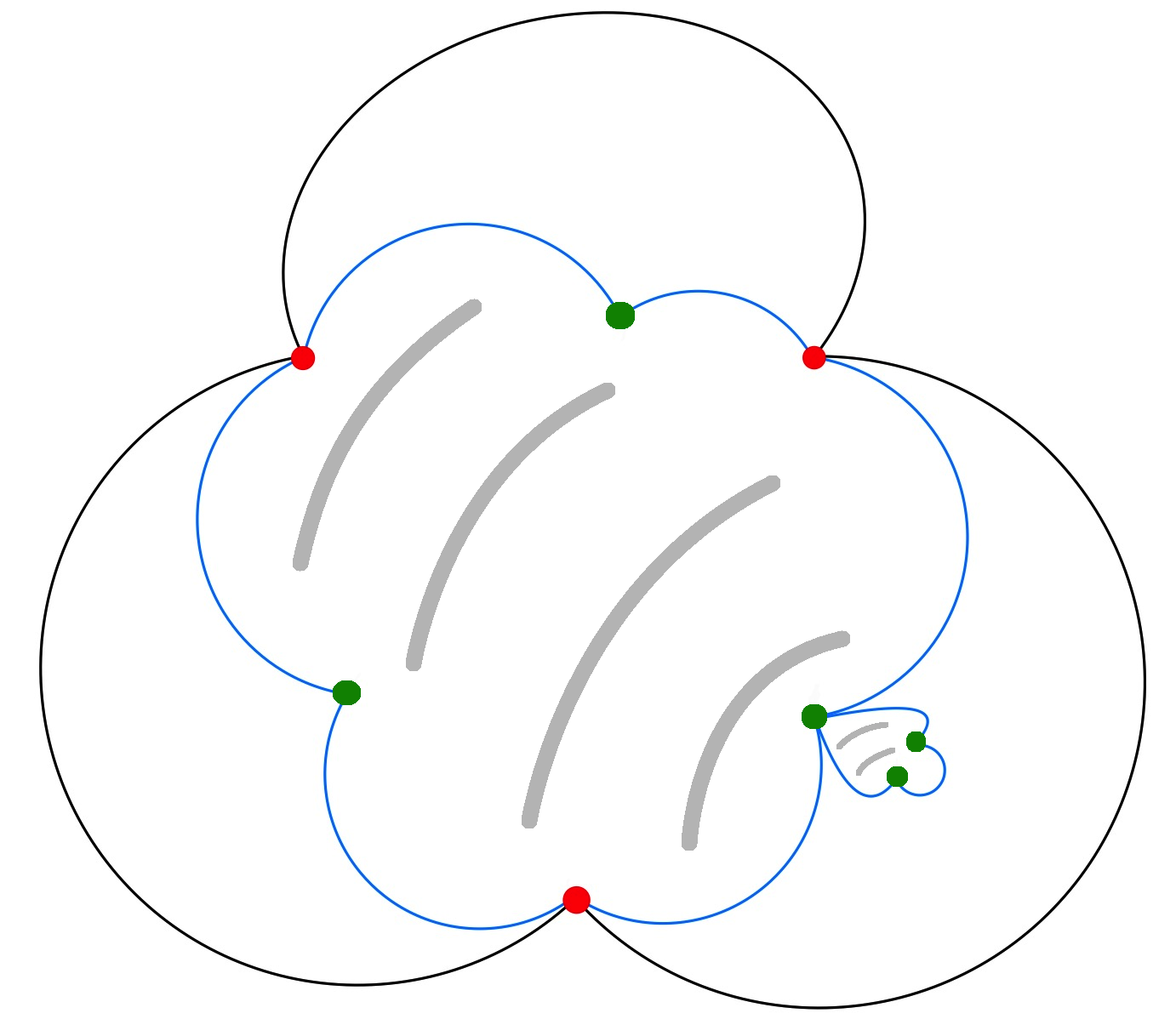}}; 
\node at (2,4.2) {\begin{Large}$\partial V$\end{Large}};
\node at (2.7,1.4) {\begin{Large}$\partial U$\end{Large}};
\end{tikzpicture}
\caption{Pictured is the domain and codomain of a pinched (anti-)polynomial-like map. Here, $V$ is the interior of the black polygon with three corners (marked in red). The interior of the blue pinched polygon is $U$. The pinched point and the additional corner points of $\partial U$ are marked in green.}
\label{pinched_poly_fig}
\end{figure}

As with polynomial-like maps, we define the \textit{filled Julia set} or \textit{non-escaping set} of a pinched (anti-)polynomial-like map to be $K(g)=\bigcap_{n\geq 0} g^{-n}(\overline U)$, and denote it by $K(g)$. Analogous to classical polynomial-like maps \cite{DH2}, the filled Julia set $K(g)$ of a pinched (anti-)polynomial-like map is connected if and only if it contains all of the critical values of $g$.

\begin{definition}\label{hybrid_def}
Let $(g_1,\overline U_1,\overline V_1)$ and $(g_2,\overline U_2,\overline V_2)$ be two pinched (anti-)polynomial-like maps. We say that $g_1$ and $g_2$ are \textit{(quasiconformally) hybrid equivalent} if there exists a quasiconformal map $\Phi\colon \widehat{\C}\to \widehat{\C}$ such that:

\begin{enumerate}
	\item $\Phi$ sends the corners of $V_1$ to the corners of $V_2$,
	\item $\Phi$ conjugates $g_1$ to $g_2$ on the closure of a neighborhood of $K(g_1)$ pinched at the corners and pinchings of $U_1$,
	\item $\overline \partial \Phi\equiv 0$ almost everywhere on $K(g_1)$.
\end{enumerate}

Likewise, we say that $g_1$ and $g_2$ are \textit{David} hybrid conjugate if the above holds with $\Phi$ a David map (see Definition~\ref{david_def}).
\end{definition}

Every (anti-)polynomial-like map is a pinched (anti-)polynomial-like map. Moreover, every Bers-like (anti-)rational map $R$ (see Section~\ref{intro}) admits a pinched (anti-)polynomial-like restriction. Indeed, if $D\subsetneq \mathcal B(R)$ is a suitable forward invariant Jordan domain such that $V = \widehat{\C}\setminus \overline D$ is a polygon, then $(R|_{\overline{R^{-1}(V)}},\overline{R^{-1}(V)},\overline V)$ is a pinched (anti-)polynomial-like map.

If $\sigma$ is the Schwarz reflection map of a Jordan quadrature domain $\Omega$ such that $\sigma^{-1}(\Omega)$ is a pinched polygon, then $\sigma$ restricts to the pinched anti-polynomial-like map $(\sigma|_{\overline{\sigma^{-1}(\Omega)}},\overline{\sigma^{-1}(\Omega)},\overline \Omega)$. Via the Riemann uniformization of $\Omega$, this pinched anti-polynomial-like restriction of $\sigma$ gives rise to a natural pinched anti-polynomial-like restriction for the $d:1$ forward branch of the correspondence $\mathfrak{C}^*$ carrying $\widetilde{K(\sigma)}\cap\overline{\D}$ onto itself (see Proposition~\ref{basic_dynamics_corr_prop}).
\smallskip

\noindent\textbf{Convention:} We will identify Schwarz reflections with the above choice of pinched anti-polynomial-like restrictions when discussing hybrid conjugacies.

\begin{remark}
In general, the choice of restriction of a global map to a pinched (anti-)polynomial-like one is not canonical. For example, the map $R(z)= \overline z^2$ has a classical anti-polynomial-like restriction. One may also consider the following restriction. Let $V$ be a polygon strictly containing $\D$, and such that $\partial V \cap \partial \D = \{1,e^{\pm 2\pi i/3}\}$. Then $(R,\overline{R^{-1}(V)},\overline{V})$ is a pinched anti-polynomial-like mapping with three corners, at $1$, $e^{2\pi i/3}$ and $e^{-2\pi i/3}$. It is shown in \cite{LMMN} that this pinched anti-polynomial-like restriction is David hybrid equivalent to the Schwarz reflection for the exterior of the deltoid.
\end{remark}

We now define what it means for the correspondence $\mathfrak{C}^*$ to be realized as a mating of a Bers-like anti-rational map $R$ and a group $G$. Our definition is a mild generalization of the same notion appearing in \cite[\S 1]{BP}, \cite[Definition~1.1]{BuLo1}, \cite[\S 3]{BuFr}, \cite{BuFr1}.

\begin{definition}[Antiholomorphic correspondence as mating]\label{mating_def}
Let $R$ be a degree $d$ Bers-like anti-rational map.
We say that the correspondence $\mathfrak{C}^*$ is a quasiconformal (respectively, David) mating of $R$ and a group $G$ if the following conditions are satisfied.
\begin{itemize}
\item On $\widetilde{T^\infty(\sigma)}$, the dynamics of the correspondence $\mathfrak{C}^*$ is equivalent to a $G$-action. More precisely, there exists a faithful $G$-action on $\widetilde{T^\infty(\sigma)}$ by conformal and anti-conformal automorphisms, such that this action has the same orbits as $\mathfrak{C}^*$ on $\widetilde{T^\infty(\sigma)}$.

\item There is a pinched anti-polynomial-like restriction of $R$ which is quasiconformally (respectively, David) hybrid equivalent to the $d:1$ forward branch of $\mathfrak{C}^*$ carrying $\widetilde{K(\sigma)}\cap\overline{\D}$ onto itself.
\end{itemize} 
\end{definition}

\begin{remark}
Unlike in \cite{BuFr}, we neither require $\widetilde{T^\infty(\sigma)}$ (which is the open set where the correspondence acts as a group) to be connected, nor require that $\widetilde{K(\sigma)}\cap\overline{\D}$ and $\widetilde{K(\sigma)}\cap\overline{\D^*}$ meet at a single point.
\end{remark}

\subsection{Group structure of correspondences on the lifted tiling set}\label{group_structure_subsec}

We will now focus on two situations where the correspondence $\mathfrak{C}^*$ exhibits a `group dynamics' on its lifted tiling set. {First, recall from the introduction that $\Gamma_d$ stands for the abstract Hecke group $\Z/2\Z\ast \Z/(d+1)\Z$.

\begin{definition}
We define the \emph{anti-Hecke group} or the \emph{standard anti-Hecke group action} $\pmb{\Gamma}_d$, to be the group of M{\"o}bius and anti-M{\"o}bius automorphisms of $\D$ generated by reflection across the sides of a regular ideal $d+1$-gon and a $2\pi/(d+1)$ rotation about the center of $\D$.
\end{definition}

\subsubsection{Tiling set unramified}

Let us first assume that $T^\infty(\sigma)$ is a simply connected domain that does not contain any critical value of $f$. Under this assumption, the lifted tiling set $\widetilde{T^\infty(\sigma)}$ consists of exactly $(d+1)$ simply connected domains each of which maps conformally onto $T^\infty(\sigma)$ under $f$. Let us call them $\widetilde{T^\infty_1(\sigma)},\cdots,\widetilde{T^\infty_{d+1}(\sigma)}$, so $$\widetilde{T^\infty(\sigma)}=\bigsqcup_{j=1}^{d+1}\widetilde{T^\infty_j(\sigma)}.$$ To analyze the structure of grand orbits of the correspondence $\mathfrak{C}^*$ on $\widetilde{T^\infty(\sigma)}$, we need to discuss the deck transformations of $f:\widetilde{T^\infty(\sigma)}\to T^\infty(\sigma)$. As $\widetilde{T^\infty(\sigma)}$ consists of exactly $(d+1)$ simply connected domains each of which maps conformally onto $T^\infty(\sigma)$ under $f$, we can define a map $$\tau:\widetilde{T^\infty(\sigma)}\to\widetilde{T^\infty(\sigma)}$$ satisfying the conditions
\begin{enumerate}
\item $\tau(\widetilde{T^\infty_j(\sigma)})=\widetilde{T^\infty_{j+1}(\sigma)},\ \textrm{mod}\ (d+1),$ and

\item $f\circ\tau=f$, for $z\in \widetilde{T^\infty(\sigma)}$.
\end{enumerate}
Clearly, $\tau$ is a conformal isomorphism of $\widetilde{T^\infty(\sigma)}$ satisfying $\tau^{\circ (d+1)}=\mathrm{id}$, and $f^{-1}(f(z))=\{z,\tau(z),\cdots,\tau^{\circ d}(z)\}$, for $z\in \widetilde{T^\infty(\sigma)}$. Since $\eta$ is an antiholomorphic involution preserving $\widetilde{T^\infty(\sigma)}$, it follows that each of $\tau\circ\eta,\cdots,\tau^{\circ d}\circ\eta$ is an anti-conformal automorphism of $\widetilde{T^\infty(\sigma)}$.

\begin{proposition}\label{grand_orbit_group}
If $T^\infty(\sigma)$ is a simply connected domain that does not contain any critical value of $f$, then the grand orbits of the correspondence $\mathfrak{C}^*$ on $\widetilde{T^\infty(\sigma)}$ are equal to the orbits of $\langle\eta\rangle\ast\langle\tau\rangle$. Hence, the dynamics of $\mathfrak{C}^*$ on $\widetilde{T^\infty(\sigma)}$ is equivalent to an action of $\Gamma_d$.
\end{proposition}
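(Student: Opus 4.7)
The plan is to make the $d$ forward branches of $\mathfrak{C}^*$ on $\widetilde{T^\infty(\sigma)}$ completely explicit and then to recognise them as composites of $\eta$ with powers of the deck transformation $\tau$. First I would use the hypothesis---that $T^\infty(\sigma)$ is simply connected and contains no critical value of $f$---to see that each component $\widetilde{T^\infty_j(\sigma)}$ is mapped conformally onto $T^\infty(\sigma)$ by $f$, and that the fiber $f^{-1}(f(z))$ consists of the $(d+1)$ distinct points $\{z,\tau(z),\ldots,\tau^{\circ d}(z)\}$ for every $z\in\widetilde{T^\infty(\sigma)}$. Since $\eta$ preserves $\widetilde{T^\infty(\sigma)}$ by Proposition~\ref{corr_partition_prop}, the defining relation \eqref{corr_eqn_2} then implies that, for $z\in\widetilde{T^\infty(\sigma)}$, the pair $(z,w)$ belongs to $\mathfrak{C}^*$ exactly when $w\in\{\tau^{\circ k}\circ\eta(z):k=1,\ldots,d\}$. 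Thus the forward branches of $\mathfrak{C}^*$ on $\widetilde{T^\infty(\sigma)}$ are precisely the anti-conformal automorphisms $s_k:=\tau^{\circ k}\circ\eta$, $k=1,\ldots,d$.

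Next, because $\eta^{\circ 2}=\mathrm{id}$ and $\tau^{\circ(d+1)}=\mathrm{id}$, the universal property of the free product provides a group homomorphism
$$
\phi\colon\Gamma_d=\langle\bar\eta\rangle\ast\langle\bar\tau\rangle\longrightarrow\mathrm{Aut}^\pm\bigl(\widetilde{T^\infty(\sigma)}\bigr),\qquad \bar\eta\mapsto\eta,\ \bar\tau\mapsto\tau,
$$
whose image is the subgroup $G:=\langle\eta,\tau\rangle$. By definition, the grand-orbit equivalence relation on $\widetilde{T^\infty(\sigma)}$ determined by $\mathfrak{C}^*$ is the one generated by the branches $s_k$ and their inverses.

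The key step is to verify that this grand-orbit equivalence coincides with the $G$-orbit equivalence. One inclusion is immediate, since each $s_k$ lies in $G$. For the reverse inclusion, I would observe that
$$
s_k\circ s_1^{-1}=\tau^{\circ k}\circ\eta\circ\eta\circ\tau^{-1}=\tau^{\circ(k-1)},\qquad k=1,\ldots,d.
$$
Taking $k=2$ (which is available since $d\ge 2$ in all the cases of interest) shows that $\tau$ itself lies in the group generated by the branches; together with $s_1=\tau\circ\eta$, this forces $\eta=\tau^{-1}\circ s_1$ to lie there as well. Consequently $G$ is contained in the group generated by $\{s_k^{\pm 1}\}$, so $G$-orbits are precisely the grand orbits, proving the first assertion. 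The second assertion then follows from the abstract isomorphism $\langle\eta\rangle\ast\langle\tau\rangle\cong\Z/2\Z\ast\Z/(d+1)\Z=\Gamma_d$.

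There is no serious technical obstacle: the argument reduces to the short algebraic computation above, once the branches have been identified. Note in particular that the statement demands only that the $G$-orbits match the grand orbits, not that the homomorphism $\phi$ be injective; faithfulness of the $\Gamma_d$-action (equivalently, freeness of the product inside $\mathrm{Aut}^\pm(\widetilde{T^\infty(\sigma)})$) is a separate question that can be addressed via the combinatorics of the $(d+1)$ tiles and is not needed here.
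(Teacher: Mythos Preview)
Your identification of the forward branches $s_k=\tau^{\circ k}\circ\eta$ and the short computation showing $\langle s_1,\ldots,s_d\rangle=\langle\eta,\tau\rangle$ are correct and match the paper verbatim. The gap is in your final paragraph: you explicitly set aside the question of whether $\langle\eta,\tau\rangle$ is actually the free product $\langle\eta\rangle\ast\langle\tau\rangle$, but this is precisely what the proposition asserts and what the paper spends most of its proof establishing. The notation $\langle\eta\rangle\ast\langle\tau\rangle$ in the statement is not shorthand for ``the image of $\Gamma_d$ under $\phi$''; it is the claim that inside $\mathrm{Aut}^\pm(\widetilde{T^\infty(\sigma)})$ the subgroup generated by $\eta$ and $\tau$ \emph{is} a free product. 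More importantly, the second sentence of the proposition feeds directly into Proposition~\ref{mating_axiom_prop} and hence into Definition~\ref{mating_def}, which requires a \emph{faithful} $G$-action. Without injectivity of $\phi$, you have not shown that the correspondence is equivalent to a $\Gamma_d$-action in the sense used downstream.

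The paper closes this gap by a ping-pong-style argument on the tile structure: any word $(\tau^{k_1}\eta)\cdots(\tau^{k_r}\eta)$ with each $k_p\in\{1,\ldots,d\}$ maps a rank-$0$ tile of $\widetilde{T^\infty(\sigma)}\cap\D^*$ to a rank-$r$ tile (since each factor $\tau^{k_p}\eta$ is a lift of $\sigma^{-1}$ and increases rank by one), so such a word cannot equal $\mathrm{id}$; and it cannot equal $\eta$ because it preserves $\widetilde{T^\infty(\sigma)}\cap\D^*$ while $\eta$ swaps the $\D$ and $\D^*$ parts. This rules out every reduced relation other than $\eta^2=\mathrm{id}$ and $\tau^{d+1}=\mathrm{id}$. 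You allude to ``the combinatorics of the $(d+1)$ tiles'' but do not carry this out; that omission is the missing half of the proof.
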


\begin{proof}
It follows from the definition of the correspondence $\mathfrak{C}^*$ and the previous discussion that for $z\in\widetilde{T^\infty(\sigma)}$, we have $(z,w)\in\mathfrak{C}^*$ if and only if $w\in\{\tau\circ\eta(z), \cdots,\tau^{\circ d}\circ\eta(z)\}$.

Also note that $\tau=(\tau^{\circ 2}\circ\eta)\circ(\tau\circ\eta)^{-1}$, and hence $\langle\tau\circ\eta,\cdots,\tau^{\circ d}\circ\eta\rangle=\langle\eta,\tau\rangle$ (considered as subgroups of the group of all conformal and anti-conformal automorphisms of $\widetilde{T^\infty(\sigma)}$). To complete the proof, we only need to show that $\langle\eta,\tau\rangle$ is the free product of $\langle\eta\rangle$ and $\langle\tau\rangle$.

To this end, we first observe that any relation in $\langle\eta,\tau\rangle$ other than $\eta^{\circ 2}=\mathrm{id}$ and $\tau^{\circ (d+1)}=\mathrm{id}$ can be reduced to one of the form 
\begin{equation}
(\tau^{\circ k_1}\circ\eta)\circ\cdots\circ(\tau^{\circ k_r}\circ\eta)=\mathrm{id}
\label{group_relation_1}
\end{equation} 
or 
\begin{equation}
(\tau^{\circ k_1}\circ\eta)\circ\cdots\circ(\tau^{\circ k_r}\circ\eta)=\eta,
\label{group_relation_2}
\end{equation}
where $k_1,\cdots,k_r\in\{1,\cdots,d\}$. Also note that a relation of the form \eqref{group_relation_2} implies a relation of the form \eqref{group_relation_1} by squaring both sides of $\eqref{group_relation_2}$, as $\eta^{\circ 2}=\mathrm{id}$.

Assume then, for contradiction, that there exists a relation of the form \eqref{group_relation_1} in $\langle\eta,\tau\rangle$. Each $(\tau^{\circ k_p}\circ\eta)$ maps a tile of rank $n$ in $\widetilde{T^\infty(\sigma)}\cap\D^*$ to a tile of rank $(n+1)$ in $\widetilde{T^\infty(\sigma)}\cap\D^*$. Hence, the group element on the left of Relation~\eqref{group_relation_1} maps the tile of rank $0$ to a tile of rank $r$ (of $\widetilde{T^\infty(\sigma)}$). Clearly, such an element cannot be the identity map proving that there is no relation of the form \eqref{group_relation_1} in $\langle\eta,\tau\rangle$.

We conclude that $\eta^{\circ 2}=\mathrm{id}$ and $\tau^{\circ (d+1)}=\mathrm{id}$ are the only relations in $\langle\eta,\tau\rangle$, and hence $\langle\eta,\tau\rangle=\langle\eta\rangle\ast\langle\tau\rangle\cong\Gamma_d$. 
\end{proof}

Given this group structure, to check that the correspondence $\mathfrak{C}^*$ in this case is a mating, one need only check the dynamics on the non-escaping set for $\sigma$. The next proposition makes this point precise.

\begin{proposition}[Mating criterion I]\label{mating_axiom_prop}
Suppose that $T^\infty(\sigma)$ is a simply connected domain containing no critical value of $f$. Then, $\mathfrak{C}^*$ is a mating of a Bers-like anti-rational map $R$ and the group $\Gamma_d$ if and only if a pinched anti-polynomial-like restriction of $R$ is hybrid conjugate to $\sigma$.
\end{proposition}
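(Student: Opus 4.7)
The proposition is essentially an unpacking exercise: the nontrivial structural work (the group theory on $\widetilde{T^\infty(\sigma)}$ and the identification of the $d{:}1$ forward branch on $\widetilde{K(\sigma)}\cap\overline{\D}$ with $\sigma$) has already been done in Propositions~\ref{basic_dynamics_corr_prop} and~\ref{grand_orbit_group}. The plan is simply to match these with the two bullets of Definition~\ref{mating_def}.

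For the forward implication, suppose $\mathfrak{C}^*$ is a mating of $R$ and $\Gamma_d$. The second bullet of Definition~\ref{mating_def} states directly that a pinched anti-polynomial-like restriction of $R$ is hybrid conjugate to the $d{:}1$ forward branch $(g\circ\eta)|_{\widetilde{K(\sigma)}\cap\overline{\D}}$. By Proposition~\ref{basic_dynamics_corr_prop}(2), this forward branch is conjugate to $\sigma\colon K(\sigma)\to K(\sigma)$ via the univalent map $f\vert_{\overline{\D}}$, and the conjugacy is conformal on $\widetilde{K(\sigma)}\cap\D$. Since $f\vert_{\overline{\D}}$ is a conformal isomorphism onto $\overline{\Omega}$, composing the hybrid conjugacy for $R$ with (an extension of) $f\vert_{\overline{\D}}$ yields a hybrid conjugacy between a pinched anti-polynomial-like restriction of $R$ and $\sigma$ (where $\sigma$ is given its canonical pinched anti-polynomial-like restriction $\sigma|_{\overline{\sigma^{-1}(\Omega)}}\colon\overline{\sigma^{-1}(\Omega)}\to\overline{\Omega}$ fixed by the Convention after Definition~\ref{hybrid_def}).

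For the reverse implication, suppose that a pinched anti-polynomial-like restriction of $R$ is hybrid conjugate to $\sigma$. I need to verify both bullets of Definition~\ref{mating_def} with $G=\Gamma_d$. The second bullet follows from the same transport argument as above, now run in the opposite direction: conjugating by $(f\vert_{\overline{\D}})^{-1}$ turns the hypothesized hybrid conjugacy into a hybrid conjugacy between a pinched anti-polynomial-like restriction of $R$ and the $d{:}1$ forward branch of $\mathfrak{C}^*$ on $\widetilde{K(\sigma)}\cap\overline{\D}$. For the first bullet, I invoke Proposition~\ref{grand_orbit_group}: under the present hypotheses (simple connectivity of $T^\infty(\sigma)$ and absence of critical values of $f$ in $T^\infty(\sigma)$), the grand orbits of $\mathfrak{C}^*$ on $\widetilde{T^\infty(\sigma)}$ coincide with the orbits of the group $\langle\eta\rangle*\langle\tau\rangle\cong\Gamma_d$. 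This action is by the conformal deck transformation $\tau$ and the anti-conformal involution $\eta$, and faithfulness is a direct consequence of the Case~1/Case~2 analysis in the proof of Proposition~\ref{grand_orbit_group}, which excludes any relation beyond $\eta^{\circ 2}=\mathrm{id}$ and $\tau^{\circ(d+1)}=\mathrm{id}$.

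There is no substantive obstacle: the only point requiring care is bookkeeping of which pinched anti-polynomial-like restriction of $\sigma$ one uses when invoking Definition~\ref{hybrid_def}. This is handled by the Convention immediately following Definition~\ref{hybrid_def}, together with the fact that $f\vert_{\overline{\D}}$ is genuinely conformal (not merely quasiconformal), so transporting hybrid conjugacies by it preserves both properties (1)--(3) of Definition~\ref{hybrid_def} and in particular the $\overline\partial\Phi\equiv 0$ condition on the non-escaping set.
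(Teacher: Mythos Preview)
Your proposal is correct and takes essentially the same approach as the paper, which simply records that the result follows from Propositions~\ref{basic_dynamics_corr_prop} and~\ref{grand_orbit_group}. You have faithfully unpacked exactly this: Proposition~\ref{grand_orbit_group} handles the group-action bullet of Definition~\ref{mating_def}, and Proposition~\ref{basic_dynamics_corr_prop}(2) together with the conformal transport along $f\vert_{\overline{\D}}$ handles the hybrid-conjugacy bullet.
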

\begin{proof}
This follows from Propositions~\ref{basic_dynamics_corr_prop} and~\ref{grand_orbit_group}.
\end{proof}

\begin{remark}
While Proposition~\ref{mating_axiom_prop} can be applied to prove that the correspondence associated with the deltoid Schwarz reflection map is a mating of $\overline{z}^2$ and $\Gamma_2$ (see \cite[Appendix~B]{LLMM3} and \cite[\S 4]{LLMM1}), Proposition~\ref{mating_axiom_prop_1} below is more suitable for the correspondences we will construct in the next section (cf. \cite[\S 10]{LLMM3}).
\end{remark}

\subsubsection{Tiling set fully ramified at a single point}

We now turn our attention to the second situation where the dynamics of $\mathfrak{C}^*$ on the lifted tiling set is given by a group action. 

Let us assume that $T^\infty(\sigma)$ is a simply connected domain containing exactly one critical value $v_0$ of $f$, and $v_0\in\Int{T^0(\sigma)}$ with $f^{-1}(v_0)$ a singleton. Set $f^{-1}(v_0)=\{c_0\}$. It follows that $c_0\in\D^*$.
Also note that as $f:\widetilde{T^\infty(\sigma)}\to T^\infty(\sigma)$ is a branched cover of degree $(d+1)$ with a critical point of multiplicity $d$, the Riemann-Hurwitz formula shows that $\widetilde{T^\infty(\sigma)}$ is a simply connected domain. Note that $$f:\widetilde{T^\infty(\sigma)}\setminus\{c_0\}\to T^\infty(\sigma)\setminus\{v_0\}$$ is a $(d+1)$-to-1 covering map between topological annuli, and is thus a regular cover with deck transformation group isomorphic to $\Z/(d+1)\Z$.

Let $\tau$ be a generator of this deck transformation group. Then, $$\tau:\widetilde{T^\infty(\sigma)}\setminus\{c_0\}\to\widetilde{T^\infty(\sigma)}\setminus\{c_0\}$$ is a biholomorphism such that $\tau(z)\to c_0$ as $z\to c_0$. Thus, setting $\tau(c_0)=c_0$ yields a biholomorphism $\tau$ of $\widetilde{T^\infty(\sigma)}$ (see Figure~\ref{schwarz_lift_fig}). By construction, the $d$ forward branches of the correspondence $\mathfrak{C}^*$ on $\widetilde{T^\infty(\sigma)}$ are given by the anti-conformal automorphisms $\tau\circ\eta,\cdots,\tau^{\circ d}\circ\eta$.
\begin{figure}[ht!]
\captionsetup{width=0.96\linewidth}
\begin{tikzpicture}
\node[anchor=south west,inner sep=0] at (0,0) {\includegraphics[width=0.8\linewidth]{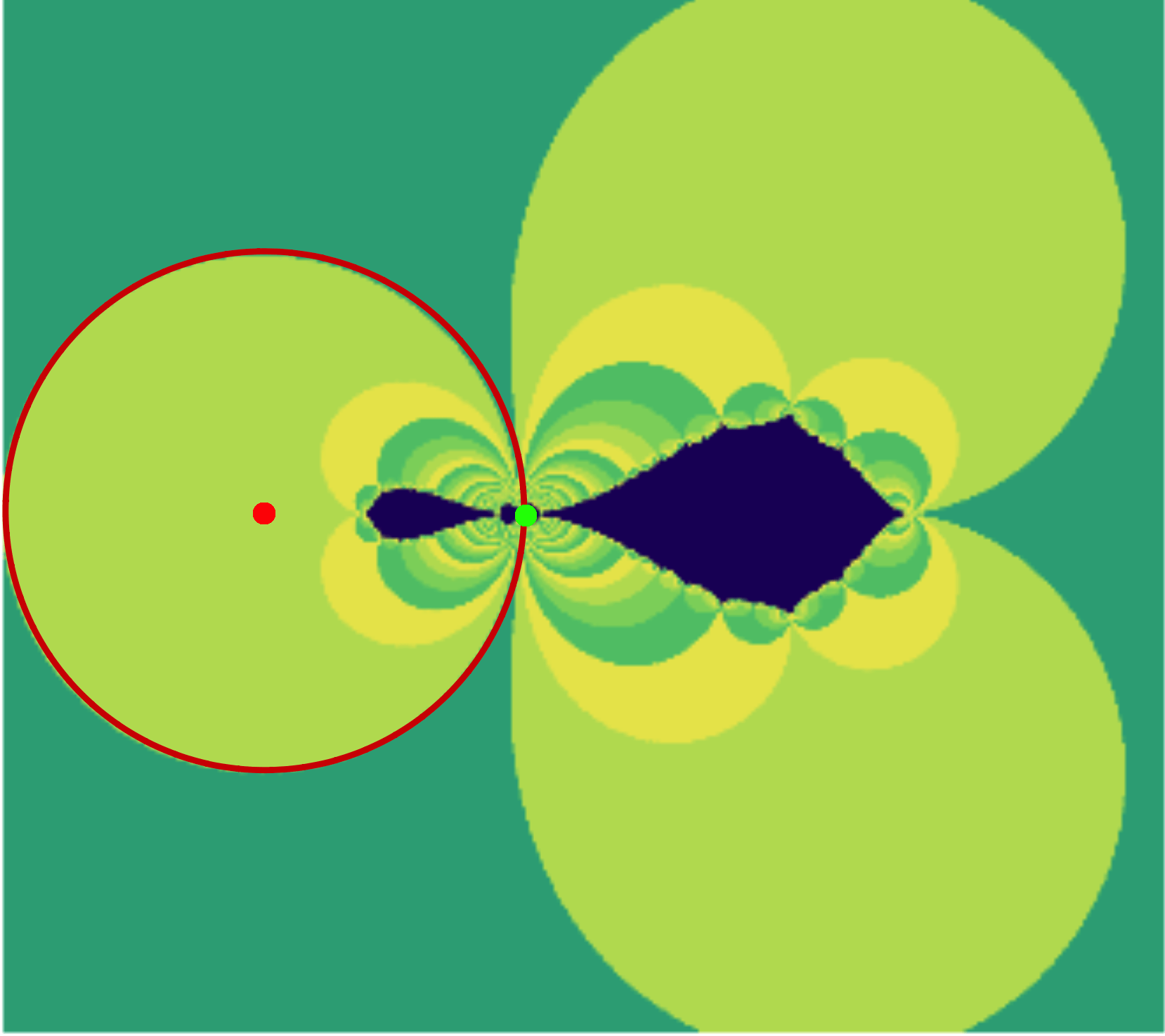}};
\node at (4.75,4.25) {$1$};
\node at (2.28,4.25) {$0$};
\end{tikzpicture}
\caption{Depicted is the dynamical plane of the correspondence $\mathfrak{C}^*$ arising from the cubic polynomial $f$ of Example~\ref{example_1}. The lifted tiling set $\widetilde{T^\infty(\sigma)}$ (yellow/green) and the lifted non-escaping set $\widetilde{K(\sigma)}$ (dark blue) of $\mathfrak{C}^*$ are obtained by pulling back the tiling and non-escaping sets of $\sigma$ (shown in Figure~\ref{chebyshev_center_fig}) under the map $f$. The lifted tiling set $\widetilde{T^\infty(\sigma)}$ is mapped by $f$ as a three-to-one branched cover (branched only at $\infty$) onto $T^\infty(\sigma)$, and hence $f$ admits an order three deck transformation $\tau$ on $\widetilde{T^\infty(\sigma)}$ with $\tau(\infty)=\infty$.}
\label{schwarz_lift_fig}
\end{figure}

Repeating the arguments of the proof of Proposition~\ref{grand_orbit_group}, one now has a desired group structure of $\mathfrak{C}^*$ on its lifted tiling set (cf. \cite[Proposition~10.5]{LLMM3}). In fact, one can describe this action more explicitly.

\begin{proposition}\label{grand_orbit_group_1}
Let $T^\infty(\sigma)$ be a simply connected domain containing exactly one critical value $v_0$ of $f$, where $v_0\in\Int{T^0(\sigma)}$ with $f^{-1}(v_0)$ a singleton. Then the correspondence $\mathfrak{C}^*$ is conformally orbit-equivalent on $\widetilde{T^\infty(\sigma)}$ to the standard anti-Hecke action, $\pmb{\Gamma}_d$.
\end{proposition}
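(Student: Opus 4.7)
The plan is to follow the strategy of Proposition~\ref{grand_orbit_group}: first identify the $\mathfrak{C}^*$-orbits on $\widetilde{T^\infty(\sigma)}$ with the orbits of the group $G:=\langle\tau,\eta\rangle$ (abstractly isomorphic to $\Gamma_d$), and then conjugate this action into the standard $\pmb{\Gamma}_d$-action via a conformal uniformization of $\widetilde{T^\infty(\sigma)}$ by $\D$.

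For the group structure, the $d$ forward branches of $\mathfrak{C}^*$ on $\widetilde{T^\infty(\sigma)}$ are precisely $\tau^{\circ k}\circ\eta$ for $k=1,\ldots,d$, and $\tau=(\tau^{\circ 2}\circ\eta)\circ(\tau\circ\eta)^{-1}$; so the correspondence orbits agree with the $G$-orbits. Replicating the tile-rank argument of Proposition~\ref{grand_orbit_group} (any reduced word $\tau^{k_1}\eta\cdots\tau^{k_r}\eta$ with $k_i\in\{1,\ldots,d\}$ sends a rank-$0$ tile of $\widetilde{T^\infty(\sigma)}\cap\D^*$ to a rank-$r$ tile, and $\eta$ swaps the $\D^*$- and $\D$-sides) shows that the only relations in $G$ are $\tau^{\circ(d+1)}=\eta^{\circ 2}=\mathrm{id}$, giving $G\cong\Gamma_d=\langle\eta\rangle\ast\langle\tau\rangle$.

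Next I would uniformize. By the Riemann--Hurwitz calculation preceding the proposition, $\widetilde{T^\infty(\sigma)}$ is simply connected and hyperbolic, so let $\phi\colon\D\to\widetilde{T^\infty(\sigma)}$ be a conformal isomorphism with $\phi(0)=c_0$. Then $\phi^{-1}\tau\phi$ is a M\"obius element of order $d+1$ fixing $0$, hence a rotation by $2\pi k/(d+1)$ with $\gcd(k,d+1)=1$; after replacing $\tau$ by an appropriate power (still a generator of the deck transformation group), I can assume $\phi^{-1}\tau\phi(z)=e^{2\pi i/(d+1)}z$. The involution $\phi^{-1}\eta\phi$ is then reflection across a hyperbolic geodesic $\gamma\subset\D$, and the residual rotational freedom in $\phi$ places $\gamma$ symmetrically about the real axis, so that its two endpoints on $\partial\D$ are $e^{\pm i\theta_0}$ for some $\theta_0\in(0,\pi)$. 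The crux is to show $\theta_0=\pi/(d+1)$: after a final rotation by $\pi/(d+1)$, $\gamma$ then becomes the side of the regular ideal $(d+1)$-gon joining $1$ to $e^{2\pi i/(d+1)}$, and $G$ is conjugated onto $\pmb{\Gamma}_d$. To see this, the fixed arc $\mathbb{S}^1\cap\widetilde{T^\infty(\sigma)}$ of $\eta$ is a single connected arc (any antiholomorphic involution of a simply connected hyperbolic Riemann surface fixes a single geodesic); its two endpoints on $\partial\widetilde{T^\infty(\sigma)}$ correspond to prime ends over singular points of $\partial\Omega$. Since $f$ is a $(d+1)$-to-$1$ cover branched only at $c_0$ and $f|_{\mathbb{S}^1}$ is a bijection onto $\partial\Omega$, the prime ends of $\widetilde{T^\infty(\sigma)}$ over each such singular point are cyclically permuted by $\tau$, and the two endpoints of $\gamma$ must be adjacent in this orbit, so $R_{2\pi/(d+1)}(e^{-i\theta_0})=e^{i\theta_0}$ and hence $\theta_0=\pi/(d+1)$.

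I expect the main obstacle to lie in this last identification. A priori, faithful representations of $\Gamma_d=\Z/(d+1)\Z\ast\Z/2\Z$ into $\mathrm{Aut}^\pm(\D)$ sending the order-$(d+1)$ generator to an elliptic element and the order-$2$ generator to an antiholomorphic reflection form a one-real-parameter family, indexed by the hyperbolic distance between the elliptic fixed point and the reflection geodesic; conjugacy with the standard $\pmb{\Gamma}_d$ is therefore not automatic from abstract group theory. Pinning the modulus down seems to require a careful analysis of the prime-end structure of $\widetilde{T^\infty(\sigma)}$ at the singular points of $\partial\Omega$, combined with the $(d+1)$-fold branching of $f$ at $c_0$, in order to show that the two endpoints of $\gamma$ belong to a common $\tau$-orbit of adjacent prime ends.
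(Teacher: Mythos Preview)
Your overall strategy coincides with the paper's: show that the $\mathfrak{C}^*$--orbits on $\widetilde{T^\infty(\sigma)}$ are the orbits of $\langle\tau\rangle\ast\langle\eta\rangle\cong\Gamma_d$ via the tile-rank argument of Proposition~\ref{grand_orbit_group}, then uniformize $\widetilde{T^\infty(\sigma)}$ by a Riemann map $\phi:(\D,0)\to(\widetilde{T^\infty(\sigma)},c_0)$, so that $\tau$ becomes the rotation $M_\omega$ and $\eta$ becomes reflection in a hyperbolic geodesic $C$. You also correctly isolate the genuine issue: these data only determine a one-real-parameter family of $\Gamma_d$--actions on $\D$, and one must pin down the modulus (the position of $C$) to land on $\pmb{\Gamma}_d$.

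Where your proposal differs from the paper is in how this modulus is fixed, and here your prime-end route is more delicate than necessary and not quite correctly set up. The boundary $\partial\widetilde{T^\infty(\sigma)}$ is the lifted limit set, a fractal, and there is no reason for the prime ends of $\widetilde{T^\infty(\sigma)}$ lying over the cusp $\pmb y$ (or over $f^{-1}(\pmb y)$) to constitute a single $(d{+}1)$-cycle under $\tau$; iterated $\sigma$--preimages of $\pmb y$ may accumulate on $\pmb y$, producing many prime ends there. So the statement ``the prime ends over each singular point are cyclically permuted by $\tau$'' does not, by itself, isolate the two endpoints of $C$ as adjacent members of a $(d{+}1)$-orbit.

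The paper bypasses prime ends entirely by working with the rank-zero tile. Since $\partial T^0(\sigma)$ (boundary taken in $T^\infty(\sigma)$) is a single arc and $f$ is an unbranched $(d{+}1)$-cover over it, $\partial\widetilde{T^0(\sigma)}$ consists of $d{+}1$ disjoint simple arcs permuted transitively by $\tau$, and one of them is precisely the $\eta$-fixed arc $\mathbb{S}^1\setminus\{1\}$. Under $\phi^{-1}$ these become $C,\omega C,\dots,\omega^d C$, which bound the $M_\omega$-invariant region $\phi^{-1}(\widetilde{T^0(\sigma)})$; this region is then the regular ideal $(d{+}1)$-gon $\Pi$ up to rotation. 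The cleanest way to nail down this last step---and the content of the remark following the paper's proof---is to observe that, by Proposition~\ref{mating_equiv_cond_prop}, your hypothesis is equivalent to the existence of the conformal conjugacy $\psi:\mathcal Q\to T^\infty(\sigma)$; one can then \emph{choose} $\phi$ to be the lift of $\psi$ through the two $(d{+}1)$-fold branched covers $\D\to\mathcal Q$ and $f:\widetilde{T^\infty(\sigma)}\to T^\infty(\sigma)$, which forces $\phi^{-1}(\widetilde{T^0(\sigma)})=\Pi$ on the nose and eliminates the modulus ambiguity without any boundary analysis.
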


By \textit{conformally orbit-equivalent} we mean that there exists a conformal map $\phi\colon \D \to \widetilde {T^\infty(\sigma)}$ which sends the orbits of $\pmb{\Gamma}_d$ to the orbits of $\mathfrak{C}^*$. The proof of this proposition will be delayed to Subsection~\ref{srd_corr_rel_subsec}.

\begin{remark}\label{grp_structure_rem}
The critical point $c_0=f^{-1}(v_0)$ is responsible for the ellipticity of the conformal automorphism $\tau$ of $\widetilde{T^\infty(\sigma)}$ (see Subsection~\ref{srd_corr_rel_subsec}). 
\end{remark}

\begin{proposition}[Mating criterion II]\label{mating_axiom_prop_1}
Suppose that $T^\infty(\sigma)$ is a simply connected domain containing exactly one critical value $v_0$ of $f$, where $v_0\in\Int{T^0(\sigma)}$ with $f^{-1}(v_0)$ a singleton. Then, $\mathfrak{C}^*$ is a mating of a Bers-like anti-rational map $R$ and the group $\pmb{\Gamma}_d$ if and only if a pinched anti-polynomial-like restriction of $R$ is hybrid equivalent to $\sigma$.
\end{proposition}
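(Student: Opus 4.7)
The statement is essentially a packaging of the two structural descriptions of $\mathfrak{C}^*$ already at our disposal: Proposition~\ref{basic_dynamics_corr_prop} analyzes the correspondence on the lifted non-escaping set $\widetilde{K(\sigma)}$, while Proposition~\ref{grand_orbit_group_1} identifies the correspondence on $\widetilde{T^\infty(\sigma)}$ with the standard anti-Hecke action $\pmb{\Gamma}_d$. The plan is to match these two ingredients against the two clauses of Definition~\ref{mating_def}, using the univalent restriction $f\vert_{\overline{\D}}$ as the intertwining map between $\sigma\vert_{K(\sigma)}$ and the $d:1$ forward branch of $\mathfrak{C}^*$ on $\widetilde{K(\sigma)}\cap\overline{\D}$.

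For the forward direction, I would assume that $\mathfrak{C}^*$ is a mating of $R$ and $\pmb{\Gamma}_d$ in the sense of Definition~\ref{mating_def}. This directly provides a pinched anti-polynomial-like restriction of $R$ that is hybrid equivalent to the $d:1$ forward branch of $\mathfrak{C}^*$ on $\widetilde{K(\sigma)}\cap\overline{\D}$. By Proposition~\ref{basic_dynamics_corr_prop}(2), that branch is conjugated to $\sigma\vert_{K(\sigma)}$ by $f\vert_{\widetilde{K(\sigma)}\cap\overline{\D}}$, a conjugacy that is a homeomorphism on the closure and conformal on the interior. Composing with $f$ (or its inverse where appropriate) yields the desired hybrid equivalence between a pinched anti-polynomial-like restriction of $R$ and $\sigma$, under the convention established immediately before the proposition.

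The reverse direction reverses this bookkeeping. Assume that a pinched anti-polynomial-like restriction of $R$ is hybrid equivalent to $\sigma$ via a quasiconformal map $\Phi$. Pre-composing $\Phi$ with $f\vert_{\widetilde{K(\sigma)}\cap\overline{\D}}$ produces a quasiconformal map which conjugates a pinched anti-polynomial-like restriction of $R$ to the $d:1$ forward branch of $\mathfrak{C}^*$ carrying $\widetilde{K(\sigma)}\cap\overline{\D}$ onto itself. This verifies the second clause of Definition~\ref{mating_def}, while the first clause (faithful conformal/anti-conformal $\pmb{\Gamma}_d$-action on $\widetilde{T^\infty(\sigma)}$ orbit-equivalent to $\mathfrak{C}^*$) is exactly the content of Proposition~\ref{grand_orbit_group_1}.

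The only step demanding any care — and hence the closest thing to an obstacle — is verifying that transport through $f$ preserves the three conditions of Definition~\ref{hybrid_def}: corner correspondence, conjugation on a pinched neighborhood of the filled Julia set, and the almost-everywhere vanishing of $\overline{\partial}$ on the filled Julia set. All three are immediate consequences of $f$ being conformal on $\widetilde{K(\sigma)}\cap\D$ and a homeomorphism on $\widetilde{K(\sigma)}\cap\overline{\D}$, together with the observation that the pinched anti-polynomial-like restriction of $\sigma$ fixed by our convention has its corners and pinchings arising from the singular set $S(\sigma)\subset\partial\Omega$, which lifts through $f$ to the corners and pinchings of the corresponding restriction of the $d:1$ branch. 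Once this compatibility is recorded, the proof is complete.
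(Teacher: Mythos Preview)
Your proposal is correct and follows exactly the same route as the paper, which simply cites Propositions~\ref{basic_dynamics_corr_prop} and~\ref{grand_orbit_group_1}; you have merely unpacked the transport through $f\vert_{\overline{\D}}$ that the paper leaves implicit.
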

\begin{proof}
This follows from Propositions~\ref{basic_dynamics_corr_prop} and~\ref{grand_orbit_group_1}.
\end{proof}

\section{The space $\mathcal{S}_{\mathcal{R}_d}$ of Schwarz reflections}\label{srd_sec}

The goal of this section is to introduce a space of Schwarz reflection maps that will give rise to antiholomorphic correspondences meeting the mating criterion of Proposition~\ref{mating_axiom_prop_1}.
Let us first informally explain how the conditions of Proposition~\ref{mating_axiom_prop_1} naturally lead to the definition of this space of Schwarz reflections (see Proposition~\ref{mating_equiv_cond_prop} and Definition~\ref{srd_def}).

Suppose that there exists a degree $d+1$ rational map $f$ that admits a univalent restriction $f\vert_{\overline{\D}}$ such that $T^\infty(\sigma)$ (where $\sigma$ is the Schwarz reflection map of the quadrature domain $f(\D)$) is a simply connected domain containing exactly one critical value $v_0$ of $f$, and $v_0\in \Int{T^0(\sigma)}$ with $f^{-1}(v_0)$ a singleton.
By the commutative diagram of Subsection~\ref{schwarz_subsec}, the set of critical points of $\sigma$ is given by 
$$
\{f(\eta(c)):c\in\widehat{\C}\setminus\overline{\D},\ \textrm{and}\ c\ \textrm{is\ a\ critical\ point\ of}\ f\},
$$
and the set of critical values of $\sigma$ is contained in the set of critical values of $f$.
In particular, critical points of $\sigma$ lying in the tiling set $T^\infty(\sigma)$ must lie inside of $\sigma^{-1}(\{v_0\})=f(\eta(f^{-1}(\{v_0\})))$, which consists of a single point, by assumption. One now sees that $\sigma$ must have a unique critical point in its tiling set $T^\infty(\sigma)$ and this critical point maps to $v_0\in\Int{T^0(\sigma)}$ with local degree $d+1$. It is this property of $\sigma$ on its tiling set that will be captured by the anti-Farey map $\mathcal{R}_d$ defined below (cf. \cite[\S 4.4]{LLMM3}).

\subsection{The anti-Farey map $\mathcal{R}_d$ and the anti-Hecke group $\pmb{\Gamma}_d$}\label{rd_subsec}

Consider the Euclidean circles $\widetilde{C}_1,\cdots, \widetilde{C}_{d+1}$ where $\widetilde{C}_j$ intersects $\{|z|=1\}$ at right angles at the roots of unity $\exp{(\frac{2\pi i\cdot(j-1)}{d+1})}$, $\exp{(\frac{2\pi i\cdot j}{d+1})}$. We denote the intersection $\D\cap \widetilde{C}_j$ by $C_j$. Then $C_1,\cdots, C_{d+1}$ are hyperbolic geodesics in $\D$, and they form a closed ideal polygon (in the topology of $\D$) which we call $\Pi$.

Let $\rho_j$ be reflection with respect to the circle $\widetilde{C}_j$, $V_j$ be the bounded connected component of $\widehat{\C}\setminus \widetilde{C}_j$, and $\D_j:= V_j\cap\D$ (see Figure~\ref{external_model_fig}). Note that $V_j$ is the symmetrization of $\D_j$ with respect to the unit circle. The maps $\rho_1,\cdots, \rho_{d+1}$ generate a subgroup $\mathcal{G}_d$ of $\mathrm{Aut}^\pm(\D)$. As an abstract group, it is given by the generators and relations 
$$
\langle\rho_1, \cdots, \rho_{d+1}: \rho_1^2=\cdots=\rho_{d+1}^2=\mathrm{id}\rangle.
$$ 

\begin{figure}
\captionsetup{width=0.96\linewidth}
\begin{tikzpicture}
\node[anchor=south west,inner sep=0] at (0,0) {\includegraphics[width=0.41\textwidth]{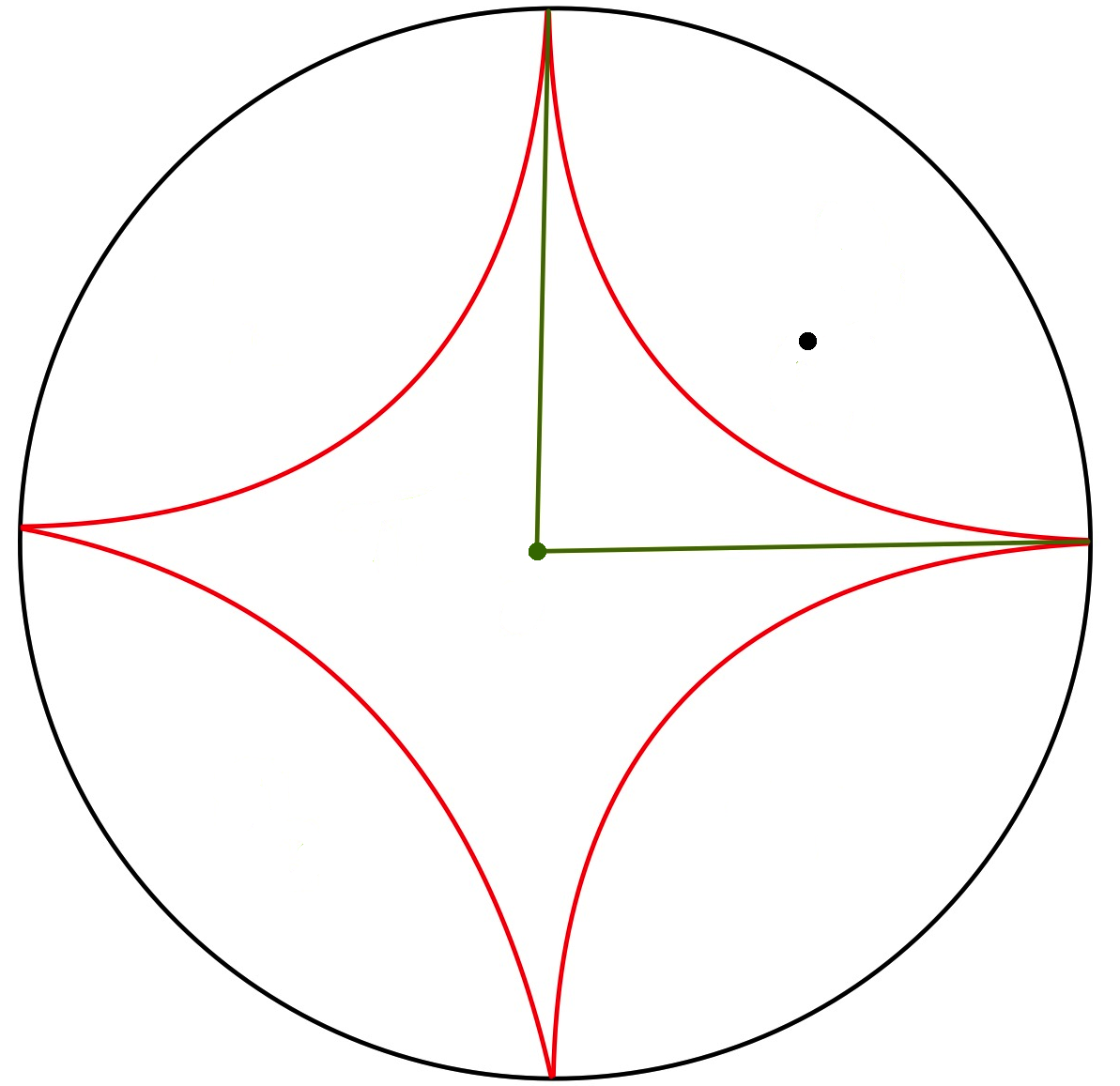}}; 
\node[anchor=south west,inner sep=0] at (6,0) {\includegraphics[width=0.4\textwidth]{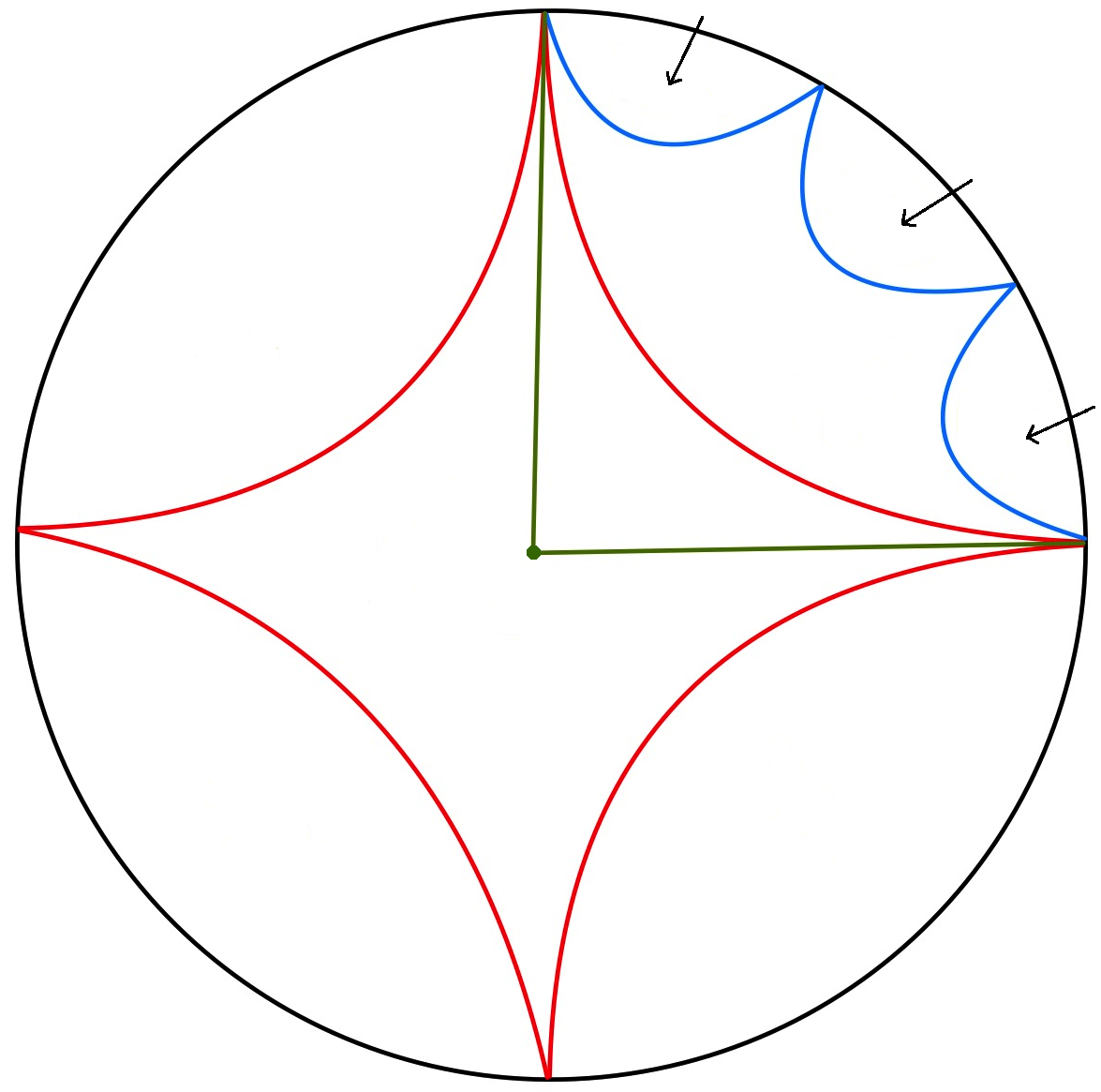}};
\node[anchor=south west,inner sep=0] at (3,-5) {\includegraphics[width=0.4\textwidth]{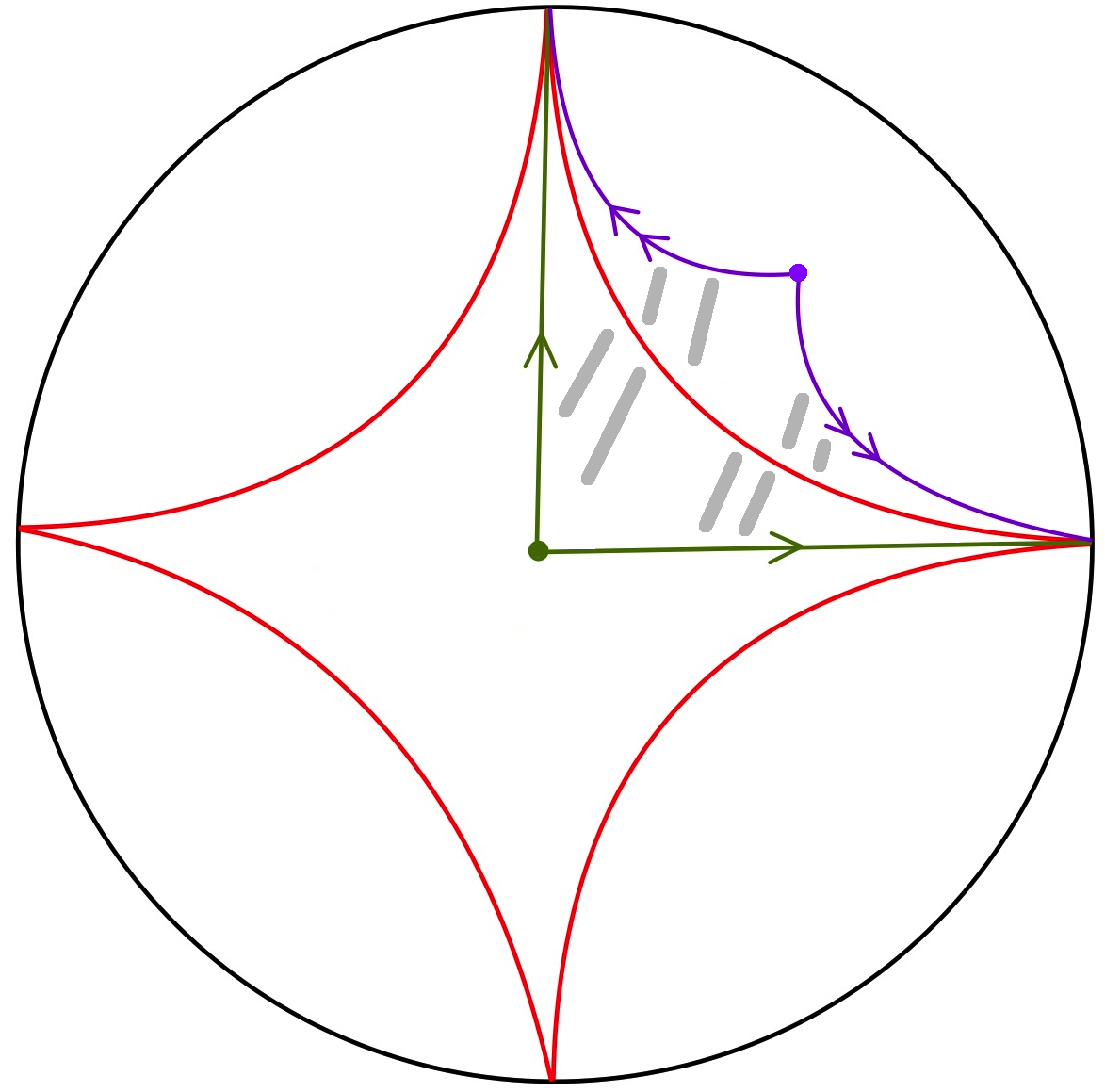}};
\node at (3.6,4.2) {\begin{large}$\D_1$\end{large}};
\node at (1.5,4) {\begin{large}$\D_2$\end{large}};
\node at (1.5,1.1) {\begin{large}$\D_3$\end{large}};
\node at (3.6,1) {\begin{large}$\D_4$\end{large}};
\node at (2.64,2.66) {\begin{small}$0$\end{small}};
\node at (2,2.2) {\begin{Large}$\Pi$\end{Large}};
\node at (3.88,3.24) {$\rho_1(0)$};
\node at (9.24,5.16) {\begin{tiny}$\rho_1(\D_2)$\end{tiny}};
\node at (10.9,4.3) {\begin{tiny}$\rho_1(\D_3)$\end{tiny}};
\node at (11.5,3.3) {\begin{tiny}$\rho_1(\D_4)$\end{tiny}};
\node at (9.5,3.48) {\begin{footnotesize}$\rho_1(\Pi)$\end{footnotesize}};
\node at (5.3,-2.5) {\begin{small}$0$\end{small}};
\node at (5.4,-3.2) {\begin{Large}$\Pi$\end{Large}};
\node at (6.06,-2.1) {\begin{small}$\mathfrak{F}$\end{small}};
\node at (9,-1.2) {\begin{small}$\rho_1(\mathfrak{F})$\end{small}};
\node at (7,-1) {\begin{footnotesize}$\rho_1(0)$\end{footnotesize}};
\draw [->,line width=0.5pt] (8.4,-1.25) to (6.4,-1.6);
\end{tikzpicture}
\caption{Top: Depicted is the ideal polygon $\Pi$ for $d=3$. The set $\rho_1(\Pi)$ is also shown. $\rho_1(\Pi)$ is mapped by $\mathcal{R}_3$ as a $4:1$ branched cover onto $\mathcal{Q}_1=\faktor{\Pi}{\langle M_i \rangle}\subset\mathcal{Q}$ (where $M_i:z\mapsto iz$). The unique critical point of $\mathcal{R}_3$ is $\rho_1(0)$. Bottom: $\mathfrak{F}$ is a fundamental domain for the action of the group $\pmb{\Gamma}_3$, generated by $\rho_1$ and $M_i$, on $\D$. A fundamental domain for the action of the associated index two Fuchsian subgroup $\widetilde{\pmb{\Gamma}}_3$ on $\D$ is given by $\widetilde{\mathfrak{F}}=\mathfrak{F}\cup\rho_1(\mathfrak{F})$. The generators $M_i$ and $\rho_1\circ M_i\circ\rho_1$ of $\widetilde{\pmb{\Gamma}}_3$ pair the sides of $\widetilde{\mathfrak{F}}$ as indicated by the arrows. This shows that $\faktor{\D}{\widetilde{\pmb{\Gamma}}_3}$ is a sphere with one puncture and two orbifold points of order four.}
\label{external_model_fig}
\end{figure}

\subsubsection{The anti-Farey map $\mathcal{R}_d$}\label{rd_subsubsec}
We define $M_\omega: z\mapsto\omega z$, and consider the (orbifold) Riemann surfaces $\mathcal{Q}:=\faktor{\D}{\langle M_\omega\rangle}$ and $\widetilde{\mathcal{Q}}:=\faktor{\widehat{\C}}{\langle M_\omega \rangle}$, where $\omega:=e^{\frac{2\pi i}{d+1}}$. Note that a (closed) fundamental domain for the action of $\langle M_\omega \rangle$ on $\widehat{\C}$ is given by 
$$
\{z\in\C:\ 0\leq\arg{z}\leq\frac{2\pi}{d+1}\}\cup\{0,\infty\},
$$
and a (closed) fundamental domain for its action on $\D$ is given by 
$$
\{\vert z\vert<1,\ 0\leq\arg{z}\leq\frac{2\pi}{d+1}\}\cup\{0\}.
$$ 
Thus, $\mathcal{Q}$ (respectively, $\widetilde{\mathcal{Q}}$) is biholomorphic to the surface obtained from the above fundamental domain by identifying the radial line segments $\{r:0<r<1\}$ and $\{re^{\frac{2\pi i}{d+1}}:0<r<1\}$ (respectively, the infinite radial rays at angles $0$ and $\frac{2\pi i}{d+1}$) by $M_\omega$. This endows $\mathcal{\mathcal{Q}}$ and $\widetilde{\mathcal{Q}}$ with preferred choices of complex coordinates. With these coordinates, the identity map is an embedding of the (bordered) surface $\D_1\cup C_1$ (respectively, $V_1\cup \widetilde{C}_1$) into $\mathcal{Q}$ (respectively, $\widetilde{\mathcal{Q}}$).

The map $\rho_1$ induces a map 
$$
\begin{tikzcd}
\mathcal R_d\colon V_1\cup \widetilde{C}_1 \arrow[r, "\rho_1"] & \widehat{\mathbb{C}} \arrow[r] & \widehat{\mathbb{C}}/\langle M_\omega\rangle = \widetilde{\mathcal Q}.
\end{tikzcd}
$$
We note that $\partial\mathcal{Q}:=\faktor{\mathbb{S}^1}{\langle M_\omega \rangle}$ is topologically a circle, and $\mathcal{R}_d$ restricts to an orientation-reversing degree $d$ covering of $\partial\mathcal{Q}\subset\widetilde{\mathcal{Q}}$ with a unique neutral fixed point (at $1$). By \cite[Lemma~3.7]{LMMN}, $\mathcal{R}_d\vert_{\partial\mathcal{Q}}$ is expansive. Moreover, $\mathcal{R}_d$ has a critical point of multiplicity $d$ at $\rho_1(0)$ with associated critical value $0$. We also note that all points in $\D_1\cup C_1$ eventually escape to $\mathcal{Q}_1:=\faktor{\Pi}{\langle M_\omega \rangle}\subset\mathcal{Q}$ under iterates of $\mathcal{R}_d$. We refer to the map $\mathcal{R}_d$ as the degree $d$ anti-Farey map (see \cite[\S 9]{LLMM4} for connections between the map $\mathcal{R}_2$ and an orientation-reversing version of the classical Farey map).

Note that the map $z\mapsto z^{d+1}$ yields a conformal isomorphism $\xi$ between the surface $\widetilde{\mathcal{Q}}$ and the Riemann sphere $\widehat{\C}$. This isomorphism restricts to a homeomorphism between $\partial\mathcal{Q}$ and $\mathbb{S}^1$.

\subsubsection{The anti-Hecke group $\pmb{\Gamma}_d$}\label{mod_reflect_group_subsubsec}

Consider the subgroup $\pmb{\Gamma}_d$ of $\mathrm{Aut}^\pm(\D)$ generated by $\rho_1$ and $M_\omega$. It is easy to see that $\pmb{\Gamma}_d$ is a discrete group isomorphic to $\Gamma_d$, and a (closed) fundamental domain $\mathfrak{F}$ for the $\pmb{\Gamma}_d$-action on $\D$ is given by 
$$
\mathfrak{F}:=\{z\in\Pi:\ 0\leq\arg{z}\leq\frac{2\pi}{d+1}\}\cup\{0\}.
$$
The index two Fuchsian subgroup $\widetilde{\pmb{\Gamma}}_d$ of $\pmb{\Gamma}_d$ is generated by $M_\omega$ and $\rho_1\circ M_\omega\circ\rho_1$. A (closed) fundamental domain for the $\widetilde{\pmb{\Gamma}}_d$-action on $\D$ is given by $\widetilde{\mathfrak{F}}:=\mathfrak{F}\cup\rho_1(\mathfrak{F})$, which is the double of $\mathfrak{F}$. It is easy to check that $\faktor{\D}{\widetilde{\pmb{\Gamma}}_d}$ is a sphere with one puncture and two orbifold points of order $d+1$ (see Figure~\ref{external_model_fig}).

\subsubsection{A David extension}\label{david_ext_subsec}

We now prove a technical lemma that will give us the necessary tool to construct Schwarz reflection maps whose tiling set dynamics is conformally equivalent to $\mathcal{R}_d$.

\begin{definition}\label{david_def}
	An orientation-preserving homeomorphism $H: U\to V$ between domains in the Riemann sphere $\widehat{\C}$ is called a \textit{David homeomorphism} if it lies in the Sobolev class $W^{1,1}_{\mathrm{loc}}(U)$ and there exist constants $C,\alpha,\varepsilon_0>0$ with
	\begin{align}\label{david_cond}
		m(\{z\in U: |\mu_H(z)|\geq 1-\varepsilon\}) \leq Ce^{-\alpha/\varepsilon}, \quad \varepsilon\leq \varepsilon_0.
	\end{align}
\end{definition}
Here $m$ is the spherical measure, and
$\mu_H= \frac{\partial H/ \partial\overline{z}}{\partial H/\partial z}$
is the Beltrami coefficient of $H$ (see \cite[Chapter 20]{AIM09}, \cite[\S 2]{LMMN} for more background on David homeomorphisms).

\begin{lemma}\label{david_ext_lem}
There exists a homeomorphism $H:\mathbb{S}^1\to\partial\mathcal{Q}$ that conjugates $\overline{z}^d$ to the anti-Farey map $\mathcal{R}_d$, and sends $1$ to $1$. Moreover, $H$ continuously extends as a David homeomorphism $H:\D\to\mathcal{Q}$. 
\end{lemma}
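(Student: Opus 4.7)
The strategy is to first build the topological conjugacy on the unit circle via symbolic dynamics, and then extend it to a David homeomorphism of the disk by adapting the parabolic David surgery developed in \cite{LMMN}.

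\textbf{Step 1: Circle conjugacy.} Both $\overline z^d|_{\mathbb{S}^1}$ and $\mathcal{R}_d|_{\partial\mathcal{Q}}$ are orientation-reversing degree $d$ covering maps of a topological circle with $d+1$ fixed points. The first is uniformly expanding, and the second is expansive by \cite[Lemma~3.7]{LMMN}. I would choose Markov partitions for each map anchored at their fixed point sets and arranged so that the distinguished fixed point $1$ on each side corresponds. A standard argument coding points by their itineraries in the respective Markov partitions then produces a unique orientation-preserving homeomorphism $H\colon \mathbb{S}^1 \to \partial\mathcal{Q}$ with $H(1)=1$ and $H\circ\overline z^d = \mathcal{R}_d\circ H$.

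\textbf{Step 2: Preliminary interior extension.} Next I would extend $H$ to the closures by pulling back a fundamental Markov tile under iterated preimages: on the $\mathcal{Q}$-side under $\mathcal{R}_d$, on the $\D$-side under $\overline z^d$. This yields compatible puzzle-piece decompositions of $\D$ and $\mathcal{Q}$, and one extends $H$ one depth at a time by choosing a convenient homeomorphism between corresponding tiles that matches the already-defined boundary data. Away from the grand orbit of the parabolic fixed point $1$, the interior branches of both dynamics are uniformly quasiregular and the tiles have comparable shape, so this construction automatically produces a locally quasiconformal extension with uniformly bounded dilatation.

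\textbf{Step 3: Verifying the David condition near the parabolic point.} The essential difficulty is concentrated in the cells clustering at $1$. On the $\overline z^d$-side, $n$-th preimages of the basic arc have length $\asymp d^{-n}$, while on the $\mathcal{R}_d$-side the Fatou-coordinate asymptotics at the simple parabolic fixed point $1$ give length $\asymp 1/n$ (this is the same normal form and therefore the same estimate one has for the Blaschke model $B_d$ used in \cite{LMMN}). If the interpolation in a tile of depth $n$ is performed with dilatation bounded by $K_n \asymp \log n$, then elementary area estimates over these tiles yield
\begin{equation*}
m\bigl(\{z\in\D\colon |\mu_H(z)| \ge 1-\varepsilon\}\bigr) \le C e^{-\alpha/\varepsilon},
\end{equation*}
i.e.\ the David condition \eqref{david_cond}. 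This is a direct parallel of the parabolic surgery in \cite{LMMN}; the target parabolic map is $\mathcal{R}_d$ rather than $B_d$, but the required boundary and petal asymptotics are identical because both models are simple parabolic of the same petal count.

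\textbf{Main obstacle.} The principal technical point is the scale-matching near $1$, where the exponential (hyperbolic) versus polynomial (parabolic) rates of preimage shrinking create the mismatch that drives the dilatation blow-up; controlling this and summing the estimate over all depths to obtain the sub-exponential David decay is where the parabolic asymptotics genuinely enter. Everything else is symbolic-dynamics bookkeeping and standard tile-by-tile quasiconformal interpolation.
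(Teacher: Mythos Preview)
Your strategy is sound in spirit but differs from the paper's in an important way. The paper does \emph{not} build the extension by hand: it first transports $\mathcal{R}_d|_{\partial\mathcal{Q}}$ to $\mathbb{S}^1$ via the conformal isomorphism $\xi$ (given by $z\mapsto z^{d+1}$), verifies that the resulting circle map is a piecewise anti-M{\"o}bius, expansive, degree $d$ covering whose periodic breakpoints are \emph{symmetrically parabolic}, exhibits a $d^2$-piece Markov partition satisfying the hypotheses (4.1)--(4.2) of \cite[Theorem~4.12]{LMMN}, and then invokes that theorem as a black box to obtain both the conjugacy $h$ with $\overline z^d$ and its David extension. The map $H$ is then $\xi^{-1}\circ h$, and \cite[Proposition~2.5(i)]{LMMN} transfers the David property through the conformal map $\xi^{-1}$. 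So the paper's proof is essentially a hypothesis check; all the analytic work you outline in Steps~2--3 is packaged inside \cite[Theorem~4.12]{LMMN}.

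Your direct construction is morally what lives inside that theorem, so it is not wrong, but two points deserve correction. First, your dilatation estimate $K_n\asymp\log n$ is too optimistic: in the standard hyperbolic-to-parabolic David surgeries the dilatation on the depth-$n$ tile grows \emph{linearly} in $n$, not logarithmically. Fortunately this is harmless for the conclusion, since with tile areas $\asymp d^{-2n}$ one still gets $m(\{|\mu_H|\ge 1-\varepsilon\})\lesssim d^{-c/\varepsilon}=e^{-\alpha/\varepsilon}$ from $K_n\asymp n$. Second, the paper flags a subtlety you skip: the ``natural'' $d$-piece partition of $\partial\mathcal{Q}$ by the arcs $I_j=\rho_1(\overline{V_j})\cap\mathbb{S}^1$ is \emph{not} Markov, because $\mathcal{R}_d$ fails to be injective at the common endpoints (both map to $1$); this is why the paper passes to the refined $d^2$-piece partition before applying \cite[Theorem~4.12]{LMMN}. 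Your fixed-point-anchored $(d+1)$-piece partition may sidestep this, but you would need to check that it really is Markov for $\mathcal{R}_d$, not just for $\overline z^d$.
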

\begin{proof}
We will appeal to \cite[Theorem~4.12]{LMMN} to deduce the existence of the required map $H$. 

We can partition $\partial\mathcal{Q}$ into $d$ closed sub-arcs $I_2,\cdots,I_{d+1}$ such that 
\begin{enumerate}
\item $I_j= \rho_1(\overline{V_j})\cap\mathbb{S}^1$, 

\item $\mathcal{R}_d$ acts as an anti-M{\"o}bius map $h_j$ (called a \emph{piece} of $\mathcal{R}_d$) on $I_j$; specifically, as a composition of $\rho_1$ with a power of $M_\omega$,

\item $h_j(I_j)=\partial\mathcal{Q}$ and $h_j(\Int{I_j})=\partial\mathcal{Q}\setminus\{1\}$, and

\item each endpoint of $I_j$ is symmetrically parabolic for the map $\mathcal{R}_d$, $j\in\{2,\cdots,d+1\}$ (see \cite[Definition~4.6, Remark~4.7]{LMMN}).
\end{enumerate}
Thus, $\mathcal{R}_d:\partial\mathcal{Q}\to\partial\mathcal{Q}$ is a piecewise anti-M{\"o}bius expansive covering map of degree~$d$. 

The partition of $\mathcal{Q}$ into the arcs $\{I_j:j\in\{2,\cdots,d+1\}\}$ does \emph{not} give a Markov partition for $\mathcal{R}_d\vert_{\partial\mathcal{Q}}$ since $\mathcal{R}_d$ is not injective at the two endpoints of $I_j$ (it maps both endpoints to $1$). However, we can refine the above partition by pulling it back under $\mathcal{R}_d$, and this produces a Markov partition $\{A_k:k\in\{1,\cdots, d^2\}\}$.
Since $\eta$ fixes $\partial\mathcal{Q}$ pointwise, each piece $h_j\vert_{A_k}$ of $\mathcal{R}_d$ extends conformally as $\eta\circ h_j$ to a neighborhood of $A_k$ in $\widetilde{\mathcal{Q}}$, where $A_k\subset I_j$. Finally, since the pieces of $\mathcal{R}_d$ are anti-M{\"o}bius, which send round disks to round disks, we can choose round disk neighborhoods $U_k$ of the interiors of the Markov partition pieces $\Int{A_k}$ (intersecting $\partial\mathcal{Q}$ orthogonally) such that if $\mathcal{R}_d(A_k)\supset A_{k'}$, then $\mathcal{R}_d(U_k)\supset U_{k'}$.

The properties of $\mathcal{R}_d$ listed in the previous paragraph imply that the map $\xi\circ\mathcal{R}_d\vert_{\partial\mathcal{Q}}\circ\xi^{-1}:\mathbb{S}^1\to\mathbb{S}^1$ is a piecewise analytic orientation-reversing expansive covering map of degree $d\geq2$ admitting a Markov partition $\{\xi(A_k):k\in\{1,\cdots, d^2\}\}$ satisfying conditions (4.1) and (4.2) of \cite[Theorem~4.12]{LMMN}. Moreover, each periodic breakpoint of its piecewise analytic definition is symmetrically parabolic. By \cite[Theorem~4.12]{LMMN}, there exists an orientation-preserving homeomorphism $h:\mathbb{S}^1\to\mathbb{S}^1$ that conjugates the map $z\mapsto\overline{z}^d$ to $\xi\circ\mathcal{R}_d\vert_{\partial\mathcal{Q}}\circ\xi^{-1}$ and continuously extends as a David homeomorphism of $\D$. Pre-composing $h$ with a rigid rotation around the origin (if necessary), we can also assume that $h$ sends the fixed point $1$ of $z\mapsto\overline{z}^d$ to the fixed point $1$ of $\xi\circ\mathcal{R}_d\vert_{\partial\mathcal{Q}}\circ\xi^{-1}$.

We now set $H:=\xi^{-1}\circ h:\overline{\D}\to\mathcal{Q}\cup\partial\mathcal{Q}$. By construction, $H:\mathbb{S}^1\to\partial\mathcal{Q}$ conjugates $\overline{z}^d$ to $\mathcal{R}_d$, and sends $1$ to $1$. The fact that $H:\D\to\mathcal{Q}$ is a David homeomorphism follows from \cite[Proposition~2.5 (part i)]{LMMN}.
\end{proof}

\subsection{Schwarz reflections with external map $\mathcal{R}_d$}\label{srd_subsec}

We will now show that the hypothesis of Proposition~\ref{grand_orbit_group_1} forces the external map of $\sigma$ to be the anti-Farey map~$\mathcal{R}_d$.

\begin{proposition}\label{mating_equiv_cond_prop}
Let $f$ be a rational map of degree $d+1$ that is injective on $\overline{\D}$, $\Omega:=f(\D)$, and $\sigma$ the Schwarz reflection map associated with $\Omega$. Then the following are equivalent.
\begin{enumerate}
\item $T^\infty(\sigma)$ is a simply connected domain containing exactly one critical value $v_0$ of $f$. Moreover, $v_0\in\Int{T^0(\sigma)}$ with $f^{-1}(v_0)$ a singleton. 

\item $\Omega$ is a Jordan domain with a unique conformal cusp on its boundary. Moreover, $\sigma$ has a unique critical point in its tiling set $T^\infty(\sigma)$, and this critical point maps to $v_0\in\Int{T^0(\sigma)}$ with local degree $d+1$.

\item There exists a conformal conjugacy $\psi$ between 
$$
\mathcal{R}_d:\mathcal{Q}\setminus\Int{\mathcal{Q}_1}\longrightarrow\mathcal{Q}\quad \mathrm{and}\quad \sigma:T^\infty(\sigma)\setminus\Int{T^0(\sigma)}\longrightarrow T^\infty(\sigma).
$$
In particular, $T^\infty(\sigma)$ is simply connected.

\item After possibly conjugating $\sigma$ by a M{\"o}bius map and pre-composing $f$ with an element of $\mathrm{Aut}(\D)$, the uniformizing map $f$ can be chosen to be a polynomial with a unique critical point on $\mathbb{S}^1$. Moreover, $K(\sigma)$ is connected. 
\end{enumerate}
\end{proposition}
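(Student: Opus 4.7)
The main tool is the semiconjugacy $\sigma\circ f|_{\D}=f\circ\eta$ on $\overline{\D}$ from Proposition~\ref{simp_conn_quad}, combined with injectivity of $f|_{\overline{\D}}$. This yields a dictionary: critical points of $\sigma$ in $\Omega$ arise as $f(\eta(c))$ for critical points $c$ of $f$ in $\D^*$, with matching local degrees and critical values $f(c)$; cusps of $\partial\Omega$ correspond bijectively to critical points of $f$ on $\mathbb{S}^1$; and $\Omega$ is automatically a Jordan domain. A critical value of $\sigma$ lies in $T^\infty(\sigma)$ precisely when the corresponding $f$-critical value does, by backward invariance of $K(\sigma)$ under $\sigma$.

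I would establish the cycle $(1)\Rightarrow(2)\Rightarrow(3)\Rightarrow(1)$ and the equivalence $(2)\Leftrightarrow(4)$. For $(1)\Rightarrow(2)$, the dictionary together with $\deg f=d+1$ and $f^{-1}(v_0)$ a singleton forces $c_0\in\D^*$ to be a critical point of $f$ of multiplicity exactly $d$, so $f(\eta(c_0))\in T^\infty(\sigma)$ is a $\sigma$-critical point of local degree $d+1$ mapping to $v_0\in\Int T^0(\sigma)$. To promote simple connectivity of $T^\infty(\sigma)$ to uniqueness of the cusp on $\partial\Omega$, I would apply Riemann--Hurwitz to the branched cover $f\colon f^{-1}(T^\infty(\sigma))\to T^\infty(\sigma)$: the single-critical-value hypothesis concentrates all $d$ units of ramification at $c_0$, and $\chi(T^\infty(\sigma))=1$ yields $\chi(f^{-1}(T^\infty(\sigma)))=1$. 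A component-by-component analysis using the inequality $R_i\geq d_i-1$, together with the fact that $f|_{\D}$ already contributes a single degree-$1$ disk component, forces the $\D^*$-part of the preimage to itself be a topological disk. Translating back through the dictionary and dualizing to $f^{-1}(K(\sigma))$, the remaining critical mass $d$ of $f$ must concentrate at a single critical point on $\mathbb{S}^1$, producing a unique cusp of order $d$ on $\partial\Omega$. The converse $(2)\Rightarrow(1)$ runs the same count in reverse.

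For $(2)\Rightarrow(3)$, I would Riemann-uniformize the simply connected $T^\infty(\sigma)$ by $\D$ and descend modulo the local $\Z/(d+1)\Z$-symmetry at the unique $(d+1)$-fold $\sigma$-critical point in $T^\infty(\sigma)$; the quotient orbifold is biholomorphic to $\mathcal{Q}$, and the descended map is a degree $d$ antiholomorphic branched self-cover with the combinatorial/dynamical signature of $\mathcal{R}_d$: a single critical point of multiplicity $d$ mapping to $0\in\Int\mathcal{Q}_1$, a single parabolic fixed point on $\partial\mathcal{Q}$ inherited from the cusp, and the Markov partition on the boundary described in Subsection~\ref{rd_subsubsec}. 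Uniqueness of such a branched self-cover up to conformal conjugacy --- which is the characterizing property of $\mathcal{R}_d$ --- then yields $\psi$. The implication $(3)\Rightarrow(1)$ is immediate since $\mathcal{Q}$ is simply connected and $\mathcal{R}_d$ has exactly the critical-value data demanded by (1). For $(2)\Leftrightarrow(4)$, a Möbius $M$ sending $v_0$ to $\infty$ together with $\phi\in\mathrm{Aut}(\D)$ whose Möbius extension sends $\infty\in\D^*$ to $c_0$ converts $f$ to $M\circ f\circ\phi$, which has $\infty$ as its unique preimage of $\infty$ with multiplicity $d+1$ and is therefore a polynomial of degree $d+1$ with unique finite critical point $\phi^{-1}(c_1)\in\mathbb{S}^1$; connectedness of $K(\sigma)$ is equivalent to simple connectivity of $T^\infty(\sigma)$ by Alexander duality in $\widehat{\C}$.

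The hard part will be the Riemann--Hurwitz/topological bookkeeping in $(1)\Rightarrow(2)$ that turns the purely dynamical ``single critical value in the tiling set'' hypothesis into the geometric ``unique cusp on $\partial\Omega$'' conclusion. The delicate points are ruling out both extra cusps --- which would introduce pinch points in the tile structure threatening simple connectivity of $T^\infty(\sigma)$ --- and extra critical points of $f$ in $\D^*$ whose critical values hide in $K(\sigma)$, which do not violate the hypothesis of (1) on $T^\infty(\sigma)$ directly but would obstruct the required topological constraints on $f^{-1}(T^\infty(\sigma))$. Controlling both simultaneously is the crux of the argument.
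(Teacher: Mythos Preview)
Your proposal has two genuine gaps.

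\textbf{In $(1)\Rightarrow(2)$: the critical-point count is wrong.} You conclude that ``the remaining critical mass $d$ of $f$ must concentrate at a single critical point on $\mathbb{S}^1$, producing a unique cusp of order $d$.'' This is false. A rational map of degree $d+1$ has $2d$ critical points; $c_0$ accounts for $d$ of them, but of the remaining $d$, only \emph{one} (simple) critical point lies on $\mathbb{S}^1$ --- the other $d-1$ lie in the open exterior $\D^*$ and give rise to the $d-1$ critical points of $\sigma$ in $K(\sigma)$ (these are what make $\sigma\vert_{K(\sigma)}$ a genuine degree-$d$ anti-rational dynamical system). The extra critical points you flag as ``the hard part'' and as potential obstructions actually \emph{exist} in every example. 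Your Riemann--Hurwitz bookkeeping on $f^{-1}(T^\infty(\sigma))$ cannot see them, since their critical values lie in $K(\sigma)$; consequently it cannot distinguish between one cusp and several. The paper avoids this by applying Riemann--Hurwitz to $\sigma$ (not $f$) on the single rank-one tile $\sigma^{-1}(\Int T^0(\sigma))$, showing it is simply connected, and then arguing directly: no cusp would force the rank-one tile to be all of $\overline{\Omega}$, while two cusps would produce a separating curve in the union of rank $\leq 1$ tiles, contradicting simple connectivity of $T^\infty(\sigma)$.

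\textbf{In $(2)\Rightarrow(3)$: you assume what must be proved, and the construction is not valid.} Statement (2) does \emph{not} include simple connectivity of $T^\infty(\sigma)$; that is part of the content of (3). Your plan to ``Riemann-uniformize the simply connected $T^\infty(\sigma)$'' is therefore circular. Moreover, there is no $\Z/(d+1)\Z$ symmetry on $T^\infty(\sigma)$ to descend by (that symmetry lives on the lift $\widetilde{T^\infty(\sigma)}$), and no uniqueness theorem for ``degree $d$ antiholomorphic branched self-covers with one parabolic point'' is available to invoke. The paper instead builds $\psi$ constructively: start with a conformal isomorphism $\mathcal{Q}_1\to T^0(\sigma)$ between closed topological disks with one ideal boundary point, lift it equivariantly to the rank-one tiles (both sides are $(d+1)$-fold branched covers with a single fully ramified point), and then pull back to all higher-rank tiles, which are unbranched by hypothesis. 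This simultaneously yields the conjugacy and the simple connectivity of $T^\infty(\sigma)$.
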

\begin{proof}
\underline{(1)$\implies$ (2):} That $\Omega$ is a Jordan domain follows from injectivity of $f$ on $\overline{\D}$. By the classification of singular points on boundaries of quadrature domains \cite{sakai-acta}, any singularity of $\partial\Omega$ must be either a double point or a conformal cusp. By the injectivity of $f$ we may rule out singularities of the former type, leaving only conformal cusps. We also note that $\Int{T^0(\sigma)}=\widehat{\C}\setminus\overline{\Omega}$ is a Jordan domain.

Since $\Int{T^0(\sigma)}\subsetneq T^\infty(\sigma)$ contains exactly one critical value $v_0$ of $f$ and $f^{-1}(v_0)$ is a singleton, it follows that $v_0$ is the unique critical value of $\sigma$ in $\Int{T^0(\sigma)}$ and $\sigma^{-1}(v_0)$ is a singleton (see the exposition at the beginning of this section). Since $v_0\in\Int{T^0(\sigma)}$ has a unique preimage under $\sigma$, it follows that $\sigma^{-1}(\Int{T^0(\sigma)})$ is connected and that it contains a maximally ramified critical point of $\sigma$. A straightforward application
of the Riemann-Hurwitz formula now implies that $\sigma^{-1}(\Int{T^0(\sigma)})$ is a simply connected domain.

Let us suppose, by way of contradiction, that $\partial\Omega$ is non-singular. Then, $T^0(\sigma)=\widehat{\C}\setminus\Omega$ is a closed Jordan domain, and the rank one tile $\sigma^{-1}(T^0(\sigma))$ contains a one-sided annular neighborhood of $\partial\Omega$. Since $\sigma^{-1}(T^0(\sigma))$ is simply connected, it must be equal to $\overline{\Omega}$, and hence $T^0(\sigma)\cup\sigma^{-1}(T^0(\sigma))$ must be the whole Riemann sphere. But this contradicts the fact that $\sigma^{-1}(\Omega)\neq\emptyset$. Hence, $\partial\Omega$ must have at least one conformal cusp.

 Let us now show that $\partial\Omega$ cannot have more than one conformal cusp. By way of contradiction, assume that it has at least two cusps $x_1,x_2$. We claim that the union $T^0(\sigma)\cup\sigma^{-1}(T^0(\sigma))$ of the rank zero and rank one tiles must contain a simple closed curve $\gamma$ in its interior such that $x_1$ and $x_2$ lie in different components of $\widehat{\C}\setminus\gamma$. To see this, let $y_1,y_2$ be nonsingular points of $\partial\Omega$, which lie in different connected components of $\partial\Omega\setminus\{x_1,x_2\}$. There is a simple arc $\gamma_1$ lying inside of $T^0(\sigma)$ which connects $y_1$ and $y_2$. By the connectivity of $\sigma^{-1}(\Int{T^0(\sigma)})$ there is also a simple arc $\gamma_2$ lying in $\sigma^{-1}(T^0(\sigma))$ with endpoints $y_1$ and $y_2$. The concatenation of $\gamma_1$ and $\gamma_2$ is a simple closed curve $\gamma$, lying in the interior of $T^0(\sigma)\cup\sigma^{-1}(T^0(\sigma))$. As $x_1, x_2\in K(\sigma)$, the existence of the curve $\gamma$ shows that $T^\infty(\sigma)$ is not simply connected. This contradicts the hypothesis, and proves that $\partial\Omega$ cannot have more than one conformal cusp.

Thus, we have demonstrated that $\Omega$ is a Jordan domain with a unique conformal cusp on its boundary. 
By the commutative diagram of Subsection~\ref{schwarz_subsec}, a critical value of $\sigma$ is also a critical value of $f$.
Hence, $\sigma$ has a unique critical value in $T^\infty(\sigma)$. Since $\sigma^{-1}(v_0)=f\vert_{\D}(\eta(f^{-1}(v_0))$ and $f^{-1}(v_0)$ is a singleton, we conclude that $\sigma$ has a unique critical point in $T^\infty(\sigma)$, and the associated critical value is $v_0$. Moreover, since $f$ has global degree $d+1$, it follows that the unique critical point of $\sigma$ in the tiling set maps to $v_0\in\Int{T^0(\sigma)}$ with local degree $d+1$.

\underline{(2)$\implies$ (3):} As $\mathcal{Q}_1$ is simply connected, we can choose a homeomorphism 
$$
\psi:\mathcal{Q}_1\to T^0(\sigma)
$$ 
such that it is conformal on the interior (note that both $\mathcal{Q}_1$ and $T^0(\sigma)$ are closed topological disks with one boundary point removed). We can further assume that $\psi(0)=v_0$, and its continuous extension sends the cusp point $1\in\partial\mathcal{Q}_1$ to the unique cusp on $\partial T^0(\sigma)=\partial\Omega$.

Note that $\sigma:\sigma^{-1}(T^0(\sigma))\to T^0(\sigma)$ is a $(d+1):1$ branched cover branched only at $\sigma^{-1}(v_0)$, and $\mathcal{R}_d:\rho_1(\Pi)\to\mathcal{Q}_1$ is a $(d+1):1$ branched cover branched only at $\rho_1(0)$. Moreover, $\sigma$ fixes $\partial T^0(\sigma)$ pointwise, and $\mathcal{R}_d$ fixes $\partial\mathcal{Q}_1$ pointwise (since $\rho_1$ fixes $C_1\cup\{1\}$ pointwise, see Subsection~\ref{rd_subsubsec}).

This allows one to lift $\psi$ to a conformal isomorphism from $\rho_1(\Pi)$ onto $\sigma^{-1}(T^0(\sigma))$ such that the lifted map sends $\rho_1(0)$ to $\sigma^{-1}(v_0)$, and continuously matches with the initial map $\psi$ on $\mathcal{Q}_1$. Abusing notation, we denote this extended conformal isomorphism by $\psi$. By construction, $\psi$ is equivariant with respect to the actions of $\mathcal{R}_d$ and $\sigma$ on $\partial\rho_1(\Pi)$ and $\partial\sigma^{-1}(T^0(\sigma))$, respectively. 

Since $T^\infty(\sigma)$ contains no other critical point of $\sigma$, every tile of $T^\infty(\sigma)$ of rank greater than one maps diffeomorphically onto $\sigma^{-1}(T^0(\sigma))$ under some iterate of $\sigma$. Similarly, each tile of $\D_1$ of rank greater than one maps diffeomorphically onto $\rho_1(\Pi)$ under some iterate of $\mathcal{R}_d$. This fact, along with the equivariance property of $\psi$ mentioned above, enables us to lift $\psi$ to all tiles using the iterates of $\mathcal{R}_d$ and $\sigma$. This produces the desired biholomorphism $\psi$ between $\mathcal{Q}$ and $T^\infty(\sigma)$ which conjugates the anti-Farey map $\mathcal{R}_d$ to the Schwarz reflection $\sigma$. Simple connectivity of $T^\infty(\sigma)$ now follows from the same property of $\mathcal{Q}$.

\smallskip

\underline{(3)$\implies$ (4):} By the assumed simple connectivity of $T^\infty(\sigma)$, it follows that $K(\sigma)$ is connected. By hypothesis, $\sigma$ has a unique critical point in $\sigma^{-1}(T^0(\sigma))\subset T^\infty(\sigma)$. We denote this critical point by $c_\infty$. Conjugating $\sigma$ by a M{\"o}bius map, we can assume that this critical point maps with local degree $d+1$ to $\infty$. 
We can normalize $f$ (which amounts to pre-composing it with an element of $\mathrm{Aut}(\D)$) so that it sends $0$ to $c_\infty$. The commutative diagram in Subsection~\ref{schwarz_subsec} now implies that $f$ sends $\infty$ to itself with local degree $d+1$. Consequently, $f$ is a degree $d+1$ polynomial. It remains to prove that  $f$ has a unique critical point on $\mathbb{S}^1$. This will follow from the next paragraph, where we argue that $\partial\Omega$ has a unique singular point, which is a conformal cusp. 

The biholomorphism $\psi$ induces a homeomorphism between $\partial\mathcal{Q}_1$ (boundary taken in $\widetilde{\mathcal{Q}}$, see Subsection~\ref{rd_subsec}) and $\partial T^0(\sigma)$.
Note also that the map $\mathcal{R}_d$ admits local anti-conformal extensions around each point of $C_1$ (see Subsection~\ref{rd_subsec}), but does not have any such extension in a relative neighborhood of $1$ in $\overline{\mathcal{Q}}$ (closure taken in $\widetilde{\mathcal{Q}}$). It follows via the conjugacy $\psi$ that $\sigma$ admits local anti-conformal extensions around each point of $\partial T^0(\sigma)\setminus\{\psi(1)\}$, but does not have any such extension in a neighborhood of $\psi(1)$. This implies that the Jordan curve $\partial T^0(\sigma)=\partial\Omega$ has a unique singular point at $\psi(1)$, which must be a conformal cusp.

\smallskip

\underline{(4)$\implies$ (1):} Connectedness of $K(\sigma)$ implies that $T^\infty(\sigma)$ is simply connected.

Since $f$ is a polynomial, $\sigma$ has a $d$-fold critical point at $f(0)$ with associated critical value $\infty\in\Int{T^0(\sigma)}$. Moreover, $f^{-1}(\infty)=\{\infty\}$. If any tile of $T^\infty(\sigma)$ of rank greater than one contains a critical point of $\sigma$, then such a tile would be ramified, and disconnect $K(\sigma)$ (cf. \cite[Proposition~5.23]{LLMM1}). Therefore, $T^\infty(\sigma)$ does not contain any other critical value of $\sigma$ and hence does not contain any critical value of $f$ other than $v_0=\infty$.
\end{proof}

\begin{definition}\label{srd_def}
We define $\mathcal{S}_{\mathcal{R}_d}$ to be the space of pairs $(\Omega,\sigma)$, where 
\begin{enumerate}
\item $\Omega$ is a Jordan quadrature domain with associated Schwarz reflection map $\sigma:\overline{\Omega}\rightarrow\widehat{\C}$, and

\item there exists a conformal map $\psi:(\mathcal{Q},0)\rightarrow (T^\infty(\sigma),\infty)$ that conjugates $\mathcal{R}_d:\mathcal{Q}\setminus\Int{\mathcal{Q}_1}\longrightarrow\mathcal{Q}$ to $\sigma:T^\infty(\sigma)\setminus\Int{T^0(\sigma)}\longrightarrow T^\infty(\sigma)$.
\end{enumerate}
\end{definition}

We endow this space with the Carath{\'e}odory topology (cf. \cite[\S 5.1]{McM94}).

\begin{remark}\label{bers_rem}
The family $\mathcal{S}_{\mathcal{R}_d}$ can be thought of as a Bers slice in the space of Schwarz reflection maps, since all maps in this family have the same external dynamics $\mathcal{R}_d$.
\end{remark}

The next corollary follows from Proposition~\ref{mating_equiv_cond_prop}.

\begin{corollary}\label{srd_poly_unif_cor}
Let $(\Omega,\sigma)\in\mathcal{S}_{\mathcal{R}_d}$. Then $\partial\Omega$ has a unique conformal cusp $\pmb{y}$ on its boundary. Moreover, there exists a polynomial $f$ of degree $d+1$ with a unique critical point on $\mathbb{S}^1$ such that $f$ carries $\overline{\D}$ injectively onto $\overline{\Omega}$.
\end{corollary}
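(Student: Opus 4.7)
The plan is to derive this corollary as a direct consequence of Proposition~\ref{mating_equiv_cond_prop}, observing that the definition of $\mathcal{S}_{\mathcal{R}_d}$ is essentially a normalized version of condition (3) of that proposition. Indeed, for $(\Omega,\sigma)\in\mathcal{S}_{\mathcal{R}_d}$, the existence of the conformal conjugacy $\psi\colon(\mathcal{Q},0)\to(T^\infty(\sigma),\infty)$ between $\mathcal{R}_d$ and $\sigma$ (on the appropriate domains) is exactly condition (3), with the additional datum that $\psi(0)=\infty$.

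First I would invoke the implication (3)$\implies$(2) from Proposition~\ref{mating_equiv_cond_prop}, which immediately yields that $\Omega$ is a Jordan domain with a unique conformal cusp $\pmb{y}$ on its boundary (and that $\sigma$ has a unique critical point in $T^\infty(\sigma)$, mapping with local degree $d+1$ to the critical value $v_0$). This establishes the first assertion of the corollary.

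For the uniformization statement, the key observation is that the Möbius normalization invoked in the proof of (3)$\implies$(4) in Proposition~\ref{mating_equiv_cond_prop} is \emph{already} built into the definition of $\mathcal{S}_{\mathcal{R}_d}$. Namely, since $0\in\mathcal{Q}$ is the unique critical value of $\mathcal{R}_d$ and $\psi(0)=\infty$, the unique critical value $v_0$ of $\sigma$ in $T^\infty(\sigma)$ is automatically $\infty$, and the unique critical point of $\sigma$ in $T^\infty(\sigma)$, call it $c_\infty$, satisfies $\sigma(c_\infty)=\infty$ with local degree $d+1$. Hence no further Möbius conjugation is required. Following the argument in the proof of (3)$\implies$(4), I would take a Riemann uniformization $f\colon\overline{\D}\to\overline{\Omega}$ (which exists by Proposition~\ref{simp_conn_quad}, extending continuously to $\overline{\D}$ since $\Omega$ is Jordan) and pre-compose it with an element of $\mathrm{Aut}(\D)$ so that $f(0)=c_\infty$. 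The commutative diagram of Subsection~\ref{schwarz_subsec} then forces $f(\infty)=\infty$ with local degree $d+1$, so $f$ is a polynomial of degree $d+1$.

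Finally, uniqueness of the critical point of $f$ on $\mathbb{S}^1$ follows from the uniqueness of the conformal cusp $\pmb{y}\in\partial\Omega$: via the commutative diagram, critical points of $f$ on $\mathbb{S}^1$ correspond precisely to singular points of $\partial\Omega$, as explained at the end of the proof of (3)$\implies$(4). I do not foresee any technical obstacle beyond carefully tracking the normalizations so as to confirm that the hypotheses of Proposition~\ref{mating_equiv_cond_prop}(3) are met without the need for an auxiliary Möbius conjugation.
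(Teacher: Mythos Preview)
Your proposal is correct and is essentially the same approach as the paper's: the paper simply records that the corollary follows from Proposition~\ref{mating_equiv_cond_prop}, and you have spelled out precisely which implications ((3)$\Rightarrow$(2) and (3)$\Rightarrow$(4)) are being invoked, together with the observation that the normalization $\psi(0)=\infty$ in Definition~\ref{srd_def} obviates the M{\"o}bius conjugation in (3)$\Rightarrow$(4). One small point worth making explicit is why the rational uniformization of $\Omega$ furnished by Proposition~\ref{simp_conn_quad} has degree exactly $d+1$ (so that Proposition~\ref{mating_equiv_cond_prop} applies): this follows because the conjugacy $\psi$ forces $\sigma:\sigma^{-1}(\Int T^0(\sigma))\to\Int T^0(\sigma)$ to have the same degree as $\mathcal{R}_d:\rho_1(\Pi)\to\mathcal{Q}_1$, namely $d+1$, and by Proposition~\ref{simp_conn_quad} this degree equals that of the uniformizing rational map.
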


We proceed to show that the space $\mathcal{S}_{\mathcal{R}_d}$ is large. Indeed, the following result will demonstrate that the union of all hyperbolic components in the connectedness locus of degree $d$ anti-polynomials injects in $\mathcal{S}_{\mathcal{R}_d}$. Roughly speaking, this is achieved by gluing in the anti-Farey map $\mathcal{R}_d$ outside the filled Julia set of a hyperbolic anti-polynomial with a connected Julia set.

Let us recall that an anti-rational map is said to be \emph{semi-hyperbolic} if it has no parabolic cycles and all critical points in its Julia set are non-recurrent. 

\begin{proposition}\label{mating_all_pcf_anti_poly}
Let $p$ be a degree $d$ monic, centered, semi-hyperbolic anti-polynomial with a connected Julia set. Then, there exist
\begin{itemize}
\item $(\Omega,\sigma)\in\mathcal{S}_{\mathcal{R}_d}$, and
\item a global David homeomorphism $\mathfrak{H}$ that is conformal on $\Int{\mathcal{K}(p)}$,
\end{itemize}
such that $\mathfrak{H}$ conjugates $p\vert_{\mathcal{K}(p)}$ to $\sigma\vert_{K(\sigma)}$.
\end{proposition}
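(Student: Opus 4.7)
The plan is to carry out a David surgery that replaces the expanding Böttcher dynamics of $p$ on $\widehat{\C}\setminus\mathcal{K}(p)$ with the parabolic anti-Farey map $\mathcal{R}_d$ on $\mathcal{Q}$, thereby producing the desired Schwarz reflection in $\mathcal{S}_{\mathcal{R}_d}$.

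First, I will set up the external David coupling. Since $p$ is semi-hyperbolic with connected filled Julia set, $\mathcal{J}(p)$ is locally connected, and the B\"ottcher coordinate extends to a conformal isomorphism $\phi_p\colon \widehat{\C}\setminus\mathcal{K}(p)\to\widehat{\C}\setminus\overline{\D}$ conjugating $p$ to $\overline{z}^d$, with continuous boundary extension. Setting $\tilde{\Phi}:=1/\phi_p$ gives a conformal isomorphism $\widehat{\C}\setminus\mathcal{K}(p)\to\D$ that still conjugates $p$ to $\overline{z}^d$. Let $H\colon\D\to\mathcal{Q}$ be the David homeomorphism from Lemma~\ref{david_ext_lem}, and set $G:=H\circ\tilde{\Phi}$. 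Then $G\colon\widehat{\C}\setminus\mathcal{K}(p)\to\mathcal{Q}$ is a David homeomorphism whose continuous boundary extension $\mathcal{J}(p)\to\partial\mathcal{Q}$ semi-conjugates $p$ to $\mathcal{R}_d$.

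Next, I will integrate. Define a Beltrami coefficient $\mu$ on $\widehat{\C}$ by $\mu:=\mu_G$ on $\widehat{\C}\setminus\mathcal{K}(p)$ and $\mu:=0$ on $\mathcal{K}(p)$. Since $\mathcal{J}(p)$ has zero area (by semi-hyperbolicity) and the David condition is preserved under pre-composition with the conformal map $\tilde{\Phi}$, $\mu$ satisfies the David condition globally. David's integrability theorem produces a normalized David homeomorphism $\mathfrak{H}\colon\widehat{\C}\to\widehat{\C}$ with $\mu_{\mathfrak{H}}=\mu$; in particular, $\mathfrak{H}$ is conformal on $\Int{\mathcal{K}(p)}$. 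The composition $\Psi:=\mathfrak{H}\circ G^{-1}\colon\mathcal{Q}\to\mathfrak{H}(\widehat{\C}\setminus\mathcal{K}(p))$ has zero Beltrami coefficient and is therefore a conformal isomorphism.

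Now I assemble the Schwarz reflection. Set $\Omega:=\widehat{\C}\setminus\Psi(\overline{\mathcal{Q}_1})$ and define $\sigma\colon\overline{\Omega}\to\widehat{\C}$ piecewise: on $\mathfrak{H}(\mathcal{K}(p))$, let $\sigma:=\mathfrak{H}\circ p\circ\mathfrak{H}^{-1}$; on $\Psi(\overline{\mathcal{Q}}\setminus\Int\mathcal{Q}_1)$, let $\sigma:=\Psi\circ\mathcal{R}_d\circ\Psi^{-1}$. Each piece is anti-holomorphic in the interior (the first since $\mathfrak{H}$ is conformal on $\mathcal{K}(p)$, the second since $\Psi$ is conformal). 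The two definitions match continuously along the seam $\mathfrak{H}(\mathcal{J}(p))=\Psi(\partial\mathcal{Q})$ thanks to the boundary semi-conjugacy $G\circ p=\mathcal{R}_d\circ G$ on $\mathcal{J}(p)$, and as this seam has zero area, $\sigma$ is anti-holomorphic on all of $\Omega$ by Rad\'o's removability theorem. It remains to verify the required properties: $\partial\Omega=\Psi(\partial\mathcal{Q}_1)$ is a Jordan curve, piecewise real-analytic with a unique conformal cusp (the image of the identified ideal vertex of $\Pi$), and $\sigma$ fixes $\partial\Omega$ pointwise since $\mathcal{R}_d$ fixes $\partial\mathcal{Q}_1$ pointwise. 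Thus $\sigma$ is the Schwarz reflection of the Jordan quadrature domain $\Omega$, and condition (2) of Definition~\ref{srd_def} is witnessed by $\Psi$, so $(\Omega,\sigma)\in\mathcal{S}_{\mathcal{R}_d}$. The required David conjugacy is $\mathfrak{H}$ itself, which sends $\mathcal{K}(p)$ onto $K(\sigma)=\mathfrak{H}(\mathcal{K}(p))$ and is conformal on $\Int{\mathcal{K}(p)}$. The principal technical hurdle is verifying the global David condition for $\mu$: one must check that the David bound on $H$ is not destroyed when pre-composed by $\tilde{\Phi}$, which requires control on how $\tilde{\Phi}$ distorts area near $\mathcal{J}(p)$ (available for semi-hyperbolic $p$ via H\"older regularity of the inverse B\"ottcher map). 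This is handled by the David surgery framework developed in~\cite{LMMN}.
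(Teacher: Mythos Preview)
Your overall strategy coincides with the paper's: glue the anti-Farey model $\mathcal{R}_d$ onto $\widehat{\C}\setminus\mathcal{K}(p)$ via the David homeomorphism $H$ of Lemma~\ref{david_ext_lem}, integrate the resulting Beltrami coefficient, and read off the Schwarz reflection. However, two steps in your write-up are not justified.

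\medskip

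\textbf{Removability of the seam.} You assert that $\sigma$ is anti-holomorphic across $\mathfrak{H}(\mathcal{J}(p))$ ``by Rad\'o's removability theorem'' because the seam has zero area. There is no such theorem: a closed set of planar Lebesgue measure zero need not be removable for bounded (or even continuous) holomorphic functions. What is actually needed is \emph{conformal removability} of $\mathfrak{H}(\mathcal{J}(p))$, and this requires substantially more than zero area. The paper supplies the missing mechanism: since $p$ is semi-hyperbolic, $\mathcal{B}_\infty(p)$ is a John domain (Carleson--Jones--Yoccoz), so $\mathcal{J}(p)=\partial\mathcal{B}_\infty(p)$ is removable for $W^{1,1}$ functions by Jones--Smirnov; then \cite[Theorem~2.7]{LMMN} transfers removability through the David map $\mathfrak{H}$ to conclude that $\mathfrak{H}(\mathcal{J}(p))$ is locally conformally removable. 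Without this, you have not shown that $\sigma$ is anti-holomorphic on $\Omega$, and hence not that $\Omega$ is a quadrature domain.

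\medskip

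\textbf{The David condition.} You first claim that the David bound on $H$ survives pre-composition with the conformal map $\tilde{\Phi}$, and only later concede this is the ``principal technical hurdle''. The initial claim is false in general: the David inequality measures spherical area of superlevel sets of $|\mu|$, and a conformal map can distort area arbitrarily badly near the boundary. The correct hypothesis, used explicitly in the paper via \cite[Proposition~2.5(iv)]{LMMN}, is again the John property of $\mathcal{B}_\infty(p)$. Your final appeal to ``H\"older regularity of the inverse B\"ottcher map'' is in the right direction (John domains have H\"older Riemann maps), but the John condition is what the cited result from \cite{LMMN} actually requires, and it should be invoked up front rather than swept into a closing remark.
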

\begin{proof}
There exists a conformal map $\kappa:\D\to\mathcal{B}_\infty(p)$ that is tangent to the identity at $\infty$ and conjugates $\overline{z}^d$ to $p$. As $p$ is semi-hyperbolic, $\mathcal{B}_\infty(p)$ is a simply connected John domain and hence the Julia set $\mathcal{J}(p)=\partial\mathcal{B}_\infty(p)$ is locally connected \cite[Theorem~1.1]{CJY}. Hence, $\kappa$ continuously extends to a semi-conjugacy between $\overline{z}^d\vert_{\mathbb{S}^1}$ and $p\vert_{\mathcal{J}(p)}$.

We define a map on a subset of $\widehat{\C}$ as follows:
$$
\widetilde{\sigma}:=
\begin{cases}
\left(\kappa\circ H^{-1}\right)\circ \mathcal{R}_d\circ\left(H\circ\kappa^{-1}\right),\ {\rm on\ } \kappa(H^{-1}(\D_1\cup C_1))\subsetneq \mathcal{B}_\infty(p),\\
p, \quad {\rm on\ } \mathcal{K}(p),
\end{cases}
$$
where $H:\D\to\mathcal{Q}$ is the homeomorphism from Lemma~\ref{david_ext_lem}.
We will denote the domain of definition of $\widetilde{\sigma}$ by $\mathrm{Dom}(\widetilde{\sigma})$. The equivariance property of $H:\mathbb{S}^1\to\partial\mathcal{Q}$ and $\kappa:\mathbb{S}^1\to\mathcal{J}(p)$ ensures that $\widetilde{\sigma}$ is continuous.

\begin{figure}
\captionsetup{width=0.96\linewidth}
\begin{tikzpicture}
\node[anchor=south west,inner sep=0] at (0,0) {\includegraphics[width=0.35\textwidth]{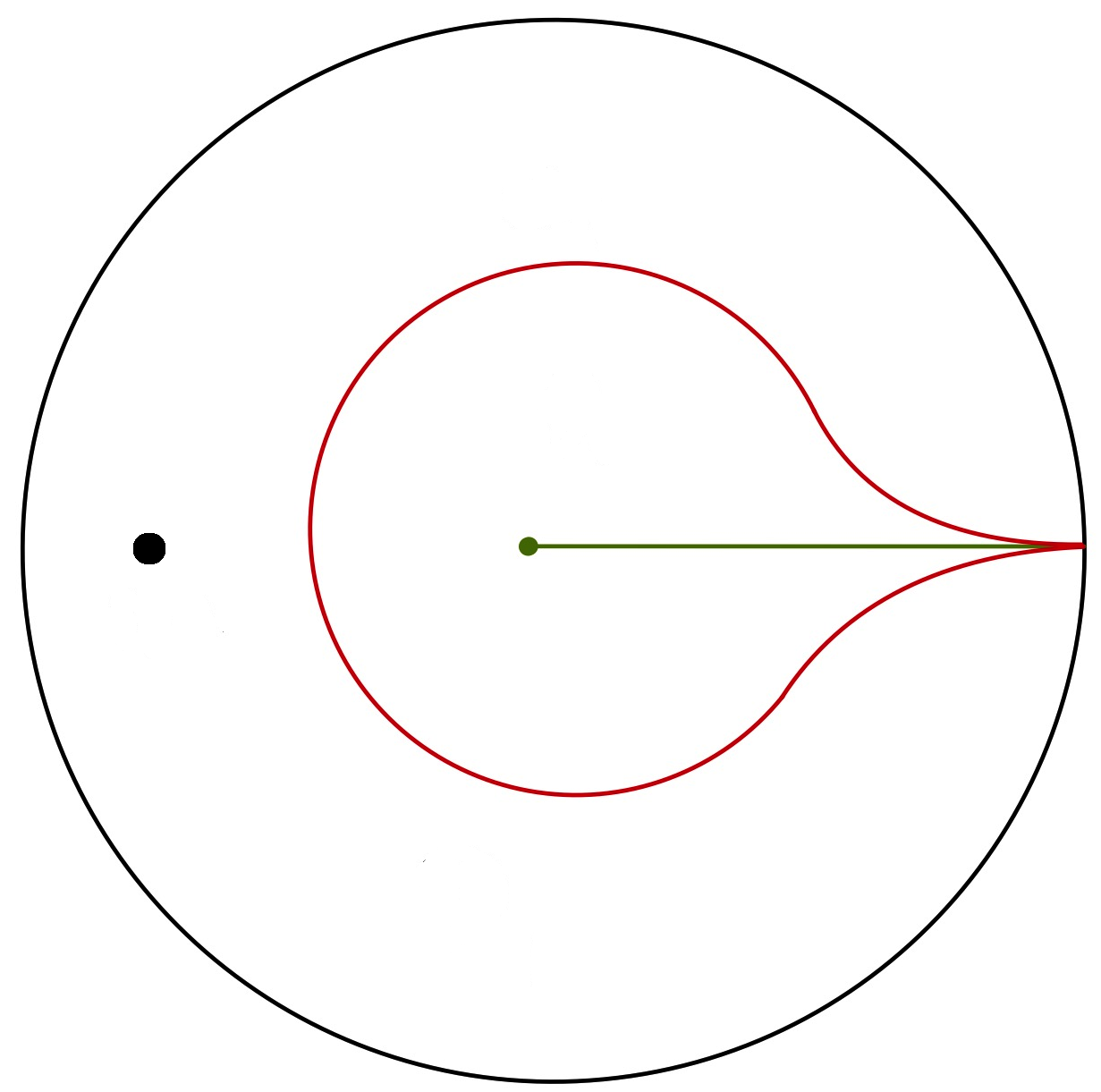}}; 
\node[anchor=south west,inner sep=0] at (5,0) {\includegraphics[width=0.55\textwidth]{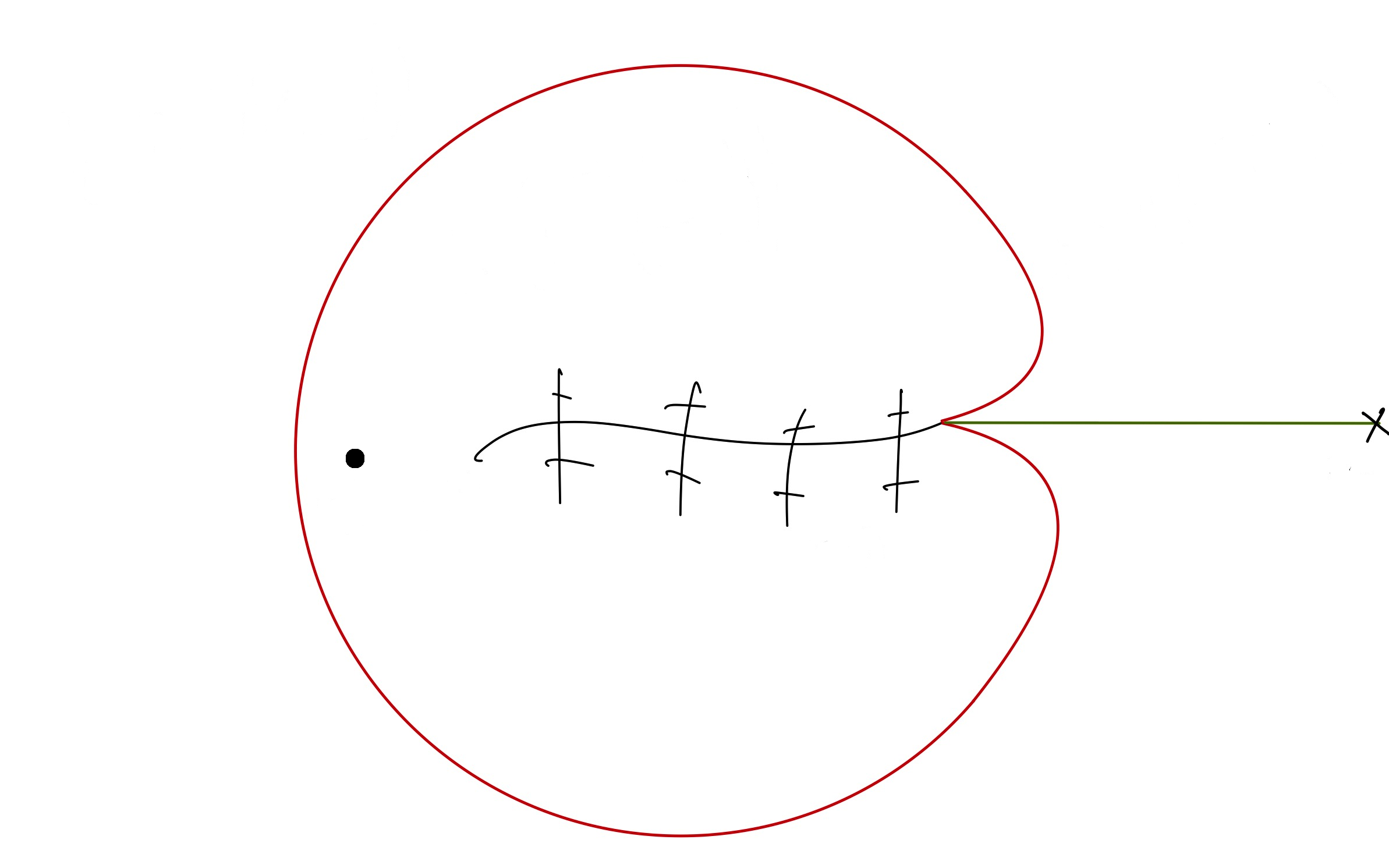}};
\node at (1,3.6) {$C_1$};
\node at (2.5,2.8) {\begin{large}$\mathcal{Q}_1$\end{large}};
\node at (2.2,0.6) {\begin{Large}$\D_1$\end{Large}};
\node at (0.7,1.84) {$\rho_1(0)$};
\draw [->,line width=0.5pt] (1.25,3.5) to (1.6,3.1);
\node at (2.15,2) {$0$};
\node at (11.9,2) {$\infty$};
\node at (9,1.5) {\begin{large}$\mathcal{K}(p)$\end{large}};
\node at (8.4,3.24) {$\kappa(H^{-1}(\D_1))$};
\draw [->,line width=0.5pt] (10.8,3.6) to (10.05,3.2);
\node at (11,3.8) {$\kappa(H^{-1}(C_1))$};
\node at (11.2,0.6) {$\kappa(H^{-1}(\mathcal{Q}_1))$};
\node at (6,0.8) {\begin{tiny}$\kappa(H^{-1}(\rho_1(0)))$\end{tiny}};
\draw [->,line width=0.5pt] (6,1) to (6.75,1.95);
\end{tikzpicture}
\caption{Left: the uniformization of the quotient Riemann surface $\mathcal{Q}$ by the disk. Also depicted is $\mathbb{D}_1\cup\ C_1$, which is the domain of definition for $\mathcal{R}_d$, as well as its complement $\mathcal{Q}_1$. Right: the domain of definition for the topological mating $\widetilde{\sigma}$ along with the critical point of multiplicity $d$.}
\label{top_mating_fig}
\end{figure}

We will now define a $\widetilde{\sigma}$-invariant complex structure $\mu$ (equivalently, a Beltrami coefficient) on $\widehat{\C}$. Define $\mu\vert_{\mathcal{B}_\infty(p)}$ to be the pullback to $\mathcal{B}_\infty(p)$ of the standard complex structure on $\D$ under the map $H\circ\kappa^{-1}$. As $\mathcal{R}_d$ is an antiholomorphic map, it follows that $\mu\vert_{\mathcal{B}_\infty(p)}$ is $\widetilde{\sigma}$-invariant. We extend $\mu$ to $\mathcal{K}(p)$ as the standard complex structure. Since $\widetilde{\sigma}\equiv p$ on $\mathcal{K}(p)$, $\mu\vert_{\mathcal{K}(p)}$ is also $\widetilde{\sigma}$-invariant.

We claim that $\mu$ is a David coefficient on $\widehat{\C}$; i.e., it satisfies condition~\eqref{david_cond} of Definition~\ref{david_def}.
Recall that $\mathcal{B}_\infty(p)$ is a John domain. By \cite[Proposition~2.5 (part iv)]{LMMN}, the map $H\circ\kappa^{-1}:\mathcal{B}_\infty(p)\to\D$ is a David homeomorphism, and hence, $\mu$ is a David coefficient on $\mathcal{B}_\infty(p)$. Since $\mu$ is the standard complex structure on $\mathcal{K}(p)$, the claim is proved.

The David Integrability Theorem \cite{David} \cite[Theorem~20.6.2, p.~578]{AIM09} provides us with a David homeomorphism $\mathfrak{H}:\widehat{\C}\to\widehat{\C}$ such that the pullback of the standard complex structure under $\mathfrak{H}$ is equal to $\mu$. Conjugating $\widetilde{\sigma}$ by $\mathfrak{H}$, we obtain the map 
$$
\sigma:=\mathfrak{H}\circ\widetilde{\sigma}\circ\mathfrak{H}^{-1}:\mathfrak{H}(\mathrm{Dom}(\widetilde{\sigma}))\to\widehat{\C}.
$$
We set $\mathrm{Dom}(\sigma):= \mathfrak{H}(\mathrm{Dom}(\widetilde{\sigma}))$. 

We proceed to show that $\sigma$ is antiholomorphic on $\Int{\mathrm{Dom}(\sigma)}$.
Note that since $\mathcal{B}_\infty(p)$ is a John domain, its boundary $\mathcal{J}(p)$ is removable for $W^{1,1}$ functions \cite[Theorem~4]{JS00}. By \cite[Theorem~2.7]{LMMN}, $\mathfrak{H}(\mathcal{J}(p))$ is locally conformally removable. Hence, it suffices to show that $\sigma$ is antiholomorphic on the interior of $\mathrm{Dom}(\sigma)\setminus\mathfrak{H}(\mathcal{J}(p))$. Indeed, this would imply that the continuous map $\sigma$ is antiholomorphic on $\Int{\mathrm{Dom}(\sigma)}$ away from the finitely many critical points of $\sigma$. One can then conclude that $\sigma$ is antiholomorphic on $\Int{\mathrm{Dom}(\sigma)}$ using the Riemann removability theorem.

To this end, first observe that both the maps $H\circ\kappa^{-1}$ and $\mathfrak{H}$ are David homeomorphisms on $\mathcal{B}_\infty(p)$ straightening $\mu\vert_{\mathcal{B}_\infty(p)}$. By \cite[Theorem~20.4.19, p.~565]{AIM09}, $H\circ\kappa^{-1}\circ\mathfrak{H}^{-1}$ is conformal on $\mathfrak{H}(\mathcal{B}_\infty(p))$. It now follows from the definitions of $\widetilde{\sigma}$ and $\sigma$ that $\sigma$ is antiholomorphic on $\mathfrak{H}(\mathcal{B}_\infty(p))\cap\Int{\mathrm{Dom}(\sigma)}$. Similarly, both the identity map and the map $\mathfrak{H}$ are David homeomorphisms on each component of $\Int{\mathcal{K}(p)}$ straightening $\mu$. Once again by \cite[Theorem~20.4.19, p.~565]{AIM09}, $\mathfrak{H}$ is conformal on each component of $\Int{\mathcal{K}(p)}$. By definition of $\widetilde{\sigma}$ and $\sigma$, it now follows that $\sigma$ is antiholomorphic on each interior component of $\mathfrak{H}(\mathcal{K}(p))$.
This completes the proof of the fact that $\sigma$ is antiholomorphic on the interior of $\mathrm{Dom}(\sigma)$.

By construction, $\Int{\mathrm{Dom}(\widetilde{\sigma})}$ is a Jordan domain (see Figure~\ref{top_mating_fig}), and hence so is $\Omega:=\Int{\mathrm{Dom}(\sigma)}$. More precisely, 
$$
\overline{\Omega}=\widehat{\C}\setminus (\mathfrak{H}\circ\kappa\circ H^{-1})(\Int{\mathcal{Q}_1}),
$$ 
where $\mathcal{Q}_1=\faktor{\Pi}{\langle M_\omega\rangle}\subset\mathcal{Q}$ (note that $\Int{\mathcal{Q}_1}$ is a Jordan domain in $\mathcal{Q}$).
Since $\mathcal{R}_d$ fixes $\overline{C_1}$ pointwise, it follows that $\sigma:\overline{\Omega}\to\widehat{\C}$ is antiholomorphic on the interior of its domain of definition and continuously extends to the identity map on $\partial\Omega$. 

Thus, $\Omega$ is a Jordan quadrature domain with associated Schwarz reflection map $\sigma$. As $\mathcal{R}_d$ admits local antiholomorphic extensions around each point of $\overline{C_1}\subset\widetilde{\mathcal{Q}}$ except for the point $1$, it follows that the Schwarz reflection map $\sigma$ admits local antiholomorphic extensions around each point of $\partial\Omega$ except for the point $(\mathfrak{H}\circ\kappa\circ H^{-1})(1)=(\mathfrak{H}\circ\kappa)(1)$. Hence, the only non-singular point on $\partial\Omega$ is $(\mathfrak{H}\circ\kappa)(1)$, which is necessarily a conformal cusp since $\partial\Omega$ is a Jordan curve. We conclude that
$$
T(\sigma)= (\mathfrak{H}\circ\kappa\circ H^{-1})(\mathcal{Q}_1\cup\{1\}),\quad \textrm{and}\quad T^0(\sigma)= (\mathfrak{H}\circ\kappa\circ H^{-1})(\mathcal{Q}_1).
$$
Since all points in $\D_1\cup C_1$ eventually escape to $\mathcal{Q}_1$ under iterates of $\mathcal{R}_d$, it follows that the tiling set of $\sigma$ is 
$$
T^\infty(\sigma)=(\mathfrak{H}\circ\kappa\circ H^{-1})(\mathcal{Q})=\mathfrak{H}(\mathcal{B}_\infty(p)),\quad \textrm{and}\quad K(\sigma)= \mathfrak{H}(\mathcal{K}(p)).
$$
Therefore, $\sigma\vert_{K(\sigma)}$ is topologically conjugate to $p\vert_{\mathcal{K}(p)}$ (via $\mathfrak{H}$) such that the conjugacy is conformal in the interior, and $\mathfrak{H}\circ\kappa\circ H^{-1}:\mathcal{Q}\to T^\infty(\sigma)$ is a conformal conjugacy between $\mathcal{R}_d$ and $\sigma$. After possibly conjugating by a M{\"o}bius map, we can assume that the unique critical value $(\mathfrak{H}\circ\kappa\circ H^{-1})(0)$ of $\sigma$ in $T^\infty(\sigma)$ is at $\infty$. With this normalization, $(\Omega,\sigma)\in\mathcal{S}_{\mathcal{R}_d}$ is the desired pair.
\end{proof}

\begin{definition}\label{rel_hyp_schwarz}
We call $(\Omega,\sigma)\in\mathcal{S}_{\mathcal{R}_d}$ \emph{relatively hyperbolic} if the forward $\sigma$-orbit of each critical point of $\sigma$ in $K(\sigma)$ converges to an attracting cycle. 
\end{definition}

\begin{remark}\label{rel_hyp_schwarz_rem}
(1) There are two major difficulties in carrying out the arguments of Proposition~\ref{mating_all_pcf_anti_poly} for an arbitrary degree $d$ anti-polynomial with a connected Julia set. Firstly, the Julia set of $p$ may not be locally connected, in which case the proof breaks down. Secondly, even if the Julia set is locally connected, lack of expansion along the postcritical set of $p$ may result in loss of control of the geometry of $\mathcal{B}_\infty(p)$. This may, in turn, imply that the Beltrami coefficient constructed in the proof of Proposition~\ref{mating_all_pcf_anti_poly} is not a David coefficient (note that Johnness of the basin of infinity for semi-hyperbolic maps was used crucially in our proof).

(2) We use the term relatively hyperbolic (as opposed to hyperbolic) because the external map of every Schwarz reflection map in $\mathcal{S}_{\mathcal{R}_d}$ has a parabolic fixed point. Thus, there is no expanding conformal metric in a neighborhood of the limit set of a relatively hyperbolic map in $\mathcal{S}_{\mathcal{R}_d}$.

(3) A member of $\mathcal{S}_{\mathcal{R}_d}$ obtained by applying Proposition~\ref{mating_all_pcf_anti_poly} on a hyperbolic anti-polynomial $p$ with connected Julia set is relatively hyperbolic.
\end{remark}

Note that there is a natural action of $\mathrm{Aut}(\C)$ on $\mathcal{S}_{\mathcal{R}_d}$ given by $A\cdot(\Omega,\sigma):=(A(\Omega),A\circ\sigma\circ A^{-1})$. We will define the set of equivalence classes by 
$$
\left[\mathcal{S}_{\mathcal{R}_d}\right] :=\ \faktor{\mathcal{S}_{\mathcal{R}_d}}{\mathrm{Aut}(\C)},
$$ 
and denote the equivalence class of $(\Omega,\sigma)$ by $[\Omega,\sigma]$.

The next result shows that $\mathcal{S}_{\mathcal{R}_d}$ also contains the closure of relatively hyperbolic maps.

\begin{proposition}\label{srd_comp_prop}
The moduli space $\left[\mathcal{S}_{\mathcal{R}_d}\right]$ is compact.
\end{proposition}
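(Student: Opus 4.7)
The plan is to parametrize $[\mathcal{S}_{\mathcal{R}_d}]$ by a finite-dimensional space of normalized uniformizing polynomials and show this space is closed and bounded. By Corollary~\ref{srd_poly_unif_cor}, each class $[\Omega,\sigma]\in[\mathcal{S}_{\mathcal{R}_d}]$ admits a representative with $\Omega=f(\D)$ for a polynomial $f$ of degree $d+1$ univalent on $\overline{\D}$ with a unique critical point on $\mathbb{S}^1$. Using the $\mathrm{Aut}(\C)$-action on equivalence classes (to place the $(d+1)$-fold critical point $c_\infty$ of $\sigma$ at $0$, the corresponding critical value $v_0$ at $\infty$, and to make $f$ monic) together with pre-composition of $f$ by rotations in $\mathrm{Aut}(\D)$ (which preserves $(\Omega,\sigma)$ and places the critical point of $f$ on $\mathbb{S}^1$ at $z=1$), I would identify $[\mathcal{S}_{\mathcal{R}_d}]$, up to a finite ambiguity coming from the residual $(d+1)$-th root of unity symmetry, with a subset $\mathcal{P}$ of the $(d-1)$-complex-dimensional affine variety of monic polynomials $f(z)=z^{d+1}+\sum_{k=1}^{d}a_k z^k$ satisfying $f'(1)=0$.

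Boundedness of $\mathcal{P}$ is the core step and should parallel the escape-radius bound for the Mandelbrot set. By Proposition~\ref{mating_equiv_cond_prop}, membership in $\mathcal{S}_{\mathcal{R}_d}$ forces $K(\sigma)$ to be connected, and the argument in the implication (4) $\Rightarrow$ (1) of that proposition shows that this in turn forces the forward $\sigma$-orbits of all finite critical values of $\sigma$ in $\Omega$---namely the points $f(\eta(c_i))$ where $c_i\in\D^*$ are the critical points of $f$---to remain inside the compact set $K(\sigma)\subset\overline{\Omega}$. A direct dynamical estimate on $\sigma=f\circ\eta\circ(f|_{\D})^{-1}$ should show that if $\max_k|a_k|$ grows unbounded, then at least one of the finite critical orbits lands in the tiling set $T^\infty(\sigma)$, making some tile of rank $\geq 2$ ramified and thereby disconnecting $K(\sigma)$, a contradiction.

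For closedness, I would take a convergent sequence $f_n\to f$ locally uniformly in $\mathcal{P}$ and verify the limit stays in $\mathcal{P}$. Since $f$ is a monic polynomial of degree $d+1$ it is non-constant, so by Hurwitz's theorem $f$ is univalent on $\D$. Injectivity on $\overline{\D}$ survives in the limit because each $f_n$ is injective on $\overline{\D}$ with its unique boundary critical point pinned at $z=1$ (which becomes the unique cusp of $\partial\Omega$). The Schwarz reflection $\sigma$ on $\Omega=f(\D)$ is recovered as the Carath\'eodory limit of $\sigma_n$. Connectivity of $K(\sigma)$ follows from Hausdorff-limit arguments applied to the uniformly bounded family $\{K(\sigma_n)\}$ of connected compacta. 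Proposition~\ref{mating_equiv_cond_prop} (4) $\Rightarrow$ (1) then yields $(\Omega,\sigma)\in\mathcal{S}_{\mathcal{R}_d}$, completing the argument.

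The main obstacle is the boundedness step: establishing the quantitative escape estimate that ties coefficient growth of $f$ to eventual escape of the free critical orbits of $\sigma$. This is more delicate than in the hyperbolic case because $\sigma$ has a parabolic external class fixing the cusp, so the uniform expansion available in the classical Mandelbrot bound is unavailable. One should exploit the rigid model given by the anti-Farey map $\mathcal{R}_d$ together with the conformal conjugacy $\psi\colon(\mathcal{Q},0)\to(T^\infty(\sigma),\infty)$, which provides uniform control on the geometry of the tiling set and hence on the escape dynamics; a secondary subtlety in the closedness step, namely ruling out migration of critical points of $f_n$ in $\D^*$ onto $\mathbb{S}^1$ (which would create extra singular boundary points), is resolved by the same a priori bound.
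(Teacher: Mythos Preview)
Your overall strategy---parametrize by normalized uniformizing polynomials and prove this parameter set is closed and bounded---is reasonable and shares its skeleton with the paper's proof, but the execution differs substantially, and your proposal leaves genuine gaps precisely where the paper's argument is cleanest.

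\textbf{Boundedness.} You correctly identify this as the crux, and you propose a Mandelbrot-style escape estimate: if the coefficients of $f$ blow up, some critical orbit of $\sigma$ should land in the tiling set. You offer no mechanism for this, and it is not clear one exists in any elementary form---as you yourself note, the parabolic external class kills the usual expansion argument. The paper bypasses this entirely. Rather than working with $f$, it normalizes the \emph{tiling-set} conformal map $\psi_n\colon(\mathcal{Q},0)\to(T^\infty(\sigma_n),\infty)$ so that $\psi_n(z)=1/z+O(z)$ near $0$. These are then schlicht maps, so the Koebe $1/4$ theorem gives $\psi_n(\mathcal{Q}_1)\supset\widehat{\C}\setminus B(0,R)$ for a uniform $R$, hence $\Omega_n\subset\overline{B(0,R)}$. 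That is the whole boundedness argument: one line of classical function theory, with no dynamics. You even gesture toward $\psi$ at the end of your proposal, but only as a tool to control ``escape dynamics''; the point is that $\psi$ gives the bound \emph{directly}, with no escape argument needed at all.

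\textbf{Closedness.} Your plan here also has soft spots. Hurwitz gives univalence of the limit $f$ on $\D$, but your claim that injectivity on $\overline{\D}$ survives is not automatic: in principle two boundary points could collide, producing a double point on $\partial\Omega$. Your Hausdorff-limit argument for connectivity of $K(\sigma_\infty)$ is also incomplete---the Hausdorff limit of $K(\sigma_n)$ is connected, but you have not shown it coincides with $K(\sigma_\infty)$, and semicontinuity of non-escaping sets under Carath\'eodory limits is delicate. The paper again sidesteps all of this by working with $\psi$: normality of schlicht maps gives a limit $\psi_\infty$, and one checks directly that $\psi_\infty(\partial\mathcal{Q}_1)$ is a Jordan curve (it is a limit of real-algebraic curves of bounded degree, hence real-algebraic and locally connected, and conformality of $\psi_\infty$ forces injectivity on $\partial\mathcal{Q}_1$). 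This simultaneously shows $f_\infty(\partial\D)$ is Jordan and that $\psi_\infty$ conjugates $\mathcal{R}_d$ to $\sigma_\infty$ on the full tiling set, verifying condition~(3) of Proposition~\ref{mating_equiv_cond_prop} directly---no appeal to connectivity of $K(\sigma_\infty)$ or to condition~(4) is needed.

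In short: the paper exploits the fact that every map in $\mathcal{S}_{\mathcal{R}_d}$ has the \emph{same} external model $\mathcal{R}_d$, so the family of external conformal conjugacies $\{\psi_n\}$ is normal, and everything (both boundedness and closedness) falls out of Koebe and elementary properties of the limit $\psi_\infty$. Your route via the polynomials $f_n$ and dynamical estimates is in principle viable but is both harder and, as written, incomplete.
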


\begin{proof}
Let $\{[\Omega_n,\sigma_n]\}$ be a sequence in $\left[\mathcal{S}_{\mathcal{R}_d}\right]$ where $\psi_n:\mathcal{Q}\rightarrow T^\infty(\sigma_n)$ is a conformal conjugacy between $\mathcal{R}_d$ and $\sigma_n$. We will show that there is a convergent subsequence. We can choose a representative from $[\Omega_n,\sigma_n]$ for which $\psi_n(0)=\infty$, and $\psi_n(z) = 1/z+ O(z)$ as $z\to0$ (this amounts to replacing $\Omega_n$ by an affine image of it).
By the normality of schlicht maps we may pass to a convergent subsequence, whose limit we denote by $\psi_\infty$.

Let $f_n:\D\to\Omega_n$ be a uniformizing map. We normalize $f_n$ as in Proposition~\ref{mating_equiv_cond_prop} so that it extends to a degree $d+1$ polynomial on $\widehat{\C}$.
The normalization of $\psi_n$ and the Koebe $1/4$ theorem imply that there is some $R>0$ such that $\psi_n(\mathcal{Q}_1)\supset \widehat{\mathbb{C}}\setminus B(0,R)$, and hence $\Omega_n\subset \overline{B(0,R)}$ for all $n$ (see Figure~\ref{precompactness_fig}). This implies that the coefficients of $f_n$ are uniformly bounded, and so after passing to a subsequence there is a limit polynomial $f_\infty$ of degree at most $d+1$ which is univalent on $\mathbb{D}$.

\begin{figure}
\captionsetup{width=0.96\linewidth}
\begin{tikzpicture}
\node[anchor=south west,inner sep=0] at (1,0) {\includegraphics[width=0.9\textwidth]{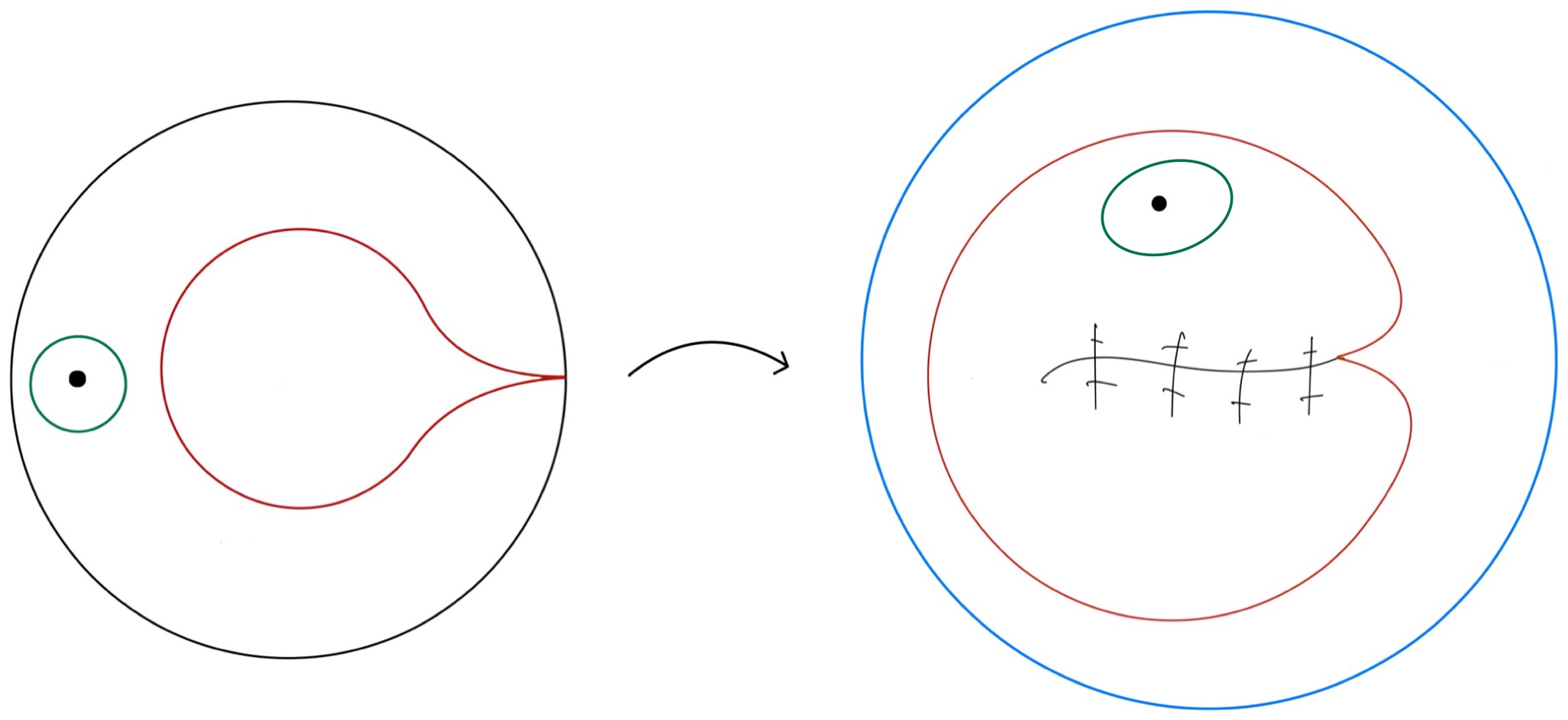}}; 
\node at (3,3) {\begin{large}$\mathcal{Q}_1$\end{large}};
\node at (3.2,1) {\begin{Large}$\D_1$\end{Large}};
\node at (0.6,1.6) {$\rho_1(0)$};
\draw [->,line width=0.5pt] (0.8,1.8) to (1.48,2.4);
\node at (1.8,1.9) {$B$};
\node at (8.76,3.32) {\begin{tiny}$\psi_n(B)$\end{tiny}};
\draw [->,line width=0.5pt] (10,4.36) to (9.48,3.84);
\node at (10.1,4.5) {\begin{tiny}$f_n(0)=\psi_n(\rho_1(0))$\end{tiny}};
\node at (9.6,1.2) {$\Omega_n$};
\node at (10.1,1.9) {$K(\sigma_n)$};
\node at (6.2,3) {$\psi_n$};
\node at (7.4,4.8) {$\partial B(0,R)$};
\end{tikzpicture}
\caption{The pointed domains $(\Omega_n,f_n(0))$ have a non-trivial Carath{\'e}odory limit since all of them contain an open set $\psi_n(B)$ of definite size and are contained in $B(0,R)$.}
\label{precompactness_fig}
\end{figure}

Take $B$ to be a neighborhood of $\rho_1(0)$ which is compactly contained in $\mathbb{D}_1$. We note that for all $n$ sufficiently large, $\psi_\infty(B)\subset \psi_n(\mathbb{D}_1) \subset f_n(\mathbb{D})=\Omega_n$ (see Figure~\ref{precompactness_fig}). Furthermore $f_n(0)=\psi_n(\rho_1(0)) \in \psi_\infty(B)$ for all $n$ large enough. It follows that $f_\infty$ has degree at least 1. By the Carath{\'e}odory kernel theorem, the pointed disks $(\Omega_n,f_n(0))$ converge to $(f_\infty(\mathbb{D}), f_\infty(0))$ in the Carath{\'e}odory topology.

The curve $\gamma:= \partial \psi_\infty(\mathcal{Q}_1)$ is a real-algebraic curve, since it is the limit of the real-algebraic curves $\psi_n(\partial \mathcal{Q}_1)=f_n(\partial \mathbb{D})$ of uniformly bounded degree. Thus, $\gamma$ is locally connected, and hence $\psi_\infty\vert_{\mathcal{Q}_1}$ extends continuously to $\overline{\mathcal{Q}_1}$. Conformality of $\psi_\infty$ on $\mathcal{Q}$ now implies that $\psi_\infty\vert_{\partial\mathcal{Q}_1}$ is a homeomorphism. Therefore, $\gamma$ is a Jordan curve. We also know that $f_\infty(\partial \mathbb{D})$ is a closed curve which is contained in $\gamma$, and hence is the same Jordan curve. This shows that $f_\infty(\mathbb{D}) = \widehat{\mathbb{C}}\setminus \overline{\psi_\infty(\mathcal{Q}_1)}$.

Let $\sigma_\infty$ be the Schwarz reflection map for the quadrature domain $f_\infty(\mathbb{D})$, which we have just shown to be a Jordan domain with a cusp on its boundary. We know that $\psi_\infty$ conjugates $\mathcal{R}_d$ to $\sigma_\infty$ where defined, and as $\mathcal{Q} = \bigcup_{n\geq 0}\mathcal{R}_d^{-n}(\mathcal{Q}_1)$, it follows that $T^\infty(\sigma_\infty) = \bigcup_{n\geq 0} \sigma^{-n}_\infty (T^0(\sigma_\infty))=\psi_\infty(\mathcal{Q})$. Thus, the action of $\sigma_\infty$ on its tiling set is conformally conjugate (via $\psi_\infty$) to the action of $\mathcal{R}_d$ on $\mathcal{Q}$.
\end{proof}

\subsection{Relation between $\mathcal{S}_{\mathcal{R}_d}$ and correspondences}\label{srd_corr_rel_subsec}

Let $(\Omega,\sigma)\in\mathcal{S}_{\mathcal{R}_d}$ and $f:\D\to\Omega$ be the uniformizing polynomial given by Corollary~\ref{srd_poly_unif_cor}. Without loss of generality, we can assume that $1$ is the unique critical point of $f$ on $\mathbb{S}^1$; i.e., $f(1)=\pmb{y}$. We denote the $d$:$d$ antiholomorphic correspondence associated with the univalent restriction $f\vert_{\overline{\D}}$ by $\mathfrak{C}^*$ (see Section~\ref{gen_corr_const_sec}). We will now show that the anti-Hecke group $\pmb{\Gamma}_d$ introduced in Subsection~\ref{mod_reflect_group_subsubsec} gives a conformal model for the group action of $\mathfrak{C}^*$ on $\widetilde{T^\infty(\sigma)}$.

\begin{proof}[Proof of Proposition~\ref{grand_orbit_group_1}]

We pick a conformal isomorphism $\widetilde{\psi}:(\D,0)\to(\widetilde{T^\infty(\sigma)},\infty)$. Since $\eta$ induces an anti-conformal involution on $\widetilde{T^\infty(\sigma)}$ whose fixed point set is given by $\mathbb{S}^1\setminus\{1\}$, it follows that $\widetilde{\eta}:=\widetilde{\psi}^{-1}\circ\eta\circ\widetilde{\psi}$ is an antiholomorphic involution of $\D$ whose fixed point set is $C:=\widetilde{\psi}^{-1}\left(\mathbb{S}^1\setminus\{1\}\right)$. Thus, $C$ is a bi-infinite geodesic of the hyperbolic disk $\D$, and $\widetilde{\eta}$ is the anti-M{\"o}bius reflection in $C$.

On the other hand, $\widetilde{\tau}:=\widetilde{\psi}^{-1}\circ\tau\circ\widetilde{\psi}$ is a finite order conformal automorphism of $\D$ fixing the origin. Hence, $\widetilde{\tau}$ is a rigid rotation (around $0$) of order $d+1$. After possibly replacing $\tau$ with some iterate of it, we can assume that $\widetilde{\tau}\equiv M_\omega : z\mapsto wz$, where $\omega:=e^{\frac{2\pi i}{d+1}}$.

Note that $f$ is a $(d+1):1$ covering map from $\partial\widetilde{T^0(\sigma)}$ (where $\widetilde{T^0(\sigma)}=f^{-1}(T^0(\sigma))$ and the boundary is taken in $\widetilde{T^\infty(\sigma)}$) onto $\partial T^0(\sigma)$ (boundary taken in $T^\infty(\sigma)$). Hence, $\partial\widetilde{T^0(\sigma)}$ is the union of $d+1$ disjoint simple arcs, one of which is $\mathbb{S}^1\setminus\{1\}$. The deck transformation $\tau$ preserves $\partial\widetilde{T^0(\sigma)}$ and permutes its $d+1$ components transitively. Therefore, the rigid rotation $\widetilde{\tau}$ preserves $\widetilde{\psi}^{-1}(\partial\widetilde{T^0(\sigma)})\subset\D$. Since $\widetilde{\psi}^{-1}(\partial\widetilde{T^0(\sigma)})$ is the union of $d+1$ disjoint simple arcs one of which is $C$, it now follows that $\widetilde{\psi}^{-1}(\widetilde{T^0(\sigma)})$ is an $M_\omega$-invariant closed ideal polygon in $\D$ (containing $0$) one of whose sides is $C$. After possibly a M{\"o}bius change of coordinates, we can assume that $\widetilde{\psi}^{-1}(\widetilde{T^0(\sigma)})$ is the ideal polygon $\Pi$ introduced in Subsection~\ref{rd_subsec}; and hence, the group $\langle\eta\rangle\ast\langle\tau\rangle$ is conformally conjugate (via $\widetilde{\psi}$) to $\pmb{\Gamma}_d$.
\end{proof}

\begin{remark}\label{crit_torsion_rem}
The conformal isomorphism $\widetilde{\psi}$ of the proof of Proposition~\ref{grand_orbit_group_1}
\[
  \begin{tikzcd}
  \left(\D,0\right)    \arrow{d}{\mathrm{proj}} \arrow{r}{\widetilde{\psi}} &  \left(\widetilde{T^\infty(\sigma)},\infty\right) \arrow{d}{f} \\
   \left(\mathcal{Q},0\right)   \arrow{r}{\psi}  & \left(T^\infty(\sigma),\infty\right)
  \end{tikzcd}
\] 
is a lift of $\psi:\mathcal{Q}\to T^\infty(\sigma)$ via the two branched coverings appearing in the vertical arrows of the above commutative diagram.
\end{remark}

The next result now follows from Propositions~\ref{mating_equiv_cond_prop},~\ref{grand_orbit_group_1}, and~\ref{mating_axiom_prop_1}.

\begin{proposition}\label{srd_mating_check_prop}
Let $(\Omega,\sigma)\in\mathcal{S}_{\mathcal{R}_d}$ and $f:\D\to\Omega$ be the uniformizing polynomial of degree $d+1$ given by Corollary~\ref{srd_poly_unif_cor}. Then the $d$:$d$ antiholomorphic correspondence $\mathfrak{C}^*$ associated with the univalent restriction $f\vert_{\overline{\D}}$ is a mating of a degree $d$ Bers-like anti-rational map $R$ and the anti-Hecke group $\pmb{\Gamma}_d$ if and only if a pinched anti-polynomial-like restriction of $R$ is hybrid equivalent to $\sigma$.
\end{proposition}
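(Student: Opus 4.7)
The plan is to reduce the statement directly to the mating criterion of Proposition~\ref{mating_axiom_prop_1}. The first step is to verify that the hypothesis of that proposition is indeed met for any $(\Omega,\sigma)\in \mathcal{S}_{\mathcal{R}_d}$. By Definition~\ref{srd_def}, such a pair satisfies condition~(3) of Proposition~\ref{mating_equiv_cond_prop}; invoking the implication (3)$\Rightarrow$(1) of that proposition applied to the uniformizing polynomial $f$ from Corollary~\ref{srd_poly_unif_cor}, we conclude that $T^\infty(\sigma)$ is simply connected, contains a unique critical value $v_0$ of $f$, and $v_0\in \Int{T^0(\sigma)}$ with $f^{-1}(v_0)$ a singleton. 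These are precisely the geometric hypotheses required by Propositions~\ref{grand_orbit_group_1} and~\ref{mating_axiom_prop_1}.

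With this verified, the equivalence in the statement is extracted by separately examining the two conditions in Definition~\ref{mating_def}. For the \emph{group condition on the lifted tiling set}, Proposition~\ref{grand_orbit_group_1} already furnishes a conformal orbit-equivalence between $\mathfrak{C}^*\vert_{\widetilde{T^\infty(\sigma)}}$ and the standard anti-Hecke action $\pmb{\Gamma}_d$; this half is automatic and does not depend on $R$. For the \emph{hybrid condition on $\widetilde{K(\sigma)}\cap \overline{\D}$}, Proposition~\ref{basic_dynamics_corr_prop}(2) provides a topological conjugacy between the $d$:$d$ forward branch of $\mathfrak{C}^*$ restricted to $\widetilde{K(\sigma)}\cap \overline{\D}$ and $\sigma\colon K(\sigma)\to K(\sigma)$, implemented by $f\vert_{\widetilde{K(\sigma)}\cap\overline{\D}}$, which is conformal on the interior.

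The two directions then follow by composing hybrid equivalences along this chain. For the forward direction, if $\mathfrak{C}^*$ is a mating of a Bers-like $R$ and $\pmb{\Gamma}_d$, then by Definition~\ref{mating_def} a pinched anti-polynomial-like restriction of $R$ is hybrid equivalent to the $d$:$d$ forward branch above; composing with the conformal (hence trivially hybrid) conjugacy $f\vert_{\widetilde{K(\sigma)}\cap\overline{\D}}$ produces a hybrid equivalence between a pinched restriction of $R$ and $\sigma$. Conversely, if a pinched restriction of $R$ is hybrid equivalent to $\sigma$, pre-composing with $f\vert_{\widetilde{K(\sigma)}\cap\overline{\D}}$ yields a hybrid equivalence to the $d$:$d$ forward branch of $\mathfrak{C}^*$, and combined with the group-action half supplied by Proposition~\ref{grand_orbit_group_1}, Definition~\ref{mating_def} is satisfied.

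There is no real obstacle here; the proof is a bookkeeping exercise, since the substantive content has been absorbed into the previous propositions. The only minor point to check is that the composition of a conformal map with a (quasiconformal or David) hybrid conjugacy is again a hybrid conjugacy of the same regularity, which is immediate from the chain rule for Beltrami coefficients together with the vanishing of $\overline\partial$ for conformal maps, ensuring in particular that the dilatation vanishes a.e.\ on the relevant filled Julia set.
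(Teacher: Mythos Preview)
Your proof is correct and follows essentially the same approach as the paper's: the paper simply states that the result follows from Propositions~\ref{mating_equiv_cond_prop},~\ref{grand_orbit_group_1}, and~\ref{mating_axiom_prop_1}, whereas you have unpacked Proposition~\ref{mating_axiom_prop_1} into its constituent ingredients (Propositions~\ref{basic_dynamics_corr_prop} and~\ref{grand_orbit_group_1}) and spelled out the verification of its hypothesis via the implication (3)$\Rightarrow$(1) of Proposition~\ref{mating_equiv_cond_prop}.
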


Thanks to Proposition~\ref{srd_mating_check_prop}, the task of studying the dynamics of the correspondences arising from $\mathcal{S}_{\mathcal{R}_d}$ boils down to analyzing the non-escaping set dynamics of the associated Schwarz reflection maps. To address this question, we will prove straightening theorems for the Schwarz reflection maps in question,  which is the main content of the next section.

\section{Straightening members of $\mathcal{S}_{\mathcal{R}_d}$}\label{straightening_sec}

\subsection{A space $\mathcal{F}_d$ of parabolic anti-rational maps}\label{para_anti_rat_subsec}

We begin with some background on parabolic points in antiholomorphic dynamics. Let $z_0$ be a parabolic fixed point for an anti-rational map $R$ (i.e., $R(z_0)=z_0$ and $(R^{\circ 2})'(z_0)=1$) with an invariant Fatou component (a parabolic basin) $U$ such that $z_0\in\partial U$ and $R^{\circ n}\vert_{U}\rightarrow z_0$ as $n\to+\infty$. Then according to \cite[Lemma~2.3]{HS}, there is an $f$-invariant open subset $V\subset U$ with $z_0\in\partial V$ so that for every $z\in U$, there is an $n\in\N$ with $f^{\circ n}(z)\in V$. Moreover, there is a univalent map $\phi^{\mathrm{att}}: V\rightarrow\C$, called the \emph{Fatou coordinate}, with 
$$
\phi^{\mathrm{att}}(R(z))=\overline{\phi^{\mathrm{att}}(z)}+1/2,\quad z\in V,
$$
and $\phi^{\mathrm{att}}$ contains a right half-plane. The map $\phi^{\mathrm{att}}$ is unique up to real translations. Note that the antiholomorphic map $R$ interchanges the two ends of the attracting cylinder $\faktor{V}{R}\cong\C/\Z$, and hence fixes a unique horizontal round circle around this cylinder, which we call the \emph{attracting equator}. By construction, $\phi^{\mathrm{att}}$ sends the equator to the real axis. We can extend $\phi^{\mathrm{att}}$ analytically to the entire Fatou component $U$ as a semi-conjugacy between $R$ and $\zeta\to\overline{\zeta}+1/2$. For $z\in U$, we call $\im(\phi^{\mathrm{att}}(z))$ (which is well-defined) the \emph{{\'E}calle height} of $z$. 

Note that the anti-Blaschke product 
$$
B_d(z) = \frac{(d+1)\overline z^d + (d-1)}{(d-1)\overline z^d + (d+1)}
$$ 
has a parabolic fixed point at $1$ and $\mathbb{D}$ is an invariant parabolic basin of this fixed point. Due to real-symmetry of the map $B_d$, the unique critical point $0$ of $B_d$ in $\D$ has {\'E}calle height zero.

\begin{definition}
We define the family $\mathcal{F}_d$ to be the collection of degree $d\geq 2$ anti-rational maps $R$ with the following properties.
\begin{enumerate}
\item $\infty$ is a parabolic fixed point for $R$.
\item There is a marked immediate parabolic basin $\mathcal{B}(R)$ of $\infty$ which is simply connected and completely invariant.
\item $R\vert_{\mathcal{B}(R)}$ is conformally conjugate to $B_d\vert_{\D}$.
\end{enumerate}
\end{definition}

Note that each $R\in\mathcal{F}_d$ is a Bers-like anti-rational map with filled Julia set $\mathcal{K}(R)=\widehat{\mathbb{C}}\setminus \mathcal{B}(R)$.

In consistence with the terminology for Schwarz reflections, we call $R\in\mathcal{F}_d$ \emph{relatively hyperbolic} if the forward orbit of each critical point of $R$ in $\mathcal{K}(R)$ converges to an attracting cycle (compare Definition~\ref{rel_hyp_schwarz}).

\begin{remark}\label{fd_big_rem}
By \cite[Example~4.2, Theorem~4.12]{LMMN}, any circle homeomorphism conjugating $\overline{z}^d\vert_{\mathbb{S}^1}$ to $B_d\vert_{\mathbb{S}^1}$ continuously extends to a David homeomorphism of $\D$. Using this fact, one can perform a David surgery (as in the proof Proposition~\ref{mating_all_pcf_anti_poly})  to glue the map $B_d\vert_{\D}$ outside the filled Julia set of a semi-hyperbolic anti-polynomial (with connected Julia set). This would prove that for any degree $d$ hyperbolic anti-polynomial $p$ with a connected Julia set, there exists a relatively hyperbolic map $R\in\mathcal{F}_d$ such that $R\vert_{\mathcal{K}(R)}$ is topologically conjugate to $p\vert_{\mathcal{K}(p)}$ with the conjugacy being conformal on the interior. 

It seems likely that an alternative approach to constructing relatively hyperbolic maps in $\mathcal{F}_d$ is to appeal to the Cui-Tan theory of characterization of geometrically finite rational maps (cf. \cite{CT18}), however we refrain from taking that route here.
\end{remark}

\begin{proposition}\label{fd_comp_prop}
The moduli space $\left[\mathcal{F}_d\right]:= \faktor{\mathcal{F}_d}{\mathrm{Aut}(\C)}$ is compact.
\end{proposition}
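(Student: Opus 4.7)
The plan follows the strategy of Proposition~\ref{srd_comp_prop}, with the compact family of schlicht functions on $\D$ playing the role of the polynomial uniformization. Given a sequence $\{R_n\}\subset\mathcal{F}_d$, choose for each $n$ the conformal conjugacy $\phi_n:(\D,0)\to(\mathcal{B}(R_n),c_n)$ between $B_d\vert_\D$ and $R_n\vert_{\mathcal{B}(R_n)}$, where $c_n=\phi_n(0)$ is the image of the (\'Ecalle-height-zero) critical point $0\in\D$ of $B_d$. Since $\mathrm{Aut}(\C)$ preserves the marked parabolic fixed point $\infty$, I normalize by affine transformations so that $c_n=0$ and $\phi_n'(0)=1$; then $\phi_n$ lies in the schlicht class. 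Koebe's distortion and growth theorems yield a subsequence with $\phi_n\to\phi_\infty$ locally uniformly on $\D$, where $\phi_\infty$ is schlicht (hence univalent), and I set $\mathcal{B}_\infty:=\phi_\infty(\D)$.

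Degree-$\le d$ anti-rational maps form a compact projective space of coefficient vectors, so I extract a further subsequence with $R_n\to R_\infty$ in that projective sense. For $w=\phi_\infty(z)\in\mathcal{B}_\infty$, the identity $R_n\circ\phi_n=\phi_n\circ B_d$ passes to the pointwise limit $R_\infty(w)=\phi_\infty(B_d(z))$, exhibiting $R_\infty\vert_{\mathcal{B}_\infty}$ as a degree-$d$ branched cover of $\mathcal{B}_\infty$ onto itself; this forces $\deg R_\infty\ge d$, which combined with $\deg R_\infty\le d$ gives $\deg R_\infty=d$. In particular the projective limit does not degenerate and $R_\infty$ is a genuine anti-rational map of the correct degree. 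Complete invariance of $\mathcal{B}_\infty$ under $R_\infty$ is forced by the degree match, $\phi_\infty$ is a conformal conjugacy between $B_d\vert_\D$ and $R_\infty\vert_{\mathcal{B}_\infty}$, and passing multipliers to the limit gives $R_\infty(\infty)=\infty$ with $(R_\infty^{\circ 2})'(\infty)=1$.

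The main obstacle is to verify that $\mathcal{B}_\infty$ is the parabolic basin of $\infty$ for $R_\infty$, rather than that of some finite parabolic fixed point that could a priori appear in the limit; equivalently, the prime end of $\mathcal{B}_\infty$ corresponding to $1\in\partial\D$ under $\phi_\infty$ must equal $\infty$. The plan is to establish this using uniform bounds on the parabolic multiplicities: Fatou's bound on non-repelling cycles of the degree-$d^2$ rational map $R_n^{\circ 2}$ gives an upper bound $m_n\le C(d)$ on the multiplicity of $R_n$ at $\infty$, and the standard Fatou-coordinate asymptotic under the schlicht normalization yields $|\phi_n(z)|\gtrsim|z-1|^{-1/m_n}\gtrsim|z-1|^{-1/C(d)}$ near $1$, uniformly in $n$. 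Taking $n\to\infty$, $\phi_\infty(z)\to\infty$ as $z\to 1$ in $\D$, so $\mathcal{B}_\infty=\mathcal{B}(R_\infty)$ and $R_\infty\in\mathcal{F}_d$ with the required marking.
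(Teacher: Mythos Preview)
Your overall approach mirrors the paper's almost exactly: normalize the conjugacies $\phi_n:(\D,0)\to(\mathcal{B}(R_n),0)$ to be schlicht, extract a locally uniform limit $\phi_\infty$, and use the conjugacy with $B_d$ to pin the degree of $R_\infty$ at exactly $d$. The paper dismisses your ``main obstacle'' with the single sentence ``It is easy to see that $R_\infty$ has a parabolic point at $\infty$ and that $\phi_\infty(\D)$ is the desired marked parabolic basin of $\infty$,'' so you are in fact being more careful than the authors in isolating this step.

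That said, your proposed resolution of this step is not complete. The Fatou-coordinate asymptotic you invoke has the form $|\phi_n(z)|\asymp\bigl(C_n/|z-1|\bigr)^{1/(m_n-1)}$ (note the exponent should be $1/(m_n-1)$, not $1/m_n$), and it is valid only in a petal-shaped neighborhood of $1$ whose size depends on $n$, with the constant $C_n$ determined by the leading parabolic coefficient of $R_n^{\circ 2}$ at $\infty$. Bounding $m_n$ via Fatou does not prevent this coefficient from tending to zero---indeed, that is precisely what happens when the parabolic multiplicity jumps in the limit, as when a sequence in $\left[\mathcal{F}_d^{\mathrm{simp}}\right]$ accumulates on $\left[\mathcal{F}_d^{\mathrm{high}}\right]$. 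In that regime both $C_n$ and the region of validity degenerate, so the inequality $|\phi_n(z)|\gtrsim|z-1|^{-1/C(d)}$ is not uniform on any fixed neighborhood of $1$, and your passage to the limit $\phi_\infty(z)\to\infty$ is unjustified. To close the argument you need either an additional mechanism controlling these constants through the schlicht normalization, or a different route to $p=\infty$: for instance, first observe that $\infty$ lies in $\mathcal{J}(R_\infty)=\partial\phi_\infty(\D)$ since it is parabolic, and then argue separately to exclude a second, finite parabolic fixed point capturing the basin $\phi_\infty(\D)$.
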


\begin{proof}
Let $R_n$ be a sequence in $\mathcal{F}_d$ and $\mathcal{B}(R_n)$ be their corresponding marked immediate parabolic basins, conjugated by affine transformations appropriately so that the maps $\phi_n\colon \mathbb{D} \to \mathcal{B}(R_n)$ which conjugate $B_d$ to $R_n$ satisfy $\phi_n(0) = 0, \phi_n'(0) =1$. As these are schlicht functions, we may pass to a subsequence such that $\phi_n$ converge to some map $\phi_\infty$. We have that the pointed domains $(\mathcal{B}(R_n),0)$ converge in the Carath\'eodory topology to $(\phi_\infty(\mathbb{D}),0)$, and as $R_n|_{\mathcal{B}(R_n) }= \phi_n \circ B_d \circ \phi^{-1}_n$, these anti-rational maps converge to some map $R_\infty\colon \phi_\infty(\mathbb{D}) \to \widehat{\mathbb{C}}$. Since $R_\infty$ is the locally uniform limit of anti-rational maps of degree $d$ it must extend to an anti-rational map itself, of degree at most $d$. Furthermore, $R_\infty$ is conformally conjugate to $B_d$ on $\phi_\infty(\mathbb{D})$ (via $\phi_\infty^{-1}$), so that it must have degree at least $d$, and therefore has degree $d$.

It is easy to see that $R_\infty$ has a parabolic point at $\infty$ and that $\phi_\infty(\mathbb{D})$ is the desired marked immediate parabolic basin of $\infty$.
\end{proof}

\subsection{Hybrid conjugacies for Schwarz and anti-rational maps}\label{hybrid_conj_def_subsec}

For a map $R\in \mathcal F_d$, let $\mathcal{P}\subset \mathcal{B}(R)$ be a petal at $\infty$ such that the critical value of $R$ in $\mathcal{B}(R)$ lies in $\mathcal{P}$, the corresponding critical point (of multiplicity $d-1$) lies on $\partial\mathcal{P}$, and $\partial\mathcal{P}\setminus\{\infty\}$ is smooth. This can be arranged so that $V:= \widehat{\C}\setminus\overline{\mathcal{P}}$ is a polygon. We now set $U:=R^{-1}(V)$, and observe that $(R|_{\overline{U}},\overline{U},\overline{V})$ is a pinched anti-polynomial-like map, as in Definition~\ref{pinched_poly_def}, and this pinched anti-polynomial-like restriction has the same filled Julia set as $R$. Any two such restrictions are clearly hybrid equivalent.
\smallskip

\noindent\textbf{Convention:} We will associate maps $R\in \mathcal F_d$ with the above choice of pinched anti-polynomial-like restrictions when discussing hybrid conjugacies.

We now show that elements of $\left[\mathcal{F}_d\right]$ and $\left[\mathcal{S}_{\mathcal{R}_d}\right]$ are completely determined by their hybrid classes.

\begin{lemma}\label{strt_unique_lem}
1) Let $R_1,R_2\in \mathcal{F}_d$ be hybrid conjugate. Then $R_1$ and $R_2$ are affinely conjugate.

2) Let $(\Omega_1,\sigma_1), (\Omega_2,\sigma_2)\in \mathcal{S}_{\mathcal{R}_d}$ be hybrid conjugate. Then $\sigma_1$ and $\sigma_2$ are affinely conjugate.
\end{lemma}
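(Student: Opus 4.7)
The proof adapts the classical Douady-Hubbard uniqueness of straightening to the parabolic antiholomorphic setting. The two parts are parallel; I will outline (1) and indicate the modifications needed for (2). Given the hybrid conjugacy $\Phi$ between pinched anti-polynomial-like restrictions of $R_1, R_2 \in \mathcal{F}_d$, fix conformal conjugacies $\phi_i\colon \D \to \mathcal{B}(R_i)$ with $\phi_i\circ B_d = R_i\circ\phi_i$ (available by definition of $\mathcal{F}_d$). These induce a conformal basin conjugacy $\Psi := \phi_2\circ\phi_1^{-1}\colon \mathcal{B}(R_1) \to \mathcal{B}(R_2)$. The strategy is to glue $\Phi$ (living on a neighborhood of $\mathcal{K}(R_1)$) with $\Psi$ (living on the basin) into a global quasiconformal conjugacy $F\colon \widehat{\C}\to\widehat{\C}$ between $R_1$ and $R_2$, and then to show that $F$ is in fact conformal---hence M{\"o}bius by Weyl's lemma, and affine because $F(\infty) = \infty$.

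The gluing proceeds by choosing a petal-shaped fundamental domain $A\subset\mathcal{B}(R_1)$ for $R_1$ adjacent to the Julia set, performing a quasiconformal interpolation between $\Phi$ on the outer boundary of $A$ and $\Psi$ on the inner boundary, spreading the interpolated map over the rest of $\mathcal{B}(R_1)$ by pulling back via iterates of $R_1$, and combining with $\Phi$ on $\mathcal{K}(R_1)$. By the hybrid property of $\Phi$, the resulting global quasiconformal map $F$ has Beltrami coefficient $\mu_F$ supported on $\mathcal{B}(R_1)$ and $R_1$-invariant. Transporting $\mu_F$ to $\D$ via $\phi_1$ yields a bounded $B_d$-invariant Beltrami coefficient $\nu$ on $\D$. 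By rigidity of $B_d$ as the unique parabolic anti-Blaschke product of degree $d$ with critical {\'E}calle height $0$---an invariant preserved throughout $\mathcal{F}_d$---any such $\nu$ arises from a conformal self-conjugacy of $B_d$, i.e., a deck transformation of $B_d$. Absorbing this deck transformation into the initial choice of $\phi_2$, we may take $\nu\equiv 0$; hence $F$ is conformal almost everywhere, and therefore affine.

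Part (2) follows the identical template with $B_d, \D, \phi_i, \mathcal{B}(R_i)$ replaced by the anti-Farey map $\mathcal{R}_d$, the orbifold $\mathcal{Q}$, the conjugacies $\psi_i\colon \mathcal{Q}\to T^\infty(\sigma_i)$ from Definition~\ref{srd_def}, and the tiling sets $T^\infty(\sigma_i)$; the gluing is carried out in a fundamental domain for $\mathcal{R}_d$ in $\mathcal{Q}$ adjacent to the cusp at $1\in\partial\mathcal{Q}$. The main technical obstacle is the quasiconformal interpolation step in the petal/cusp fundamental domain, which degenerates at the parabolic fixed point: in Fatou coordinates this domain becomes an unbounded horizontal strip, and one must bound the dilatation of the interpolation uniformly out to infinity. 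This requires asymptotic information on the boundary values of $\Phi$ and $\Psi$ near the parabolic/cusp point, of the same Warschawski-type flavor invoked in the paper's straightening theorem for simple pinched anti-polynomial-like maps (Subsection~\ref{pinched_anti_poly_straightening_subsec}). Once this interpolation is in place, the rigidity step is a routine consequence of the uniqueness of the parabolic Fatou coordinate.
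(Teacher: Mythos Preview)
Your proposal takes a detour that the paper avoids, and the detour contains a real gap.

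The paper's argument is the direct Douady--Hubbard gluing: define $H=\Phi$ on $\mathcal{K}(R_1)$ and $H=\psi_2\circ\psi_1^{-1}$ on $\mathcal{B}(R_1)$, with no interpolation at all. Continuity across $\mathcal{J}(R_1)$ is handled by a prime end argument: both $\Phi$ and $\psi_2\circ\psi_1^{-1}$ conjugate $R_1$ to $R_2$, hence send fixed prime ends to fixed prime ends; both send the prime end at $\infty$ to the prime end at $\infty$; and the circular order of fixed prime ends forces the two maps to agree there (hence everywhere on the prime end circle, by equivariance and density of preperiodic prime ends). Bers--Rickman then gives quasiconformality of $H$, and since $H$ is conformal on $\mathcal{B}(R_1)$ and conformal a.e.\ on $\mathcal{K}(R_1)$ by the hybrid hypothesis, Weyl's lemma finishes. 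Part~(2) is identical with $\psi_i:\mathcal{Q}\to T^\infty(\sigma_i)$ in place of the basin uniformizations.

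Your interpolation route produces a global quasiconformal conjugacy $F$ whose Beltrami coefficient $\mu_F$ is $R_1$-invariant and supported on $\mathcal{B}(R_1)$, but then you must argue $\mu_F\equiv 0$. The claimed rigidity---``any $B_d$-invariant $\nu$ on $\D$ arises from a conformal self-conjugacy of $B_d$''---is false. Passing to attracting Fatou coordinates, the basin dynamics of $B_d$ becomes $\zeta\mapsto\overline{\zeta}+\tfrac12$, and the space of invariant Beltrami coefficients is $L^\infty$ of a fundamental vertical strip, which is infinite-dimensional. There is no finite ``deck transformation'' ambiguity to absorb; a generic interpolation will produce a nonzero $\mu_F$ that cannot be removed by adjusting $\phi_2$. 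The step ``absorbing this deck transformation into the initial choice of $\phi_2$, we may take $\nu\equiv 0$'' therefore does not go through. To salvage your approach you would still have to replace $F\vert_{\mathcal{B}(R_1)}$ by the conformal map $\Psi$ and check continuity across the Julia set---which is exactly the paper's argument. The interpolation, and the Warschawski-type control you flag as the main obstacle, are simply not needed here.
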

\begin{proof}
1) Let $\Phi\colon \widehat{\C}\to\widehat{\C}$ be a quasiconformal homeomorphism inducing the hybrid conjugacy between $R_1, R_2$. Also recall that there are conformal maps $\psi_i\colon \mathbb{D}\to \mathcal{B}(R_i)$ which conjugate $B_d$ to $R_i$, $i\in\{1,2\}$. 

We now define the map
$$
H=\begin{cases}\Phi\quad \text{on }\mathcal{K}(R_1),\\ \psi_2\circ \psi_1^{-1}\quad \text{on }\mathcal{B}(R_1). \end{cases}
$$
We wish to show that $H$ is continuous. By the arguments of \cite[\S 1.5, Lemma~1]{DH2}, it suffices to show that $\Phi$ and $\psi_2\circ \psi_1^{-1}$ agree on the fixed prime ends of $\mathcal{B}(R_1)$. Since $\psi_2\circ \psi_1^{-1}$ conjugates $R_1$ to $R_2$ on their marked immediate parabolic basin of $\infty$, it clearly takes fixed prime ends to fixed prime ends while mapping the prime end of $\mathcal{B}(R_1)$ corresponding to $\infty$ to the prime end of $\mathcal{B}(R_2)$ corresponding to $\infty$.

On the other hand, since $\Phi$ is a global homeomorphism that conjugates pinched anti-polynomial-like restrictions of $R_1$ and $R_2$, it follows that $\Phi$ also carries fixed prime ends to fixed prime ends and maps the prime end of $\mathcal{B}(R_1)$ corresponding to $\infty$ to the prime end of $\mathcal{B}(R_2)$ corresponding to $\infty$.
As the fixed prime ends of $\mathcal{B}(R_i)$ are circularly ordered, $\Phi$ must agree with $\psi_2\circ \psi_1^{-1}$ on each of them, and hence $H$ is continuous.

By the Bers-Rickman gluing lemma (see \cite[\S 1.5, Lemma~2]{DH2}), $H$ is a quasiconformal homeomorphism of the sphere. By design it conjugates $R_1$ to $R_2$, and is conformal almost everywhere. By Weyl's lemma, it follows that $H$ is in fact conformal and thus an affine map as it fixes $\infty$.

2)  Let $h_1:\widehat{\C}\to\widehat{\C}$ be a quasiconformal homeomorphism inducing the hybrid conjugacy between $\sigma_1$ and $\sigma_2$. Furthermore, by definition of $\mathcal{S}_{\mathcal{R}_d}$, there is a conformal map $h_2\colon T^\infty(\sigma_1)\to T^\infty(\sigma_2)$ which conjugates the Schwarz reflections, where defined. We now define the map
$$
h(z):=\begin{cases}h_1 \quad \mathrm{on}\ K(\sigma_1),\\ h_2 \quad \mathrm{on}\ T^\infty(\sigma_1).\end{cases}
$$
If we prove that $h_1$ and $h_2$ agree on the fixed prime ends of $T^\infty(\sigma_1)$, then the arguments of the previous part would apply mutatis mutandis to show that $\sigma_1$ and $\sigma_2$ are M{\"o}bius conjugate. As each $\sigma_i$ has a unique critical value in its tiling set; namely at $\infty$, such a M{\"o}bius conjugacy must send $\infty$ to $\infty$. Hence, $\sigma_1$ and $\sigma_2$ would be affinely conjugate. 

To complete the proof, we now proceed to establish the above statement about prime ends.
Since $h_2$ conjugates $\sigma_1$ to $\sigma_2$ on their tiling sets, it takes fixed prime ends to fixed prime ends while mapping the prime end of $T^\infty(\sigma_1)$ corresponding to $\pmb{y}_1$ to the prime end of $T^\infty(\sigma_2)$ corresponding to $\pmb{y}_2$.

On the other hand, since $h_1$ is a global homeomorphism that conjugates pinched anti-polynomial-like restrictions of $\sigma_1$ and $\sigma_2$, it follows that $h_1$ also carries fixed prime ends to fixed prime ends and maps the prime end of $T^\infty(\sigma_1)$ corresponding to $\pmb{y}_1$ to the prime end of $T^\infty(\sigma_2)$ corresponding to $\pmb{y}_2$.
As the fixed prime ends of $T^\infty(\sigma_i)$ are circularly ordered, $h_2$ must agree with $h_1$ on each of them.
\end{proof}

\subsection{Two straightening results}\label{pinched_anti_poly_straightening_subsec}

The goal of this subsection is twofold. The first one is to prove a straightening result for a restricted class of pinched anti-polynomial-like maps (see Definition~\ref{pinched_poly_def}), and the second one is to establish the fact that the external maps $\mathcal{R}_d$ and $B_d$ are quasiconformally compatible. These results form the technical core of this section.

\subsubsection{Simple pinched anti-polynomial-like maps}

\begin{definition}\label{pinched_anti_poly_like_map_def}
Let $(F,\overline U,\overline V)$ be a pinched anti-polynomial-like map as defined in Definition~\ref{pinched_poly_def}. We impose the following conditions on $U$ and $V$.

\begin{enumerate}
\item[(a)] $\partial U\cap \partial V=\{\infty\}$, and $\infty$ is the only corner of $\overline V$.
\item[(b)] There exists some sufficiently large $R$ such that 
$$
\partial V\setminus B(0,R) = \{te^{\pm 2\pi i/3}\mid t\geq R\},
$$ 
and $-t\in V$ for $t>R$.
\end{enumerate}

Furthermore, we restrict $F\colon \overline U\to \overline V$ such that:

\begin{enumerate}
\item\label{extension} There is some neighborhood $U'$ of $\overline U\setminus F^{-1}(\infty)$ on which $F$ extends to an antiholomorphic map.
\item Each access from $\widehat{\mathbb{C}}\setminus\overline{U}$ to each point of $F^{-1}(\infty)$ has a positive angle.
\item\label{asymptotics} The point $\infty$ is fixed under $F$ and  $F(z) = \overline z + \frac 1 2 + O(1/\overline z)$ as $z\to \infty$.
\item\label{crit_f_loc} The critical values of $F$ lie either in $V$ or at $\infty$.
\end{enumerate}

We then say that the triple $(F,\overline{U},\overline{V})$ is a \textit{simple pinched anti-polynomial-like map}.
\end{definition}
(See Figure~\ref{straightening_fig}.)
We will often refer to a simple pinched anti-polynomial-like map simply by $F$, implicitly assuming the domain and codomain to be given.

\begin{remark}\label{positive_angle_rem}
By Condition~\eqref{asymptotics} we have that near $\infty$, $\partial U$ is asymptotic to linear rays at angles $\pm 2\pi/3$.
\end{remark}

Recall that a conformal cusp is of type $(3,2)$ if it is locally given by coordinates $(x,y)=(t^2+O(t^3),ct^3+O(t^4))$, $c\neq 0$, and will be the generic situation. We refer the reader to Section~\ref{cusp_type_sec} for a more precise description of types of conformal cusps.

\begin{definition}\label{simp_high_def} 
\noindent\begin{enumerate}
\item We define 
$$
\mathcal{S}_{\mathcal{R}_d}^{\mathrm{simp}}:=\{(\Omega,\sigma)\in \mathcal{S}_{\mathcal{R}_d}: \mathrm{the\ unique\ cusp\ of}\ \partial\Omega\ \mathrm{is\ of\ type}\ (3,2)\},
$$
and set $\mathcal{S}_{\mathcal{R}_d}^{\mathrm{high}}:=\mathcal{S}_{\mathcal{R}_d}\setminus\mathcal{S}_{\mathcal{R}_d}^{\mathrm{simp}}$.

\item We define 
$$
\mathcal{F}_d^{\mathrm{simp}}:=\{R\in\mathcal{F}_d: \infty\ \mathrm{is\ a\ simple\ parabolic\ fixed\ point\ of}\ R\},
$$
 and set $\mathcal{F}_d^{\mathrm{high}}:=\mathcal{F}_d\setminus\mathcal{F}_d^{\mathrm{simp}}$.
\end{enumerate}
\end{definition}

For a map $R\in\mathcal{F}_d^{\mathrm{simp}}$ the associated pinched anti-polynomial-like map will be simple.

Our main result of this subsection is the following straightening theorem for simple pinched anti-polynomial-like maps, which is of independent interest.

\begin{theorem}\label{straightening_thm}
\noindent\begin{enumerate}\upshape
\item Let $(F,\overline{U},\overline{V})$ be a simple pinched anti-polynomial-like map of degree $d\geq~2$. Then $F$ is hybrid conjugate to a simple pinched anti-polynomial-like restriction of a degree $d$ anti-rational map $R$ with a simple parabolic fixed point.

\item If the filled Julia set of $F$ is connected, then $R$ is unique up to M{\"o}bius conjugacy, and has a unique representative in $\ \left[\mathcal{F}_d^{\mathrm{simp}}\right]$.
\end{enumerate}
\end{theorem}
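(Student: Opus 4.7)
The plan is to adapt the Douady--Hubbard straightening argument for polynomial-like maps to the pinched antiholomorphic setting, using $B_d$ as the model external class. First I would produce a degree $d$ quasiregular extension $\widetilde{F}\colon \widehat{\C}\to\widehat{\C}$ of $F$ which coincides with $F$ on $\overline{U}$ and is conformally conjugate to $B_d\vert_{\D}$ on a definite neighborhood of $\widehat{\C}\setminus\overline{V}$. Next, I would pull back the standard complex structure under $\widetilde{F}$ to manufacture an invariant Beltrami coefficient $\mu$ of uniformly bounded dilatation, solve it via the Measurable Riemann Mapping Theorem, and identify the resulting antiholomorphic conjugate of $\widetilde{F}$ as the desired element of $\mathcal{F}_d^{\mathrm{simp}}$.

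The key technical point is the quasiconformal interpolation on the fundamental region $V\setminus U$. Because $\infty$ is a simple parabolic fixed point of $F$ with $F(z)=\overline{z}+1/2+O(1/\overline{z})$, and $\partial V$ is asymptotic near $\infty$ to the rays at angles $\pm 2\pi/3$, the Fatou coordinate for $F$ at $\infty$ straightens a one-sided neighborhood of $\partial V$ in $V\setminus U$ into a horizontal half-strip on which $F$ acts as $\zeta\mapsto\overline{\zeta}+1/2$. A parallel construction at the parabolic fixed point $1$ of $B_d$ linearizes $B_d$ to the same normal form, and identifying these two strip models gives a candidate for $\widetilde{F}$ on a neighborhood of the pinching. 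Away from $\infty$ the fundamental region has bounded geometry and a routine smooth quasiconformal interpolation suffices. The obstruction to having globally bounded dilatation up to the puncture is controlled precisely by Warschawski's asymptotic theorem for conformal maps between infinite strips (cf. \cite{War42}), which pins down the real-translation behavior of the Fatou-coordinate identifications and allows one to glue continuously with uniformly bounded dilatation.

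Once $\widetilde{F}$ is built, define $\mu$ on $\widehat{\C}$ as follows: on $\widehat{\C}\setminus\overline{V}$, take the pullback of the standard complex structure via the conformal identification with $B_d$; on $V\setminus U$, take the Beltrami coefficient of the interpolation (bounded by the previous step); on $\overline{U}\setminus K(F)$, propagate by successive pullbacks under iterates of $\widetilde{F}$, which preserve the $L^\infty$ bound since $\widetilde{F}$ is antiholomorphic there; and on $K(F)$ set $\mu\equiv 0$. Integrating $\mu$ via the Measurable Riemann Mapping Theorem yields a quasiconformal homeomorphism $\Psi\colon\widehat{\C}\to\widehat{\C}$, normalized so that $\Psi(\infty)=\infty$, such that $R:=\Psi\circ\widetilde{F}\circ\Psi^{-1}$ is antimeromorphic on the sphere, hence an anti-rational map of degree $d$. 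The point $\infty$ inherits a simple parabolic fixed-point structure from the $B_d$ model at $1$, and $\mathcal{B}(R):=\Psi(\widehat{\C}\setminus K(F))$ is a simply connected, completely invariant parabolic basin on which $R$ is conformally conjugate to $B_d\vert_{\D}$; thus $R\in\mathcal{F}_d^{\mathrm{simp}}$. By construction $\Psi$ is a hybrid conjugacy between $F$ and the canonical pinched anti-polynomial-like restriction of $R$ introduced in Subsection~\ref{hybrid_conj_def_subsec}, which establishes part (1).

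For part (2), when $K(F)$ is connected any two outputs $R_1, R_2\in\mathcal{F}_d^{\mathrm{simp}}$ of the construction are mutually hybrid equivalent by composing hybrid conjugacies through $F$, so Lemma~\ref{strt_unique_lem}(1) forces them to be affinely conjugate; hence $R$ has a single representative in $\left[\mathcal{F}_d^{\mathrm{simp}}\right]$. I expect the main obstacle to be the quasiconformal interpolation: the pinched annulus $V\setminus U$ has infinite modulus at the puncture, so elementary finite-annulus interpolation arguments fail, and one must bring in the Warschawski estimates to control the dilatation of the gluing all the way to the parabolic pinch. Once this analytic input is in place, the remainder of the construction follows the classical Douady--Hubbard template.
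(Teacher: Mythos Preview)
Your outline follows essentially the paper's strategy: quasiregular extension via an external parabolic model, quasiconformal interpolation on the pinched fundamental annulus $V\setminus U$ with Warschawski controlling the ends at $\infty$, then MRMT. The paper uses the anti-polynomial $p_d(z)=\overline{z}^d+c_d$ (conjugated so its parabolic point sits at $\infty$ with the same asymptotics $\overline{z}+1/2+O(1/\overline{z})$) rather than $B_d$, and only deduces $R\vert_{\mathcal{B}(R)}\cong B_d\vert_{\D}$ a posteriori via the critical \'Ecalle height; your choice of $B_d$ as the external model is a harmless variation.

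Two points need sharpening. First, your invocation of ``the Fatou coordinate for $F$ at $\infty$'' to straighten $V\setminus U$ is not quite right: $F$ is only defined on $\overline{U}$, so it has a repelling petal at $\infty$ but no attracting one, and its Fatou coordinate does not act on $V\setminus U$. The paper instead works directly with the geometry: $\partial V$ is linear and $\partial U$ asymptotically linear near $\infty$, so each end of $V\setminus U$ is a half-strip; Warschawski then says the Riemann map to a straight half-strip is asymptotically a rotation, and the equivariant lift of the conformal map $\widehat{\C}\setminus\overline{V}\to\mathfrak{P}$ has bounded displacement between the two boundary components, which is what makes the linear interpolation quasiconformal. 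Second, you should not skip the finite pinch points of $\partial U$ (the points of $F^{-1}(\infty)$ that are genuine pinchings): there the boundary lift of $\Phi$ is not injective, and one needs the positive-angle condition in Definition~\ref{pinched_anti_poly_like_map_def} to see that $\Phi$ is locally quasisymmetric, so that Ahlfors--Beurling applies on the bounded piece. Finally, your conclusion ``$R\in\mathcal{F}_d^{\mathrm{simp}}$'' at the end of part~(1) needs $\mathcal{B}(R)$ simply connected, hence $K(F)$ connected; move that sentence into part~(2).
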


\begin{remark}
	As in the case of classical polynomial-like maps, a pinched anti-polynomial-like map (in the sense of Definitions~\ref{pinched_poly_def} and \ref{pinched_anti_poly_like_map_def}) need not have a connected non-escaping set. Just like the classical setting, the straightening of a pinched anti-polynomial-like map will not be unique when the filled Julia set is not connected.
\end{remark}

\begin{proof}
1) We will use quasiconformal surgery to attach the dynamics of an appropriate degree $d$ antiholomorphic map to the exterior of $V$. Let $p_d(z)=\overline z^d+c_d$, where $c_d=(d-1)d^{\frac{-d}{d-1}}$. We note that $p_d$ has a simple parabolic fixed point at $z_0=d^{\frac{-1}{d-1}}$ and the orbit of the critical point $0$ has {\'E}calle height zero (see the discussion in the beginning of Subsection~\ref{para_anti_rat_subsec}). Let $P$ be an attracting petal for this fixed point, such that the boundary $\partial P$ near $z_0$ consists of straight lines which subtend an angle of $4\pi/3$, the boundary $\partial P$ is smooth away from $z_0$, and $0\in \partial P$ so that $p_d^{-1}(P)$ is connected (see Figure~\ref{cauli_fig}).
\begin{figure}
\captionsetup{width=0.96\linewidth}
\begin{tikzpicture}
\node[anchor=south west,inner sep=0] at (1,0) {\includegraphics[width=0.5\textwidth]{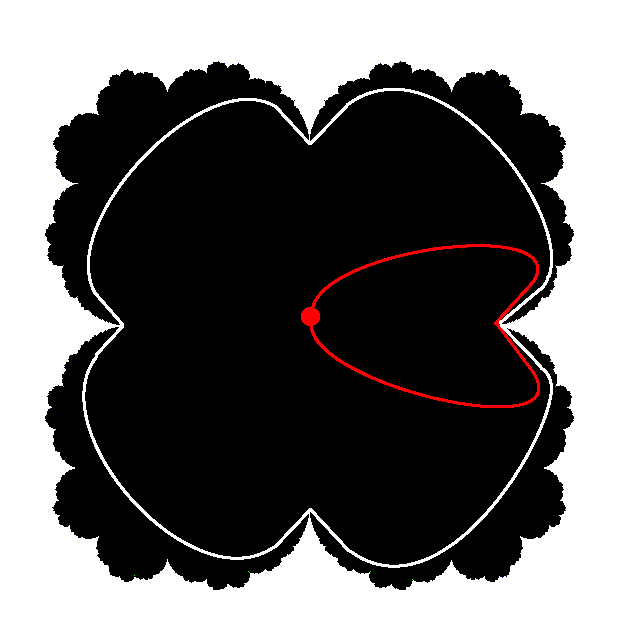}}; 
\node at (5.4,3) {\begin{large}\textcolor{white}{$P$}\end{large}};
\node at (3.8,3.2) {\begin{large}\textcolor{white}{$0$}\end{large}};
\node at (3.2,1.8) {\begin{large}\textcolor{white}{$p_d^{-1}(P)$}\end{large}};
\end{tikzpicture}
\caption{The interior of the red curve is the petal $P$ introduced in the proof of Theorem~\ref{straightening_thm}, and the interior of the white curve is its preimage $p_d^{-1}(P)$.}
\label{cauli_fig}
\end{figure}
We make a change of variables $z\mapsto -c/(z-z_0 )$ which sends $z_0$ to infinity and denote the image of the petal by $\mathfrak P$ and the conjugated map by $\mathfrak p$. We choose $c>0$ so that the asymptotics of $\mathfrak p$ at $\infty$ is given by $z\mapsto \overline{z} + \frac{1}{2} + O(1/\overline{z})$. Denote $\mathfrak Q = \mathfrak p^{-1}(\mathfrak P)$ and note that $\partial \mathfrak Q$ (like $\partial U$) is asymptotically linear near $\infty$  and smooth away from $\mathfrak p^{-1}(\infty)$ (see Figure~\ref{straightening_fig}).

Let $\Phi\colon \mathfrak P\to \mathbb{C}\setminus \overline V$ be a conformal map whose continuous boundary extension fixes $\infty$. Since the angle that $\partial \mathfrak P$ makes at $\infty$ is equal to the angle that $\partial V$ makes at $\infty$, this map is of the form $\Phi(z) = \lambda  z + o(z)$, for some $\lambda>0$, near $\infty$. By the Carath{\'e}odory-Torhorst theorem, $\Phi$ extends continuously as a map from $\partial \mathfrak P$ to $\partial V$ and in fact smoothly away from $\infty$. The $d$ components of $\partial \mathfrak Q \setminus \mathfrak p^{-1}(\infty)$ are circularly ordered by position relative to infinity. There is a corresponding circular ordering of the components of $\partial U\setminus F^{-1}(\infty)$. We then equivariantly lift $\Phi:\partial\mathfrak{P}\setminus\{\infty\}\rightarrow\partial V\setminus\{\infty\}$ to a map $\Phi:\partial\mathfrak Q \setminus \mathfrak p^{-1}(\infty)\rightarrow\partial U\setminus F^{-1}(\infty)$. More precisely, $\Phi$ is extended as $F^{-1}\circ \Phi\circ \mathfrak p$, where we choose the branch of $F^{-1}$ so that components of $\partial\mathfrak Q\setminus \mathfrak p^{-1}(\infty)$ map to corresponding components of $\partial U\setminus F^{-1}(\infty)$. By continuity $\Phi$ extends to a map from $\partial \mathfrak Q$ to $\partial U$, which will not be injective at the preimages of the pinched points of $\overline{U}$. Since the accesses from $\widehat{\mathbb{C}}\setminus\overline{U}$ to the pinched points have positive angles, the images of local arcs of $\partial \mathfrak Q$ are quasiarcs. In particular, $\Phi$ is locally a quasi-symmetry. Also note that due to the asymptotics of $F$ and $\mathfrak{p}$ near $\infty$, this lifted map $\Phi$ on the boundary also has the asymptotics $\Phi(z)=\lambda z + o(z)$ as $z\to \infty$.

\begin{figure}
\captionsetup{width=0.96\linewidth}
\begin{tikzpicture}
\node[anchor=south west,inner sep=0] at (1,0) {\includegraphics[width=0.9\textwidth]{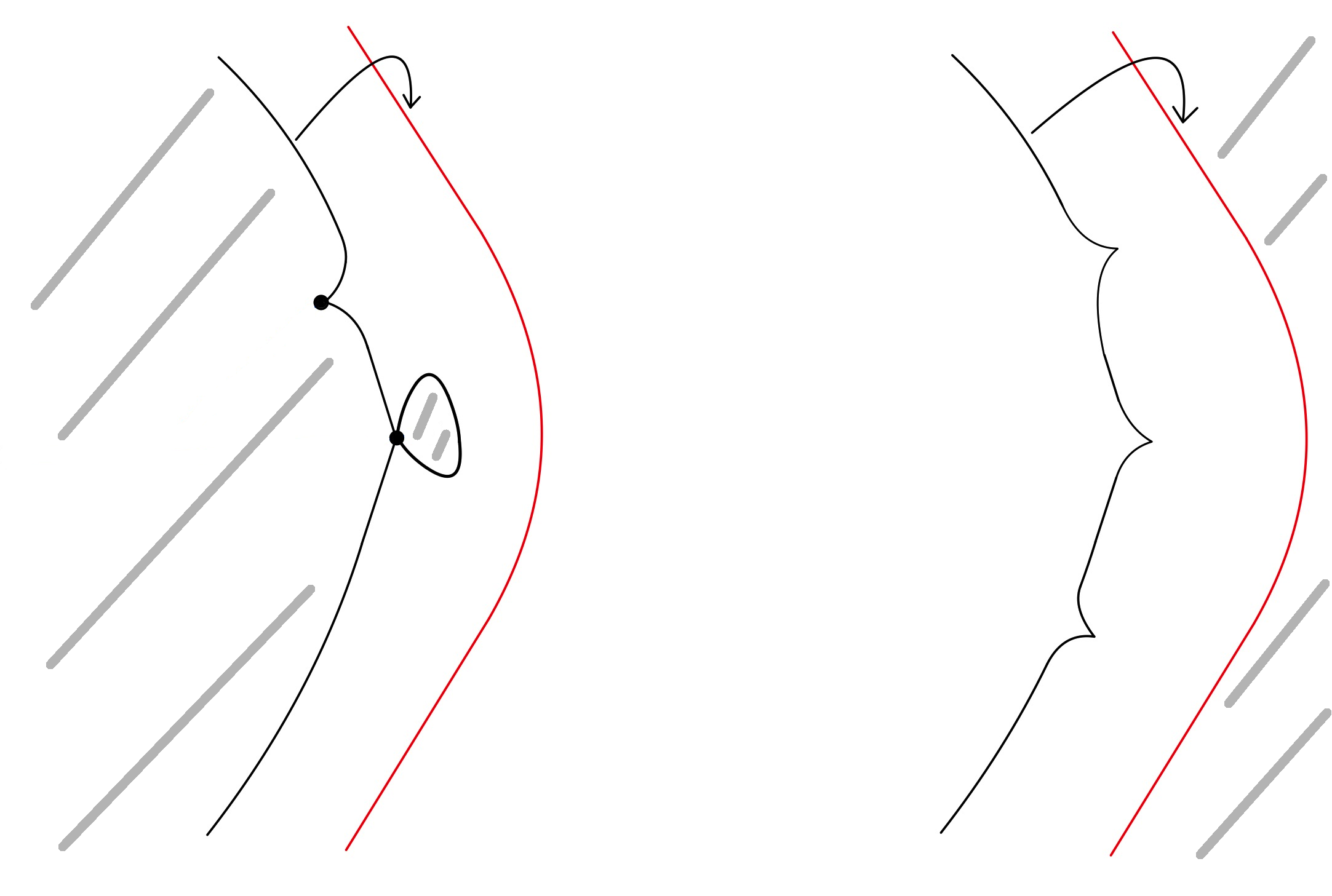}}; 
\node at (4.9,1.2) {\begin{large}$\partial V$\end{large}};
\node at (3.7,1.2) {\begin{large}$\partial U$\end{large}};
\node at (3.75,7.05) {\begin{large}$F$\end{large}};
\node at (11.5,1.2) {\begin{large}$\partial\mathfrak{P}$\end{large}};
\node at (10,1.2) {\begin{large}$\partial \mathfrak{Q}$\end{large}};
\node at (10.2,7.1) {\begin{large}$\mathfrak{p}$\end{large}};
\node at (4.6,5.4) {\begin{Large}$S$\end{Large}};
\node at (11.2,5) {\begin{Large}$\mathfrak{S}$\end{Large}};
\node at (12.8,1) {};
\end{tikzpicture}
\caption{Left: Depicted are the domain and range of the simple pinched anti-polynomial-like map $F$. The shaded region is the pinched polygon $U$, and the region to the left of the red curve is its $F$-image $V$. Right: Depicted are the domain and range of the map $\mathfrak{p}$ obtained by M{\"o}bius conjugating the restriction $p_d:p_d^{-1}(P)\to P$ of Figure~\ref{cauli_fig}. The shaded region is the `petal' $\mathfrak{P}$, and the region to the right of the black curve is its $\mathfrak{p}$-preimage $\mathfrak{Q}$.}
\label{straightening_fig}
\end{figure}

In fact we will say more. Let $z_1\in \partial\mathfrak P, z_2\in \partial\mathfrak Q$ be given points which are sufficiently close to infinity and a distance less than $1$ apart. This is possible as $\partial \mathfrak Q$ is asymptotic to a pair of rays which are parallel to $\partial \mathfrak P$ and a distance of $\sqrt{3}/4$ from it. Now note that by choosing $z_1$ and $z_2$ sufficiently close to $\infty$ we have that $\overline {\mathfrak p(z_2)}\in \partial \mathfrak P \cap B(z_2,1)$. Thus $|\overline{\mathfrak p(z_2)}- z_1|<2$. It follows from continuity of $\Phi$ that $|\Phi(\overline{\mathfrak p(z_2)})- \Phi(z_1)|$ is bounded, and thus together with the asymptotics on $\Phi$ near infinity that
$$
|\overline{\Phi(\mathfrak p(z_2))} - {\Phi(z_1)}| \leq |\overline{\Phi(\mathfrak p(z_2))}-\Phi(\overline{\mathfrak p(z_2)}) | + |\Phi(\overline{\mathfrak p(z_2)}) - {\Phi(z_1)}|<M
$$
for some sufficiently large $M$ which depends only on how close to $\infty$ the points $z_1$ and $z_2$ were chosen to be, and not on the choice of points. By definition of the conjugacy and asymptotics of $F$, we have that
$$
\Phi(\mathfrak p(z_2)) = F(\Phi(z_2)) = \overline{\Phi(z_2)} + 1/2 + O(1/\Phi(z_2))=\overline{\Phi(z_2)}+\frac{1}{2}+ O(1/z).
$$
We conclude that $\Phi(z_1)$ and $\Phi(z_2)$ are at a uniformly bounded distance from each other.

Let $S$ be the strip $V\setminus \overline U$ and $\mathfrak S = \mathfrak{Q}\setminus \overline{\mathfrak{P}}$ be the corresponding strip to be glued in (see Figure~\ref{straightening_fig}). In order to interpolate the boundary map $\Phi\colon \partial \mathfrak P\cup \partial\mathfrak Q\to \partial S$ to a quasiconformal map on all of $\mathfrak S$ we decompose $S$ into two parts: the unbounded parts asymptotic to half-strips and the bounded pinched polygon.

Fix $E_1$ to be one of the two ends of $S$, such that $E_1$ is a rotation and translation of the right half-strip bounded by curves $x=0, y=0, y=\sqrt{3}/4 + O(1/x)$ as $x\to \infty$. This is possible as $\partial V$ is a linear ray close enough to $\infty$ and $\partial U = F^{-1}(\partial V)$ is asymptotically linear as noted in  Remark~\ref{positive_angle_rem}. By \cite{War42}, there is a uniformizing map $\alpha\colon E_1\to  T:=\{(x,y)\mid x>0, y\in (0,\sqrt{3}/4)\}$ with asymptotics given by $\alpha(z) = e^{-2\pi i/3}z + o(z)$. There is an analogous region $E_1'\subset\mathfrak{S}$ with $\partial E_1' \cap \partial \mathfrak S = \Phi^{-1}(\partial E_1\cap \partial S)$ and an analogous uniformizing map $\alpha':E_1'\to T$ with the asymptotics $\alpha'(z) = e^{-2\pi i/3}z +o(z)$.
Thus, we have an induced map 
$$
\widetilde{\Phi}:=  \alpha\circ\Phi\circ \alpha'^{-1}\ \colon \partial T\to \partial T
$$ 
on the upper and lower boundary rays and by the identity on the vertical line segment $\{it\mid t\in (0,\sqrt{3}/4)\}$. Note that $\widetilde{\Phi}$ is smooth on the upper and lower boundary lines of $T$. Moreover, by the computation above, it is asymptotic to $w\mapsto \lambda w+ o(w)$ as $\partial T\ni w\to\infty$, and the maps on the upper and lower boundaries are a bounded distance from each other. Therefore, linear interpolation yields a homeomorphism $\widetilde{\Phi}: T\to T$ that is quasiconformal on $\Int{T}$ and that continuously agrees with $\widetilde{\Phi}\vert_{\partial T}$ defined above (cf. \cite[Lemma~5.3]{LLMM3}).
This extended map lifts to a quasiconformal map from $E_1'$ to $ E_1$ which agrees with $\Phi$ on the boundary. The same argument shows the existence of a quasiconformal interpolating map between regions $E_2'\subset\mathfrak{S}$ and $E_2\subset S$ which are ends of the other accesses to $\infty$.

Now consider the regions $\mathfrak S\setminus (E_1'\cup E_2')$ and $S\setminus (E_1\cup E_2)$ which are a conformal polygon and a conformal pinched polygon respectively. Moreover, the edges of these (pinched) polygons are smooth and they meet at positive angles at the vertices. The map $\Phi$, constructed so far, is a quasisymmetric map between the boundaries of the two regions. Each of these regions may be uniformized by the disk $\mathbb{D}$ and two conformal maps $\phi_1,\phi_2$ which send the disk to $\mathfrak S\setminus (E_1'\cup E_2')$ and $S\setminus (E_1\cup E_2)$ respectively. We now define the map $\phi_2^{-1}\circ \Phi \circ \phi_1\colon \partial \mathbb{D}\to \partial \mathbb{D}$ where this composition is well defined, and extending continuously using the circular ordering of the pinched points where it is not. Now note that for every point on $\partial (\mathfrak S\setminus (E_1'\cup E_2'))$ there is a local neighborhood for which $\Phi$ is a quasi-symmetric homeomorphism onto its image. This property lifts to the boundary map, and by the Ahlfors-Beurling theorem the boundary map extends to a quasiconformal map of $\mathbb{D}$. Going back via the conformal maps $\phi_1,\phi_2$, we obtain our desired quasiconformal extension $\Phi:\mathfrak S\setminus (E_1'\cup E_2')\longrightarrow S\setminus (E_1\cup E_2)$.

We now have a globally defined continuous map

\begin{equation}
\begin{split}
\widetilde{F}\colon \widehat{\mathbb{C}} \longrightarrow \widehat{\mathbb{C}} \nonumber \hspace{2.8cm} \\
\widetilde{F}(z) = \begin{cases}F(z),\quad z\in \overline U\\ \Phi \circ \mathfrak p \circ \Phi^{-1}(z), \quad z\in \widehat{\mathbb{C}}\setminus \overline U.\end{cases}\nonumber
\end{split}
\end{equation}
Note that since $\partial U$ is a piecewise smooth curve with finitely many singular points, it is removable for quasiconformal maps. Hence, $\widetilde{F}$ is a degree $d$ anti-quasiregular map of $\widehat{\C}$. In fact, $\widetilde{F}$ is antiholomorphic off the strip $S$ and the pinching points of $\partial U$ are critical points for $\widetilde{F}$. Let $\mu_0$ denote the standard complex structure on $\widehat{\mathbb{C}}\setminus \overline V$. Pulling $\mu_0$ back under iterates of $\widetilde{F}$ we obtain a complex structure on $\mathbb{S}^2\setminus K(F)$, and we complete this to a complex structure $\mu$ on all of $\mathbb{S}^2$ by putting the standard complex structure on $K(F)$. Furthermore, $\mu$ is invariant under the action of $\widetilde{F}$. As at most one iterate of $\widetilde{F}$ lands in the strip $S$, it follows that the eccentricity of the pulled back complex structure is essentially bounded. By applying the measurable Riemann mapping theorem we obtain a map $\Xi\colon \mathbb{S}^2\to \widehat{\mathbb{C}}$ which sends $\mu$ to the standard complex structure. Note that as $\mu$ was already the standard complex structure on $K(F)$ we have that $\Xi$ is in fact conformal on $K(F)$. Now $R:=\Xi \circ \widetilde{F}\circ \Xi^{-1}\colon \widehat{\mathbb{C}}\to \widehat {\mathbb{C}}$ is an orientation reversing map of the sphere which preserves the standard complex structure, and is thus an anti-rational map. By construction it is hybrid equivalent to $F$.

It remains to argue that $R$ has a simple parabolic fixed point at $\Xi(\infty)$. That $\infty$ is a fixed point for $R$ follows from the fact that the anti-quasiregular map $\widetilde{F}$ fixes $\infty$. Moreover, by Condition~\ref{asymptotics} of the definition of simple pinched anti-polynomial-like maps and the construction of $\widetilde{F}$, points in $U$ near $\infty$ are repelled away from $\infty$ under iterations of $\widetilde{F}$, while the forward $\widetilde{F}$-orbits of points in $\widehat{\C}\setminus U$ converge to $\infty$. This translates to the fact that $\Xi(\infty)$ is a parabolic fixed point of $R$ with a unique attracting and a unique repelling petal. In other words, $\Xi(\infty)$ is a simple parabolic fixed point of $R$.
\medskip

2) We now assume that the filled Julia set $K(F)$ is connected. We can assume, possibly after a M{\"o}bius change of coordinates, that $\Xi(\infty)=\infty$; i.e., $R$ has a simple parabolic fixed point at $\infty$. Moreover, by construction of $R$, the forward $R$-orbits of all points outside of $\Xi(K(F))$ converge to $\infty$. It follows that $\mathcal{B}(R):=\widehat{\C}\setminus \Xi(K(F))$ is a simply connected, completely invariant immediate parabolic basin of $\infty$. 

Also note that connectedness of $K(F)$ is equivalent to the fact that all critical points of $F$ lie in $K(F)$. Therefore, there is a unique critical point (of multiplicity $d-1$) of $R$ in $\mathcal{B}(R)$. Since the critical point $0$ of $p_d$ has {\'E}calle height zero and the critical {\'E}calle height is a conformal invariant, it follows that the unique critical point of $R$ in $\mathcal{B}(R)$ also has {\'E}calle height zero. Thus, $R\vert_{\mathcal{B}(R)}$ is conformally conjugate to the action on $\D$ of a unicritical parabolic anti-Blaschke product with critical {\'E}calle height zero. Up to M{\"o}bius conjugacy, $B_d$ is the unique such anti-Blaschke product. We conclude that $R\in\mathcal{F}_d^{\mathrm{simp}}$.

It remains to prove that if $R_1, R_2\in\mathcal{F}_d^{\mathrm{simp}}$ are two such straightened maps, then they are affinely conjugate. But this follows from Lemma~\ref{strt_unique_lem}.
The proof of the theorem is now complete.
\end{proof}

\subsubsection{Straightening the external map $\mathcal{R}_d$}

Our next technical lemma asserts that the external maps $\mathcal{R}_d$ and $B_d$ are quasiconformally conjugate in a pinched neighborhood of the circle. As the idea of the proof is similar to that of Theorem~\ref{straightening_thm}, we only outline the key steps.

\begin{lemma}\label{qc_conj_rd_bd_lem}
There exists a homeomorphism $\mathfrak{h}:\overline{\mathcal{Q}}\to\overline{\D}$ that is quasiconformal on $\D$, sends $1$ to $1$, and conjugates the restriction of the anti-Farey map $\mathcal{R}_d$ on the closure of a (one-sided) neighborhood of $\partial\mathcal{Q}\setminus\mathcal{R}_d^{-1}(1)$ to the restriction of the anti-Blaschke product $B_d$ on the closure of a (one-sided) neighborhood of $\mathbb{S}^1\setminus B_d^{-1}(1)$.
\end{lemma}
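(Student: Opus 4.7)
The plan is to adapt the quasiconformal surgery of the proof of Theorem~\ref{straightening_thm} to the pair $(\mathcal{R}_d, B_d)$. Both maps are degree $d$ antiholomorphic branched covers whose boundary restrictions $\mathcal R_d|_{\partial\mathcal{Q}}$ and $B_d|_{\mathbb{S}^1}$ are piecewise real-analytic expansive covers of degree $d$ with a single simple parabolic fixed point at $1$; in the case of $\mathcal R_d$, the breakpoints of the piecewise analytic structure coincide with $\mathcal R_d^{-1}(1)\setminus\{1\}$ and are symmetrically parabolic (see the proof of Lemma~\ref{david_ext_lem}), while $B_d|_{\mathbb{S}^1}$ is globally real-analytic.

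First, I would build a boundary conjugacy $\mathfrak{h}|_{\partial\mathcal Q}\colon \partial\mathcal Q\to\mathbb{S}^1$ with $\mathfrak{h}(1)=1$ that conjugates $\mathcal R_d$ to $B_d$. Using the matching symbolic dynamics of the two expansive covers, such a conjugacy is built by fixing an analytic homeomorphism between fundamental arcs of $\mathcal R_d|_{\partial\mathcal Q}$ and $B_d|_{\mathbb{S}^1}$ and extending equivariantly via the two dynamics. On compact subsets of $\partial\mathcal Q\setminus \mathcal R_d^{-1}(1)$ this extension is real-analytic, while at the points of $\mathcal R_d^{-1}(1)$ it is quasisymmetric thanks to the matching simple parabolic nature of the germs of $\mathcal R_d$ and $B_d$ at these points.

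Next, I would extend $\mathfrak{h}$ quasiconformally into the interior. Choose a fundamental one-sided strip $S_{\mathcal R}\subset\mathcal{Q}$ bounded by a fundamental arc $\alpha_{\mathcal R}\subset\partial\mathcal Q$, its $\mathcal R_d$-preimage $\mathcal R_d^{-1}(\alpha_{\mathcal R})$ inside $\mathcal{Q}$, and two interior arcs joining their endpoints; fix an analogous strip $S_B\subset\D$ on the $B_d$-side. Since $\alpha_{\mathcal R}$ has endpoints at preimages of $1$, the strip $S_{\mathcal R}$ has two cusp-like ends; by Warschawski's theorem~\cite{War42}, the conformal uniformizations of these ends onto standard half-strips have matching derivative asymptotics on both sides. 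Applying the strip-interpolation argument from the proof of Theorem~\ref{straightening_thm} (linear interpolation in the straightened half-strip coordinates at each cusp end, combined with Ahlfors-Beurling extension on the bounded middle) produces a quasiconformal homeomorphism $S_{\mathcal R}\to S_B$ that agrees with $\mathfrak{h}|_{\partial\mathcal Q}$ on $\alpha_{\mathcal R}$ and with $B_d^{-1}\circ \mathfrak{h}\circ \mathcal R_d$ on $\mathcal R_d^{-1}(\alpha_{\mathcal R})$. Pulling this strip map back equivariantly through all iterated $\mathcal R_d$- and $B_d$-preimages yields a quasiconformal conjugacy $\mathfrak{h}$ on the one-sided neighborhood $\mathcal N$ of $\partial\mathcal Q\setminus\mathcal R_d^{-1}(1)$; the global dilatation on $\mathcal N$ equals that of $\mathfrak{h}|_{S_{\mathcal R}}$ since all pullbacks are antiholomorphic.

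The complement $\overline{\mathcal Q}\setminus \mathcal N$ is a finite union of topological Jordan domains whose boundaries $\mathfrak{h}$ already maps quasisymmetrically onto the boundaries of the corresponding Jordan domains in $\overline{\D}$; the Ahlfors-Beurling extension on each such domain completes the construction of the global quasiconformal $\mathfrak{h}\colon\overline{\mathcal Q}\to\overline{\D}$. I expect the main technical obstacle to be the strip-interpolation sub-step: the strips $S_{\mathcal R}$ and $S_B$ are unbounded in the Warschawski coordinates, and one must ensure that the boundary data on $\alpha_{\mathcal R}$ and $\mathcal R_d^{-1}(\alpha_{\mathcal R})$ match up to bounded error at infinity in these straightened coordinates, exactly as in the corresponding step in the proof of Theorem~\ref{straightening_thm}.
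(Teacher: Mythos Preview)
Your overall strategy---use the interpolation argument of Theorem~\ref{straightening_thm} on a fundamental domain and then propagate by equivariance---is the one the paper uses, but the fundamental domain you describe is geometrically ill-posed. The boundary $\partial\mathcal{Q}$ is completely invariant under $\mathcal{R}_d$: since $\mathcal{R}_d$ is induced by the reflection $\rho_1$ in a circle orthogonal to $\mathbb{S}^1$, the full $\mathcal{R}_d$-preimage of $\partial\mathcal{Q}$ inside $\overline{\mathcal{Q}}$ is again $\partial\mathcal{Q}$. Hence there is no arc $\alpha_{\mathcal{R}}\subset\partial\mathcal{Q}$ whose $\mathcal{R}_d$-preimage lies in the interior of $\mathcal{Q}$, and the strip $S_{\mathcal{R}}$ you describe does not exist. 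The same obstruction occurs on the $B_d$ side, since $B_d^{-1}(\mathbb{S}^1)=\mathbb{S}^1$.

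The paper instead takes a fundamental annulus bounded by curves \emph{interior} to $\mathcal{Q}$ and $\D$. One first thickens the core $\mathcal{Q}_1$ near $1$ to a region $\mathcal{Q}_1^w$ whose boundary meets $\partial\mathcal{Q}$ at $1$ in a wedge of some positive angle $\theta_0$, and picks an attracting petal $\mathcal{P}\subset\D$ for $B_d$ at $1$ subtending the same angle, with the critical point $0\in\partial\mathcal{P}$. A \emph{conformal} homeomorphism $\mathfrak{h}:\overline{\mathcal{Q}_1^w}\to\overline{\mathcal{P}}$ sending $1$ to $1$ is then asymptotically linear at $1$ because the angles match. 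Its boundary values are lifted equivariantly via the degree-$d$ covers $\mathcal{R}_d$ and $B_d$ to the inner boundary of the pinched fundamental annulus $\mathcal{R}_d^{-1}(\overline{\mathcal{Q}_1^w})\setminus\mathcal{Q}_1^w$ (respectively $B_d^{-1}(\overline{\mathcal{P}})\setminus\mathcal{P}$). Now the interpolation of Theorem~\ref{straightening_thm} applies directly---the parabolic asymptotics of $\mathcal{R}_d,B_d$ at $1$ together with the linear asymptotics of $\mathfrak{h}$ play the role of the asymptotic conditions there---and iterated pullback fills out $\mathcal{Q}\setminus\mathcal{Q}_1^w$. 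No residual Ahlfors--Beurling step is needed, since $\mathfrak{h}\vert_{\mathcal{Q}_1^w}$ is conformal from the outset.

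Your preliminary construction of a quasisymmetric boundary conjugacy $\partial\mathcal{Q}\to\mathbb{S}^1$ is correct on its own (the paper records this weaker statement as Remark~\ref{qs_conj_rem}) but is not used as input: in the paper's organization the boundary conjugacy arises as the continuous extension to $\partial\mathcal{Q}$ of the pulled-back interior map.
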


\begin{figure}[h!]
\captionsetup{width=0.96\linewidth}
\begin{tikzpicture}
\node[anchor=south west,inner sep=0] at (1,0) {\includegraphics[width=0.9\textwidth]{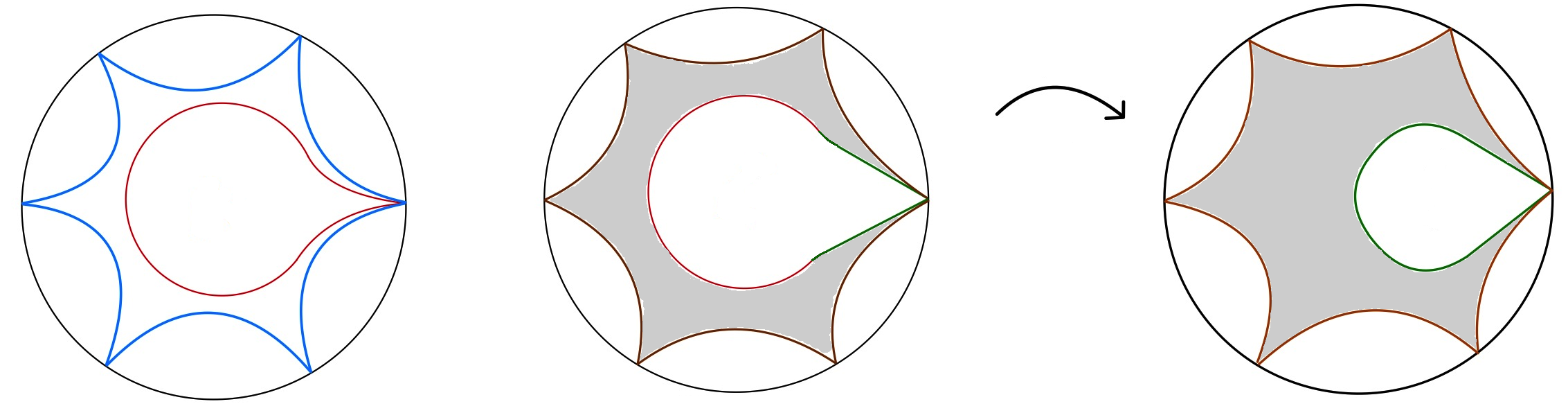}}; 
\node at (2.8,1.45) {$\mathcal{Q}_1$};
\node at (4.15,1.45) {\begin{scriptsize}$1$\end{scriptsize}};
\node at (6.4,1.45) {$\mathcal{Q}_1^{w}$};
\node at (7.9,1.5) {\begin{scriptsize}$1$\end{scriptsize}};
\node at (11.4,1.45) {$\mathcal{P}$};
\node at (12.45,1.55) {\begin{scriptsize}$1$\end{scriptsize}};
\node at (8.7,2.56) {$\mathfrak{h}$};
\end{tikzpicture}
\caption{We open up the cusp of $\mathcal{Q}_1$ at $1$ to obtain a wedge of positive angle. The shaded regions are fundamental domains for $\mathcal{R}_d$ and $B_d$.}
\label{rd_bd_qc_conj_fig}
\end{figure}

\begin{proof}[Sketch of the proof]
Let us first thicken $\mathcal{Q}_1$ near $1$ to turn the cusp into a wedge of angle $\theta_0$ (for some $\theta_0\in(0,\pi)$), and call this domain $\mathcal{Q}_1^{w}$ (see Figure~\ref{rd_bd_qc_conj_fig}). Analogously, consider an attracting petal $\mathcal{P}\subset\D$ of $B_d$ at the parabolic point $1$ such that $\mathcal{P}$ contains the critical value of $B_d$ in $\D$, the critical point $0$ lies on $\partial\mathcal{P}$, and $\partial\mathcal{P}$ subtends an angle $\theta_0/2$ at $1$. Now choose a homeomorphism $\mathfrak{h}:\overline{\mathcal{Q}_1^{w}}\longrightarrow \overline{\mathcal{P}}$ that is conformal on the interior and sends $1$ to $1$. By \cite[Theorem~3.11]{Pom}, the map $\mathfrak{h}$ is of the form
\begin{equation}
z\mapsto 1+c_1(z-1)^{1/2}+o((z-1)^{1/2}),
\end{equation} 
for some $c_1\in\C^*$, near $1$ (for a suitable branch of square root).
The boundary $\partial \mathcal Q_1^w$ consists of two parts; the unwedged part $\partial \mathcal Q_1^w\cap (\partial\mathcal Q_1 \setminus \{1\})$, which has $d+1$ preimages under $\mathcal R_d$ (as $\mathcal R_d$ is the identity on $\partial \mathcal Q_1$, and thus includes itself as a preimage), and the wedged part $(\partial \mathcal Q_1^w\setminus \partial \mathcal Q_1)\cup \{1\}$, which has $d$ preimages. As such, both $\mathcal{R}_d:\left(\mathcal R_d^{-1}(\partial\mathcal{Q}_1^w)\setminus \partial \mathcal Q_1\right) \cup \{1\}\longrightarrow \partial\mathcal{Q}_1^w$ and $B_d: B_d^{-1}(\partial\mathcal{P})\longrightarrow\partial\mathcal{P}$ are degree $d$ orientation-reversing covering maps. We lift the map $\mathfrak{h}:\partial\mathcal{Q}_1^{w}\longrightarrow \partial\mathcal{P}$ via the above coverings to get a homeomorphism from $\left(\mathcal{R}_d^{-1}(\partial\mathcal{Q}_1^w)\setminus \partial \mathcal Q_1\right) \cup \{1\}$ onto $B_d^{-1}(\partial\mathcal{P})$, which we also denote by $\mathfrak{h}$.

Thanks to the parabolic asymptotics of $\mathcal{R}_d$ and $B_d$ near $1$, we can perform change of coordinates of the form $z\mapsto\frac{c_2}{z-1}$ and $z\mapsto\frac{c_3}{(z-1)^2}$ (respectively) to conjugate $\mathcal{R}_d, B_d$ to maps of the form $\zeta\mapsto\overline{\zeta}+1/2+O(1/\zeta)$ near $\infty$.
This fact, combined with the asymptotics of $\mathfrak{h}$ near $1$ allow one to apply the quasiconformal interpolation arguments of Theorem~\ref{straightening_thm} to conclude the existence of a quasiconformal homeomorphism $\mathfrak{h}$ between the pinched fundamental annuli $\mathcal{R}_d^{-1}(\overline{\mathcal{Q}_1^{w}})\setminus \mathcal{Q}_1^{w}$ and $B_d^{-1}(\overline{\mathcal{P}})\setminus \mathcal{P}$ (of $\mathcal{R}_d$ and $B_d$ respectively) that continuously agrees with $\mathfrak{h}$  already defined (these fundamental domains are shade in grey in Figure~\ref{rd_bd_qc_conj_fig}). By construction, this map conjugates the actions of $\mathcal{R}_d$ and $B_d$ on the boundaries of their fundamental domains. Finally, pulling $\mathfrak{h}$ back by iterates of $\mathcal{R}_d$ and $B_d$, one obtains a quasiconformal homeomorphism of $\D$ that conjugates the restriction of $\mathcal{R}_d$ on the closure of a (one-sided) neighborhood of $\partial\mathcal{Q}\setminus\mathcal{R}_d^{-1}(1)$ to the restriction of $B_d$ on the closure of a (one-sided) neighborhood of $\mathbb{S}^1\setminus B_d^{-1}(1)$.
\end{proof}

\begin{remark}\label{qs_conj_rem}
A weaker version of Lemma~\ref{qc_conj_rd_bd_lem}; namely, the existence of a quasiconformal homeomorphism $\overline{\mathcal{Q}}\to\overline{\D}$ that conjugates $\mathcal{R}_d$ to $B_d$ \emph{only on} $\mathbb{S}^1$, can be deduced from \cite[Theorem~4.9]{LMMN}. 
\end{remark}

\subsection{Straightening Schwarz reflections in $\mathcal{S}_{\mathcal{R}_d}$}\label{strt_schwarz_ref_subsec}

\subsubsection{Straightening all maps in $\mathcal{S}_{\mathcal{R}_d}$}\label{srd_unif_straightening_subsubsec}

\begin{theorem}\label{srd_unif_strt_thm}
Let $(\Omega,\sigma)\in\mathcal{S}_{\mathcal{R}_d}$. Then, there exists a unique $R_\sigma\in\left[\mathcal{F}_d\right]$ such that $\sigma$ is hybrid conjugate to $R_\sigma$. Moreover, $R_\sigma\in\mathcal{F}_d^{\mathrm{high}}$ if and only if $(\Omega,\sigma)\in\mathcal{S}_{\mathcal{R}_d}^{\mathrm{high}}$.
\end{theorem}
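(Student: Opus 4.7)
The plan is to execute the quasiconformal surgery of Proposition~\ref{mating_all_pcf_anti_poly} in reverse, replacing the David conjugacy of Lemma~\ref{david_ext_lem} by the quasiconformal conjugacy $\mathfrak{h}\colon\overline{\mathcal{Q}}\to\overline{\D}$ of Lemma~\ref{qc_conj_rd_bd_lem}; the replacement being \emph{quasi}conformal rather than David reflects the fact that we are passing between two parabolic external classes of the same asymptotic shape. Concretely, starting with $(\Omega,\sigma)\in\mathcal{S}_{\mathcal{R}_d}$, I would fix the conformal conjugacy $\psi\colon(\mathcal{Q},0)\to(T^\infty(\sigma),\infty)$ from Definition~\ref{srd_def} and set $\mathcal{H}:=\mathfrak{h}\circ\psi^{-1}\colon T^\infty(\sigma)\to\D$. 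This is a quasiconformal homeomorphism whose restriction conjugates $\sigma$ to $B_d$ on the $\psi$-image of the pinched neighborhood of $\partial\mathcal{Q}$ furnished by Lemma~\ref{qc_conj_rd_bd_lem}.

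Next, I would assemble a bounded Beltrami coefficient $\mu$ on $\widehat{\C}$ which is $\sigma$-invariant on $\overline{\Omega}$: set $\mu:=0$ on $K(\sigma)$, $\mu:=\mathcal{H}^*\mu_0$ on the rank-$0$ tile $T^0(\sigma)$, and extend to every tile of rank $n\geq 1$ by pull-back under $\sigma^{\circ n}$. Since $\sigma$ is antiholomorphic on each tile of positive rank, these pull-backs preserve the dilatation, so $\|\mu\|_\infty$ is bounded by that of $\mathfrak{h}$. Invoking the measurable Riemann mapping theorem, one obtains a quasiconformal homeomorphism $\Phi\colon\widehat{\C}\to\widehat{\C}$ with $\mu_\Phi=\mu$. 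Define
\[
R_\sigma(w):=\begin{cases}\Phi\circ\sigma\circ\Phi^{-1}(w),& w\in\Phi(K(\sigma)),\\(\Phi\circ\mathcal{H}^{-1})\circ B_d\circ(\mathcal{H}\circ\Phi^{-1})(w),& w\in\Phi(T^\infty(\sigma)).\end{cases}
\]
On the first piece, $\sigma$-invariance of $\mu$ makes $R_\sigma$ antiholomorphic; on the second, $\Phi\circ\mathcal{H}^{-1}\colon\D\to\Phi(T^\infty(\sigma))$ is conformal (because $\Phi$ and $\mathcal{H}$ share the same Beltrami coefficient on $T^\infty(\sigma)$), so $R_\sigma$ is an antiholomorphic conformal conjugate of $B_d$ there.

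Granting the global continuity of $R_\sigma$ across $\Phi(\Lambda(\sigma))$ (the main obstacle, discussed below), $R_\sigma$ is an anti-rational map of degree $d$. By construction, $\mathcal{B}(R_\sigma):=\Phi(T^\infty(\sigma))$ is a simply connected, completely invariant Fatou component on which $R_\sigma$ is conformally conjugate to $B_d|_\D$ via $\Phi\circ\mathcal{H}^{-1}$; after a M{\"o}bius normalization moving $\Phi(\pmb{y})$ to $\infty$, one obtains $R_\sigma\in\mathcal{F}_d$, and $\Phi$ itself serves as the hybrid conjugacy between $\sigma$ and $R_\sigma$ (conformal on $K(\sigma)$ since $\mu\equiv 0$ there). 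Uniqueness of $R_\sigma$ in $[\mathcal{F}_d]$ follows immediately from Lemma~\ref{strt_unique_lem}(1). For the simple-versus-high dichotomy, I would use that the hybrid conjugacy $\Phi$ preserves the local dynamics near the parabolic fixed point $\pmb{y}\leftrightarrow\infty$, and that for maps in $\mathcal{S}_{\mathcal{R}_d}$ the order of this parabolic point is governed by the cusp type of $\partial\Omega$ at $\pmb{y}$: a $(3,2)$-type cusp corresponds to a simple parabolic fixed point, while higher-order cusps yield higher-order parabolic fixed points. This translates precisely to $R_\sigma\in\mathcal{F}_d^{\mathrm{high}}\Longleftrightarrow(\Omega,\sigma)\in\mathcal{S}_{\mathcal{R}_d}^{\mathrm{high}}$.

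The main technical hurdle is ensuring that the two piecewise formulas for $R_\sigma$ glue into a globally continuous anti-meromorphic function on $\widehat{\C}$. Since $\Lambda(\sigma)$ need not be locally connected in general, $\psi$ may fail to extend continuously to the boundary, so pointwise matching of the two definitions on $\Phi(\Lambda(\sigma))$ is not automatic. The resolution exploits Lemma~\ref{qc_conj_rd_bd_lem}: on a pinched neighborhood of $\partial\mathcal{Q}$ the map $\mathfrak{h}$ genuinely conjugates $\mathcal{R}_d$ to $B_d$, so via $\psi$ the two formulas for $R_\sigma$ agree on a pinched one-sided neighborhood of $\Phi(\Lambda(\sigma))$ inside $\Phi(T^\infty(\sigma))$; combined with antiholomorphicity on each side and a removability argument for the locus $\Phi(\Lambda(\sigma))$ (mirroring the concluding removability step in the proof of Proposition~\ref{mating_all_pcf_anti_poly}), this upgrades $R_\sigma$ to a bona fide anti-rational map.
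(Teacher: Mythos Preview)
Your overall strategy is the paper's: use the quasiconformal map $\mathfrak{h}$ of Lemma~\ref{qc_conj_rd_bd_lem} to graft the external class $B_d$ onto the tiling set and then straighten. However, your definition of $\mu$ has a genuine gap. In the construction of Lemma~\ref{qc_conj_rd_bd_lem}, the map $\mathfrak{h}$ is \emph{conformal} on a thickened region $\mathcal{Q}_1^w\supset\mathcal{Q}_1$ (one first picks a Riemann map $\mathcal{Q}_1^w\to\mathcal{P}$ and only then interpolates quasiconformally on the fundamental pinched annulus). Hence $\mathcal{H}^*\mu_0\equiv 0$ on the rank-$0$ tile $T^0(\sigma)$, and your $\sigma$-pullbacks yield $\mu\equiv 0$ on every tile; $\Phi$ is then M{\"o}bius and $\Phi\circ\mathcal{H}^{-1}$ is not conformal, so your second formula for $R_\sigma$ is not antiholomorphic. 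More conceptually, Lemma~\ref{qc_conj_rd_bd_lem} only guarantees that $\mathfrak{h}$ conjugates $\mathcal{R}_d$ to $B_d$ on a pinched neighborhood of $\partial\mathcal{Q}$, \emph{not} on all of $\mathcal{Q}\setminus\mathcal{Q}_1$; there is therefore no reason for the $\sigma$-invariant coefficient you build to agree with $\mu_\mathcal{H}$ on higher-rank tiles, and your claim that ``$\Phi$ and $\mathcal{H}$ share the same Beltrami coefficient on $T^\infty(\sigma)$'' is unjustified.

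The paper sidesteps this by reversing the order of operations. It first assembles a single global anti-quasiregular map
\[
\widetilde{R_\sigma}:=\begin{cases}\mathcal{H}^{-1}\circ B_d\circ\mathcal{H} & \text{on }T^\infty(\sigma),\\ \sigma & \text{on }K(\sigma),\end{cases}
\]
and observes that the two formulas \emph{overlap and agree} on a pinched open neighborhood of $K(\sigma)\setminus\sigma^{-1}(\pmb{y})$ (precisely by the conjugation property of $\mathfrak{h}$), so continuity is automatic and your ``main hurdle'' never arises. One then sets $\mu:=\mu_\mathcal{H}$ on $T^\infty(\sigma)$ and $0$ on $K(\sigma)$; this is $\widetilde{R_\sigma}$-invariant simply because $B_d$ is antiholomorphic, with no $\sigma$-invariance needed. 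Straightening $\widetilde{R_\sigma}$ by the MRMT yields $R_\sigma\in\mathcal{F}_d$ directly. Your treatment of the simple/high dichotomy is correct in spirit; the paper makes it precise through Corollary~\ref{3_2_att_cor}, which characterizes cusps of type higher than $(3,2)$ by the existence of an attracting direction for $\sigma^{\circ 2}$ in $K(\sigma)$, a topological feature preserved by the hybrid conjugacy $\Xi$.
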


\begin{proof}
Let us fix $(\Omega,\sigma)\in\mathcal{S}_{\mathcal{R}_d}$.
Recall that there exists a conformal map $\psi:\mathcal{Q}\to T^\infty(\sigma)$ that conjugates $\mathcal{R}_d$ to $\sigma$ and sends $1$ to $\pmb{y}$. Moreover, by Lemma~\ref{qc_conj_rd_bd_lem}, there exists a quasiconformal homeomorphism $\mathfrak{h}:\mathcal{Q}\to\D$ that conjugates the restriction of $\mathcal{R}_d$ on a (one-sided) neighborhood of $\partial\mathcal{Q}\setminus\mathcal{R}_d^{-1}(1)$ to the restriction of $B_d$ on a (one-sided) neighborhood of $\mathbb{S}^1\setminus B_d^{-1}(1)$.

Let us now define a map on $\widehat{\C}$ as follows:
$$
\widetilde{R_\sigma}:=
\begin{cases}
\left(\psi\circ \mathfrak{h}^{-1}\right)\circ B_d\circ\left(\mathfrak{h}\circ\psi^{-1}\right)\ {\rm on\ } T^\infty(\sigma),\\
\sigma \quad {\rm on\ } K(\sigma).
\end{cases}
$$
By the conjugation properties of $\psi$ and $\mathfrak{h}$, the map $\widetilde{R_\sigma}$ agrees with $\sigma$ on the closure of a neighborhood of $K(\sigma)\setminus\sigma^{-1}(\pmb{y})$. Since finitely many points are quasiconformally removable, we conclude that the map $\widetilde{R_\sigma}$ is a global anti-quasiregular map.

Let $\mu$ be the Beltrami coefficient on $\widehat{\C}$ given by the pullback of the standard complex structure under the map $\mathfrak{h}\circ\psi^{-1}$ on $T^\infty(\sigma)$ and zero elsewhere. As $B_d$ is an antiholomorphic map, it follows that $\mu$ is $\widetilde{R_\sigma}$-invariant. Since  $\mathfrak{h}\circ\psi^{-1}$ is quasiconformal, it follows that $\vert\vert\mu\vert\vert_\infty<1$. We conjugate $\widetilde{R_\sigma}$ by a quasiconformal homeomorphism $\Xi$ of $\widehat{\C}$ that solves the Beltrami equation with coefficient $\mu$ to obtain an anti-rational map $R_\sigma$. By construction, $R_\sigma$ has a parabolic fixed point at $\infty$ (after possibly conjugating $R_\sigma$ by a M{\"o}bius map), and this parabolic point has a simply connected, completely invariant immediate basin of attraction where the dynamics is conformally conjugate to $B_d$. Thus, $R_\sigma\in\mathcal{F}_d$. Moreover, $\mathcal{B}(R_\sigma)=\Xi(T^\infty(\sigma))$, and $\mathcal{K}(R_\sigma)=\Xi(K(\sigma))$.

Note that by the normalization of $\mathfrak{h}$, the parabolic fixed point $1$ of $B_d$ is glued to the unique cusp of $\partial\Omega$. Hence, $\Xi$ is conformal a.e. on $K(\sigma)$, sends the unique cusp on $\partial\Omega$ to the parabolic fixed point $\infty$, and conjugates a pinched anti-polynomial-like restriction of $\sigma$ to a pinched anti-polynomial-like restriction of $R_\sigma$.

According to Corollary~\ref{3_2_att_cor}, $(\Omega,\sigma)\in\mathcal{S}_{\mathcal{R}_d}^{\mathrm{high}}$ if and only if the cusp $\pmb{y}$ has at least one $\sigma^{\circ 2}$-invariant attracting direction in $K(\sigma)$. Since $\Xi(\pmb{y})=\infty$, it follows that $(\Omega,\sigma)\in\mathcal{S}_{\mathcal{R}_d}^{\mathrm{high}}$ if and only if $R_\sigma^{\circ 2}$ has at least two invariant attracting directions at $\infty$ (one in $\mathcal{B}(R_\sigma)$ and at least one in $\mathcal{K}(R_\sigma)$). Clearly, this is equivalent to saying that $\infty$ is a fixed point of $R_\sigma^{\circ 2}$ of multiplicity at least three; i.e., $R_\sigma\in\mathcal{F}_d^{\mathrm{high}}$. 

Finally for the uniqueness statement, note that if $R_1, R_2\in\mathcal{F}_d$ are hybrid conjugate to $\sigma$, then they are hybrid conjugate to each other. By Lemma~\ref{strt_unique_lem}, $R_1$ and $R_2$ must be affinely conjugate.
\end{proof}


\subsubsection{Straightening Schwarz reflections in $\mathcal{S}_{\mathcal{R}_d}^{\mathrm{simp}}$ via pinched anti-polynomial-like restrictions}\label{srd_pinched_anti_poly_subsubsec}

We now show that the Straightening Theorem~\ref{straightening_thm} applies to the Schwarz reflections in $\mathcal{S}_{\mathcal{R}_d}^{\mathrm{simp}}$. The main advantage of this straightening method is that it gives better control on the domains of the hybrid conjugacies.

\begin{lemma}\label{srd_pre_str_lem}
Let $(\Omega,\sigma)\in \mathcal{S}_{\mathcal{R}_d}^{\mathrm{simp}}$. Then, there exists a Jordan domain $V'\subset \Omega$ with $\overline V'\supset K(\sigma)$ and a conformal map $\beta:\overline{V'}\to\widehat{\C}$ such that $\beta$ conjugates $\sigma\colon \overline{\sigma^{-1}(V')}\to \overline{V'}$ to a simple pinched anti-polynomial-like map $(F,\overline{U},\overline{V})$ of degree $d$. Moreover, the filled Julia set of this pinched anti-polynomial-like map is $\beta(K(\sigma))$.
\end{lemma}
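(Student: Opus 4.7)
The plan is to use the local cusp analysis worked out in Appendix~\ref{cusp_append} to obtain a local straightening coordinate near the cusp $\pmb{y}$ that sends $\pmb{y}$ to $\infty$ and realizes the simple-parabolic normal form, and then to extend it to a global Riemann map on an appropriately chosen sub-Jordan domain $V'\subset\Omega$.

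First, I would apply the asymptotic analysis of Schwarz reflections at simple $(3,2)$ cusps from Appendix~\ref{cusp_append} to produce a local conformal chart $\beta_0$ defined on a neighborhood $N$ of $\pmb{y}$ in $\overline{\Omega}$ which sends $\pmb{y}$ to $\infty$, conjugates $\sigma|_{N\cap\overline{\Omega}}$ to the normal form $w \mapsto \bar{w} + \tfrac{1}{2} + O(1/\bar{w})$, and sends the two branches of $\partial\Omega$ emanating from $\pmb{y}$ to curves asymptotic at $\infty$ to the rays $\{te^{\pm 2\pi i/3} : t > 0\}$. These properties are dictated by the type $(3,2)$ of the cusp together with the standard simple-parabolic form imposed by membership in $\mathcal{S}_{\mathcal{R}_d}^{\mathrm{simp}}$.

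Next, I would construct a Jordan subdomain $V' \subset \Omega$ with: (i)~$\overline{V'} \supset K(\sigma)$ and $\overline{V'} \cap \partial\Omega = \{\pmb{y}\}$; (ii)~$\partial V'$ coinciding with $\partial\Omega$ on some small relative neighborhood of $\pmb{y}$; and (iii)~$\partial V'\setminus\{\pmb{y}\}$ a smooth simple arc disjoint from the (finitely many) critical values of $\sigma$ lying in $K(\sigma)\setminus\{\pmb{y}\}$. Such a $V'$ exists because $K(\sigma)$ is connected (by Proposition~\ref{mating_equiv_cond_prop}) and contained in $\Omega\cup\{\pmb{y}\}$, so one may start from a smoothed thickening of $K(\sigma)$ in $\Omega$ and then perturb the boundary arc away from $\pmb{y}$ to avoid the critical values. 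Let $\beta\colon V'\to V$ be a Riemann map normalized to agree with $\beta_0$ to leading order at $\pmb{y}$; it extends continuously to a homeomorphism $\overline{V'}\to\overline{V}$ sending $\pmb{y}$ to $\infty$. By construction, $V := \beta(V')$ is a polygon with a unique corner at $\infty$ satisfying Condition~(b) of Definition~\ref{pinched_anti_poly_like_map_def}.

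Finally, setting $U := \beta(\sigma^{-1}(V'))$ and $F := \beta\circ\sigma\circ\beta^{-1}$, I would verify that $(F,\overline{U},\overline{V})$ is a simple pinched anti-polynomial-like map of degree $d$. That $F\colon \overline{U}\to \overline{V}$ is a degree $d$ branched covering is immediate from Proposition~\ref{simp_conn_quad}. The required asymptotic form of $F$ at $\infty$ and the antiholomorphic extension of $F$ on a neighborhood of $\overline{U}\setminus F^{-1}(\infty)$ are inherited from the corresponding properties of $\beta_0$ and of $\sigma$ across $\partial\Omega\setminus\{\pmb{y}\}$. The unique critical point of $\sigma$ in $T^\infty(\sigma)\cap\overline{\Omega}$ has critical value $v_0\in \widehat{\C}\setminus\overline{\Omega}$, hence does not lie in $\sigma^{-1}(V')$; the remaining critical points of $\sigma$ lie in $K(\sigma)$ with critical values in $V'\cup\{\pmb{y}\}$ by~(iii), so all critical values of $F$ lie in $V\cup\{\infty\}$. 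The most delicate condition, which I expect to be the principal technical obstacle, is the pinched-polygon structure of $\overline{U}$ together with the positive-angle condition at each point of $F^{-1}(\infty)$; these are controlled by another application of the local analysis of Appendix~\ref{cusp_append} at each $\sigma$-preimage of $\pmb{y}$ in $\overline{\Omega}$. Once this is in place, the identification $K(F) = \beta(K(\sigma))$ follows immediately from the $\sigma$-invariance of $K(\sigma)$ together with the characterization $K(F) = \bigcap_{n\ge 0} F^{-n}(\overline{V})$.
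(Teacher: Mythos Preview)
Your construction of $V'$ has the geometry backwards, and this causes condition~(b) of Definition~\ref{pinched_anti_poly_like_map_def} to fail. First, your conditions (i) and (ii) are mutually inconsistent: if $\partial V'$ coincides with $\partial\Omega$ on a relative neighborhood of $\pmb{y}$, then $\overline{V'}\cap\partial\Omega$ contains an arc, not just $\{\pmb{y}\}$. More importantly, keeping $\partial V'=\partial\Omega$ near the cusp means $V'$ has interior angle $2\pi$ at $\pmb{y}$ (the cusp points into $\Omega$). Under the explicit chart $\beta_0(z)=c/\sqrt{z}$ from Appendix~\ref{cusp_append}, angles at $0$ are halved, so $\beta_0(\partial\Omega)$ near $\infty$ consists of two curves asymptotic to the \emph{imaginary axis} (interior angle $\pi$), not to rays at angles $\pm 2\pi/3$. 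If instead you take $\beta$ to be an abstract Riemann map onto a domain $V$ satisfying~(b), then the interior angle of $V$ at $\infty$ is $2\pi/3$, forcing $\beta(z)\sim c\,z^{-1/3}$ near $\pmb{y}$; conjugating $\sigma$ by such a map destroys the parabolic normal form $\overline{\zeta}+\tfrac12+O(1/\overline{\zeta})$ required in condition~\eqref{asymptotics}.

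The paper does the opposite: it keeps $\partial V'=\partial\Omega$ \emph{away} from the cusp and \emph{replaces} the cuspidal arc near $\pmb{y}$ by straight segments $L^{\pm}=\{te^{\pm 2\pi i/3}:0\le t<\delta/2\}$, opening the cusp to a wedge of interior angle $4\pi/3$. The explicit map $\beta(z)=c/\sqrt{z}$ (with $c$ real) then sends $L^{\pm}$ \emph{exactly} to the rays at angles $\pm 2\pi/3$ at infinity, so condition~(b) holds on the nose, while the computation in Subsection~\ref{cusp_goes_to_infty_subsec} simultaneously yields the required asymptotics of $F$. Those same asymptotics are then used to show that the thin region $B(0,\delta)\setminus\overline{V'}$ between $L^{\pm}$ and $\partial\Omega$ escapes under $\sigma$, which is how one obtains $K(\sigma)\subset\overline{V'}$ and hence the identification $K(F)=\beta(K(\sigma))$.
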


\begin{proof}
Without loss of generality we may assume that the cusp is at $0$ and points into the positive real axis.

We begin by opening up the cusp of $\Omega$ (i.e., creating a wedge), using the following procedure. For some $\delta>0$, to be specified later, we define a Jordan domain $V'\subset \Omega$ such that 
$$
\partial V'\setminus B(0,\delta)=\partial \Omega\setminus B(0,\delta),\ \partial V'\cap B(0,\delta/2) = L^\pm:= \{te^{\pm 2\pi i/3}: t\in [0,\delta/2)\},
$$ 
and $\partial V'$ is smooth except at $0$. Since $(\partial\Omega\setminus\{0\})\cap K(\sigma)=\emptyset$, we can choose $\delta>0$ small enough so that $K(\sigma)\subset V'\cup B(0,\delta)$.

By Proposition~\ref{n_2_cusp}, $\sigma$ has a unique invariant direction at $0$ given by the positive real axis. By Proposition~\ref{odd_cusp_prop}, this direction is repelling for $\sigma$. We apply the change of coordinates $\beta$ described in Subsection~\ref{cusp_goes_to_infty_subsec} which conjugates $\sigma$ (near $0$) to $\zeta\mapsto \overline{\zeta} + 1/2 + O(1/\overline{\zeta})$ (near $\infty$).
Moreover, $\beta$ sends small enough positive reals to large negative reals and the line segments $L^\pm$ to the infinite rays at angles $\pm\frac{2\pi}{3}$ meeting at $\infty$. Since $\beta\circ\sigma\circ\beta^{-1}$ is approximately $\overline{\zeta}+\frac12$ for $\vert\im{\zeta}\vert$ large enough, it follows that points between $\beta(L^\pm)$ and $\beta(\partial\Omega)$ with sufficiently large imaginary part eventually leave $\beta(\Omega)$. Therefore, we can choose $\delta>0$ sufficiently small so that points in $B(0,\delta)\setminus\overline{V'}$ eventually leave $\Omega$. It now follows that for such a $\delta$, the non-escaping set $K(\sigma)$ is contained in $\overline{\sigma^{-1}(V')}$. Hence, we have that 
\begin{equation}
K(\sigma)=\{z\in\overline{\sigma^{-1}(V')}: \sigma^{\circ n}(z)\in \overline{\sigma^{-1}(V')}\ \forall\ n\geq 0\}.
\label{filled_julia_rel}
\end{equation} 
Note also that $\partial \sigma^{-1}(V')\setminus B(0,\delta)\subset \Omega\setminus B(0,\delta)$, and so $(\partial \sigma^{-1}(V')\setminus B(0,\delta) )\cap \partial V' =\emptyset$. Together with the asymptotics of $\beta\circ\sigma\circ\beta^{-1}$ near $\infty$, it follows that 
$\partial \sigma^{-1}(V')\cap \partial V'~=~\{0\}.$ 

We now set $V:=\beta(V'),\ U:=\beta(\sigma^{-1}(V')),$ and $F:=\beta\circ\sigma\circ\beta^{-1}:\overline{U}\to\overline{V}$, and claim that $F$ is a simple pinched anti-polynomial-like map.
The pinched polygon structure of $U$ follows from the fact that $\overline{\sigma^{-1}(\Omega)}$ is a pinched disk with possible pinched points in $\sigma^{-1}(0)$ (this happens only if the cusp $0$ is a critical value of $\sigma$). We also note that $\sigma$ is a proper antiholomorphic map on each component of $\sigma^{-1}(\Omega)$, and hence $F$ is a proper antiholomorphic map on each component of $U$. The other defining conditions of a simple pinched anti-polynomial-like map are easily checked from the above construction. The fact that the filled Julia set of this pinched anti-polynomial-like map is $\beta(K(\sigma))$ follows from Relation~\eqref{filled_julia_rel}.
\end{proof}

\begin{remark}\label{beta_extends_qc_rem}
The conformal map $\beta:\overline{V'}\to\beta(\overline{V'})$ can be extended as a quasiconformal homeomorphism of $\widehat{\C}$.
\end{remark}

As a slight abuse of notation, we will call $\sigma\colon \overline{\sigma^{-1}(V')}\to \overline{V'}$ a simple pinched anti-polynomial-like restriction of $\sigma$.

\begin{theorem}\label{srd_simp_strt_thm}
Let $(\Omega,\sigma)\in\mathcal{S}_{\mathcal{R}_d}^{\mathrm{simp}}$. Then, 
\begin{enumerate}
\item $\sigma$ restricts to a simple pinched anti-polynomial-like map with filled Julia set equal to $K(\sigma)$, and 

\item this simple pinched anti-polynomial-like map is hybrid conjugate to a unique member $[R_\sigma]\in\ \left[\mathcal{F}_d^{\mathrm{simp}}\right]$ with filled Julia set $\mathcal{K}(R_\sigma)$.
\end{enumerate}
\end{theorem}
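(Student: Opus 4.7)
The plan is to derive both parts directly from two previously established inputs: Lemma~\ref{srd_pre_str_lem} for part (1), and Theorem~\ref{straightening_thm} (combined with Lemma~\ref{strt_unique_lem}) for part (2). No new machinery is needed; the theorem is essentially a packaging statement.

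For part (1), I would simply invoke Lemma~\ref{srd_pre_str_lem}, which for any $(\Omega,\sigma)\in\mathcal{S}_{\mathcal{R}_d}^{\mathrm{simp}}$ produces a Jordan domain $V'\subset\Omega$ containing $K(\sigma)$ and a conformal map $\beta:\overline{V'}\to\widehat{\C}$ that conjugates $\sigma:\overline{\sigma^{-1}(V')}\to\overline{V'}$ to a simple pinched anti-polynomial-like map $(F,\overline{U},\overline{V})$ of degree $d$ with filled Julia set $\beta(K(\sigma))$. Since $\beta$ is a conformal isomorphism between $\overline{V'}$ and $\overline{V}$, one may equivalently regard $\sigma:\overline{\sigma^{-1}(V')}\to\overline{V'}$ itself as a simple pinched anti-polynomial-like map, whose filled Julia set is then precisely $K(\sigma)$.

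For existence in part (2), I would first note that $K(\sigma)$, and hence the filled Julia set of $F$, is connected: by Definition~\ref{srd_def} the tiling set $T^\infty(\sigma)$ is conformally equivalent to the simply connected surface $\mathcal{Q}$, so its complement $K(\sigma)$ is connected. This puts me in the situation covered by Theorem~\ref{straightening_thm}, part (2), which yields a unique (up to affine conjugacy) map $R_\sigma\in\mathcal{F}_d^{\mathrm{simp}}$ together with a quasiconformal hybrid conjugacy $\Phi:\widehat{\C}\to\widehat{\C}$ between $F$ and the canonical pinched anti-polynomial-like restriction of $R_\sigma$. Using Remark~\ref{beta_extends_qc_rem} to extend $\beta$ globally as a quasiconformal self-homeomorphism of $\widehat{\C}$, the composition $\Phi\circ\beta$ is a global quasiconformal map conjugating the simple pinched anti-polynomial-like restriction of $\sigma$ to that of $R_\sigma$; it is a.e.\ conformal on $K(\sigma)$ because $\beta$ is conformal on $\overline{V'}\supset K(\sigma)$ and $\Phi$ is a.e.\ conformal on $\beta(K(\sigma))$. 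In particular, $\Phi\circ\beta$ maps $K(\sigma)$ onto $\mathcal{K}(R_\sigma)$.

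Uniqueness of $[R_\sigma]\in\left[\mathcal{F}_d^{\mathrm{simp}}\right]$ is a direct consequence of Lemma~\ref{strt_unique_lem}, part (1): any two representatives $R_1, R_2\in\mathcal{F}_d^{\mathrm{simp}}$ hybrid conjugate to the pinched anti-polynomial-like restriction of $\sigma$ are, by transitivity, hybrid conjugate to each other and therefore affinely conjugate, defining the same class in the moduli space. I do not anticipate a real obstacle here; the only mild bookkeeping point is to verify that $\Phi\circ\beta$ satisfies the three conditions of Definition~\ref{hybrid_def} — corners go to corners, the map conjugates the dynamics on a pinched neighborhood of the filled Julia set, and $\overline\partial(\Phi\circ\beta)\equiv 0$ a.e.\ on $K(\sigma)$ — all of which follow immediately from the corresponding properties of $\Phi$ and the conformality of $\beta$ on $\overline{V'}$.
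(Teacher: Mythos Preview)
Your proposal is correct and follows essentially the same approach as the paper: the paper's proof is the single sentence ``This follows from Lemma~\ref{srd_pre_str_lem} and Theorem~\ref{straightening_thm},'' and you have simply unpacked this, making explicit the connectedness of $K(\sigma)$, the use of the global quasiconformal extension of $\beta$ (Remark~\ref{beta_extends_qc_rem}), and the uniqueness via Lemma~\ref{strt_unique_lem} (which is already built into Theorem~\ref{straightening_thm}, part~(2)).
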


\begin{proof}
This follows from Lemma~\ref{srd_pre_str_lem} and Theorem~\ref{straightening_thm}.
\end{proof}

For $(\Omega,\sigma)\in\mathcal{S}_{\mathcal{R}_d}$, the map $R_\sigma$ produced by Theorem~\ref{srd_simp_strt_thm} (or Theorem~\ref{srd_unif_strt_thm}) will be referred to as the \emph{straightening} of $\sigma$.
Clearly, if $\sigma_1,\sigma_2\in\mathcal{S}_{\mathcal{R}_d}$ are affinely conjugate, then they have the same straightening.

\begin{definition}\label{chi_def}
We define the \emph{straightening map} 
$$
\chi:\left[\mathcal{S}_{\mathcal{R}_d}\right]\longrightarrow\left[\mathcal{F}_d\right],\quad \chi([\Omega,\sigma])=[R_\sigma],
$$
where $[R_\sigma]$ is the straightening of $\sigma$; i.e., $R_\sigma$ is the unique map in $\mathcal{F}_d$, up to affine conjugacy, to which $\sigma$ is hybrid conjugate.
\end{definition}
\noindent Abusing notation, we will often write $\chi(\sigma)=R$.

In principle Theorem~\ref{srd_unif_strt_thm} alone is enough to produce the straightening map (without the use of Theorem~\ref{srd_simp_strt_thm}). However its proof relies on the uniformization map for the tiling set $T^\infty(\sigma)$, which may not depend nicely on the map $\sigma$. In order to obtain parameter space results, such as Proposition~\ref{chi_cont_rigid_prop}, we require the use of the straightening in Theorem~\ref{srd_simp_strt_thm}. To this end, we have the following corollary giving parameter control of the straightening map for $\mathcal{S}_{\mathcal{R}_d}^{\mathrm{simp}}$.

\begin{corollary}\label{chi_dila_dom_control_cor}
Hybrid conjugacies between $\left[\Omega,\sigma\right]\in\left[\mathcal{S}_{\mathcal{R}_d}^{\mathrm{simp}}\right]$ and $\chi(\left[\Omega,\sigma\right])\in\left[\mathcal{S}_{\mathcal{R}_d}^{\mathrm{simp}}\right]$ can be chosen such that 
\begin{enumerate}
\item their dilatations are locally bounded, and 
\item the domains of definition of these conjugacies depend continuously on parameters.
\end{enumerate}
\end{corollary}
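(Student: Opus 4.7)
The plan is to revisit the hybrid conjugacy for $\sigma\in\mathcal{S}_{\mathcal{R}_d}^{\mathrm{simp}}$ constructed through Lemma~\ref{srd_pre_str_lem} and Theorem~\ref{straightening_thm}, and verify that the dilatation bounds and the domains can be chosen to vary continuously in $[\Omega,\sigma]$. Recall that for a fixed parameter, $\sigma$ is first restricted to $\overline{\sigma^{-1}(V')}$ for a Jordan domain $V'\subset \Omega$ opening up the cusp $\pmb y$ to a wedge of angle $\theta_0$, and the cusp-straightening change of variables $\beta$ from Appendix~\ref{cusp_append} conjugates the restriction to a simple pinched anti-polynomial-like map $F$. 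The external model $\mathfrak p$ on the fixed petal $\mathfrak P$ is then glued in via a quasiconformal interpolation $\Phi$ on the strip $S = V\setminus \overline U$, producing an anti-quasiregular map $\widetilde F$ which is antiholomorphic off $S$. The associated Beltrami coefficient $\mu$, obtained by pulling back the standard structure from $\widehat{\C}\setminus \overline V$ by iterates of $\widetilde F$, is then integrated by MRMT to produce the hybrid conjugacy $\Xi$.

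For part (1), note that on any compact set $K \subset \widehat{\C}\setminus \mathcal{K}(R_\sigma)$, only finitely many $\widetilde F$-iterates of $S$ meet $\Xi^{-1}(K)$; since $\widetilde F$ is antiholomorphic off $S$, its pullbacks preserve the dilatation of the initial interpolation. Hence the dilatation of $\Xi$ on $\Xi^{-1}(K)$ is bounded by the dilatation of $\Phi$ on $S$, and a uniform bound over a compact family of parameters follows once one arranges the interpolation $\Phi$ to be uniformly quasiconformal. Composing $\Xi$ with the quasiconformal extension of $\beta$ from Remark~\ref{beta_extends_qc_rem} does not affect local boundedness.

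For part (2), I would choose all the ingredients in a parameter-continuous way. The cusp $\pmb y$ and the tangent direction of $\partial\Omega$ at $\pmb y$ depend continuously on $(\Omega,\sigma)$ in the Carath\'eodory topology, so the opening-up domain $V'$ (with fixed wedge angle $\theta_0$ and size small enough that $V'\cup B(\pmb y,\delta)\supset K(\sigma)$) can be taken to depend continuously on the parameter, as can the straightened variable $\beta$ normalized as in Appendix~\ref{cusp_append}. The external model $\mathfrak p, \mathfrak P$ is fixed. The uniformization $\Phi\colon \mathfrak P\to \C\setminus\overline V$ and its equivariant lift to $\partial\mathfrak Q\to \partial U$ depend continuously on $(U,V,F)$ by the standard stability of Riemann maps for smoothly varying Jordan domains. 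The Warschawski uniformization of the two ends of $S$ and the Ahlfors-Beurling extension on the remaining pinched polygon both depend continuously on boundary data, yielding a continuously varying quasiconformal extension $\Phi\colon \mathfrak S\to S$. Consequently, $\widetilde F$ varies continuously off its critical set, the Beltrami coefficient $\mu$ varies continuously in $L^\infty_{\mathrm{loc}}$, and a suitable normalization of MRMT (e.g. fixing three points) yields a continuously varying family $\Xi$. The domain of the hybrid conjugacy, namely $\overline{V'}$, then varies continuously with the parameter.

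The main obstacle is the uniform control of the Warschawski interpolation on the ends of $S$ as the parameter varies: the boundary curves $\partial V'$ near the cusp are perturbations of a common linear wedge, but the precise asymptotics depend on $\sigma$. A compactness argument together with the uniform cusp asymptotics of Appendix~\ref{cusp_append} should guarantee that the Warschawski asymptotic $\alpha(z) = e^{-2\pi i/3}z + o(z)$, and hence the interpolation dilatation, are uniform over a compact set of parameters, which is enough to conclude both the local boundedness and the continuous dependence.
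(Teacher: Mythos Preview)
Your plan is correct and follows the same route as the paper: the paper's proof simply cites the construction of Theorem~\ref{straightening_thm} and Lemma~\ref{srd_pre_str_lem} and observes that the fundamental pinched annuli move continuously and that the cusp asymptotics are the same $(3,2)$ model throughout $\mathcal{S}_{\mathcal{R}_d}^{\mathrm{simp}}$, which is exactly the uniform Warschawski control you identify at the end. One small simplification: in part~(1) there is no need to localize to a compact $K\subset\widehat{\C}\setminus\mathcal{K}(R_\sigma)$ in the dynamical plane, since the orbit of any point meets the strip $S$ at most once and hence $\|\mu\|_\infty$ is already \emph{globally} bounded by the dilatation of the interpolation $\Phi$; the phrase ``locally bounded'' refers to local boundedness in parameter space, which is what you then argue.
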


We emphasize here local boundedness for the dilatations, as a priori the dilatations may be unbounded as one approaches elements of $\mathcal S_{\mathcal R_d}$ with higher order~cusps.

\begin{proof}
This follows from the construction of hybrid conjugacies given in Theorem~\ref{straightening_thm} and the facts that the fundamental (pinched) annuli of the simple pinched anti-polynomial-like restrictions of Schwarz reflections constructed in Lemma~\ref{srd_pre_str_lem} move continuously with respect to the parameter and the asymptotics of the maps near the cusps are the same throughout $\mathcal{S}_{\mathcal{R}_d}^{\mathrm{simp}}$ (see Subsection~\ref{cusp_goes_to_infty_subsec}).
\end{proof}

\section{Invertibility of the straightening map and proofs of the main Theorems}\label{strt_prop_sec}

The main goal of this section is to prove that the straightening map $\chi$ is bijective, from which our main theorems will follow. We will demonstrate this by constructing an explicit inverse of $\chi$. The construction of this inverse map is dual to that of $\chi$ given in Theorem~\ref{srd_unif_strt_thm}.

For maps in $\mathcal{F}_d^{\mathrm{simp}}$, we will also give an alternative construction of $\chi^{-1}$ that will follow the strategy of the proof of Theorem~\ref{srd_simp_strt_thm}. This will give us control on the dilatations and the domains of definition of the associated hybrid conjugacies on $\mathcal{F}_d^{\mathrm{simp}}$.

\subsection{Invertibility of $\chi$}\label{chi_invertible_subsec}

\begin{theorem}\label{chi_bijective_thm}
The map $\ \chi:\left[\mathcal{S}_{\mathcal{R}_d}\right]\ \longrightarrow\ \left[\mathcal{F}_d\right]\ $
is invertible. In particular, the restrictions $\chi: \left[\mathcal{S}_{\mathcal{R}_d}^{\mathrm{simp}}\right]\to\left[\mathcal{F}_d^{\mathrm{simp}}\right]$ and $\chi: \left[\mathcal{S}_{\mathcal{R}_d}^{\mathrm{high}}\right]\to\left[\mathcal{F}_d^{\mathrm{high}}\right]$ are bijections.
\end{theorem}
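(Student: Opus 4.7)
The plan is to construct $\chi^{-1}$ explicitly by reversing the surgery of Theorem~\ref{srd_unif_strt_thm}: instead of grafting the parabolic anti-Blaschke external class $B_d$ onto the non-escaping dynamics of a Schwarz reflection, we graft the anti-Farey external class $\mathcal{R}_d$ onto the filled Julia set of a map in $\mathcal{F}_d$. Fix $R\in\mathcal{F}_d$, let $\phi:\D\to\mathcal{B}(R)$ be the conformal conjugacy between $B_d\vert_{\D}$ and $R\vert_{\mathcal{B}(R)}$, and let $\mathfrak{h}:\overline{\mathcal{Q}}\to\overline{\D}$ be the quasiconformal homeomorphism supplied by Lemma~\ref{qc_conj_rd_bd_lem}, which conjugates $\mathcal{R}_d$ to $B_d$ on one-sided neighborhoods of $\partial\mathcal{Q}\setminus\mathcal{R}_d^{-1}(1)$ and $\mathbb{S}^1\setminus B_d^{-1}(1)$. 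Let $\mathcal{P}\subset\mathcal{B}(R)$ be the petal corresponding to $\mathcal{Q}_1$ under $\phi\circ\mathfrak{h}^{-1}$, and define the anti-quasiregular map
$$
\widetilde{\sigma}\ :=\ \begin{cases} R & \text{on } \mathcal{K}(R), \\ (\phi\circ\mathfrak{h}^{-1})\circ\mathcal{R}_d\circ(\mathfrak{h}\circ\phi^{-1}) & \text{on } \mathcal{B}(R)\setminus\Int{\mathcal{P}}. \end{cases}
$$
The conjugation property of $\mathfrak{h}$ forces the two definitions to agree on a closed one-sided neighborhood of $\partial\mathcal{B}(R)\setminus R^{-1}(\infty)$, so by quasiconformal removability of the finitely many points of $R^{-1}(\infty)$, the map $\widetilde{\sigma}$ is anti-quasiregular on $\widehat{\C}\setminus\Int{\mathcal{P}}$.

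Define a Beltrami coefficient $\mu$ on $\widehat{\C}$ by pulling back the standard complex structure under $\mathfrak{h}\circ\phi^{-1}$ on $\mathcal{B}(R)$ and taking $\mu\equiv0$ on $\mathcal{K}(R)$. Since $\mathcal{R}_d$ is antiholomorphic, $\mu$ is $\widetilde{\sigma}$-invariant, and $\|\mu\|_\infty<1$ by quasiconformality of $\mathfrak{h}$. Let $\Xi:\widehat{\C}\to\widehat{\C}$ be a quasiconformal solution of the Beltrami equation for $\mu$, and set $\sigma:=\Xi\circ\widetilde{\sigma}\circ\Xi^{-1}$ on $\overline{\Omega}:=\Xi(\widehat{\C}\setminus\Int{\mathcal{P}})$. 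Then $\Xi$ is conformal a.e.\ on $\mathcal{K}(R)$, while the composition $\Xi\circ\phi\circ\mathfrak{h}^{-1}:\mathcal{Q}\to\Xi(\mathcal{B}(R))$ preserves the standard complex structure and is therefore conformal; the latter conjugates $\mathcal{R}_d$ to $\sigma$ on a one-sided neighborhood of $\partial\mathcal{Q}$. Hence $\sigma$ is antiholomorphic on $\overline{\Omega}$, continuously extends to the identity on $\partial\Omega$ (inheriting a single conformal cusp at $\Xi(\infty)$ from the parabolic point of $R$), and $\Xi(\mathcal{B}(R))=T^\infty(\sigma)$. After the standard M\"obius normalization placing the critical value in the tiling set at $\infty$, we obtain $(\Omega,\sigma)\in\mathcal{S}_{\mathcal{R}_d}$, and we set $\chi^{-1}([R]):=[\Omega,\sigma]$.

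To verify both composition identities, observe that $\Xi$ is conformal a.e.\ on $\mathcal{K}(R)$ and conjugates a pinched anti-polynomial-like restriction of $R$ to one of $\sigma$, so $\sigma$ and $R$ are hybrid conjugate; by Definition~\ref{chi_def} and Lemma~\ref{strt_unique_lem}~(1), $\chi([\Omega,\sigma])=[R]$, giving $\chi\circ\chi^{-1}=\mathrm{id}$. Conversely, if $\chi([\Omega,\sigma])=[R]$ and $\chi^{-1}([R])=[\Omega',\sigma']$, then $\sigma'$ and $\sigma$ are both hybrid conjugate to $R$ and hence to each other, so Lemma~\ref{strt_unique_lem}~(2) forces $[\Omega',\sigma']=[\Omega,\sigma]$. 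Finally, the correspondence between cusp type of $\partial\Omega$ and the multiplicity of the parabolic fixed point of $R$, already recorded in Theorem~\ref{srd_unif_strt_thm}, shows that $\chi^{-1}$ preserves the simple/high decomposition, yielding the two restricted bijections. The principal subtlety throughout is the continuity and anti-quasiregularity of $\widetilde{\sigma}$ across the parabolic/cusp point; this ultimately reduces to the carefully matched parabolic asymptotics between $\mathcal{R}_d$ and $B_d$ supplied by Lemma~\ref{qc_conj_rd_bd_lem}, together with quasiconformal removability of the finitely many non-regular points.
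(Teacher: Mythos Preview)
Your proof is correct and essentially identical to the paper's: both construct $\chi^{-1}$ by the inverse surgery (grafting $\mathcal{R}_d$ onto $\mathcal{K}(R)$ via the quasiconformal conjugacy $\mathfrak{h}$ of Lemma~\ref{qc_conj_rd_bd_lem}, straightening the resulting anti-quasiregular map, and verifying membership in $\mathcal{S}_{\mathcal{R}_d}$), and then invoke Lemma~\ref{strt_unique_lem} in both parts to confirm the two compositions are the identity and deduce the simple/high splitting from Theorem~\ref{srd_unif_strt_thm}. There is a harmless notational slip: you declare $\mathfrak{h}:\overline{\mathcal{Q}}\to\overline{\D}$ but then use it consistently as if it goes $\overline{\D}\to\overline{\mathcal{Q}}$; with either convention fixed, all compositions make sense and match the paper's construction.
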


\begin{proof}
Let us fix $R\in\mathcal{F}_d$.
Recall that there exists a conformal map $\psi:\D\to \mathcal{B}(R)$ that conjugates $B_d$ to $R$, and sends $1$ to $\infty$. Also, the quasiconformal homeomorphism $\mathfrak{h}:\mathcal{Q}\to\D$ of Lemma~\ref{qc_conj_rd_bd_lem} conjugates the restriction of $\mathcal{R}_d$ on a (one-sided) neighborhood of $\partial\mathcal{Q}\setminus\mathcal{R}_d^{-1}(1)$ to the restriction of $B_d$ on a (one-sided) neighborhood of $\mathbb{S}^1\setminus B_d^{-1}(1)$.

We now define a map on a subset of $\widehat{\C}$ as follows:
$$
\widetilde{\sigma_R}:=
\begin{cases}
\left(\psi\circ \mathfrak{h}\right)\circ \mathcal{R}_d\circ\left(\mathfrak{h}^{-1}\circ\psi^{-1}\right)\ {\rm on\ } \mathcal{B}(R)\setminus\psi(\mathfrak{h}(\Int{\mathcal{Q}_1})),\\
R \quad {\rm on\ } \mathcal{K}(R).
\end{cases}
$$
By the conjugation properties of $\psi$ and $\mathfrak{h}$, the map $\widetilde{\sigma_R}$ agrees with $R$  on the closure of a neighborhood of $\mathcal{K}(R)\setminus R^{-1}(\infty)$. Since finitely many points are quasiconformally removable, we conclude that the map $\widetilde{\sigma_R}$ is an anti-quasiregular map on $\widehat{\C}\setminus\overline{\psi(\mathfrak{h}(\mathcal{Q}_1))}$. Moreover, $\widetilde{\sigma_R}$ continuously extends as the identity map to the boundary of its domain of definition, which is a Jordan domain (compare the proof of Proposition~\ref{mating_all_pcf_anti_poly}).

Let $\mu$ be the Beltrami coefficient on $\widehat{\C}$ given by the pullback of the standard complex structure under the map $\mathfrak{h}^{-1}\circ\psi^{-1}$ on $\mathcal{B}(R)$ and zero elsewhere. As $\mathcal{R}_d$ is an antiholomorphic map, it follows that $\mu$ is $\widetilde{\sigma_R}$-invariant. Since  $\mathfrak{h}^{-1}\circ\psi^{-1}$ is quasiconformal, it follows that $\vert\vert\mu\vert\vert_\infty<1$. We conjugate $\widetilde{\sigma_R}$ by a quasiconformal homeomorphism $\mathfrak{g}$ of $\widehat{\C}$ that solves the Beltrami equation with coefficient $\mu$ to obtain an antiholomorphic map $\sigma_R$ on a Jordan domain that continuously extends as the identity map to the boundary of its domain of definition $\Omega_R=\widehat{\C}\setminus\mathfrak{g}(\psi(\mathfrak{h}(\Int{\mathcal{Q}_1})))$. Thus, $\Omega_R$ is a Jordan quadrature domain and $\sigma_R$ is its Schwarz reflection map.

Arguments used in the last two paragraphs of the proof of Proposition~\ref{mating_all_pcf_anti_poly} apply verbatim to the current context to show that the Jordan curve $\partial\Omega_R$ has a unique conformal cusp and the tiling set dynamics of $\sigma_R$ is conformally conjugate to the action of $\mathcal{R}_d$ on $\mathcal{Q}$. Thus, after possibly a M{\"o}bius change of coordinates, we can assume that $(\Omega_R,\sigma_R)\in\mathcal{S}_{\mathcal{R}_d}$. It also follows from the same arguments that $K(\sigma_R)=\mathfrak{g}(\mathcal{K}(R))$, and $T^\infty(\sigma_R)=\mathfrak{g}(\mathcal{B}(R))$.

Note that by the normalization of $\mathfrak{h}$, the parabolic fixed point $1$ of $\mathcal{R}_d$ is glued to the parabolic fixed point $\infty$ of $R$. It now follows from the construction that the global quasiconformal map $\mathfrak{g}^{-1}$ (suitably normalized) is conformal a.e. on $K(\sigma_R)$, sends the unique cusp on $\partial\Omega_R$ to $\infty$, and conjugates a pinched anti-polynomial-like restriction of $\sigma_R$ to a pinched anti-polynomial-like restriction of $R$. 

By Lemma~\ref{strt_unique_lem}, the map $(\Omega_R,\sigma_R)$ is the unique element of $\mathcal{S}_{\mathcal{R}_d}$ (up to affine conjugacy) that is hybrid conjugate to $R$. Hence, 
$$
\chi^*:\ \left[\mathcal{F}_d\right]\ \longrightarrow\ \left[\mathcal{S}_{\mathcal{R}_d}\right],\ \left[R\right]\mapsto \left[\Omega_R,\sigma_R\right]
$$
is a well-defined map.
Finally, the fact that no two distinct elements of $\left[\mathcal{S}_{\mathcal{R}_d}\right], \left[\mathcal{F}_d\right]$ have the same hybrid class (again by Lemma~\ref{strt_unique_lem}) implies that $\chi^*\circ\chi\equiv\mathrm{id}$ on $\left[\mathcal{S}_{\mathcal{R}_d}\right]$ and $\chi\circ\chi^*\equiv\mathrm{id}$ on $\left[\mathcal{F}_d\right]$. Therefore, $\chi^*$ is the desired inverse of $\chi$.

The second statement of the theorem follows from the fact that $\chi(\left[\Omega,\sigma\right])\in\left[\mathcal{F}_d^{\mathrm{high}}\right]$ if and only if $\left[\Omega,\sigma\right]\in\left[\mathcal{S}_{\mathcal{R}_d}^{\mathrm{high}}\right]$ (see Theorem~\ref{srd_unif_strt_thm}).
\end{proof}

We will now provide an alternative construction of $\chi^{-1}$ on $\left[\mathcal{F}_d^{\textrm{simp}}\right]$ using the notion of simple pinched anti-polynomial-like maps (in the sense of Definition~\ref{pinched_anti_poly_like_map_def}). This will supply additional control on the corresponding hybrid conjugacies that will be useful in studying topological properties of $\chi$.

\begin{theorem}\label{chi_inv_dila_dom_control_thm}
Hybrid conjugacies between $\left[R\right]\in\left[\mathcal{F}_d^{\mathrm{simp}}\right]$ and $\chi^{-1}(\left[R\right])\in\left[\mathcal{S}_{\mathcal{R}_d}^{\mathrm{simp}}\right]$ can be chosen such that 
\begin{enumerate}
\item their dilatations are locally bounded, and 
\item the domains of definition of these conjugacies depend continuously on parameters.
\end{enumerate}
\end{theorem}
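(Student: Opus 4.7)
The strategy is dual to that of Corollary~\ref{chi_dila_dom_control_cor}: I would build the hybrid conjugacy by a controlled quasiconformal interpolation in a fundamental (pinched) strip whose geometry varies continuously with the parameter, rather than by integrating the global Beltrami coefficient used in Theorem~\ref{chi_bijective_thm} (which gives bounded but not continuously varying dilatation data). The first task is to produce, on both sides, simple pinched anti-polynomial-like restrictions in the sense of Definition~\ref{pinched_anti_poly_like_map_def} that move continuously with the parameter.

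For $R\in\mathcal{F}_d^{\mathrm{simp}}$, since $\infty$ is a simple parabolic fixed point and the critical value of $R$ in $\mathcal{B}(R)$ has \'Ecalle height zero, I would choose an attracting petal $\mathcal{P}\subset\mathcal{B}(R)$ at $\infty$ whose boundary near $\infty$ consists of two straight rays subtending the angle $2\pi/3$, such that $\mathcal{P}$ contains the critical value in the basin and $\partial\mathcal{P}$ passes through the corresponding critical point of multiplicity $d-1$. Setting $V_R=\widehat{\C}\setminus\overline{\mathcal{P}}$ and $U_R=R^{-1}(V_R)$ yields a simple pinched anti-polynomial-like restriction of $R$ with filled Julia set $\mathcal{K}(R)$, and the continuous dependence of the petal data and of the critical orbit on $R$ makes $(U_R,V_R)$ vary continuously in $R$. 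On the Schwarz side, for $\sigma=\chi^{-1}(R)\in\mathcal{S}_{\mathcal{R}_d}^{\mathrm{simp}}$, Lemma~\ref{srd_pre_str_lem} provides a simple pinched anti-polynomial-like restriction $(F_\sigma,\overline{U_\sigma},\overline{V_\sigma})$ via the cusp-opening coordinate change $\beta$ of Subsection~\ref{cusp_goes_to_infty_subsec}; the repelling-direction data at the $(3,2)$-cusp (Corollary~\ref{3_2_att_cor}, Proposition~\ref{odd_cusp_prop}) and the uniform cusp asymptotics of Appendix~\ref{cusp_append} ensure that both $\beta$ and the domain $V'$ depend continuously on $\sigma$.

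The core of the proof is to reconstruct an explicit hybrid conjugacy between these two simple pinched anti-polynomial-like restrictions by mimicking the quasiconformal gluing step in the proof of Theorem~\ref{straightening_thm}, but with the roles of $R$ and the model map reversed: instead of attaching a cauliflower-type model to the exterior of a simple pinched anti-polynomial-like map, I would attach the Schwarz-side exterior $F_\sigma$ to $R$'s pinched restriction. Concretely, I would pick a conformal isomorphism between the exteriors of $V_R$ and $V_\sigma$ that fixes $\infty$ (respectively, the cusp $\pmb{y}$ of $\partial\Omega$ sent to $\infty$ by $\beta$), equivariantly lift it to a boundary homeomorphism between $\partial U_R$ and $\partial U_\sigma$, and then interpolate quasiconformally in the fundamental pinched annulus. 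Because both $\partial V_R$ and $\partial V_\sigma$ are asymptotically linear at $\infty$ with the same opening angle $2\pi/3$, the boundary map has the linear asymptotic $z\mapsto\lambda z+o(z)$, so the Warschawski uniformization of the strip ends together with the linear interpolation and the Ahlfors--Beurling extension on the bounded piece, all as in Theorem~\ref{straightening_thm}, produces an interpolation with locally bounded dilatation. Extending by the identity on $\mathcal{K}(R)$ and by equivariant pullback on the remaining iterated preimages gives a global quasiconformal hybrid conjugacy whose dilatation is concentrated in the single fundamental strip; by Lemma~\ref{strt_unique_lem} this is precisely the hybrid conjugacy between $R$ and $\sigma$ up to the $\mathrm{Aut}(\C)$-normalizations.

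The main obstacle is verifying that all ingredients of the interpolation truly move continuously with the parameter. The delicate points are: that the positive-angle accesses from the exteriors to the pinched points of $\overline{U_R}$ and $\overline{U_\sigma}$ vary continuously (which follows from continuous dependence of the critical portrait of $R$ and of the singular set of $\partial\Omega$); that the Warschawski uniformizations of the strip ends converge when the strips converge, which is standard given the uniform linear asymptotic of their boundaries; and that the uniformizing conformal maps of the bounded pinched polygons in the strip converge in the Carath\'eodory sense. This last item is essentially the content of the compactness argument in Proposition~\ref{srd_comp_prop} applied jointly to the $\mathcal{F}_d^{\mathrm{simp}}$- and $\mathcal{S}_{\mathcal{R}_d}^{\mathrm{simp}}$-sides, and it is where the bulk of the technical work will live.
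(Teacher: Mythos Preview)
Your outline captures the right spirit---interpolating quasiconformally in a single fundamental pinched strip whose geometry is uniformly controlled---but it diverges from the paper in one key structural choice, and that choice creates a genuine gap.

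The paper does \emph{not} interpolate between $R$ and the varying Schwarz reflection $\sigma=\chi^{-1}(R)$. Instead it fixes, once and for all, a single reference map $(\Omega_0,\sigma_0)\in\mathcal{S}_{\mathcal{R}_d}^{\mathrm{simp}}$ (the real-symmetric ``cauliflower'' Schwarz reflection coming from $\overline{z}^d$ via Proposition~\ref{mating_all_pcf_anti_poly}), takes its simple pinched anti-polynomial-like restriction $(\pmb{\sigma}_0,\overline{U},\overline{V})$, and glues \emph{that} fixed exterior to the $R$-side restriction. After integrating the resulting $R$-invariant Beltrami coefficient (trivial on $\mathcal{K}(R)$) one obtains a Schwarz reflection $\sigma_R$ which is then checked to lie in $\mathcal{S}_{\mathcal{R}_d}^{\mathrm{simp}}$; uniqueness (Lemma~\ref{strt_unique_lem}) identifies $\sigma_R$ with $\chi^{-1}(R)$. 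Because the $\sigma_0$-side is frozen, the only parameter dependence in the interpolation comes from the petal $\mathcal{P}$ of $R$, so local boundedness of dilatation and continuity of domains follow immediately from the continuous variation of the $R$-side petal data.

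Your plan instead takes the moving target $\sigma=\chi^{-1}(R)$ on the Schwarz side. This forces you to control the $\sigma$-strip geometry \emph{as a function of $R$}, which is essentially equivalent to knowing that $\chi^{-1}$ is continuous---close to what you are trying to prove. You gesture at this in your last paragraph, but the Carath\'eodory convergence of the bounded polygons and of the $\beta$-coordinates on the Schwarz side cannot be deduced from compactness of $[\mathcal{S}_{\mathcal{R}_d}]$ alone without first knowing that the map $R\mapsto\sigma$ is continuous. Separately, the sentence ``Extending by the identity on $\mathcal{K}(R)$ and by equivariant pullback \ldots gives a global quasiconformal hybrid conjugacy'' is not right as written: a conjugacy must send $\mathcal{K}(R)$ to $K(\sigma)$, so it cannot be the identity there. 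What you need is to integrate an invariant Beltrami coefficient (trivial on $\mathcal{K}(R)$) and take the straightening map as the conjugacy; the equivariant pullback defines the coefficient, not the conjugacy itself. Fixing the reference map $\sigma_0$ as the paper does removes both problems at once.
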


\begin{remark}
	We anticipate that similar results should hold when one keeps the multiplicity of the marked parabolic fixed point constant. The parameters with simple parabolic points form a generic set, so we do not treat the case of higher order cusps for technical simplicity.
\end{remark}

\begin{proof}
Let $R\in \mathcal{F}_d^{\mathrm{simp}}$. 

By Proposition~\ref{mating_all_pcf_anti_poly}, there exists $(\Omega_0,\sigma_0)\in\mathcal{S}_{\mathcal{R}_d}$ such that $\sigma_0\vert_{K(\sigma_0)}$ is topologically conjugate to $\overline{z}^d\vert_{\overline{\D}}$ with the conjugacy being conformal on the interior. In particular, the cusp of $\partial\Omega_0$ has no attracting direction and hence is of type $(3,2)$ (by Corollary~\ref{3_2_att_cor}). Thus, $(\Omega_0,\sigma_0)\in\mathcal{S}_{\mathcal{R}_d}^{\mathrm{simp}}$. Real-symmetry of $\overline{z}^d$ and $\mathcal{R}_d$ implies that $\Omega_0$ can be chosen to be is real-symmetric (cf. \cite[\S 11.4, p. 82]{LMMN}). We can also normalize so that that the cusp of $\partial\Omega_0$ is at the origin. 

Recall from Lemma~\ref{srd_pre_str_lem} that there exists a Jordan domain $V'\subset \Omega_0$ with a corner at the origin such that $\overline V'\supset K(\sigma_0)$ and
$$
\beta:\overline{V'}\to\beta(\overline{V'}),\ z\mapsto c/\sqrt{z}
$$ 
conjugates $\sigma_0\colon \overline{\sigma_0^{-1}(V')}\to \overline{V'}$ to a degree $d$ simple pinched anti-polynomial-like map whose filled Julia set is $\beta(K(\sigma_0))$ (where $c\in\R_{<0}$ is chosen suitably and the chosen branch of square root sends positive reals to positive reals). In particular, the map $\beta$ sends the cusp of $\partial\Omega_0$ to $\infty$, and conjugates $\sigma_0$ to a map of the form $\zeta\mapsto \overline{\zeta} + 1/2 +O(1/\overline{\zeta})$ near $\infty$. 
We denote this simple pinched anti-polynomial-like map by $(\pmb{\sigma}_0,\overline{U},\overline{V})$, where $U:=\beta(\sigma_0^{-1}(V')), V:=\beta(V')$ (see Figure~\ref{str_inverse_gluing_fig}).

Note that the map $\beta$ extends to a quasiconformal homeomorphism of $\widehat{\C}$. After possibly post-composing $\beta$ with an affine map, we may assume that $\beta(\infty)=0$. We set $\pmb{\Omega}_0:=\beta(\Omega_0)$, and continue to denote the conjugated map $\beta\circ\sigma_0\circ\beta^{-1}$ on $\pmb{\Omega}_0$ by $\pmb{\sigma}_0$.
Since $\sigma_0^{-1}(\infty)$ is a singleton $\{c_0\}$, it follows that $\pmb{c}_0:=\beta(c_0)$ is a $d$-fold critical point for $\pmb{\sigma}_0$ with associated critical value $0$.

Let us consider a simple pinched anti-polynomial-like restriction $R:\overline{\mathcal{U}}\to \overline{\mathcal{V}}$ of $R$.
By construction, $\widehat{\C}\setminus\overline{\mathcal{V}}\subsetneq \mathcal{B}(R)$ is an attracting petal which subtends an angle of $4\pi/3$ at the parabolic fixed point $\infty$ 
such that the petal contains the critical value of $R$ in $\mathcal{B}(R)$ and the corresponding critical point (of multiplicity $d-1$) lies on the petal boundary. 
Also, $\mathcal{U}:=R^{-1}(\mathcal{V})$ (see Figure~\ref{str_inverse_gluing_fig}).

Let $\Psi\colon \widehat{\C}\setminus\overline{V}\longrightarrow \widehat{\C}\setminus\overline{\mathcal{V}}$ be a Riemann map whose homeomorphic boundary extension carries $\infty$ to $\infty$ and 
is asymptotically $z\mapsto \lambda z+o(z)$, for some $\lambda>0$, near $\infty$. The arguments of Theorem~\ref{straightening_thm} apply verbatim to this setting to supply 
a continuous map $\Psi\colon \widehat{\C}\setminus U \longrightarrow \widehat {\C}\setminus \mathcal{U}$ that is quasiconformal on the interior of the strip $\overline{V}\setminus U$, 
conformal on $\widehat{\C}\setminus\overline{V}$ and conjugates $\pmb{\sigma}_0:\partial U\to\partial V$ to $R:\partial\mathcal{U}\to\partial\mathcal{V}$.

We then define the map 
\begin{equation}
\begin{split}
F\colon \mathcal{U}\ \cup\ \Psi\left(\overline{\pmb{\Omega}_0}\setminus U \right) \longrightarrow \widehat{\C}\hspace{2.2cm} \nonumber\\
F(z) = \begin{cases} R(z),\quad  z\in \mathcal{U}\\ \Psi\circ \pmb{\sigma}_0\circ \Psi^{-1}(z), \quad \text{otherwise}.\end{cases}\nonumber
\end{split}
\end{equation}
The fact that $\partial\mathcal{U}$ is a piecewise smooth curve with finitely many singular points implies that it is removable for quasiconformal maps and hence, $F$ is anti-quasiregular. Moreover, $\Psi(\pmb{c}_0)$  is a critical point of multiplicity $d$ of $F$ with associated critical value $\Psi(0)$.
We also note that under iterates of $F$, each $z\notin\mathcal{K}(R)$ eventually escapes to $\Psi(\C\setminus \pmb{\Omega}_0)=\C\setminus \Int{\mathrm{Dom}(F)}$. 
Finally, the map $F$ fixes $\partial\mathrm{Dom}(F)$ pointwise. 
\begin{figure}[h!]
\captionsetup{width=0.96\linewidth}
\begin{tikzpicture}
\node[anchor=south west,inner sep=0] at (0,0) {\includegraphics[width=0.96\textwidth]{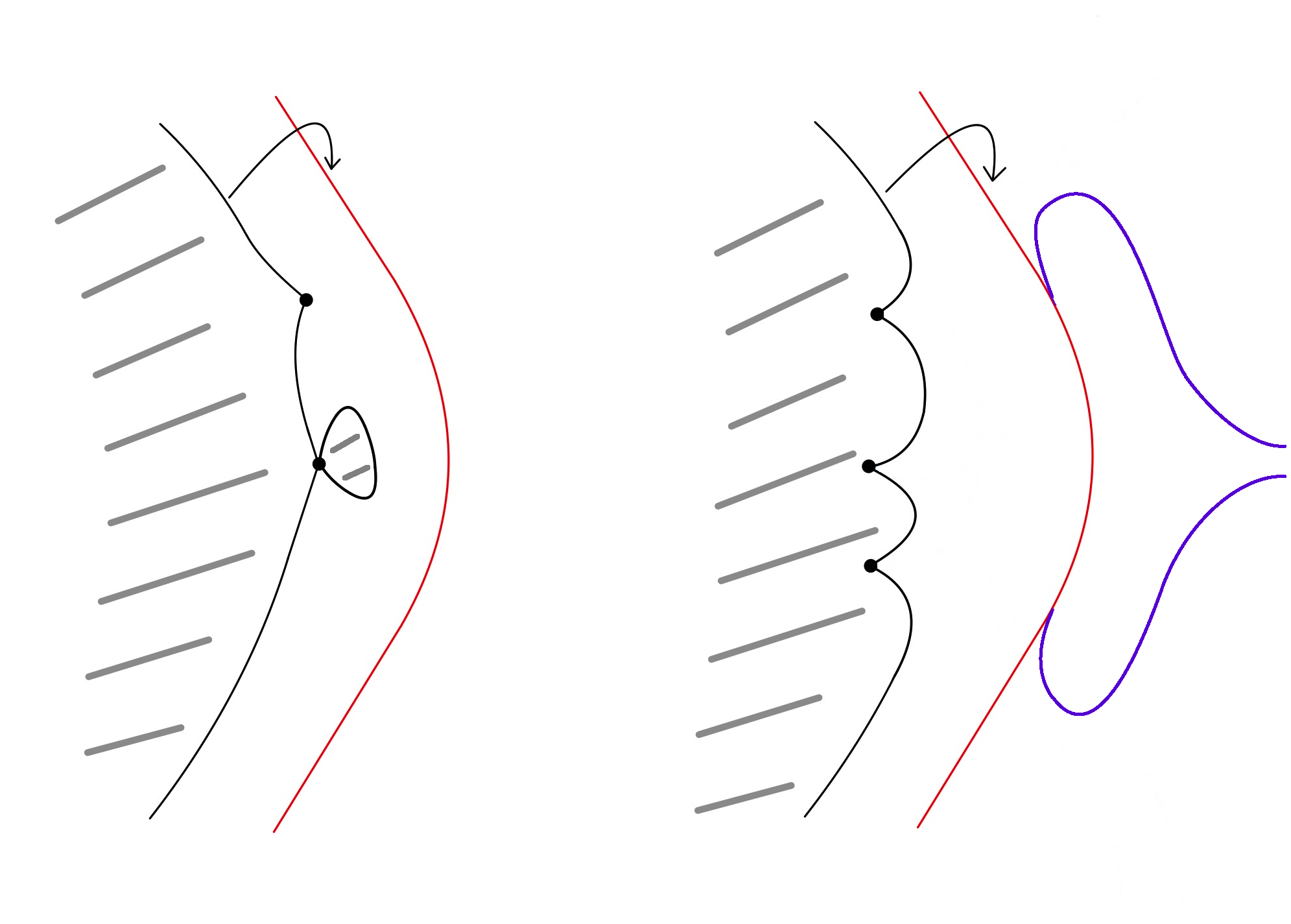}}; 
\node at (9.15,1.2) {\begin{scriptsize}$\partial V$\end{scriptsize}};
\node at (3.1,1.2) {\begin{scriptsize}$\partial \mathcal{V}$\end{scriptsize}};
\node at (8.1,1.2) {\begin{scriptsize}$\partial U$\end{scriptsize}};
\node at (1.9,1.2) {\begin{scriptsize}$\partial \mathcal{U}$\end{scriptsize}};
\node at (9.25,7.7) {\begin{scriptsize}$\pmb{\sigma}_0$\end{scriptsize}};
\node at (3,7.75) {\begin{scriptsize}$R$\end{scriptsize}};
\node at (10.64,1.88) {\begin{scriptsize}$\partial \pmb{\Omega}_0$\end{scriptsize}};
\node at (9.64,4.6) {\begin{scriptsize}$\pmb{c}_0$\end{scriptsize}};
\end{tikzpicture}
\caption{Left: The simple pinched anti-polynomial-like map $(R,\overline{\mathcal{U}},\overline{\mathcal{V}})$ is shown. The shaded region is $\mathcal{U}$, and the region to the left of the red curve is $\mathcal{V}$. Right: The simple pinched anti-polynomial-like map $(\pmb{\sigma}_0,\overline{U},\overline{V})$ is shown. The shaded region is $U$, and the region to the left of the red curve is $V$. The boundary $\partial V$ (in red) consists of a part of $\partial\pmb{\Omega}_0$ and a pair of smooth arcs that meet at $\infty$ at a positive angle. The purple curves denote the remaining part of~$\partial\pmb{\Omega}_0$.}
\label{str_inverse_gluing_fig}
\end{figure}

We pull back the standard complex structure on $\widehat{\C}\setminus V'$ under the quasiconformal map $(\Psi\circ\beta)^{-1}$ to get a complex structure on $\widehat{\C}\setminus \mathcal{V}$.
Pulling this complex structure on $\widehat{\C}\setminus \mathcal{V}$ back  by iterates of $F$ and extending by the standard complex structure on $\mathcal{K}(R)$, one obtains an $F$-invariant Beltrami coefficient $\mu$ on $\widehat{\C}$. Since the anti-quasiregular map $F$ is antiholomorphic on $\mathcal{U}$, and the $F$-orbit of each point meets $\overline{\mathcal{V}}\setminus\mathcal{U}$ at most once, it follows that $\vert\vert\mu\vert\vert_\infty<1$.

Conjugating $F$ by a quasiconformal map $H$ that solves the Beltrami equation with coefficient $\mu$, we obtain an antiholomorphic map $\sigma_R=H\circ F\circ H^{-1}$ defined on the closed Jordan disk $\ \Omega_R:=H(\mathrm{Dom}(F))$. Moreover, $\sigma_R$ fixes the boundary $\partial\Omega_R$ pointwise. Hence, $\Omega_R$ is a simply connected quadrature domain and $\sigma_R$ is its Schwarz reflection map. After possibly conjugating $\sigma_R$ by a M{\"o}bius map, we can assume that $H(\Psi(0))=\infty$ and $H(\Psi(\infty))=H(\infty)=0$.
\smallskip

We will now justify that $(\Omega_R,\sigma_R)\in\mathcal{S}_{\mathcal{R}_d}^{\mathrm{simp}}$.
The mapping properties of $F$ imply that $H(\Psi(0))=\infty\in \Int{\Omega_R^c}$ is a critical value of $\sigma_R$ with $\sigma_R^{-1}(\infty)=\{H(\Psi(\pmb{c}_0))\}\in\Omega_R$. In particular, $c:=H(\Psi(\pmb{c}_0))$ is a critical point of multiplicity $d$. It follows that $\sigma_R:\sigma_R^{-1}(\Int{\Omega_R^c})\to\Int{\Omega_R^c}$ is a degree $d+1$ branched covering.
Since $\Omega_R$ is a Jordan quadrature domain, it follows from Proposition~\ref{simp_conn_quad} that there exists a degree $d+1$ rational map $f$ that carries $\overline{\D}$ injectively onto $\overline{\Omega_R}$. We normalize $f$ so that $f(0)=c$. As $\sigma_R\equiv f\circ\eta\circ\left(f\vert_{\overline{\D}}\right)^{-1}$, we conclude that $f$ maps $\infty$ to itself with local degree $d+1$. Thus, $f$ is a degree $d+1$ polynomial.

Note that as $R$ has $d-1$ critical points in $\mathcal{K}(R)$, the Schwarz reflection $\sigma_R$ has $d-1$ critical points in $H(\mathcal{K}(R))\subset\Omega_R$. This implies that $f$ has $d-1$ critical points in $\D^*\setminus\{\infty\}$. As $f$ has $d$ critical points in the plane and none of them can lie in $\D$, it follows that the remaining critical point of $f$ lies on $\mathbb{S}^1$. Thus, $f$ has a unique critical point on $\mathbb{S}^1$, and hence $\partial\Omega_R$ has a unique conformal cusp (and no double point). Further, the fact that $\partial\Omega_0\setminus\{0\}$ is a non-singular real-analytic arc combined with quasiconformality of $\beta, \Psi$ and $H$ implies that $\partial\Omega_R\setminus\{0\}$ is a quasi-arc. Hence, the unique conformal cusp of $\partial\Omega_R$ is at $0$.

Therefore, $T^0(\sigma_R)=\Omega_R^c\setminus\{0\}$. That each $z\notin\mathcal{K}(R)$ eventually escapes to $\C\setminus \Int{\mathrm{Dom}(F)}$ under $F$ translates to the fact that the non-escaping set (respectively, the tiling set) of $\sigma_R$ is given by $H(\mathcal{K}(R))$ (respectively, $\widehat{\C}\setminus H(\mathcal{K}(R))$). Thus, the non-escaping set $K(\sigma_R)$ is connected.

In light of Proposition~\ref{mating_equiv_cond_prop}, we conclude that $(\Omega_R,\sigma_R)\in\mathcal{S}_{\mathcal{R}_d}$ (one could alternatively conclude this from the fact that $c$ is the unique critical point of $\sigma_R$ in its tiling set $T^\infty(\sigma_R)$ and that this critical point maps to $\infty\in\Int{T^0(\sigma_R)}$ with local degree $d+1$). Since $R \in\mathcal{F}_d^{\mathrm{simp}}$, the parabolic fixed point $\infty$ of $R$ has no attracting direction in $\mathcal{K}(R)$. Under the topological conjugacy $H$, this translates to the fact that $\sigma_R$ has no attracting direction in $K(\sigma_R)$. By Corollary~\ref{3_2_att_cor}, the unique conformal cusp of $\partial\Omega_R$ is of type $(3,2)$. Therefore, $(\Omega_R,\sigma_R)\in\mathcal{S}_{\mathcal{R}_d}^{\mathrm{simp}}$.
\smallskip

Finally, since $\overline{\partial} H=0$ a.e. on $\mathcal{K}(R)$, we conclude that $H^{-1}$ induces a hybrid conjugacy between a simple pinched anti-polynomial-like restriction of $\sigma_R$ (with filled Julia set $K(\sigma_R)$) and a simple pinched anti-polynomial-like restriction of $R$ (with filled Julia set $\mathcal{K}(R)$). In particular, $\chi^{-1}(\left[R\right])=\left[\Omega_R,\sigma_R\right]$.

Finally, since the fundamental (pinched) annuli of the simple pinched anti-polynomial-like restrictions of anti-rational maps $\left[R\right]\in\left[\mathcal{F}_d^{\mathrm{simp}}\right]$ move continuously with respect to the parameter and the asymptotics of the maps near the parabolic point at $\infty$ are the same throughout $\mathcal{F}_{d}^{\mathrm{simp}}$, it follows that the quasiconformal dilatations of the hybrid conjugacies between $\left[R\right]$ and $\left[\chi^{-1}(R)\right]$ constructed above are locally bounded and the domains of definition of these conjugacies depend continuously on parameters as $\left[R\right]$ runs over $\mathcal{F}_{d}^{\mathrm{simp}}$. 
\end{proof}

\subsection{Proofs of the main theorems}\label{thm_A_proof_subsec}

We are now ready to prove precise versions of Theorem~\ref{mating_existence_thm_intro} and the first part of Theorem~\ref{straightening_thm_intro} stated in the introduction. The continuity statement of Theorem~\ref{straightening_thm_intro} will be proved in the next section.

\begin{theorem}\label{mating_existence_thm_precise}
Let $R\in\mathcal{F}_d$. Then, there exists a polynomial map $f$ of degree $d+1$ with a unique critical point on $\mathbb{S}^1$ such that $f\vert_{\overline{\D}}$ is univalent, and the associated antiholomorphic correspondence $\mathfrak{C}^*$ given by Equation~\eqref{corr_eqn_2} is a mating of the anti-Hecke group $\pmb{\Gamma}_d$ and $R$. 

Moreover, this mating operation yields a bijection between $\left[\mathcal{F}_d\right]$ and the space of antiholomorphic correspondences arising from $\left[\mathcal{S}_{\mathcal{R}_d}\right]$.
\end{theorem}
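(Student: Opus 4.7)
The plan is to deduce this theorem as a nearly immediate consequence of the bijectivity of the straightening map (Theorem~\ref{chi_bijective_thm}) combined with the mating criterion of Proposition~\ref{srd_mating_check_prop}. Given $R\in\mathcal{F}_d$, I would first invoke $\chi^{-1}$ to produce a representative $(\Omega_R,\sigma_R)$ of $\chi^{-1}([R])\in\left[\mathcal{S}_{\mathcal{R}_d}\right]$, so that a pinched anti-polynomial-like restriction of $R$ is hybrid conjugate to $\sigma_R$. By Corollary~\ref{srd_poly_unif_cor}, there is a degree $d+1$ polynomial $f$ with a unique critical point on $\mathbb{S}^1$ carrying $\overline{\D}$ injectively onto $\overline{\Omega_R}$. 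Form the $d$:$d$ antiholomorphic correspondence $\mathfrak{C}^*$ from this $f$ via Equation~\eqref{corr_eqn_2}. Proposition~\ref{srd_mating_check_prop} then immediately yields that $\mathfrak{C}^*$ is a quasiconformal mating of $\pmb{\Gamma}_d$ and $R$, proving the first statement.

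For the bijection statement, I would factor the mating operation as $\chi^{-1}:\left[\mathcal{F}_d\right]\to\left[\mathcal{S}_{\mathcal{R}_d}\right]$ followed by the assignment $[\Omega,\sigma]\mapsto[\mathfrak{C}^*]$ that sends each Schwarz reflection in $\mathcal{S}_{\mathcal{R}_d}$ to the correspondence built from its uniformizing polynomial. Since $\chi$ is already bijective, it suffices to check that the second map descends to a well-defined bijection onto the target space of correspondences modulo M{\"o}bius conjugacy. Surjectivity onto the target space is tautological from the definition of that space. Well-definedness amounts to verifying that changing the representative in $[\Omega,\sigma]$ (i.e.\ applying an affine change of coordinates) and the ambiguity in the choice of uniformizing polynomial (namely pre-composition with an element of $\mathrm{Aut}(\D)$) together produce only M{\"o}bius-conjugate correspondences; this is a direct computation from Equation~\eqref{corr_eqn_2} using the fact that $\eta$ conjugates $\mathrm{Aut}(\D)$ to $\mathrm{Aut}(\D^*)$. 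For injectivity, I would recover $[\Omega,\sigma]$ from $[\mathfrak{C}^*]$ by appealing to Proposition~\ref{basic_dynamics_corr_prop}(2), which identifies $\sigma:K(\sigma)\to K(\sigma)$, up to conformal conjugacy, as the $d$:$1$ forward branch of $\mathfrak{C}^*$ acting on $\widetilde{K(\sigma)}\cap\overline{\D}$; combined with Lemma~\ref{strt_unique_lem}(2), two M{\"o}bius-conjugate correspondences must come from affinely conjugate Schwarz reflections.

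The principal content of the theorem therefore lies not in this last step but in everything that feeds into it: all the substantive work (the David surgery of Proposition~\ref{mating_all_pcf_anti_poly}, the compactness of Proposition~\ref{srd_comp_prop}, the Straightening Theorem~\ref{straightening_thm} with its Warschawski-type quasiconformal interpolation in pinched strips, the quasiconformal compatibility Lemma~\ref{qc_conj_rd_bd_lem}, and the inverse construction in Theorem~\ref{chi_bijective_thm}) has already been done. The only real obstacle I anticipate in writing out the proof is bookkeeping: carefully tracking the various conjugacy ambiguities on both sides of the correspondence so that the factorization $\text{mating} = (\text{correspondence construction})\circ\chi^{-1}$ is literally a composition of bijections of moduli spaces, rather than of merely surjective maps of sets. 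No new ideas beyond those developed in Sections~\ref{gen_corr_const_sec},~\ref{srd_sec}, and~\ref{straightening_sec} should be required.
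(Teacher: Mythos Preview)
Your proposal is correct and follows essentially the same approach as the paper: the paper's own proof is the one-liner ``Follows from Theorem~\ref{chi_bijective_thm}, Proposition~\ref{srd_mating_check_prop}, and the definition of $\chi$,'' and your write-up is a faithful unpacking of exactly that. Your additional bookkeeping for the bijection statement (well-definedness under the $\mathrm{Aut}(\C)$ and $\mathrm{Aut}(\D)$ ambiguities, and injectivity via Proposition~\ref{basic_dynamics_corr_prop}(2) and Lemma~\ref{strt_unique_lem}(2)) supplies detail the paper leaves implicit, but does not depart from its strategy.
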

\begin{proof}
Follows from Theorem~\ref{chi_bijective_thm}, Proposition~\ref{srd_mating_check_prop}, and the definition of $\chi$.
\end{proof}

The next result, which is a precise version of Theorem~\ref{hyp_poly_mating_thm_intro} stated in the introduction, is an immediate consequence of Propositions~\ref{mating_all_pcf_anti_poly} and~\ref{srd_mating_check_prop}.

\begin{theorem}\label{hyp_poly_mating_thm}
Let $p$ be a degree $d$ semi-hyperbolic anti-polynomial with a connected Julia set. Then, there exists a polynomial map $f$ of degree $d+1$ with a unique critical point on $\mathbb{S}^1$ such that $f\vert_{\overline{\D}}$ is univalent, and the associated antiholomorphic correspondence $\mathfrak{C}^*$ given by Equation~\eqref{corr_eqn_2} is a mating of the anti-Hecke group $\pmb{\Gamma}_d$ and $p$.
\end{theorem}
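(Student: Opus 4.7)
The theorem is a concatenation of three already-established pieces: the David surgery construction of Proposition~\ref{mating_all_pcf_anti_poly}, the polynomial uniformization supplied by Corollary~\ref{srd_poly_unif_cor}, and the mating criterion of Proposition~\ref{srd_mating_check_prop} (read in its David incarnation). The idea is to first realize the filled Julia set dynamics of $p$ as a Schwarz reflection in the family $\mathcal{S}_{\mathcal{R}_d}$, then lift the associated Schwarz reflection through its canonical polynomial uniformizer to obtain the correspondence, and finally verify the axioms of Definition~\ref{mating_def} by appealing to the group structure on the lifted tiling set together with the David hybrid conjugacy on the non-escaping set.

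First I would apply Proposition~\ref{mating_all_pcf_anti_poly} to the semi-hyperbolic anti-polynomial $p$. This produces a pair $(\Omega,\sigma)\in\mathcal{S}_{\mathcal{R}_d}$ together with a global David homeomorphism $\mathfrak{H}:\widehat{\C}\to\widehat{\C}$ that is conformal on $\Int\mathcal{K}(p)$ and conjugates $p\vert_{\mathcal{K}(p)}$ to $\sigma\vert_{K(\sigma)}$. Next, Corollary~\ref{srd_poly_unif_cor} provides a degree $d+1$ polynomial $f$ with a unique critical point on $\mathbb{S}^1$ such that $f$ carries $\overline{\D}$ injectively onto $\overline{\Omega}$. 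Using this $f$, I would form the $d$:$d$ antiholomorphic correspondence $\mathfrak{C}^*$ from Equation~\eqref{corr_eqn_2}.

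It then remains to observe that $\mathfrak{H}$ yields a David hybrid equivalence (in the sense of Definition~\ref{hybrid_def}) between a pinched anti-polynomial-like restriction of $p$ with filled Julia set $\mathcal{K}(p)$ and the canonical pinched anti-polynomial-like restriction $(\sigma\vert_{\overline{\sigma^{-1}(\Omega)}},\overline{\sigma^{-1}(\Omega)},\overline{\Omega})$ of the Schwarz reflection $\sigma$. Indeed, $\mathfrak{H}$ is conformal on $\Int\mathcal{K}(p)$, hence $\overline{\partial}\mathfrak{H}\equiv 0$ almost everywhere on $\mathcal{K}(p)$, and it conjugates the two pinched anti-polynomial-like maps on a neighborhood of the filled Julia set by construction. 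Combining this with the mating criterion of Proposition~\ref{srd_mating_check_prop} (together with the group-action identification of $\mathfrak{C}^*$ on $\widetilde{T^\infty(\sigma)}$ with $\pmb{\Gamma}_d$ furnished by Proposition~\ref{grand_orbit_group_1}) yields that $\mathfrak{C}^*$ is a David mating of $p$ and $\pmb{\Gamma}_d$.

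The only subtlety, and the main obstacle if any, is to ensure that Proposition~\ref{srd_mating_check_prop} applies in the David regime rather than the quasiconformal one. Since its proof invokes only the dynamical/topological content of Propositions~\ref{basic_dynamics_corr_prop} and~\ref{grand_orbit_group_1} and the Mating Criterion~II of Proposition~\ref{mating_axiom_prop_1}, which refers to hybrid equivalence of either flavor via Definition~\ref{mating_def}, no additional regularity is needed beyond what Proposition~\ref{mating_all_pcf_anti_poly} supplies. The conclusion therefore follows directly.
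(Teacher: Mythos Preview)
Your proposal is correct and follows essentially the same approach as the paper, which states the result as an immediate consequence of Propositions~\ref{mating_all_pcf_anti_poly} and~\ref{srd_mating_check_prop}. You simply unpack these ingredients in more detail, explicitly invoking Corollary~\ref{srd_poly_unif_cor} and verifying that the David homeomorphism $\mathfrak{H}$ supplies the required David hybrid equivalence, but the underlying argument is identical.
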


\section{Continuity properties of the straightening map}\label{chi_cont_sec}

\begin{lemma}\label{strt_limit_lem}
\noindent \begin{enumerate}\upshape
\item Let $\{[\Omega_n,\sigma_n]\} \longrightarrow [\Omega_\infty,\sigma_\infty]$ in $\ \left[\mathcal{S}_{\mathcal{R}_d}^{\mathrm{simp}}\right]$. Then, all accumulation points of $\{\chi(\sigma_n)\}$ in $\ \left[\mathcal{F}_d\right]$ are quasiconformally conjugate to $\chi(\sigma_\infty)$.

\item Let $\{[R_n]\}\longrightarrow [R_\infty]$ in $\left[\mathcal{F}_d^{\mathrm{simp}}\right]$. Then, all accumulation points of $\{\chi^{-1}(R_n)\}$ in $\left[\mathcal{S}_{\mathcal{R}_d}\right]$ are quasiconformally conjugate to $\chi^{-1}(R_\infty)$.
\end{enumerate}
\end{lemma}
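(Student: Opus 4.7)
The approach for both parts is the same: I would extract a quasiconformal limit of the hybrid conjugacies supplied by Corollary~\ref{chi_dila_dom_control_cor} and Theorem~\ref{chi_inv_dila_dom_control_thm}, and then combine this limit with the hybrid conjugacy at the target parameter to obtain a QC conjugacy between the two accumulation-point maps. I will outline Part~(1); Part~(2) is formally identical with the roles of $\mathcal{S}_{\mathcal{R}_d}$ and $\mathcal{F}_d$ interchanged.

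First set $R_n := \chi(\sigma_n)$; by Theorem~\ref{srd_unif_strt_thm} each $R_n$ lies in $[\mathcal{F}_d^{\mathrm{simp}}]$. Pick an arbitrary accumulation point $R$ of $\{R_n\}$, guaranteed to exist by the compactness of $[\mathcal{F}_d]$ (Proposition~\ref{fd_comp_prop}), and pass to a subsequence with $R_n \to R$. Corollary~\ref{chi_dila_dom_control_cor} would then supply quasiconformal hybrid conjugacies $\Phi_n\colon \widehat{\C}\to\widehat{\C}$ between the simple pinched anti-polynomial-like restrictions of $\sigma_n$ (from Lemma~\ref{srd_pre_str_lem}) and those of $R_n$, whose dilatations are locally uniformly bounded in the parameter (hence uniformly bounded along our sequence) and whose conjugation domains vary continuously. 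After pre- and post-composing with M\"obius maps implementing a parameter-continuous three-point normalization, the family $\{\Phi_n\}$ becomes normal; I would then extract a further subsequence converging locally uniformly to a quasiconformal homeomorphism $\Phi\colon \widehat{\C}\to\widehat{\C}$.

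Next, by the continuous dependence of the conjugation domains, every compact subset of some neighborhood of $K(\sigma_\infty)$ should lie, for $n$ large, inside the conjugation domain of $\Phi_n$. The functional equation $\Phi_n\circ \sigma_n = R_n\circ \Phi_n$ therefore passes to the limit, so $\Phi$ realizes a QC conjugacy between the pinched anti-polynomial-like restrictions of $\sigma_\infty$ and of $R$ on such a neighborhood. Since $\chi(\sigma_\infty)$ is, by the definition of $\chi$, hybrid (and thus quasiconformally) conjugate to $\sigma_\infty$ via some global QC map $\Psi$, the composition $\Psi\circ\Phi^{-1}$ yields the desired QC conjugacy between $R$ and $\chi(\sigma_\infty)$ on a neighborhood of the respective filled Julia sets. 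Part~(2) would be handled identically, using Proposition~\ref{srd_comp_prop} for the compactness of $[\mathcal{S}_{\mathcal{R}_d}]$ and Theorem~\ref{chi_inv_dila_dom_control_thm} in place of Corollary~\ref{chi_dila_dom_control_cor}.

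The main obstacle will be ensuring that the normal-family limit $\Phi$ is a genuine QC homeomorphism rather than a degenerate collapse of the sphere. This rests on two pillars: the locally uniform bound on the dilatations $\mathrm{Dil}(\Phi_n)$ and the choice of a parameter-continuous three-point normalization that is compatible with the Carath\'eodory convergence of the data. A secondary but essential point will be to verify that the conjugation identity passes to the limit on a full neighborhood of $K(\sigma_\infty)$, which uses the continuity of the conjugation domains, i.e.\ the second conclusion of Corollary~\ref{chi_dila_dom_control_cor} and Theorem~\ref{chi_inv_dila_dom_control_thm}.
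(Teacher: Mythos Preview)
Your approach is essentially the paper's: extract a quasiconformal limit of the hybrid conjugacies using the dilatation bound and the continuous dependence of domains from Corollary~\ref{chi_dila_dom_control_cor} (resp.\ Theorem~\ref{chi_inv_dila_dom_control_thm}), then compose with the hybrid conjugacy at the target parameter. However, your argument stops one step short. The composition $\Psi\circ\Phi^{-1}$ only yields a quasiconformal conjugacy between $R$ and $\chi(\sigma_\infty)$ on a \emph{pinched} neighborhood of their filled Julia sets---pinched precisely at the parabolic point $\infty$. The lemma, as used downstream (e.g., in Proposition~\ref{chi_bdry_prop} to preserve the multiplicity of the parabolic fixed point), requires a \emph{global} quasiconformal conjugacy on $\widehat{\C}$.

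The missing step is the gluing: since both $R$ and $\chi(\sigma_\infty)$ lie in $\mathcal{F}_d$, they are conformally conjugate on their marked parabolic basins (both have external class $B_d$). One must then match this conformal conjugacy on $\mathcal{B}(R)$ with your local quasiconformal conjugacy on the filled Julia set side, via the fixed-prime-end argument and the Bers--Rickman lemma exactly as in Lemma~\ref{strt_unique_lem}. The paper does this explicitly: after obtaining the limit conjugacy on pinched neighborhoods of the filled Julia sets, it notes that the two maps are also conformally conjugate on their parabolic basins of $\infty$, and then invokes the arguments of Lemma~\ref{strt_unique_lem} to conclude global quasiconformal conjugacy. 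Without this step your conclusion is too weak to support the applications.
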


\begin{proof}
1) We set $[R_n]:=\chi(\sigma_n)$.
According to Theorem~\ref{straightening_thm}, there exist quasiconformal maps $\phi_n$ that hybrid conjugate simple pinched anti-polynomial-like restrictions of $\sigma_n$ (with filled Julia set $K(\sigma_n)$) to simple pinched anti-polynomial-like restrictions of $R_n$ (with filled Julia set $\mathcal{K}(R_n)$). Moreover by Corollary~\ref{chi_dila_dom_control_cor}, the quasiconformal dilatations of the hybrid conjugacies $\phi_n$ are bounded. Thus, we may assume after passing to a subsequence that the global quasiconformal maps $\phi_n$ converge uniformly to some quasiconformal homeomorphism $\phi_\infty$ of $\widehat{\C}$.

According to Proposition~\ref{fd_comp_prop}, $\mathcal{F}_d$ is compact. Thus, we can assume possibly after passing to a further subsequence that $[R_n]\to [R_\infty]\in\mathcal{F}_d$. We also recall from Corollary~\ref{chi_dila_dom_control_cor} that the domains of definition of the hybrid conjugacies $\phi_n$ depend continuously on parameters. Thus, the domains of the simple pinched anti-polynomial-like restrictions of $\sigma_n$ constructed in Lemma~\ref{srd_pre_str_lem} converge to that of $\sigma_\infty$ (with associated non-escaping set $K(\sigma_\infty)$). The equivariance property of $\phi_n$ now implies that $\phi_\infty$ is a conjugacy between a pinched anti-polynomial-like restriction of $\sigma_\infty$ to a pinched anti-polynomial-like restriction of $R_\infty$.

On the other hand, there exists a quasiconformal map $\phi$ that hybrid conjugates a simple pinched anti-polynomial-like restriction of $\sigma_\infty$ (with filled Julia set $K(\sigma_\infty)$) to a simple pinched anti-polynomial-like restriction of $\chi(\sigma_\infty)$ (with filled Julia set $\mathcal{K}(\chi(\sigma_\infty))$). Thus, the map $R_\infty$ and $\chi(\sigma_\infty)$ are quasiconformally conjugate on some pinched neighborhoods of their filled Julia sets. As these two maps are also conformally conjugate on their parabolic basin of $\infty$, it follows by the arguments of Lemma~\ref{strt_unique_lem} that $R_\infty$ and $\chi(\sigma_\infty)$ are globally quasiconformally conjugate.
\smallskip

2) Since $\chi$ is bijective, we may set $[\Omega_n,\sigma_n]:=\chi^{-1}(R_n)$. Theorem~\ref{chi_inv_dila_dom_control_thm} provides with global quasiconformal homeomorphisms $\psi_n$ that hybrid conjugate simple pinched anti-polynomial-like restrictions of $R_n$ (with filled Julia set $\mathcal{K}(R_n)$) to simple pinched anti-polynomial-like restrictions of $\sigma_n$ (with filled Julia set $K(\sigma_n)$) such that the quasiconformal dilatations of the hybrid conjugacies $\psi_n$ are bounded. Thus, we may assume after passing to a subsequence that the global quasiconformal maps $\psi_n$ converge uniformly to some quasiconformal homeomorphism $\psi_\infty$ of $\widehat{\C}$.

By Proposition~\ref{srd_comp_prop}, $\mathcal{S}_{\mathcal{R}_d}$ is compact, and hence we can assume possibly after passing to a further subsequence that $[\Omega_n,\sigma_n]\to [\Omega_\infty,\sigma_\infty]\in\mathcal{S}_{\mathcal{R}_d}$. Theorem~\ref{chi_inv_dila_dom_control_thm} also guarantees that the domains of definition of the hybrid conjugacies $\psi_n$ depend continuously on parameters, and hence the domains of the conventional simple pinched anti-polynomial-like restrictions of $R_n$ converge to that of $R_\infty$ (with associated filled Julia set $\mathcal{K}(R_\infty)$). Hence, $\psi_\infty$ is a conjugacy between a pinched anti-polynomial-like restriction of $R_\infty$ to a pinched anti-polynomial-like restriction of $\sigma_\infty$.

On the other hand, there exists a quasiconformal map $\psi$ that hybrid conjugates a simple pinched anti-polynomial-like restriction of $R_\infty$ (with filled Julia set $\mathcal{K}(R_\infty)$) to a simple pinched anti-polynomial-like restriction of $\chi^{-1}(R_\infty)$ (with filled Julia set $K(\chi^{-1}(R_\infty))$). Thus, the map $\sigma_\infty$ and $\chi^{-1}(R_\infty)$ are quasiconformally conjugate on some pinched neighborhoods of their non-escaping sets. As these two maps are also conformally conjugate on their tiling set, it follows by the arguments of Lemma~\ref{strt_unique_lem} that $\sigma_\infty$ and $\chi^{-1}(R_\infty)$ are globally quasiconformally conjugate.
\end{proof}

\begin{proposition}\label{chi_bdry_prop}
The sequence $\{[\Omega_n,\sigma_n]\} \subset \ \left[\mathcal{S}_{\mathcal{R}_d}^{\mathrm{simp}}\right]$ has no accumulation point in $\ \left[\mathcal{S}_{\mathcal{R}_d}^{\mathrm{high}}\right]$\\
$\iff $ the sequence $\{[\chi(\sigma_n)]\}\subset \left[\mathcal{F}_d^{\mathrm{simp}}\right]$ has no accumulation point in $\ \left[\mathcal{F}_d^{\mathrm{high}}\right]$.
\end{proposition}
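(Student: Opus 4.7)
The plan is to argue both implications by contradiction, extracting suitable subsequences via compactness of the two moduli spaces (Propositions~\ref{srd_comp_prop} and~\ref{fd_comp_prop}) and then invoking Lemma~\ref{strt_limit_lem} together with one classical ingredient: quasiconformal conjugacies preserve the number of attracting petals at a parabolic fixed point (this is a topological, hence QC, invariant via the Leau--Fatou flower theorem). Recall from Theorem~\ref{srd_unif_strt_thm} that $\chi$ sends $\mathcal{S}_{\mathcal{R}_d}^{\mathrm{simp}}$ into $\mathcal{F}_d^{\mathrm{simp}}$ and $\mathcal{S}_{\mathcal{R}_d}^{\mathrm{high}}$ into $\mathcal{F}_d^{\mathrm{high}}$, and that simpness on either side is characterized by $\infty$ being a \emph{simple} parabolic fixed point of the straightened map (equivalently, by the cusp having no attracting direction in $K(\sigma)$, via Corollary~\ref{3_2_att_cor}).

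For the forward direction, suppose for contradiction that $\chi(\sigma_{n_k})\to [R_\infty]\in\left[\mathcal{F}_d^{\mathrm{high}}\right]$. Using compactness of $\left[\mathcal{S}_{\mathcal{R}_d}\right]$ together with the assumption that no accumulation point of $\{[\Omega_n,\sigma_n]\}$ lies in $\left[\mathcal{S}_{\mathcal{R}_d}^{\mathrm{high}}\right]$, extract a further subsequence along which $[\Omega_{n_{k_j}},\sigma_{n_{k_j}}]\to [\Omega_\infty,\sigma_\infty]\in\left[\mathcal{S}_{\mathcal{R}_d}^{\mathrm{simp}}\right]$. This places us in the hypothesis of Lemma~\ref{strt_limit_lem}(1), which yields that $R_\infty$ is quasiconformally conjugate to $\chi(\sigma_\infty)\in\mathcal{F}_d^{\mathrm{simp}}$. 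Since QC conjugacy preserves the petal count, $R_\infty$ must also have a simple parabolic fixed point at $\infty$, contradicting $R_\infty\in\mathcal{F}_d^{\mathrm{high}}$.

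The reverse implication is strictly dual. Assuming $[\Omega_{n_k},\sigma_{n_k}]\to [\Omega_\infty,\sigma_\infty]\in \left[\mathcal{S}_{\mathcal{R}_d}^{\mathrm{high}}\right]$, compactness of $\left[\mathcal{F}_d\right]$ together with the hypothesis on $\{[\chi(\sigma_n)]\}$ lets us pass to a further subsequence along which $[\chi(\sigma_{n_{k_j}})]\to [R_\infty]\in\left[\mathcal{F}_d^{\mathrm{simp}}\right]$. Lemma~\ref{strt_limit_lem}(2) then identifies $[\Omega_\infty,\sigma_\infty]$ as quasiconformally conjugate to $\chi^{-1}([R_\infty])\in \left[\mathcal{S}_{\mathcal{R}_d}^{\mathrm{simp}}\right]$ (by Theorems~\ref{chi_bijective_thm} and~\ref{srd_unif_strt_thm}), and the same petal-count argument applied to the cusp of $\sigma_\infty$ yields the contradiction. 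The only real obstacle is bookkeeping: in both directions one must arrange, via compactness and the standing hypothesis, that the limit entering Lemma~\ref{strt_limit_lem} lies in the simp family so that the lemma is applicable, after which the topological invariance of parabolic multiplicity closes the argument routinely.
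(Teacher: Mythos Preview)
Your proposal is correct and follows essentially the same approach as the paper: both directions rest on Lemma~\ref{strt_limit_lem} combined with the topological (hence QC) invariance of the parabolic multiplicity, using Corollary~\ref{3_2_att_cor} to translate this into cusp type on the Schwarz side. The only cosmetic difference is that you phrase each direction as a proof by contradiction with explicit subsequence extraction via compactness (Propositions~\ref{srd_comp_prop} and~\ref{fd_comp_prop}), whereas the paper states the implications directly; the compactness step is implicit there but needed for the same reason you spell out.
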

\begin{proof}
Suppose that all accumulation points of $\{[\Omega_n,\sigma_n]\} \subset \ \left[\mathcal{S}_{\mathcal{R}_d}^{\mathrm{simp}}\right]$ lie in $\ \left[\mathcal{S}_{\mathcal{R}_d}^{\mathrm{simp}}\right]$. Then by part (1) of Lemma~\ref{strt_limit_lem}, all accumulation points of $\{[\chi(\sigma_n)]\}$ are quasiconformally conjugate to maps in $\ \left[\mathcal{F}_d^{\mathrm{simp}}\right]$. As the multiplicity of a parabolic fixed point is a topological invariant, it follows that all accumulation points of $\{[\chi(\sigma_n)]\}$ lie in $\ \left[\mathcal{F}_d^{\mathrm{simp}}\right]$.

Conversely, assume that $\{[\Omega_n,\sigma_n]\} \subset \ \left[\mathcal{S}_{\mathcal{R}_d}^{\mathrm{simp}}\right]$ and all accumulation points of $\{[\chi(\sigma_n)]\}$ lie in $\ \left[\mathcal{F}_d^{\mathrm{simp}}\right]$. Then by part (2) of Lemma~\ref{strt_limit_lem}, all accumulation points of $\{[\Omega_n,\sigma_n]\}$ are quasiconformally conjugate to maps in $\ \left[\mathcal{S}_{\mathcal{R}_d}^{\mathrm{simp}}\right]$. By Corollary~\ref{3_2_att_cor}, the condition of having a $(3,2)$-cusp on the quadrature domain boundary is a topological conjugacy invariant for maps in $\mathcal{S}_{\mathcal{R}_d}$. Therefore, all accumulation points of $\{[\Omega_n,\sigma_n]\}$ lie in $\ \left[\mathcal{S}_{\mathcal{R}_d}^{\mathrm{simp}}\right]$.
\end{proof}

\begin{proposition}\label{chi_cont_rigid_prop}
The straightening map $\chi$ is continuous at quasiconformally rigid and at relatively hyperbolic parameters in $\ \left[\mathcal{S}_{\mathcal{R}_d}^{\mathrm{simp}}\right]$.
\end{proposition}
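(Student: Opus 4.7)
The plan is to combine Lemma~\ref{strt_limit_lem}(1) with the defining properties of qc-rigidity and of relative hyperbolicity to pin down the accumulation points of $\{[\chi(\sigma_n)]\}$. Concretely, suppose $[\sigma_n] \to [\sigma_\infty]$ in $\left[\mathcal{S}_{\mathcal{R}_d}^{\mathrm{simp}}\right]$. By compactness of $[\mathcal{F}_d]$ (Proposition~\ref{fd_comp_prop}), it suffices to show that every accumulation point $[R_\infty]$ of $\{[\chi(\sigma_n)]\}$ coincides with $[\chi(\sigma_\infty)]$. By Lemma~\ref{strt_limit_lem}(1) any such $[R_\infty]$ is quasiconformally conjugate to $[\chi(\sigma_\infty)]$, so the task reduces to promoting this qc conjugacy to an affine one in each of the two cases.

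In the qc-rigid case this is essentially automatic. Via the bijection $\chi$ (Theorem~\ref{chi_bijective_thm}) together with Lemma~\ref{strt_unique_lem}, qc-rigidity of $\sigma_\infty$ transfers to qc-rigidity of $\chi(\sigma_\infty)$; any map quasiconformally conjugate to $\chi(\sigma_\infty)$ is then affinely conjugate to it, so $[R_\infty] = [\chi(\sigma_\infty)]$.

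For a relatively hyperbolic $[\sigma_\infty]$, I first observe that the hybrid conjugacy between $\sigma_\infty$ and $\chi(\sigma_\infty)$ transports attracting cycles together with their multipliers from $K(\sigma_\infty)$ to $\mathcal{K}(\chi(\sigma_\infty))$, so $\chi(\sigma_\infty) \in \mathcal{F}_d$ is itself relatively hyperbolic. By Lemma~\ref{strt_limit_lem}(1), $R_\infty$ is qc conjugate to $\chi(\sigma_\infty)$, and hence is also relatively hyperbolic, lying in the same relatively hyperbolic component of $[\mathcal{F}_d]$. Within such a component, and since the external class is locked to $B_d$ throughout $\mathcal{F}_d$, the standard Teichm\"uller theory of hyperbolic components gives that the map is determined up to affine conjugacy by the multipliers of its attracting cycles in $\mathcal{K}(R)$. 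Next I show that these multipliers transfer correctly: the attracting cycles of $\sigma_n$ persist, with multipliers converging to those of $\sigma_\infty$ by Carath\'eodory convergence, and the hybrid conjugacies of Theorem~\ref{chi_inv_dila_dom_control_thm}, which have locally bounded dilatation and continuously varying domains, carry these multipliers to the multipliers of the corresponding attracting cycles of $\chi(\sigma_n)$ and $\chi(\sigma_\infty)$. Extracting a subsequence realizing $[R_\infty]$, continuity of multipliers on the $\mathcal{F}_d$-side forces the attracting multipliers of $R_\infty$ to agree with those of $\chi(\sigma_\infty)$, and the multiplier parametrization then yields $[R_\infty]=[\chi(\sigma_\infty)]$.

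The principal technical obstacle is the multiplier parametrization of relatively hyperbolic components in $[\mathcal{F}_d]$: I need that within such a component the attracting multipliers give local coordinates. The key point (and what makes this plausible despite the non-standard parabolic external class) is precisely that all maps in $\mathcal{F}_d$ share the external dynamics $B_d$ on $\mathcal{B}(R)$, so qc deformations of $R$ are effectively supported on $\mathcal{K}(R)$ alone; a classical Teichm\"uller argument (or invocation of the Cui--Tan characterization \cite{CT18} of geometrically finite rational maps) then yields the required multiplier map. Everything else---persistence of attracting cycles under Carath\'eodory convergence and transfer of multipliers through the controlled hybrid conjugacies of Theorem~\ref{chi_inv_dila_dom_control_thm}---is routine.
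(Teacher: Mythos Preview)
Your treatment of the quasiconformally rigid case matches the paper's: transfer rigidity through $\chi$ via Lemma~\ref{strt_unique_lem} and apply Lemma~\ref{strt_limit_lem}(1).

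For the relatively hyperbolic case, however, there is a genuine gap. Your key claim is that within a relatively hyperbolic component of $[\mathcal{F}_d]$, a map is determined up to affine conjugacy by the multipliers of its attracting cycles in $\mathcal{K}(R)$. This is false in general: a relatively hyperbolic map of degree $d$ has $d-1$ critical points in $\mathcal{K}(R)$, and several of them may lie in the immediate basin of a single attracting cycle. In that situation the multiplier of the cycle is only one of several independent moduli; the remaining ones record the conformal positions of the extra critical orbits inside the basin. So multiplier data alone cannot separate distinct points of such a component, and your argument that $[R_\infty]=[\chi(\sigma_\infty)]$ breaks down precisely there. The Cui--Tan theory you mention concerns combinatorial characterization of geometrically finite maps and does not supply the local multiplier coordinates you invoke.

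The paper circumvents this by using the correct parametrization: by an adaptation of \cite[Theorem~5.1]{Milnor12}, each relatively hyperbolic component of $[\mathcal{S}_{\mathcal{R}_d}^{\mathrm{simp}}]$ and of $[\mathcal{F}_d^{\mathrm{simp}}]$ is diffeomorphic to a space of fibrewise anti-Blaschke products modelling the first-return dynamics on the periodic attracting Fatou components. Since hybrid conjugacies are conformal on $\Int K(\sigma)$, the map $\chi$ respects these Blaschke models and hence factors as the composition of two such diffeomorphisms, giving continuity directly. Your argument becomes correct if you replace ``multipliers of attracting cycles'' by ``conformal conjugacy class of the first-return map on each attracting periodic component'' throughout; that invariant is exactly what the Blaschke-product model encodes and what the hybrid conjugacy preserves.
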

\begin{proof}
We first note that $(\Omega,\sigma)\in\mathcal{S}_{\mathcal{R}_d}$ is a quasiconformally rigid parameter if and only if $\chi(\sigma)$ is so.
 Continuity of $\chi$ at quasiconformally rigid parameters in $\ \left[\mathcal{S}_{\mathcal{R}_d}^{\mathrm{simp}}\right]$ now follows from Lemma~\ref{strt_limit_lem}.
 
A straightforward adaptation of \cite[Theorem~5.1]{Milnor12} shows that the relatively hyperbolic components of $\ \left[\mathcal{S}_{\mathcal{R}_d}^{\mathrm{simp}}\right]$ and $\ \left[\mathcal{F}_d^{\mathrm{simp}}\right]$ are diffeomorphic to appropriate spaces of fibrewise anti-Blaschke products. The construction of such a diffeomorphism and the fact that hybrid conjugacies are conformal on the interior of the non-escaping sets imply that $\chi$ carries each relatively hyperbolic component of $\left[\mathcal{S}_{\mathcal{R}_d}^{\mathrm{simp}}\right]$ to a corresponding component of $\left[\mathcal{F}_d^{\mathrm{simp}}\right]$, and $\chi$ factors (through a space of fibrewise anti-Blaschke products) as the composition of two diffeomorphisms.
The result follows.
\end{proof}

\appendix

\section{Schwarz reflections and conformal cusps}\label{cusp_append}

\subsection{Compositions of power series}
It will be convenient to develop some notation for compositions of power series. For a power series $P(z)$ which is centered at zero and has no constant term, denote the coefficient of $z^n$ as $C_n(P)$. We denote the $m$-th power of $P$ (not to be confused with the $m$-fold iterate $P^{\circ m}$ of $P$) by $P^m$. Routine computations show that, 
\begin{equation}
C_n(P^n)=\left(C_1(P)\right)^n,\quad C_{n+1}(P^n)=nC_1(P)^{n-1}C_2(P),\quad \textrm{etc.}
\label{power_formula}
\end{equation}

We denote by $\overline P$ the power series with coefficients which are complex conjugates of those of $P$. Then 
\begin{equation}
C_n((a\overline P)^m) = a^m \overline{C_n(P^m)}.
\label{conjugate_coeff_formula}
\end{equation}

Let $P$ and $Q$ be two power series. Then we have that \begin{equation}
C_n(Q\circ P) =\sum_{m=1}^n C_m(Q) C_n(P^m).
\label{comp_formula}
\end{equation}

\subsection{Conformal cusps and their type}\label{cusp_type_sec}

\begin{definition} Let $\Omega\subset\mathbb{C}$ be an open set. A boundary point $p\in\partial \Omega$ is called \emph{regular} if there is a disc $B(p,\varepsilon):=\{z\in\C:\vert z-p\vert<\varepsilon\}$ such that $\Omega\cap B(p,\varepsilon)$ is a Jordan domain and $\partial \Omega\cap B(p,\varepsilon)$ is a simple non-singular real-analytic arc; otherwise $p$ is a \emph{singular} point.
\end{definition}

Let $f:B(0,\epsilon)\to\C,\ f(0)=0$ be a holomorphic map that is univalent on the closure of $B^+:= B(0,\epsilon)\cap\{\re{z}>0\}$ and has a simple critical point at $0$. Then the curve $\gamma:=f(-i\epsilon,i\epsilon)$ has a singularity at the origin, which we refer to as a \emph{conformal cusp} (we will often drop the word conformal and call it a cusp). We note that $\Omega:=f(B^+)$ is a Jordan domain, and by univalence of $f\vert_{B^+}$, the cusp points in the inward direction towards $\Omega$.
We further define the \emph{type} of the cusp on $\partial\Omega$  according to the Taylor series expansion of $f$ at $0$. Since $0$ is a simple critical point of $f$,  we can assume (after scaling $f$, if necessary) that 
\begin{equation}
f(w)= w^2 +\sum_{k\geq 3} C_k(f) w^k.
\label{f_normal_form}
\end{equation}
Then, $\gamma$ can be parametrized near $0$ as $f(it)= (-t^2+O(t^3))+i(ct^n+O(t^{n+1})$, where $t\in(-\epsilon,\epsilon)$, $c\in\R\setminus\{0\}$, and $n\geq 3$. By definition, we say that the cusp at $0$ is of the type $(n,2)$. More precisely, since
\begin{align}\label{cusp_type}
\nonumber f(it)&= -t^2 +\sum_{k\geq 3}C_k(f) (it)^k\\ 
 &= \left(-t^2 + \sum_{\substack{k\geq 3\\ k\ \mathrm{odd}}} (-1)^\frac{k+1}{2} \im(C_k(f))t^k +\sum_{\substack{k\geq 4\\ k\ \mathrm{even}}} (-1)^\frac{k}{2} \re(C_k(f)) t^k\right) +\\ 
\nonumber &\qquad i \left(\sum_{\substack{k\geq 3\\ k\ \mathrm{odd}}} (-1)^\frac{k-1}{2} \re(C_k(f))t^k+\sum_{\substack{k\geq 4\\ k\ \mathrm{even}}} (-1)^\frac{k}{2}\im(C_k(f))t^k\right),
\end{align}
the curve $\gamma$ has a cusp of type $(n,2)$ at $0$ if and only if for $k<n$ we have $\re(C_k(f))=0$ for $k$ odd, $\im(C_k(f))=0$ for k even, and the real or the imaginary part of $C_n(f)$ is non-zero, depending on the parity of $n$. 

\begin{remark}\label{osculating_rem}
By Relation~\ref{cusp_type}, $\partial\Omega$ has cusp of type $(n,2)$ with $n>3$ at the origin if and only if the non-singular branches of $\partial\Omega$ at $0$ have at least fourth order contact with the real line. Since $f$ is locally a squaring map near $0$, this is equivalent to saying that the imaginary axis has at least second order contact with the curve germ at $0$ that maps under $f$ to a segment of the negative real axis at the origin. 
\end{remark}

\subsection{Asymptotics of Schwarz reflections near cusps}
We continue to use the notation of the previous subsection.

The curve $\gamma$ admits a one-sided Schwarz reflection map 
$$
\sigma:\overline{\Omega}=f(\overline{B^+})\to\C,\ z\mapsto f\circ (w\mapsto -\overline{w})\circ (f\vert_{\overline{B^+}})^{-1}(z).
$$
We will study the local dynamics of $\sigma$ in terms of the type of the cusp.

Observe that the inverse branch $g:=(f\vert_{\overline{B^+}})^{-1}$ appearing in the definition of $\sigma$ admits a power series in $z^{1/2}$, where the chosen branch of square root sends $\C\setminus \{x\in \R\mid x<0\}$ to the right half-plane. Specifically,
$g(z) = P(z^{1/2})$, where $P(z)=z+\sum_{k\geq 2} C_k(P)z^k$. Importantly, this branch of the square root commutes with complex conjugation. 

Since $g$ is an inverse branch of $f$, we have that $f(g(z^2))= z^2$; i.e., $f\circ P(z)=z^2$. Consequently, \begin{equation}
C_2(f\circ P)=1,\quad \textrm{and}\quad C_n(f\circ P) = 0\quad \textrm{for}\ n> 2.
\label{inverse_coeff_formula}
\end{equation}
This relation, combined with Relations~\eqref{power_formula},~\eqref{comp_formula} can be used to compute coefficients of $P$ in terms of coefficients of $f$. For example,
$$
0=C_3 (f\circ P) =C_3(P^2) +C_3(f)C_3(P^3) = 0 + 2 C_1(P)C_2(P)+ C_3(f) (C_1(P))^3,
$$
so that $C_2(P) = -C_3(f)/2$.

By definition, $\sigma(z)=f(-\overline{g(z)})= (f\circ (-\overline P))(\overline{z}^{1/2})$. Thus, for $n\geq 3$ we have
\begin{align}
\nonumber C_n(\sigma) &:= C_n(f\circ (-\overline P))\\ 
\nonumber &= C_n(f\circ (-\overline P)) - \overline{C_n(f\circ P)}\qquad \textrm{(By Relation~\eqref{inverse_coeff_formula})}\\
\nonumber &= \sum_{m=1}^n C_m(f)C_n((-\overline P)^m)- \sum_{m=1}^n \overline{C_m(f) C_{n}(P^m)}\qquad \textrm{(By Relation~\eqref{comp_formula})}\\
\label{schwarz_coeff_formula} &= \sum_{m=2}^n \left((-1)^m C_m(f)- \overline{C_m(f)}\right)\overline{C_{n}(P^m)}\qquad \textrm{(By Relation~\eqref{conjugate_coeff_formula})}\\
\nonumber &= 2\left(i\sum_{m\text{ even}}^n \im(C_m(f))\overline{C_{n}(P^m)}-\sum_{m\text{ odd}}^n \re(C_m(f)) \overline{C_{n}(P^m)}\right).
\end{align}

We set $A_n:=(-1)^nC_n(f) - \overline{C_n(f)}$, and deduce from Expressions~\eqref{cusp_type} and~\eqref{schwarz_coeff_formula} that $\gamma$ has a $(n,2)$ cusp at the origin if and only if $C_k(\sigma) = 0$ for all $3\leq k<n$ and $C_n(\sigma) =A_n\neq 0$. 
Expressions~\eqref{schwarz_coeff_formula},~\eqref{power_formula} 
also show that for a $(n,2)$ cusp, we have $C_{n+1}(\sigma)  =A_n\overline{C_{n+1}(P^n)}+A_{n+1}=A_{n+1} - n\overline{C_3(f)}A_n/2$ (recall that $C_2(P)=-C_3(f)/2$). We conclude the following.

\begin{proposition}\label{cusp_asymp_prop}
The cusp at $0$ is of type $(n,2)$ if and only if the local asymptotics of the Schwarz reflection near $0$ is given by
\begin{equation}
\sigma(z) = \overline{z} + A_n\overline{z}^{n/2} +\left(A_{n+1}- \frac{n\overline{C_3(f)}}{2} A_{n}\right)\overline{z}^{(n+1)/2}+ O(\overline{z}^{\frac{n}{2}+1})
\label{schwarz_cusp_asymp}
\end{equation}
on $\overline{\Omega}$.
\end{proposition}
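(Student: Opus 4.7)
The plan is to observe that the proposition is essentially a repackaging of the coefficient-by-coefficient computations carried out in the paragraphs immediately preceding it, so the proof will be a careful collation rather than a new construction. The two implications (cusp of type $(n,2)$ $\iff$ the stated expansion) will then follow by reading off coefficients from a single formula.

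First I would recall that, by construction, $\sigma(z) = (f\circ(-\overline{P}))(\overline{z}^{1/2})$ admits a power-series expansion $\sigma(z) = \sum_{k\geq 1} C_k(\sigma)\,\overline{z}^{k/2}$ with $C_k(\sigma) = C_k(f\circ(-\overline{P}))$. Using $C_1(f)=0$, $C_2(f)=1$, and $C_1(P)=1$, a direct application of Relations~\eqref{power_formula} and~\eqref{comp_formula} gives $C_1(\sigma)=0$ and $C_2(\sigma)= (C_1(-\overline{P}))^2 = 1$, which accounts for the leading $\overline{z}$ term. For $k\geq 3$, the identity $C_k(f\circ P)=0$ (Relation~\eqref{inverse_coeff_formula}) lets us symmetrize, producing precisely the expression in Relation~\eqref{schwarz_coeff_formula}:
$$ C_k(\sigma) \;=\; \sum_{m=2}^{k} A_m\,\overline{C_k(P^m)}, \qquad A_m := (-1)^m C_m(f) - \overline{C_m(f)}. $$
Since $C_2(f)=1$ forces $A_2 = 0$, the sum effectively starts at $m=3$.

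Next I would match the cusp-type hypothesis with the vanishing of the $A_m$'s. Reading the real and imaginary parts off Relation~\eqref{cusp_type}, and using $A_m = -2\re(C_m(f))$ for $m$ odd and $A_m = 2i\,\im(C_m(f))$ for $m$ even, the cusp at $0$ is of type $(n,2)$ if and only if $A_3 = \cdots = A_{n-1} = 0$ and $A_n \neq 0$. Under this hypothesis, the displayed formula collapses: for $3\leq k < n$ only zero $A_m$'s appear, giving $C_k(\sigma) = 0$; and for $k=n$,
$$ C_n(\sigma) \;=\; A_n\,\overline{C_n(P^n)} \;=\; A_n\,(\overline{C_1(P)})^n \;=\; A_n, $$
by the first identity in Relation~\eqref{power_formula}. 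This yields the $A_n\overline{z}^{n/2}$ term.

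For the next coefficient, the same formula yields
$$ C_{n+1}(\sigma) \;=\; A_n\,\overline{C_{n+1}(P^n)} \;+\; A_{n+1}\,\overline{C_{n+1}(P^{n+1})}, $$
and I would substitute $C_{n+1}(P^n) = n\,C_1(P)^{n-1}C_2(P) = -n C_3(f)/2$ (using the already-computed $C_2(P) = -C_3(f)/2$) together with $C_{n+1}(P^{n+1}) = 1$ to obtain exactly $A_{n+1} - \tfrac{n\overline{C_3(f)}}{2}A_n$. The $O(\overline{z}^{n/2+1})$ remainder is then just the convergent tail of the power series on $\overline{\Omega}$. For the converse direction, if $\sigma$ admits the stated expansion, then reading off $C_k(\sigma)$ for $3\leq k \leq n$ and inverting the strictly triangular system in Relation~\eqref{schwarz_coeff_formula} (which is triangular because $\overline{C_k(P^k)} = 1$ lies on the diagonal) forces $A_k=0$ for $3\leq k<n$ and $A_n\neq 0$, i.e. the cusp is of type $(n,2)$.

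Since the argument is purely bookkeeping, I do not anticipate a genuine obstacle; the only care required is to track the chosen branch of the square root (the one commuting with conjugation, so that $\overline{z}^{1/2}$ lands in the appropriate half-plane for $z\in\overline{\Omega}$) and to verify that the parity conditions in Relation~\eqref{cusp_type} really do line up term-by-term with the vanishing of the $A_m$'s.
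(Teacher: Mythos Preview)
Your proposal is correct and follows essentially the same approach as the paper: the proposition is indeed a summary of the coefficient computations carried out in the preceding paragraphs (Relations~\eqref{power_formula}--\eqref{schwarz_coeff_formula}), and you have faithfully reproduced and organized those computations. Your treatment of the converse direction via the triangular structure of the system in Relation~\eqref{schwarz_coeff_formula} is slightly more explicit than the paper's one-line deduction, but the content is the same.
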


When $n$ is odd, $A_n=-2\re(C_n(f))$ is real, and when $n$ is even, $A_n=2i \im(C_n(f))$ is purely imaginary. Iterating the Puiseux series expansion of $\sigma$ given in Proposition~\ref{cusp_asymp_prop}, we obtain that
\begin{align*}
\sigma^{\circ 2}(z) &= z + (A_n+\overline{A_n}) z^{n/2} + \left( \overline{\left(A_{n+1}- \frac{n\overline{C_3(f)}}{2} A_{n}\right)}+\left(A_{n+1}- \frac{n\overline{C_3(f)}}{2} A_{n}\right) \right) z^{(n+1)/2}
\\ &\quad + O(z^{(n+2)/2})\\
=&z + 2\text{Re}(A_n) z^{n/2} + 2\text{Re}\left(A_{n+1}- \frac{n\overline{C_3(f)}}{2} A_{n}\right) z^{(n+1)/2} + O(z^{(n+2)/2}),
\end{align*}
where it is defined.

If $n$ is even then $\re{A_n}=0$, and so we have $\sigma^{\circ 2}(z) = z+O(z^{(n+1)/2})$. When $n$ is odd we have $A_n = -2\re(C_n(f))\neq 0$. From the standard theory of parabolic germs (cf. \cite[\S 10]{Milnor06}), we thus have the following:

\begin{proposition}\label{n_2_cusp}
Let $0$ be a cusp of type $(n,2)$ of $\gamma$. Then the following hold true.
\begin{enumerate}
\item If $n$ is odd then the number of $\sigma^{\circ 2}$-invariant directions at $0$ contained in $\Omega$ is $n-2$. These directions are attracting and repelling in an alternating manner.

\item If $n$ is even then the number of $\sigma^{\circ 2}$-invariant directions at $0$ contained in $\Omega$ is at least $n-1$. These directions are attracting and repelling in an alternating manner. 

\item The positive real axis is always a $\sigma$-invariant and hence also a $\sigma^{\circ 2}$-invariant direction.  This is the only invariant direction for $\sigma^{\circ 2}$ if and only if the cusp is of type $(3,2)$. 
\end{enumerate}  
In particular, when the cusp is of type $(n,2)$ with $n>3$, there is at least one attracting direction.
\end{proposition}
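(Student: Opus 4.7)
The plan is to convert the problem to the classical Leau--Fatou setting for a parabolic holomorphic germ via the substitution $w = z^{1/2}$. This branch is well-defined on the slit-like domain $\Omega$ near the cusp, and the Puiseux expansion of $\sigma^{\circ 2}$ displayed just above the proposition statement upgrades to a genuine holomorphic power series in the variable $w$. Since $g(z) \sim z^{1/2}$ near $0$, tangent directions at $0$ in $\Omega$ correspond bijectively to those at $0$ in the right half-ball $B^+$ via angle-halving, and the attracting/repelling character of invariant directions is preserved. It thus suffices to count invariant directions of $\tilde\sigma_2(w) := (\sigma^{\circ 2}(w^2))^{1/2}$ at $0$ in $B^+$.

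Extracting the $B^+$-branch of the square root in the Puiseux expansion yields
\[
\tilde\sigma_2(w) = w + \re(A_n)\,w^{n-1} + \re\!\bigl(A_{n+1}-\tfrac{n}{2}\overline{C_3(f)}\,A_n\bigr)\,w^n + O(w^{n+1}).
\]
For $n$ odd, $A_n$ is real and nonzero, so $\tilde\sigma_2$ is a parabolic germ of multiplicity $k = n-2$ with real leading coefficient. For $n$ even, $A_n$ is purely imaginary, so $\re(A_n) = 0$ and the $w^{n-1}$-term vanishes; when the $w^n$-coefficient is nonzero, the multiplicity is $k = n-1$ with real leading coefficient, and otherwise $k \geq n-1$.

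By the Leau--Fatou flower theorem, a parabolic germ $w \mapsto w + cw^{k+1} + O(w^{k+2})$ with $c \neq 0$ possesses exactly $2k$ invariant tangent directions at $0$, alternating attracting and repelling; when the leading coefficient is real, these lie at $\theta_m = m\pi/k$ for $m = 0, 1, \ldots, 2k-1$, symmetric about the real $w$-axis. Because $k$ is odd in both of our cases (being $n-2$ or $n-1$), a short direct count shows exactly $k$ of the $\theta_m$ lie in the open right half-plane $B^+$, with none on the imaginary axis $\partial B^+$. Via the angle-halving correspondence, this produces $n-2$ (respectively at least $n-1$) invariant directions of $\sigma^{\circ 2}$ in $\Omega$, preserving the attracting/repelling alternation, establishing parts (1) and (2).

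For parts (3) and (4), the angle $\theta_0 = 0$ always appears among the $\theta_m$ (owing to the real leading coefficient) and corresponds to the positive real $z$-axis under angle-halving, so the positive real axis is always a $\sigma^{\circ 2}$-invariant direction. For $n = 3$, part (1) forces this to be the unique invariant direction, whereas for $n > 3$ both parts (1) and (2) produce $\geq 3$ invariant directions, and the alternation guarantees at least one attracting direction. The main technical subtlety is the bookkeeping when translating the Puiseux expansion in $z$ into a genuine power series in $w$ (in particular, the $w^{n-1}$-cancellation in the even case stems precisely from $A_n$ being purely imaginary) together with the short parity argument excluding invariant directions on $\partial B^+$ (which would correspond to the slit rather than to directions genuinely in $\Omega$); the degenerate subcase for $n$ even where the $w^n$-coefficient vanishes is absorbed by passing to higher-order terms and yields only the weaker ``at least $n-1$'' bound claimed.
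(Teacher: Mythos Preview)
Your proof is correct and follows essentially the same approach as the paper: compute the Puiseux expansion of $\sigma^{\circ 2}$ (which the paper carries out just before the proposition) and then invoke the standard Leau--Fatou theory of parabolic germs. You spell out the passage to the $w=z^{1/2}$ coordinate and the half-plane direction count in more detail than the paper, which simply appeals to \cite[\S 10]{Milnor06} at that point.
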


We now focus on cusps of type $(n,2)$ with $n$ odd. 

\begin{proposition}\label{odd_cusp_prop}
Let $0$ be a cusp of type $(n,2)$ of $\gamma$ with $n$ odd. Then the positive real invariant direction is repelling for $\sigma$ if $n\equiv 3\mod 4$ and is attracting for $\sigma$ if $n\equiv 1\mod 4$.
\end{proposition}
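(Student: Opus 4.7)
By Proposition~\ref{cusp_asymp_prop}, along the invariant positive real ray the Schwarz reflection has the asymptotic
\[
\sigma(t) \;=\; t + A_n\, t^{n/2} + O\bigl(t^{(n+1)/2}\bigr), \qquad t>0\text{ small},
\]
with $A_n = -2\re C_n(f)$ a nonzero real number. Thus attracting versus repelling is governed entirely by the sign of $\re C_n(f)$, and the task reduces to showing that univalence of $f$ on $\overline{B^+}$ forces this sign to be positive exactly when $n \equiv 1\pmod 4$ and negative exactly when $n\equiv 3\pmod 4$.

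The main tool is the local holomorphic involution $\phi$ of $f$, characterized by $f\circ\phi = f$ and $\phi(w) = -w + O(w^2)$ near $0$. Univalence of $f$ on $\overline{B^+}$ is equivalent to $\phi\bigl(\overline{B^+}\setminus\{0\}\bigr)\cap\overline{B^+} = \emptyset$; applied to $w = it$ with $t\in(0,\epsilon]$, this yields the strict geometric inequality
\[
\re\phi(it) \;<\; 0 \qquad \text{for all } t\in(0,\epsilon].
\]
Writing $\phi(w) = -w + \sum_{k\geq 2}\phi_k w^k$ and taking real parts gives $\re\phi(it) = \sum_{k\geq 2}\re(\phi_k\, i^k)\, t^k$, so the leading non-vanishing coefficient of this series must be strictly negative.

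To identify that coefficient explicitly in terms of $\re C_n$, I plan to introduce the auxiliary series $g(w) := -f(iw)$. The cusp conditions on $C_3,\ldots,C_{n-1}$ translate precisely into all coefficients of $g$ through order $w^{n-1}$ being real, with the first non-real coefficient appearing at $w^n$ and having imaginary part proportional to $\re C_n$. The non-identity involution $\psi$ of $g$ is linked to $\phi$ by $\phi(iw) = i\psi(w)$, equivalently $\phi_k = (-i)^{k-1}\psi_k$. Inducting on the recursion $g\circ\psi = g$, the reality of $g$ through order $w^{n-1}$ forces $\psi_2,\ldots,\psi_{n-2}$ to be real, while the $w^n$-equation gives $\psi_{n-1} = -c_n + (\text{real})$, where $c_n = -C_n i^n$ is the $w^n$-coefficient of $g$. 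Translating back, $\phi_k\in\R$ for odd $k\le n-2$ and $\phi_k\in i\R$ for even $k\le n-2$, which makes $\re(\phi_k\, i^k)$ vanish throughout $2\le k\le n-2$; a short calculation gives the key identity $\re\phi_{n-1} = -\re C_n$.

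Combining the inputs, the first non-vanishing term of $\re\phi(it)$ is at $t^{n-1}$, with coefficient
\[
\re(\phi_{n-1}\, i^{n-1}) \;=\; (-1)^{(n-1)/2}\,\re\phi_{n-1} \;=\; -(-1)^{(n-1)/2}\,\re C_n.
\]
The univalence-forced negativity then reads $(-1)^{(n-1)/2}\,\re C_n > 0$, yielding $\re C_n > 0$ (hence $A_n<0$, attracting) for $n\equiv 1\pmod 4$ and $\re C_n < 0$ (hence $A_n>0$, repelling) for $n\equiv 3\pmod 4$, as claimed. The chief technical obstacle is the bookkeeping of real/imaginary parities in the involution recursion; the reduction to the real power series $g$ is the device that makes this tractable.
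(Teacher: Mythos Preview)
Your argument is correct, but it takes a genuinely different route from the paper. Both proofs begin identically by reducing to the sign of $\re C_n(f)$ via $\sigma(t)-t=A_n t^{n/2}+\cdots$. From there the paper proceeds \emph{geometrically}: it reads off directly from Expression~\eqref{cusp_type} that $\im f(it)=(-1)^{(n-1)/2}\re(C_n(f))\,t^n+O(t^{n+1})$, and then argues by a winding-number/orientation contradiction---if $\re C_n(f)$ had the wrong sign, the Jordan curve $f(\partial B^+)$ would traverse the upper and lower branches at the cusp in an order incompatible with winding counter-clockwise around small positive reals. This is a two-line topological argument once the formula for $\im f(it)$ is in hand.

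Your approach is \emph{algebraic}: you encode univalence via the local deck involution $\phi$ of the double cover $f$, translate injectivity on $\overline{B^+}$ into the sign condition $\re\phi(it)<0$, and then track the real/imaginary parity of the coefficients $\phi_k$ through the auxiliary series $g(w)=-f(iw)$ and its own involution $\psi$. The bookkeeping you outline is accurate (in particular the identities $\phi_k=(-i)^{k-1}\psi_k$, $\psi_{n-1}=-g_n+\text{real}$, and $\re\phi_{n-1}=-\re C_n$ all check out), and the conclusion follows. What your approach buys is a systematic mechanism---the involution recursion---that would generalize cleanly to higher-order vanishing or other normal forms; what it costs is length and a certain amount of opaque coefficient-chasing. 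The paper's winding-number argument is shorter and more transparent for this specific statement, but yours is a legitimate and self-contained alternative.
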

\begin{proof}
For all small enough positive $\delta$ we have that 
$$
\sigma(\delta)-\delta = -2 \re(C_n(f))\delta^{n/2}+O(\delta^{(n+1)/2}),
$$ 
where $\re C_n(f)\neq 0$. Thus, the positive real direction is attracting (respectively, repelling) for $\sigma$ if $\re C_n(f)>0$ (respectively, if $\re C_n(f)<0$). The proof will be completed by establishing the following claim (which shows that univalence of $f$ on $\overline{B^+}$ determines the sign of $\re C_n(f)$).
\smallskip

\noindent\textbf{Claim: $\re C_n(f)>0$ for $n\equiv 1 \mod 4$, and $\re C_n(f)<0$ for $n\equiv 3 \mod 4$.}
\smallskip

\noindent\textbf{Proof of Claim.}
Let us equip $\partial B^+$ with a counter-clockwise orientation and observe that as $f\vert_{\overline{B^+}}$ is an orientation preserving homeomorphism, the Jordan curve $f(\partial B^+)$ must also be oriented counter-clockwise. Moreover, our normalization of $f$ (see Expression~\eqref{f_normal_form}) implies that $f(B^+)=\Omega$ contains small enough positive real numbers. In other words, $f(\partial B^+)$ has winding number one around small enough positive reals.

For $t\in (-\varepsilon,\varepsilon)$ with $\varepsilon$ small enough we have from Expression~\eqref{cusp_type} that 
\begin{equation}
\re f(it)=-t^2+O(t^3),\quad \im f(it) = (-1)^{(n-1)/2} \re(C_n(f)) t^n +O(t^{n+1}).
\label{re_im_position}
\end{equation}
Now by way of contradiction, let us assume that the claim is not true. Then, by Relation~\eqref{re_im_position}, $f(it)$ lies in the second quadrant for $t$ negative and in the third quadrant for $t$ positive. But this contradicts the fact that $f(\partial B^+)$ is a Jordan curve with a positive winding number around small enough positive reals. This proves the claim and completes the proof of the proposition.
\end{proof}

A particular consequence of the previous two propositions is the following.

\begin{corollary}\label{3_2_att_cor}
The cusp of $\gamma$ at $0$ is of type $(3,2)$ if and only if it admits no attracting directions.
\end{corollary}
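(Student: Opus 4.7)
The proof is essentially a direct combination of Propositions~\ref{n_2_cusp} and~\ref{odd_cusp_prop}, so my plan is to derive both implications by reading off consequences of these two results. First I would handle the forward direction: assuming the cusp is of type $(3,2)$, Proposition~\ref{n_2_cusp} (part 3) tells us that the positive real axis is the unique $\sigma^{\circ 2}$-invariant direction at $0$, and Proposition~\ref{odd_cusp_prop} with $n=3 \equiv 3 \pmod 4$ says that this direction is repelling for $\sigma$ (hence also for $\sigma^{\circ 2}$). Therefore no attracting direction exists.

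For the reverse direction, I would argue by contrapositive: if the cusp is not of type $(3,2)$, then it is of type $(n,2)$ for some $n \geq 4$ (since $n \geq 3$ always), and the last sentence of Proposition~\ref{n_2_cusp} already gives the existence of at least one attracting direction for $\sigma^{\circ 2}$ in $\Omega$. Thus having no attracting direction forces $n=3$.

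I do not anticipate any obstacle here; the corollary is a bookkeeping consequence of the two propositions, and the statement is essentially designed so that the $n=3$, $n$ odd with $n\geq 5$, and $n$ even cases are already settled by the previous results. The only mild subtlety is remembering that, for $n=3$, Proposition~\ref{n_2_cusp} part (1) gives $n-2 = 1$ invariant direction in $\Omega$ (the positive real axis), matched with Proposition~\ref{odd_cusp_prop} to conclude this one direction is repelling rather than attracting.
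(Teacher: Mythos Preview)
Your proposal is correct and matches the paper's approach exactly: the paper presents this corollary as an immediate consequence of Propositions~\ref{n_2_cusp} and~\ref{odd_cusp_prop} without writing out a separate proof, and your argument spells out precisely the intended reasoning.
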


\begin{remark}
We do not know of examples of quadrature domains with cusps of type $(n,2)$ where $n$ is even.
\end{remark}

\subsection{Moving the cusp to infinity: parabolic behavior}\label{cusp_goes_to_infty_subsec}

Now suppose that the cusp is of type $(3,2)$ and consider the change of coordinates $\beta\colon z\mapsto \lambda/z^{1/2}$, sending the cusp to $\infty$. The inverse change of coordinates is given by $\beta^{-1}\colon \zeta \mapsto \lambda^2/\zeta^2$. We compute using Expression~\eqref{schwarz_cusp_asymp} that

\begin{equation*}
\begin{split}
\beta\circ \sigma\circ \beta^{-1}(\zeta) \hspace{10.7cm} \\
= \beta\left(\frac{\overline \lambda ^2}{\overline \zeta^2} - \frac{2\re(C_3(f))\overline \lambda^3}{\overline \zeta^3}+ O(\frac{1}{\overline \zeta^4})\right)\ 
=\ \lambda \left(\frac{\overline \lambda ^2}{\overline \zeta^2} - \frac{2\re(C_3(f))\overline \lambda^3}{\overline \zeta^3}+ O(\frac{1}{\overline \zeta^4})\right)^{-\frac12} \hspace{1cm} \\
=\ \frac{\lambda}{\overline \lambda}\overline \zeta \left(1 -2 \re(C_3(f))\frac{\overline \lambda}{\overline \zeta} + O(\overline \zeta^{-2})\right)^{-\frac12}
=\ \frac{\lambda}{\overline \lambda}\overline \zeta \left(1 + \frac{1}{2}(2 \re(C_3(f)))\frac{\overline \lambda}{\overline \zeta}\right)+O\left(\overline \zeta^{-1}\right). \hspace{0.28cm}
\end{split}
\end{equation*}
Setting $\lambda:= \frac{1}{2\re(C_3(f))}$, we have that 
\begin{equation}
\beta \circ \sigma\circ \beta^{-1}(\zeta) = \overline \zeta + \frac{1}{2} + O(\overline \zeta^{-1}),
\label{cusp_para_asymp}
\end{equation}
on $\beta(\overline{\Omega})$.

\end{document}